\newtheorem{theorem}{Theorem}[section]
\newtheorem{definition}{Definition}[section]
\newtheorem{proposition}{Proposition}[section]
\newtheorem{corollary}{Corollary}[section]
\newtheorem{lemma}{Lemma}[section]
\theoremstyle{definition}
\newtheorem{example}{Example}[section]
\def\href{}
\DeclareMathOperator{\Ker}{Ker}
\begin{document}

\begin{center}

{ \bf \LARGE   Homotopical  Cancellation Theory for  \\[10pt] Gutierrez-Sotomayor Singular Flows}



\vspace{0.6cm}
\large 
D.V.S. Lima\footnote{Supported by FAPESP under grants 2014/11943-6 and 2015/10930-0.}  
\hspace{0.3cm}
S. A. Raminelli\footnote{Partially supported by CNPq under grant 140712/2016-0 and by CAPES under grant 1185783.}
\hspace{0.3cm}
K. A. de Rezende\footnote{Partially supported by CNPq under grant 305649/2018-3}

\end{center}
\vspace{0.3cm}

\begin{abstract}

In this article, we present a dynamical homotopical cancellation theory for Gutierrez-Sotomayor singular flows  $\varphi$, GS-flows,  on singular surfaces $M$. This theory  generalizes  the classical theory of Morse complexes of smooth dynamical systems together with the corresponding cancellation theory for non-degenerate singularities. This is accomplished by defining 
a GS-chain complex for  $(M,\varphi)$ and computing its spectral sequence $(E^r,d^r)$.  As $r$ increases, algebraic cancellations occur, causing  modules in  $E^r$ to become trivial. The main theorems herein relate these algebraic cancellations within the spectral sequence to a family $\{M_r,\varphi_r\}$ of GS-flows $\varphi_r $ on singular surfaces $M_r$, all of which have the same homotopy type as $M$.
The surprising element in these results is that the dynamical homotopical cancellation of GS-singularities of the flows $\varphi_r$ are in consonance with the algebraic cancellation of the modules in $E^r$ of its associated spectral sequence. Also, the convergence of the spectral sequence corresponds to a GS-flow $\varphi_{\bar{r}}$ on $M_{\bar{r}}$, for some $\bar{r}$, with the property that $\varphi_{\bar{r}}$ admits no further dynamical homotopical cancellation of GS-singularities.

\end{abstract}
\vspace{0.6cm}

\noindent
{\bf Keywords: }  GS-singularities, stratified manifold, chain complexes, spectral sequence, Gutierrez-Sotomayor flows, dynamical homotopical cancellation.\\
{\bf 2010 Mathematics Subject Classification:} 58K45, 58K65, 55U15, 55T05,  37B30,  37D15. 


\section{Introduction}

In~\cite{GS}, Gutierrez and Sotomayor presented simple singularities, as well as their characterization and genericity theorems for $C^1$-structurally stable vector fields tangent to a 2-dimensional compact subset $M$ of $\mathbb{R}^k$. 

For the first time, in~\cite{dRM}, the flows associated to these vector fields, with no periodic orbits or limit cycles, were studied using Conley index theory and named {\it Gutierrez-Sotomayor flows}, GS-flows for short. The simple singularities presented in~\cite{GS}, namely,  regular  ($\mathcal{R}$), cone ($\mathcal{C}$), Whitney ($\mathcal{W}$), double crossing ($\mathcal{D}$) and triple crossing  ($\mathcal{T}$) singularities were called GS-singularities.  In~\cite{dRM}, the Conley index of each GS-singularity was computed. The existence of Lyapunov functions for GS-flows was established and a GS-handle theory was introduced in order to construct isolating blocks for each GS-singularity. 

In this work, we take this analysis a step further, by analyzing global GS-flows on singular closed surfaces. Our goal is to  investigate the global connections of flow lines of GS-flows under a spectral sequence analysis of a chain complex associated to it. This method was successful in~\cite{BLMdRS1, BLMdRS2, LMdRS} in order to obtaining cancellation theorems in smooth settings, gradient flows of Morse functions, as well as, for circle-valued Morse functions.

However, it is a great challenge to adapt the smooth theory to the singular setting, more specifically for GS-flows.  One wishes to maintain  the principles that undergird the former setting in the latter. In order for the theory to retain its basic structure and be a valid generalization, the definitions and postulates of the singular setting  must encompass the definitions  and postulates of the smooth setting. Hence, one must face the problem of defining intersection numbers in the absence of differentiability, as well as  defining a chain complex generated by  GS-singularities.

Furthermore, a generalized notion of cancellation must be presented for GS-singularities. This will be captured by defining a dynamical  homotopical cancellation which is a generalization of the classical notion of cancellation in the smooth case  as Figure \ref{fig:exemplo_morse_intro2} suggests. 
In a classical cancellation, the manifold before and after the cancellation are always  homeomorphic.  In a homotopical cancellation, the singular manifold before and after the cancellation are of the same homotopy type  and may not be homeomorphic. 
Roughly, the idea behind a  dynamical homotopical cancellation  is to consider  a set of three  singularities  $x$, $x'$ and $y$ and the flow lines $u,u'$ joining them in a neighborhood $U$ which, through a homotopy will be taken to a neighborhood $\bar{U}$ containing  a GS-singularity $\overline{x}'$.  The regions $U$ and $\bar{U}$ are of the same homotopy  type and this homotopy respects the number of singular regions (droplets and folds) that exist in $U$.

 \begin{figure}[h!]
 \centering
 \includegraphics[scale=0.75]{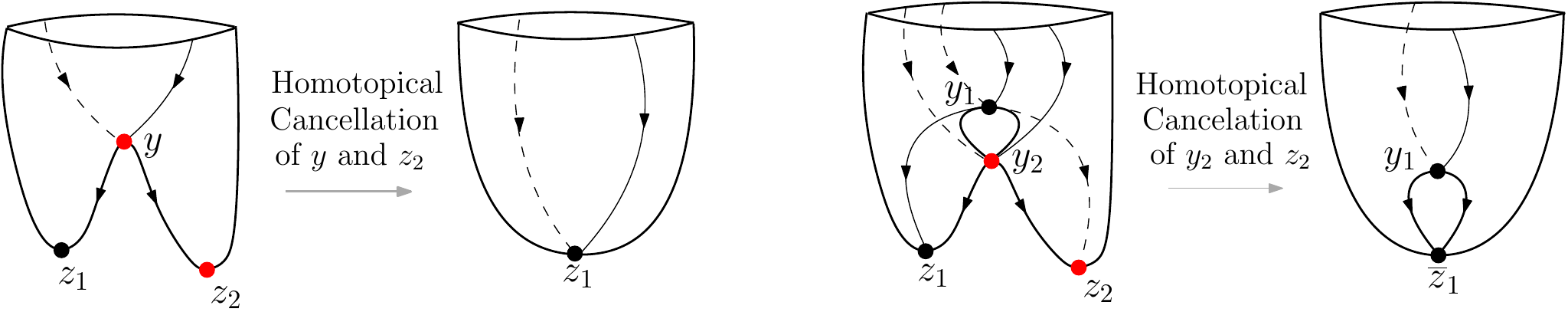}
 \caption{ Dynamical homotopical cancellations in the smooth  and singular cases.}
 \label{fig:exemplo_morse_intro2}
 \end{figure}

In order for these homotopies to be well defined, we will consider a  larger class of GS-singularities which include $n$-sheet cone, Whitney, double and triple attractors and repellers. For simplicity, henceforth, we will continue to refer to  these as GS-singularities for GS-flows.
 Since these more general GS-flows have not been previously considered in the literature, these fundamental concepts have to be established herein in order to get the theory off the ground.
This in itself is already quite a formidable endeavor, since the GS-singularities comprise a large class of different singularity types, which must be dealt with in a case by case analysis.


The main contribution of this work is that, with the introduction of a generalization of these concepts, several homotopical cancellation theorems for GS-flows are proven.
In our opinion, what is most striking in these theorems, is that the dynamical homotopical cancellations within the flow occur in consonance with the algebraic cancellations of the unfolding, i.e., with the turning of the pages, of the associated spectral sequence.
In order to appreciate the beauty of these results, we finalize this paper with three examples from the realms of flows with cone, Whitney and double crossing singularities. See Section~\ref{DDCTSS}.


This paper is organized as follows. Section~\ref{sec:GSVF} is an introduction to GS-vector fields and their associated flows. In order to define a Gutierrez-Sotomayor chain complex, we need to establish a regularization process of GS-singularities, referred to as its Morsification, which is presented in Section~\ref{sec:MGSVFIB}. In Section~\ref{sec:GSCC}, we make use of the Morsification process to introduce GS-intersection numbers and hence obtain a differential for a chain complex generated by the GS-singularities, which we refer to as a GS-chain complex.
Next, we prove  local dynamical homotopical cancellation  theorems for GS-singularities in Section~\ref{DCTGSF}.
Moreover, in Section~\ref{DDCTSS}, we generalize the theory developed in~\cite{BLMdRS1, BLMdRS2} to obtain global  homotopical  cancellation theorems for flows on singular surfaces with $\mathcal{R}$, $\mathcal{C}$, $\mathcal{D}$, $\mathcal{W}$ or $\mathcal{T}$ singularities. This is accomplished by associating the algebraic cancellations that occur in a spectral sequence of a filtered GS-chain complex of a GS-flow with the dynamical homotopical  cancellations that occur within the flow.
%
The  flow chart in Figure~\ref{fig:diagramafinal} provides an overview of the development of the results, and helps to understand the interrelationships among the sections.

 \begin{figure}[h!]
 \centering
  \includegraphics[scale=1.7]{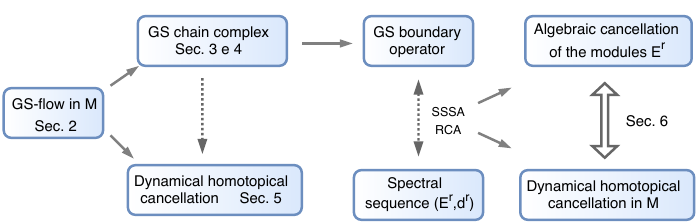}
 \caption{Flow chart: an overview of the development of the results herein.}
 \label{fig:diagramafinal}
 \end{figure}

\section{Gutierrez-Sotomayor Flows}\label{sec:GSVF}

\subsection{Gutierrez-Sotomayor Vector Fields}

In~\cite{GS}, Gutierrez and Sotomayor presented a characterization for manifolds with singularities where the degeneracy is restricted in order to admit  only those that appear in a stable manner.  This means that the regularity conditions in the definition of the smooth surfaces in $\mathbb{R}^3$, given in terms of implicit functions and immersions, are broken stably. Hence the following singularities arise.  

\begin{definition}
A subset $M\subset\mathbb{R}^l$ is called a
two-dimensional  \emph{\textbf{ \emph{manifold with simple singularities}}} 
if for every point $p\in M$ there are a neighbourhood $V_p$ of $p$ in $M$ and 
a local chart, a $C^{\infty}$-diffeomorphism $\Psi:V_p\rightarrow \mathcal{G}$ such that
$\Psi(p)=0$, where $\mathcal{G}$ is one of the following subsets of $\mathbb{R}^3$:

$\mathcal{R}=\{(x,y,z);z=0\}$, plane;

$\mathcal{C}=\{(x,y,z);z^2-y^2-x^2=0\}$, cone;

$\mathcal{W}=\{(x,y,z);zx^2-y^2=0\}$, Whitney's umbrella;

$\mathcal{D}=\{(x,y,z);xy=0\}$, double crossing;

$\mathcal{T}=\{(x,y,z);xyz=0\}$, triple crossing.
\end{definition}

We denote by $M(\mathcal{G})$ the set of points $p\in M$ such that
$\Psi(p)=0$ for a local  chart  $\Psi:V_p\rightarrow \mathcal{G}$, where 
$\mathcal{G}=\mathcal{R}$, $\mathcal{C}$, $\mathcal{D}$, $\mathcal{W}$ or $\mathcal{T}$. 
Thus $M(\mathcal{R})$ is a smooth two-dimensional manifold called the \emph{\textbf{regular part}} of $M$, 
$M(\mathcal{D})$ is a one-dimensional smooth manifold, 
while $M(\mathcal{C})$, $M(\mathcal{W})$ and $M(\mathcal{T})$ are discrete sets. The \emph{\textbf{singular part}} of $M$, $\mathscr{SP}(M)$, is the union of all non regular singularities and folds, i.e. the union $ M(\mathcal{C})\cup  M(\mathcal{W}) \cup M(\mathcal{D}) \cup M(\mathcal{T})$.   Also, the set $M$ endowed with the partition $\{M(\mathcal{G}), \mathcal{G}\}$ is a stratified set in the sense of Thom.

A vector field $X$ of class  $C^r$ on $\mathbb{R}^l$ is said to be {\it tangent} to a manifold $M\subset \mathbb{R}^l$ with simple singularities if it is tangent to the smooth submanifolds
$M(\mathcal{G})$, for all $\mathcal{G}$. The space of such vector fields is denoted by
$\mathfrak{X}^r(M)$ and it is  endowed with the $C^r$-compact open topology.

In~\cite{GS}, Gutierrez and Sotomayor characterized a  set of structurally stable vector fields $\Sigma^{r}(M)$ contained in $\mathfrak{X}^r(M)$. This set contains vector fields with finitely many hyperbolic singularities and periodic orbits,  as well as,  singular limit cycles with  no saddle connections and the additional property that 
the $\alpha$ and $\omega$-limit sets of a trajectory is either a singularity, a periodic orbit or a singular cycle.

In this paper,  we do not wish to consider  vector fields with  periodic orbits  or limit cycles. Hence, we will consider vector fields having only  the hyperbolic simple singularities as shown in Figure~\ref{hiperbolicos}.  

\begin{figure}[h!]
    \centering
           \includegraphics[width=0.6\textwidth]{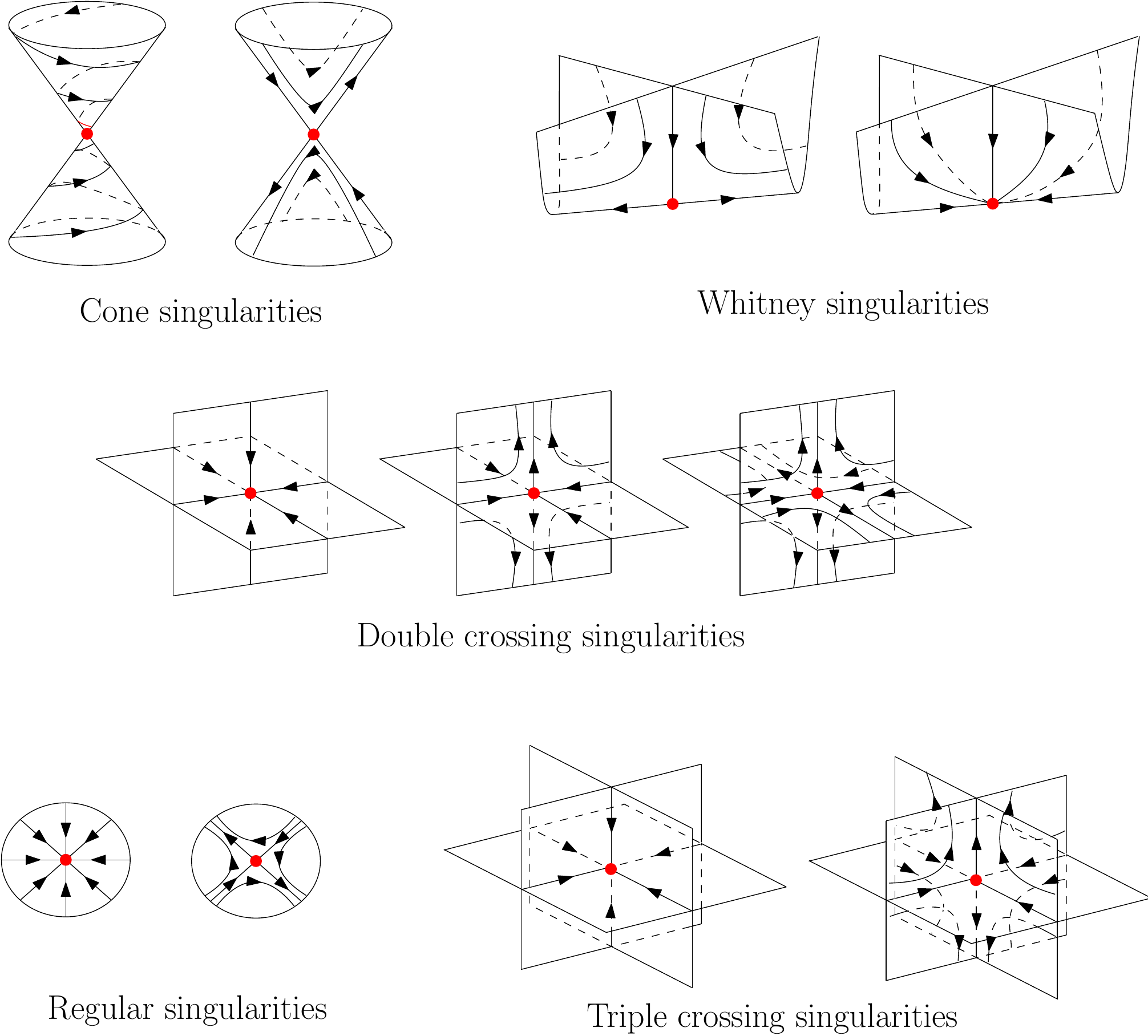}
    \caption{Local types of GS-singularities.}
    \label{hiperbolicos}
\end{figure}   

\subsection{Isolating blocks for GS-singularities}

Given a vector field $X\in \Sigma^{r}(M)$, we refer to the associated  flow as the  Gutierrez-Sotomayor flow  $\varphi_{X}$ on $M$, GS-flow  with GS-singularities for short. 

An \emph{\textbf{isolating block}} for a GS-singularities $p$ of a GS-flow $\varphi$ is an isolating neighborhood  $N \subset M$ of $p$ such that the exit set $N^- =\{x\in N; \varphi([0.T),x)\nsubseteq N, \forall T>0\}$
is closed.
The existence of  isolating blocks for GS-singularities is a consequence of the existence of  Lyapunov functions $f$ in a neighborhood of $p$. Hence,  if $f(p)=c$, let $\epsilon >0$ be such that  there are no critical values in $[c-\epsilon,c+\epsilon]$, then  the connected component  of  $f^{-1}([c-\epsilon,c+\epsilon])$ which  contains $p$, $N$,  is an isolating block  for  $p$.  Also, $N^- =f^{-1}(c-\epsilon)\cap N$.  It is worth noting that an isolating block  can also be defined for a maximal invariant set of a GS-flow. See~\cite{Con, Montufar, dRM} for more details. 

The next theorem characterizes the relation between the first Betti number of the boundary of an isolating block for the singularity  $p$, with the number of boundary components and the ranks of the homology Conley index. The proof can be found in~\cite{ Montufar, dRM}.

\begin{theorem}[Poincaré-Hopf equality]\label{PH-equality}
Let $(N,N^{-})$ be an index pair for a GS-singularity $p$ and $(h_0,h_1,h_2)$ be the ranks of the homology Conley index of
$p$. Then
\begin{equation}
(h_2-h_1+h_0)-(h_2-h_1+h_0)^{\ast}=e^+-\mathcal{B}^+-e^-+\mathcal{B}^-,
\end{equation}
where $^\ast$ indicates the index of the time-reversed flow, $e^+$ (resp., $e^-$) is
the number of entering (resp., exiting) boundary components of $N$ and
$\mathcal{B}^+=\sum_{k=1}^{e^+}b_k^+ $ (resp., $ 
(\mathcal{B}^-=\sum_{k=1}^{e^-}b_k^-)$, where $b_k^+(b_k^-)$ is the first  Betti number of the
$k$-th entering (resp., exiting) boundary components of $N$.
\end{theorem}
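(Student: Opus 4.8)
The plan is to prove the Poincar\'e--Hopf equality by building an auxiliary closed surface from the isolating block and applying the usual Euler characteristic/Morse-theoretic bookkeeping to it, then translating back. First I would fix an index pair $(N,N^-)$ for the GS-singularity $p$ coming from a Lyapunov function $f$ as in the preceding paragraph, so that $N$ is a connected piece of $f^{-1}([c-\epsilon,c+\epsilon])$ and $N^- = f^{-1}(c-\epsilon)\cap N$ is the exit set, a disjoint union of the $e^-$ exiting boundary components; dually, the entering set $N^+ = f^{-1}(c+\epsilon)\cap N$ consists of the $e^+$ entering boundary components. The key topological input is the long exact sequence of the pair $(N,N^-)$, which computes the homology Conley index $\mathrm{CH}_*(p) = H_*(N,N^-)$ with ranks $(h_0,h_1,h_2)$, together with the analogous statement for the reversed flow, whose index is $H_*(N,N^+)$ with ranks $(h_0^\ast,h_1^\ast,h_2^\ast)$. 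Since $N$ deformation retracts onto a neighborhood of $p$ in $M$, which is contractible (each local model $\mathcal{G}$ is a cone), $N$ itself is connected and has the homology of a point; this is the crucial simplification that makes the computation go through.

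Next I would extract from the two long exact sequences the relation
\begin{equation}
\big(h_2 - h_1 + h_0\big) = \chi(N,N^-) = \chi(N) - \chi(N^-),
\end{equation}
and symmetrically $(h_2 - h_1 + h_0)^\ast = \chi(N) - \chi(N^+)$, so that subtracting gives $(h_2-h_1+h_0) - (h_2-h_1+h_0)^\ast = \chi(N^+) - \chi(N^-)$. It then remains to identify $\chi(N^+)$ and $\chi(N^-)$ in terms of the Betti numbers of the individual boundary components. Each entering component is a closed (possibly singular, but one-dimensional at worst along $M(\mathcal{D})$, $M(\mathcal{C})$, etc.) curve, so it is a connected graph-like space with $\chi = 1 - b_k^+$ by the rank-nullity for its first homology; summing over the $e^+$ components yields $\chi(N^+) = e^+ - \sum_{k=1}^{e^+} b_k^+ = e^+ - \mathcal{B}^+$, and likewise $\chi(N^-) = e^- - \mathcal{B}^-$. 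Substituting produces exactly
\begin{equation}
(h_2 - h_1 + h_0) - (h_2 - h_1 + h_0)^\ast = e^+ - \mathcal{B}^+ - e^- + \mathcal{B}^-,
\end{equation}
which is the claimed identity.

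The main obstacle I anticipate is justifying rigorously the two Euler-characteristic identities $\chi(N,N^-) = h_2 - h_1 + h_0$ (and its dual) in the singular, non-manifold setting: one must check that $H_*(N,N^-)$ is finitely generated and concentrated in degrees $0,1,2$, which follows from the explicit list of Conley indices of GS-singularities computed in~\cite{dRM}, and that the boundary pieces $N^\pm$ are sufficiently tame (finite CW complexes) for $\chi$ to be additive in the pair sequence. A secondary, more bookkeeping-level point is to be careful about which boundary components count as ``entering'' versus ``exiting'' when $p$ lies on a stratum of positive-dimensional singular set, since a single boundary circle of $N$ may pass through several sheets; here I would rely on the isolating-block structure (closedness of $N^-$) to guarantee the partition $\partial N = N^+ \sqcup N^-$ is well defined. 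Once these tameness and decomposition facts are in hand, the argument is a routine additivity computation, and I would simply cite~\cite{Montufar, dRM} for the detailed verification, as the statement does.
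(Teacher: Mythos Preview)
The paper does not actually prove this theorem; it simply refers the reader to~\cite{Montufar, dRM}. Your argument is therefore not a comparison target but a genuine, self-contained proof, and it is essentially correct: the identity is nothing more than additivity of Euler characteristic applied twice, once to the pair $(N,N^-)$ and once to $(N,N^+)$, followed by the elementary computation $\chi(N^\pm)=\sum_k(1-b_k^\pm)=e^\pm-\mathcal{B}^\pm$ for a disjoint union of connected one-dimensional CW complexes.

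One remark worth making: you call the contractibility of $N$ ``the crucial simplification that makes the computation go through,'' but in fact your derivation never uses it. The term $\chi(N)$ appears in both $(h_2-h_1+h_0)=\chi(N)-\chi(N^-)$ and $(h_2-h_1+h_0)^\ast=\chi(N)-\chi(N^+)$ and cancels upon subtraction, regardless of what $\chi(N)$ is. This is fortunate, because the claim that every isolating block for a GS-singularity is contractible is not obviously true in all cases (consider, for instance, the saddle cone block with connected entering and exiting circles in case~2.2 of Proposition~\ref{prop_cone}, whose Morsification is a torus with two holes); you would need a separate argument to establish it, and you can simply drop the claim. The tameness issues you flag (finite generation, concentration in degrees $0,1,2$, CW structure on $\partial N$) are genuine but routine, and deferring them to~\cite{Montufar,dRM} is consistent with what the paper itself does.
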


For each type of GS-singularity, we now define its nature  which corresponds to the local behavior of the flow on a chart around the singularity.


\begin{definition} Let $p$ be a  GS-singularity of  
\begin{enumerate} 
\item regular or  cone type, denote its \textbf{nature} by 
$a$ (resp., $r$) if $p$  is an attractor (resp., repeller); 
by $s$ if $p$ is a saddle. 
\item Whitney type, denote its \textbf{nature} by 
 $a$ (resp., $r$) if $p$ is an attractor (resp., repeller); 
 $s_s$ (resp., $s_u$) if $p$ is a saddle and its stable (resp., unstable) manifold is singular.
  \item  double crossing typem denote its \textbf{nature} by  
 $a^2$ (resp., $r^2$) if $p$ is an attractor (resp., repeller); 
  $sa$ (resp., $sr$) if $p$ is a  saddle  formed by a regular saddle and a regular attractor (resp., repeller); 
   $ss_s$ (resp., $ss_u$)  if $p$ is a saddle formed by two regular saddles which are identified along their stable (resp., unstable) manifolds.
  \item triple crossing type, denote its \textbf{nature} by  
 $a^3$ (resp., $r^3$) if $p$ is an attractor (resp., repeller); 
  $ssa$ (resp., $ssr$) if $p$ is a saddle  formed by two regular saddles and a regular attractor (resp., repeller). 
\end{enumerate}
\end{definition}

%
%
%

In~\cite{Montufar,dRM}, the construction  of  isolating blocks was undertaken for each GS-singularity according to their type   $\mathcal{C}$, $\mathcal{W}$, $\mathcal{D}$ or $\mathcal{T}$, and their nature according to the table below.

\begin{multicols}{2}

\noindent
\begin{tabular}{|c|c|c|c|c|}
\hline
type   &     nature      & $e_v^-$ &	$e_v^+$  & weight       \\ \hline \hline         
$\mathcal{D}$  &  $\textbf{$a^2$}$        &   0       &        1        &  $b_1^+=3$       \\  
\cline{2-5}  &   ${sa}$       &   1     &  1       &  $b_1^+=b_1^- + 2$            \\  
\cline{2-5}   &    $ {sa}$      &   2     &  1       &  $b_1^+=b_1^- + b_2^- + 1$          \\  
\cline{2-5}     &   $ss_s$    &  1     &  1       &  $b_1^+=b_1^- + 2$          \\  
\cline{2-5}  &   $ss_s$    &  1     &  2       &  $b_1^-=b_1^+ + b_2^+ - 3$     \\  
\cline{2-5} 
&   $ ss_s$    &  2     &  1       &  $b_1^+=b_1^- + b_2^- + 1$                     \\  \cline{2-5}       &   $ss_s$   &  2     &  2       &  $b_1^+ + b_2^+ = b_1^- + b_2^- + 2$               \\  
\cline{2-5}    &    $ss_s$   &  3     &  1       &  $b_1^+\!=\!b_1^- \!+\! b_2^- \!+\! b_3^-$      \\  
\cline{2-5}   &  $ss_s$     &  3     &  2       &  $b_1^+ \!+\! b_2^+ \!=\! b_1^- \!+\! b_2^- \!+\! b_3^- \!+\! 1$          \\  
\cline{2-5} &   $ss_s$    &  4     &  1       &  $b_1^+ \!=\! b_1^- \!+\! b_2^- \!+\! b_3^- \!+\! b_4^- \!-\! 1$    \\  \cline{2-5}      & $ss_s$    &   4     &  2       &  $b_1^+ \!+\! b_2^+ \!=\! b_1^- \!+\! b_2^- \!+\! b_3^- \!+\! b_4^-$   \\  
\cline{2-5}       & & & & Reversed  flow \\ \cline{2-5}    &   $ss_u$    &  2     &  4       &  $b_1^- \!+ \!b_2^- \!= \! b_1^+ \!+\! b_2^+ \!+\! b_3^+ \!+\! b_4^+$      \\  
\cline{2-5}     &   $ss_u$    &  1     &  4       &  $b_1^-\!=\!b_1^+ \!+\! b_2^+ \!+\! b_3^+ \!+\! b_4^+ \!-\! 1$         \\  
\cline{2-5}  &   $ss_u$     &  2     &  3       &  $b_1^- \!+\! b_2^- \!=\! b_1^+ \!+\! b_2^+ \!+\! b_3^+ \!+\! 1$     \\  
\cline{2-5}   &  $ss_u$    &  1     &  3       &  $b_1^-=b_1^+ + b_2^+ + b_3^+$     \\  \cline{2-5}   &   $ss_u$   &  2     &  2       &  $b_1^- \!+\! b_2^- \!=\! b_1^+ \!+\! b_2^+ \!+\! 2$     \\
\cline{2-5} &   $ss_u$    &  1     &  2       &  $b_1^-=b_1^+ + b_2^+ + 1$    \\  
\cline{2-5}     &   $ss_u$    &  2     &  1       &  $b_1^+=b_1^- + b_2^- - 3$     \\ 
\cline{2-5}  &   $ss_u$    &  1     &  1       &  $b_1^-=b_1^+ + 2$         \\  
\cline{2-5}     &   ${sr}$        &  1     &  2       &  $b_1^-=b_1^+ + b_2^+ + 1$  \\ 
\cline{2-5}  &   ${sr}$        &  1     &  1       &  $b_1^-=b_1^+ + 2$           \\  
\cline{2-5}  &    {$r^2$}    &   1     &  0       &  $b_1^-=3$                                                      
\\   \hline 
\end{tabular}

 \begin{tabular}{|c|c|c|c|c|}
\hline
type  &     nature       & $e_v^-$ &	$e_v^+$  & weight           \\ \hline \hline
$\mathcal{R}$  &  $ {a}$       &     0       &        1        &  $b_1^+=1$       \\  
\cline{2-5}   &   $ {s}$      &      1       &        1        &  $b_1^-=b_1^+$    \\  \cline{2-5}       &   $ {s}$      &      1       &        2        &  $b_1^-=b_1^+ + b_2^+ - 1$     \\ \cline{2-5}      &   $ {s}$      &      2       &        1        &  $b_1^+=b_1^- + b_2^- - 1$      \\  
\cline{2-5} &   $ {r}$      &      1       &        0        &  $b_1^-=1$             \\ \hline 
$\mathcal{C}$  &  $ {a}$      &      0       &        2        &  $b_1^+=b_2^+=1$    \\  \cline{2-5}   &   $ {s}$      &     1       &        1        &  $b_1^-=b_1^+$         \\  \cline{2-5}  &   $ {s}$      &     2       &        2        &  $b_1^-+b_2^-=b_1^+ + b_2^+$    \\  
\cline{2-5} &   $ {r}$       &    2       &        0        &  $b_1^-=b_2^-=1$       \\    \hline 
$\mathcal{W}$  &  $ {a}$       &    0       &        1        &  $b_1^+=2$    \\   
\cline{2-5}         &   $s_s$    &   1       &        1       &  $b_1^+=b_1^- + 1$                   \\    \cline{2-5}         &   $s_s$   &   2       &        1       &  $b_1^+=b_1^- + b_2^-$    \\    \cline{2-5} & & & & Reversed  flow \\ \cline{2-5}         &   $s_u$    &   1       &        2       &  $b_1^-=b_1^+ + b_2^+$      \\   
\cline{2-5}        &   $s_u$   &   1       &        1       &  $b_1^-=b_1^+ + 1$                    \\   
\cline{2-5}  &   $ {r}$        &   1       &        0        &  $b_1^-=2$                                \\    \hline 
$\mathcal{T}$  &  $a^3$     &   0     &     1       &  $b_1^+=7$                                 \\  
\cline{2-5}            &   $ {ssa}$    &   1     &     1       &  $b_1^+=b_1^- + 2$                 \\  \cline{2-5}           &   $ {ssa}$    &   2     &     1       &  $b_1^+=b_1^- + b_2^- + 1$    \\  \cline{2-5} & & & & Reversed  flow \\ \cline{2-5}      &   $ {ssr}$    &   1     &     2       &  $b_1^-=b_1^+ + b_2^+ + 1$    \\   \cline{2-5}          &  $ {ssr}$     &   1     &     1       &  $b_1^-=b_1^+ + 2$             \\ 
\cline{2-5}       &   $r^3$       &   1     &     0       &  $b_1^-=7$                                \\                      
\hline
\end{tabular}

\end{multicols}

\subsection{Super Attractors and Repellers }


In this work, we will study  homotopical cancellations within an isolating block  containing the maximal invariant set  of three GS-singularities, one saddle and two attractors (resp. repellers), and their connecting orbits. This homotopy produces a super attractor (resp., super  repeller) singularity.
 See Figure~\ref{fig:exemplo_cone_intro2}.


\begin{figure}[h!]
\centering
\includegraphics[scale=0.5]{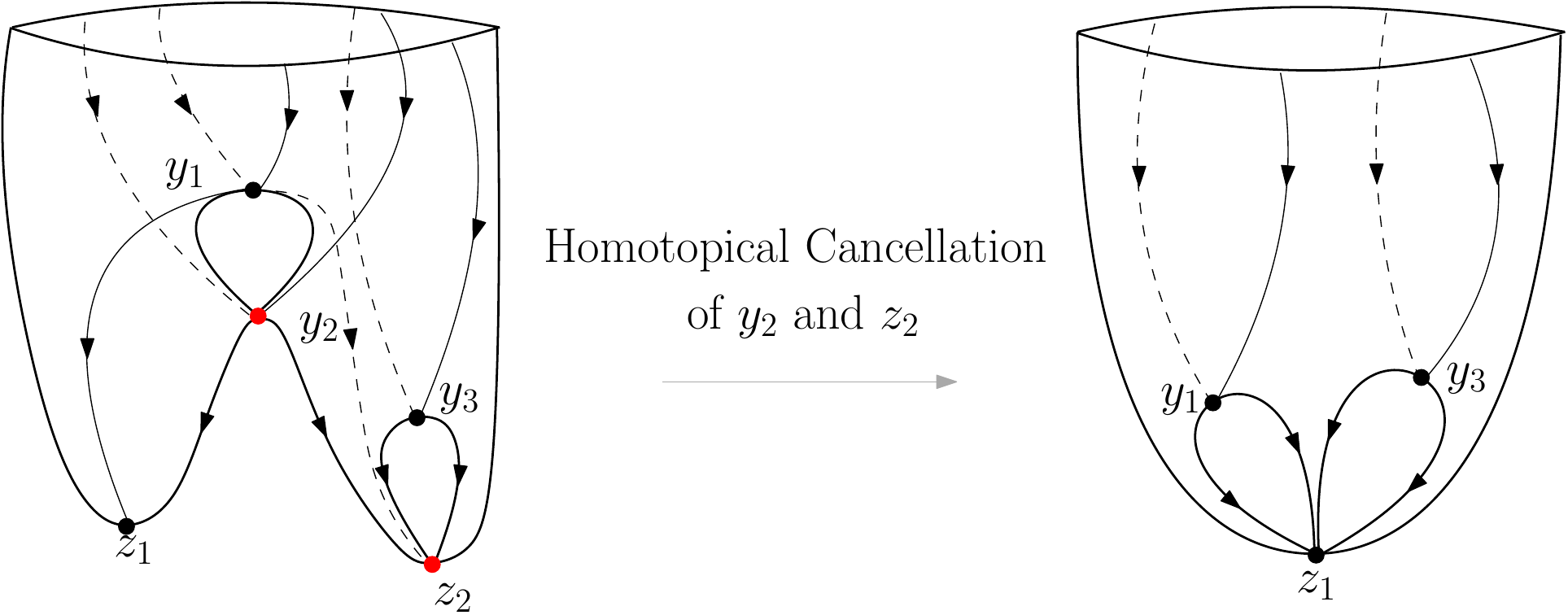}
\caption{Homotopical cancellation of a saddle cone and an attracting cone singularities.}
\label{fig:exemplo_cone_intro2}
\end{figure}

Let $D^A\subseteq\mathbb{R}^2$ ($D^R\subseteq\mathbb{R}^2$) be the unit disc of center $p=(0,0)$ and $X$ the attracting radial (resp., repelling) vector field on $D$ with attracting (resp., repelling) singularity $p$.

\begin{definition}
A generalized GS-singularity $p$ is:

\begin{enumerate}
\item a  \emph{\textbf{ \emph{super attractor}}}  (resp.,  \emph{\textbf{ \emph{super repeller}}}) of type:
\begin{enumerate}
    \item $n$-sheet cone  of attracting  (resp. repelling) nature when obtained by identifying the center points $p_i$ of $n$ discs $D_i, i=2,...,n$, where $D_i$ has defined on it an  attracting (resp., repelling) radial vector field. 
    \item $n$-sheet Whitney  of attracting  (resp., repelling) nature when obtained by identifying the center points $p_i$ and some radii of the $n$ discs $D^A_i$ (resp., $D^R_i$), $i=1,..., n$, where $D^A_i$ (resp., $D^R_i$) has defined on it an attracting (resp. repelling) radial vector field. Moreover, $n-2$ discs $D^A_i$ (resp., $D^R_i$) have the property that exactly  two radii are identified to raddi of two distinct discs. The remaining discs have the property that exactly one radius is identified to a radius of another disc. See Figure~\ref{GS-geral}.
    \item $n$-sheet double crossing of attracting  (resp. repelling) nature, $n=2,3, \ldots$, when obtained by identifying  the center points $p_i$ of $n$ discs $D^A_i,i=0,...,n-1$, where each $D^A_i$ (resp., $D^R_i$ ) is defined as above. Moreover, we identify exactly one diameter of each disc $D_i, i=1,...,n-1$ to  distinct diameters  $d_{i}$ of the  disc $D_0$, i.e. $D_i\cap D_j \setminus \{p\}=\emptyset, $  and $D_ {i} \cap D_0 = d_{i} $, $ i\neq j, i,j=1,\ldots n-1$. See Figure~\ref{GS-geral}.
    \item $n$-sheet triple crossing  of attracting  (resp., repelling) nature, $n=2k+1$, when obtained by identifying the center points $p_i$ of $n$ discs $D_0,D_i^1, D_i^2, i=1,...,n$, where each disc is defined as above. Moreover, consider the sets of distinct diameters  $\{d_{0,i}^1,d_{0,i}^2\}$ in $D_0$, $\{d_i^1, \partial_i^1\}$ in $D_i^1$ and $\{d_i^2,\partial_i^2\}$ in $D_i^2$, $i=1,\ldots, n$. We identify the diameters $\partial_i^1$ and $\partial_i^2$, the diameters $d_i^1$ and $d_{0,i}^1$, and the diameters $d_i^2$ and $d_{0,i}^2$, so that  all  of discs $D_i^1,D_i^2$ are pairwise disjoint, i.e., $(D_i^1\cup D_i^2) \cap (D_j^1\cup D_j^2) =\emptyset, i\neq j, j=1,\ldots n$. See Figure~\ref{GS-geral}.
\end{enumerate}
    \item  a  \emph{\textbf{ \emph{$\mathcal{C}$-type}}} (resp.,  \emph{\textbf{ \emph{$\mathcal{W}$,$\mathcal{D}$, $\mathcal{T}$-type}}})  singularity of saddle nature if it is a $\mathcal{C}$-type (resp., $\mathcal{W}$,$\mathcal{D}$, $\mathcal{T}$-type)  GS-singularity of saddle nature.
\end{enumerate}
\end{definition}

Given an  $n$-sheet generalized GS-singularity $p$, define the   \emph{\textbf{ {{singularity type number}}}} $m(p)$ of $p$ as 
$n-1$ if $p$ is of  $\mathcal{C}$-type or $\mathcal{D}$-type; $n$ if $p$ is of $\mathcal{W}$-type; $k$ if $p$ is of  $\mathcal{T}$-type, where $n=2k+1$.
 Note that a $\mathcal{C}$-type (resp., $\mathcal{W}$,$\mathcal{D}$, $\mathcal{T}$-type)   singularity of saddle nature has type number equal to $1$.  Also, a regular singularity always has type number equal to zero.

\begin{figure}[H]
    \centering
        \includegraphics[width=0.85\textwidth]{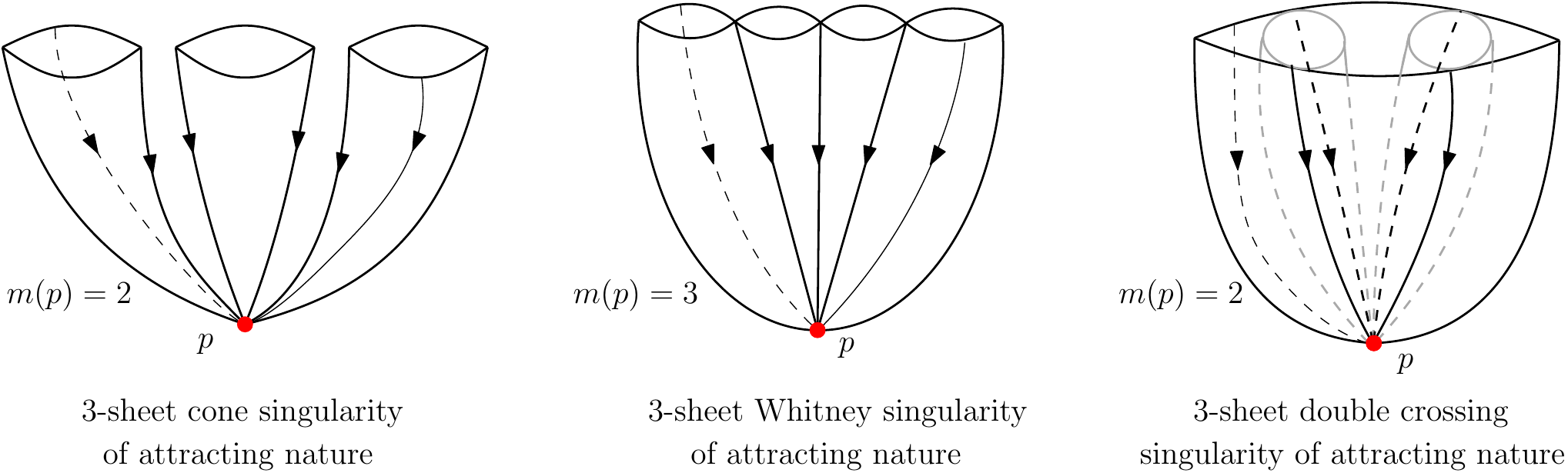}
    \caption{Examples of super attractor GS-singularities.}\label{GS-geral}
\end{figure}

We now define the nature of super attractors and repellers:

\begin{definition}
Let $p\in M$ be a super attractor or a super repeller singularity.  Denote its {\textbf{{nature}}} by:
%
 \begin{itemize}
 \item $a$ (resp., $r$) if $p$ is an attracting (resp., repelling) $n$-sheet cone or Whitney;
 \item $a^n$ (resp.,  $r^n$) if $p$ is an  attracting (resp., repelling) $n$-sheet double or triple  crossing.
 \end{itemize}
\end{definition}



\begin{definition}
Denote by $\mathfrak{M}(\mathcal{GS})$ the set of two-dimensional stratified manifold with generalized  GS-singularities. 
Given $M \in \mathfrak{M}(\mathcal{GS})$, define the set $\mathfrak{X}^{r}_{\mathcal{GS}}(M)$ of generalized GS-vector fields on $M$ so that for each  $X\in\mathfrak{X}^{r}_{\mathcal{GS}}(M)$ the following conditions are satisfied:

\begin{enumerate}
\item $X$ has finitely many generalized GS-singularities;
\item $X$ has no periodic orbits nor limit cycles;
\item The $\alpha$ and $\omega$- limit set of every trajectory of $X$ is a generalized GS-singularity.
\end{enumerate}
\end{definition}


The corresponding flow  $\varphi_{X}$ associated to a GS-vector field $X\in \mathfrak{X}_{\mathcal{GS}}^{r}(M)$ is called a \textbf{\emph{Gutierrez-Sotomayor flow}} on $M$, \textbf{\emph{GS-flow}} for short. 

Since we are interested in working with vector fields that possess only one type of generalized GS-singularities in addition to regular singularities, we establish the following notation for subsets of $\mathfrak{M}(\mathcal{GS})$ and $\mathfrak{X}^{r}_\mathcal{GS}(M)$:
 $\mathfrak{M}(\mathcal{GC})$ (resp., $\mathfrak{M}(\mathcal{GW})$, $\mathfrak{M}(\mathcal{GD})$, $\mathfrak{M}(\mathcal{GT})$) denotes the  set of  stratified 2-manifolds with generalized GS-singularities of regular and cone (resp., Whitney, double crossing, triple crossing) types; $\mathfrak{X}_{\mathcal{GC}}(M)$
(resp., $\mathfrak{X}_{\mathcal{GC}}(W)$, $\mathfrak{X}_{\mathcal{GC}}(D)$, $\mathfrak{X}_{\mathcal{GC}}(T)$) denotes the set of all  vector fields on  $M \in \mathfrak{M}(\mathcal{GC})$ (resp., $\mathfrak{M}(\mathcal{GW})$, $\mathfrak{M}(\mathcal{GD})$, $\mathfrak{M}(\mathcal{GT})$)  which only possess regular and generalized cone (resp., Whitney, double crossing, triple crossing) singularities. 

Hereafter we will refer to generalized GS-flows as GS-flows omitting the term ``generalized''.

%


\section{Morsification of Gutierrez-Sotomayor Flows on isolating blocks}
\label{sec:MGSVFIB}

Let $M\in\mathfrak{M}(\mathcal{GS})$ be a compact stratified 2-manifold  and  
$X\in\mathfrak{X}_{\mathcal{GS}}(M)$ be a GS-vector field on $M$, where $\mathcal{S}=\mathcal{C}$, $\mathcal{W}$, $ \mathcal{D}$ or $ \mathcal{T}$.  Consider the  Gutierrez-Sotomayor flow $\varphi_ {X}$ on $M$ associated to $X$.
In this section, our goal is to establish a regularization process of the GS-singularities which will produce a  smooth 2-manifold $\widetilde{M}$ together with a smooth flow  with regular singularities. We refer to this process as  the Morsification of  GS-singularities. 

\begin{definition} 
Let $M\in\mathfrak{M}(\mathcal{GS})$ be a compact stratified 2-manifold, $X\in\mathfrak{X}_{\mathcal{GS}}(M)$ a GS-vector field on $M$ and $\varphi_{X}$ the GS-flow  associated to $X$.  An isolating block $(N,\varphi_X)$ admits a {\textbf{{Morsification}}} 
if there exists a quadruple $(\widetilde{N},\varphi_{\widetilde{X}}, \mathfrak{h},\mathfrak{p})$ such that 

\begin{enumerate}
\item $\widetilde{N}$ is a smooth 2-manifold;
\item $\widetilde{\varphi}$ is a smooth flow on $\widetilde{N}$ with only  regular singularities;
\item $\mathfrak{h}:N \rightarrow \widetilde{N}$ is a  multivalued map such that $\mathfrak{h}$ 
restricted to
$$N\diagdown \{ \mathscr{SP}(N) \cup \{ x\in N \mid   \omega(x) = p \  or \  \alpha(x)=p , \text{where} \ p \ \text{ is a saddle cone singularity} \} \} $$
%
 is a homeomorphism;
\item $\mathfrak{p}:\widetilde{N}\rightarrow N$ is the projection map and   $\mathfrak{h}\circ\mathfrak{p} = id|_{\widetilde{N}}$.
\end{enumerate}

\end{definition}

In this case, one says that  $(N,\varphi_X)$  admits a Morsification to $(\widetilde{N},\varphi_{\widetilde{X}})$, or that $(\widetilde{N},\varphi_{\widetilde{X}})$ is a Morsification of $(N,\varphi)$.

\begin{theorem}\label{teo_morsificacao_fluxo}
Let $M\in\mathfrak{M}(\mathcal{GS})$ be a singular 2-manifold, $X\in\mathfrak{X}_{\mathcal{GS}}(M)$ a GS-vector field on $M$ and $\varphi_{X}$ the GS-flow  associated to $X$, where $\mathcal{S} = \mathcal{C}, \mathcal{W}, \mathcal{D}$ or $\mathcal{T}$.   Given a GS-singularity $p$ and  an isolating block $(N,\varphi_X)$ for $p$, there  exists   a Morsification  $(\widetilde{N},\varphi_{\widetilde{X}})$, where $\widetilde{N}$ is an isolating block w.r.t. the regularized flow  $\varphi_{\widetilde{X}}$.
\end{theorem}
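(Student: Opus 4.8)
The plan is to proceed by a case-by-case analysis over the finitely many types and natures of GS-singularities listed in the tables of Section~\ref{sec:GSVF}, constructing in each case an explicit local model $(\widetilde{N},\varphi_{\widetilde{X}})$ together with the multivalued map $\mathfrak{h}$ and its section $\mathfrak{p}$. Since an isolating block $(N,\varphi_X)$ for a GS-singularity $p$ is by construction a connected component of $f^{-1}([c-\epsilon,c+\epsilon])$ for a local Lyapunov function $f$, with $N^- = f^{-1}(c-\epsilon)\cap N$, the data to be reproduced after Morsification is combinatorial: the entering and exiting boundary components $e^+,e^-$ and their first Betti numbers, which are constrained by the Poincaré--Hopf equality of Theorem~\ref{PH-equality}. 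First I would treat the attractor and repeller cases: here $p$ is an $n$-sheet cone, Whitney, double or triple super attractor/repeller, built by identifying center points (and, where relevant, radii or diameters) of $n$ radial discs $D_i$. The Morsification simply \emph{separates the sheets}: $\widetilde{N}$ is the disjoint union (or boundary connected sum, to keep $\widetilde N$ connected where the combinatorics of identified radii demands it) of the underlying smooth discs with their radial flows, so $\varphi_{\widetilde X}$ has only regular attractors/repellers. The map $\mathfrak h$ is single-valued and a homeomorphism away from $\mathscr{SP}(N)$ (the center point), multivalued exactly at $p$ where it records the $n$ preimages; $\mathfrak p$ is the obvious quotient/collapse. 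One checks $\widetilde N$ is an isolating block for $\varphi_{\widetilde X}$ directly from the radial model, and that $\mathfrak h\circ\mathfrak p = \mathrm{id}|_{\widetilde N}$.

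Next I would handle the saddle cases, which are the substantive ones. For a $\mathcal{C}$-type saddle, $N$ is two regular-saddle sectors joined along the cone point; the Morsification perturbs the vector field near the cone point so the two local saddle separatrices no longer meet at a single singular point but instead the identification is resolved into a smooth annular neck carrying a regular flow — concretely one replaces the cone $z^2=x^2+y^2$ by a smooth cylinder and transports the saddle dynamics across. For $\mathcal{W}$-type saddles of nature $s_s$ or $s_u$ one resolves Whitney's umbrella $zx^2=y^2$ along its self-intersection line into an embedded smooth strip, again carrying across the regular-saddle dynamics on each sheet; the singular stable (resp.\ unstable) manifold becomes a genuine smooth curve. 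For $\mathcal{D}$- and $\mathcal{T}$-type saddles (natures $sa$, $sr$, $ss_s$, $ss_u$, $ssa$, $ssr$) the local picture is a union of a regular saddle with regular attractors/repellers, or of several regular saddles identified along stable/unstable manifolds; the Morsification separates these along the crossing loci, producing a smooth surface on which $\varphi_{\widetilde X}$ is a disjoint collection of regular saddles and nodes. In every saddle case the key point to verify is that the homeomorphism property of $\mathfrak h$ is required to hold only on the complement of $\mathscr{SP}(N)$ together with the stable and unstable sets of saddle-cone singularities — precisely the set named in the definition of Morsification — so the resolution is allowed to genuinely change topology along those lower-dimensional loci, which is what makes the construction possible at all.

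The bookkeeping step, to be carried out once per case, is to confirm that $\widetilde N$ is an isolating block with respect to $\varphi_{\widetilde X}$: since each local model is itself of the form $g^{-1}([c-\epsilon,c+\epsilon])$ for a Lyapunov function $g$ of the regularized flow (the restriction/lift of $f$), this follows from the same argument that produced $N$, giving $\widetilde N^- = g^{-1}(c-\epsilon)\cap\widetilde N$ closed. I would also record, as a sanity check, that the boundary data $(\widetilde e^+,\widetilde e^-)$ and Betti numbers of $\widetilde N$ are consistent with the $\mathcal{R}$-row of the table for the regular pieces and reassemble, via the Conley index sum over the components of the regularized invariant set, to the Conley index of $p$ — this is not strictly needed for the statement but it is the natural consistency condition and pins down which smooth model to pick in the ambiguous cases.

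The main obstacle I anticipate is the $\mathcal{W}$-type and $\mathcal{T}$-type saddles: unlike the cone and double-crossing cases, where the identification locus is, respectively, a point or a clean transverse line and the resolution is essentially a local surgery with an obvious transported flow, for Whitney's umbrella the self-intersection curve degenerates at the pinch point (the origin), so producing a \emph{smooth} $\widetilde N$ with a flow that is genuinely smooth (not merely continuous) there — while still matching the prescribed exit set and Betti numbers from the table — requires a careful choice of the resolving perturbation near the pinch. This is where most of the technical work of the proof will live, and where the case analysis cannot be shortcut; the cone and attractor/repeller cases, by contrast, are routine once the separation-of-sheets idea is in hand.
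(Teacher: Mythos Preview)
Your overall strategy---case-by-case explicit construction of $(\widetilde{N},\varphi_{\widetilde{X}},\mathfrak{h},\mathfrak{p})$ for each singularity type and nature---is precisely what the paper does (it splits the proof into Propositions~\ref{prop_cone}--\ref{prop_triplo}, one per type). Two points of divergence deserve comment.

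First, for $n$-sheet cone attractors/repellers the paper does \emph{not} separate the sheets into $n$ disjoint radial discs. Instead it takes $\widetilde{N}$ to be a \emph{connected} $2$-sphere with $n$ holes, carrying one regular attractor/repeller together with $n-1$ regular saddles, so that $\partial\widetilde{N}=\partial N$ (Figure~\ref{flow_cone_r}). Your disjoint-disc model would still satisfy the bare definition of Morsification, so it does prove the theorem as stated; but it is not the Morsification the paper uses downstream when defining the GS-chain complex. The sign-transfer in Definition~\ref{characteristicGS} relies on a cone singularity Morsifying to a specific configuration of regular critical points (one node for an $n$-sheet cone attractor/repeller, two saddles for a saddle cone), and your model would not feed into that machinery without rewriting it.

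Second, you have the difficulty located in the wrong place. The Whitney and triple cases are the \emph{routine} ones in the paper---the Whitney saddle Morsifies to a single regular saddle on a sphere with three holes (Figures~\ref{flow_w_s2},~\ref{flow_w_s1}), and the triple-crossing proposition is dispatched with ``follows the same steps.'' The case demanding real care is the saddle cone, which splits into two topologically distinct sub-cases according to whether $N^-$ and $N^+$ are connected: $\widetilde{N}$ is a sphere with four holes in the disconnected-boundary case and a \emph{torus} with two holes in the connected-boundary case (Figures~\ref{flow_cone_s} and~\ref{flow_cone_s2}), in both cases carrying two regular saddles with explicit orbit-by-orbit definitions of $\mathfrak{h}$ and $\mathfrak{p}$. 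Your ``smooth annular neck'' picture captures the first sub-case but misses that the second forces genus in $\widetilde{N}$; this is also where the exceptional clause in the definition of Morsification (excluding $W^s(p)\cup W^u(p)$ for saddle cones from the homeomorphism locus) actually does work, since $\mathfrak{h}$ must be genuinely multivalued along those separatrices.
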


Now we procedure to the proof of Theorem~\ref{teo_morsificacao_fluxo}, which will be   done in the following subsections for each type of singularities. 

\subsection{Morsification of Cone Singularities}

Let $p$ be a cone singularity in $M\in\mathfrak{M}(\mathcal{GC})$ 
and $N$ be an isolating block for $p$ with GS-flow $\varphi_{X}$ where $X\in\mathfrak{X}_{\mathcal{GC}}(M)$. Consider the boundaries $N^-$  and $N^+$ of  $N$ which constitute the exit  and entering sets of $\varphi_{X}$, respectively. Next it is shown how to Morsify the GS-flow on $N$ to obtain a regular flow on a smooth isolating block $\widetilde{N}$.  Considering a Morsification of all  isolating blocks for singularities of $M$, one can  glue them together  to form a flow on a  smooth 2-manifold $\widetilde{M}$.

\begin{proposition}\label{prop_cone}
Let $M\in\mathfrak{M}(\mathcal{GC})$ be a singular 2-manifold, $X\in\mathfrak{X}_{\mathcal{GC}}(M)$ a GS-vector field on $M$ and $\varphi_{X}$ the GS-flow  associated to $X$.   Given a cone singularity $p$ and  an isolating block $N$ for $p$, there  exists a Morsification $(\widetilde{N},\varphi_{\widetilde{X}})$ where  $\partial \widetilde{N} = \partial N$.
\end{proposition}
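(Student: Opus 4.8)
The plan is to carry out the Morsification explicitly in the local chart $\mathcal{C}=\{(x,y,z)\colon z^2-y^2-x^2=0\}$, exploiting the fact that a cone is two discs glued at their center points $p=(0,0,0)$. First I would separate the analysis according to the nature of $p$ (attractor $a$, repeller $r$, or saddle $s$), using the isolating block data recorded in the table of Section~2: for the cone, an attractor has $(e_v^-,e_v^+)=(0,2)$ with $b_1^+=b_2^+=1$, a repeller is its time-reversal, and a saddle has either $(1,1)$ or $(2,2)$ exit/entrance components. In each case the isolating block $N$ is (by the Lyapunov-function construction recalled after the definition of isolating block) the connected component of $f^{-1}([c-\epsilon,c+\epsilon])$ through $p$, so $N$ is a wedge of two discs along $p$ with $N^-=f^{-1}(c-\epsilon)\cap N$ and $N^+=f^{-1}(c+\epsilon)\cap N$.

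The core construction: I would define $\widetilde N$ by ``pulling apart'' the two sheets of the cone at the cone point, i.e. replacing the wedge $D_1\vee_p D_2$ by the disjoint union $D_1\sqcup D_2$ when $p$ is an attractor or repeller — on each disc $D_i$ put the radial attracting (resp.\ repelling) vector field $\widetilde X$ with its single regular singularity $\widetilde p_i$ — and, when $p$ is a saddle, by separating the two local sheets so that the identified regular saddle on each sheet becomes two regular saddles on $\widetilde N$ joined appropriately along the stable/unstable directions as dictated by the $(1,1)$ or $(2,2)$ boundary pattern. The key point to verify is that this can be done keeping $\partial\widetilde N=\partial N$: since the gluing producing the cone point is interior to $N$, it does not touch $N^-\cup N^+$, so the boundary is literally unchanged, and $\widetilde N$ is by construction a smooth $2$-manifold with a smooth flow $\varphi_{\widetilde X}$ having only regular singularities. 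I would then define $\mathfrak p\colon\widetilde N\to N$ as the quotient map collapsing $\widetilde p_1,\widetilde p_2$ (and, on the saddle sheets, the closures of the relevant separatrices) to $p$, and $\mathfrak h\colon N\to\widetilde N$ as its multivalued inverse, which restricts to a homeomorphism precisely on the complement of $\mathscr{SP}(N)$ together with the stable/unstable set of a saddle cone singularity — exactly the set appearing in the definition of Morsification — and check $\mathfrak h\circ\mathfrak p=\mathrm{id}_{\widetilde N}$. Finally I would confirm that $\widetilde N$ is an isolating block for $\varphi_{\widetilde X}$: the exit set $\widetilde N^-=\mathfrak p^{-1}(N^-)$ is closed because $N^-$ is and $\mathfrak p$ is continuous, and no trajectory of $\widetilde X$ is internally tangent to $\partial\widetilde N$ since none was for $\varphi_X$ on $\partial N=\partial\widetilde N$.

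The main obstacle I expect is the saddle case rather than the attractor/repeller case. For an attracting or repelling cone the separation is essentially trivial — two independent radial discs — and the homotopy equivalence $\widetilde N\simeq N$ (a wedge of two discs is contractible, as is a disjoint union of two discs up to the collapsing map) is immediate. For a saddle cone, one must choose the regularization so that the two regular saddles produced have their stable and unstable manifolds arranged to reproduce the correct entering/exiting boundary combinatorics of $N$ (the $(e_v^-,e_v^+)\in\{(1,1),(2,2)\}$ cases with the weight relations $b_1^-=b_1^+$ and $b_1^-+b_2^-=b_1^++b_2^+$), and one must check that collapsing the separatrices — not just the singular points — still yields a well-defined projection $\mathfrak p$ with $\mathfrak h$ a homeomorphism off the prescribed set; this is why the definition of $\mathfrak h$ excludes the stable/unstable set of a saddle cone singularity, and verifying that the excluded set is exactly what is needed (no larger) is the delicate bookkeeping step. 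I would organize this by drawing the explicit phase portraits on each sheet, as in Figure~\ref{hiperbolicos}, and matching boundary components one at a time.
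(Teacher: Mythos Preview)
Your approach diverges substantially from the paper's, and while your attractor/repeller construction is a legitimate alternative Morsification, your treatment of the saddle cone case has a genuine gap.

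For an $n$-sheet attracting or repelling cone the paper does \emph{not} take the disjoint union $D_1\sqcup\cdots\sqcup D_n$; it builds a \emph{connected} $\widetilde N$, namely a $2$-sphere with $n$ holes carrying one regular attractor/repeller $\tilde p$ together with $n-1$ regular saddles $\tilde p'_1,\dots,\tilde p'_{n-1}$. Your disconnected construction does satisfy the four items in the definition of Morsification and preserves $\partial N$, so it is a valid alternative (the paper itself remarks, in the Whitney case, that a disjoint union of repeller discs is ``another possible Morsification''). Note, however, that your side remark that ``a disjoint union of two discs is contractible'' is false: it has two components, hence is not homotopy equivalent to the wedge $D_1\vee_p D_2$. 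Fortunately homotopy equivalence of $N$ and $\widetilde N$ is not required by the definition, so this slip is harmless for the attractor/repeller case.

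The real problem is the saddle case. An Euler-characteristic count shows that ``pulling apart the two sheets'' cannot produce the required $\widetilde N$. A smooth compact orientable surface with $b$ boundary circles carrying exactly two regular saddles (and no other singularities) must have $\chi=-2$, i.e.\ $2-2g-b=-2$. For the $(e_v^-,e_v^+)=(2,2)$ block one has $b=4$, forcing $g=0$: $\widetilde N$ is a sphere with four holes. For the $(1,1)$ block one has $b=2$, forcing $g=1$: $\widetilde N$ is a \emph{torus} minus two discs. Neither arises by separating the cone into two disjoint saddle pieces---indeed, a single regular saddle cannot sit alone on a connected surface with exactly two boundary circles, since then $\chi=-2g$ is even and never $-1$. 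The paper constructs these two connected surfaces explicitly and, crucially, defines $\mathfrak h$ by sending each point of $W^s(p)\cap\partial N$ and $W^u(p)\cap\partial N$ to a \emph{pair} of boundary points on $\partial\widetilde N$, then ``fattening'' the separatrices of $p$ into strips $D_{ij}\subset\widetilde N$ carrying new flow lines that run from one regular saddle $\tilde p$ to the other $\tilde p'$. Your phrase ``joined appropriately along the stable/unstable directions'' gestures at this but does not supply it; in particular, nothing in your outline anticipates the genus-one $\widetilde N$ required in the $(1,1)$ case.
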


\begin{proof} The proof is done by  constructing  a quadruple $(\widetilde{N},\varphi_{\widetilde{X}}, \mathfrak{h},\mathfrak{p})$ for  each type of singularity.

\begin{itemize}
\item[1)] Let $p$ be a repelling (resp. attracting) $n$-sheet cone singularity.
\end{itemize}

Consider a 2-sphere with $n$-holes $\widetilde{N}$, with exit set $\widetilde{N}^- = \sqcup_{j=1}^n \widetilde{N}_j^-$ (resp., entering set $\widetilde{N}^+ = \sqcup_{j=1}^n \widetilde{N}_j^+$) homeomorphic to $N^- = \sqcup_{j=1}^n N_j^-$ (resp. $N^+ = \sqcup_{j=1}^n N_j^+$) and containing a regular repelling (resp., attracting) singularity
$\tilde{p}$  and  regular saddle singularities $\tilde{p}'_i,$ $ i=1,$\ldots$,n-1$.
For each $j=1,$\ldots$,n$, the components of the exit set  $N_j^-, \widetilde{N}_j^-$ (resp., entering sets $N_j^+, \widetilde{N}_j^+$) are homeomorphic to  $S^1$.
Denote the homeomorphisms which preserve counterclockwise orientation on the boundary by $h_j^-:N^-_j\rightarrow\widetilde{N}^-_j$ (resp., $h_j^+:N^+_j\rightarrow\widetilde{N}^+_j$).
Let $\varphi_{\widetilde{X}}$ be a flow on $\widetilde{N}$ that satisfies the following conditions:
for each $i$, there are two orbits
$\tilde{u_1}(\tilde{p},\tilde{p}'_i)$ and $\tilde{u_2}(\tilde{p},\tilde{p}'_i)$ 
such that $\omega(\tilde{u}_1)=\tilde{p_i}'=\omega(\tilde{u}_2)$ and $\alpha(\tilde{u}_1)=\tilde{p}=\alpha(\tilde{u}_2)$ 
(resp., $\omega(\tilde{u}_1)=\tilde{p}=\omega(\tilde{u}_2)$ and $\alpha(\tilde{u}_1)=\tilde{p_i}'=\alpha(\tilde{u}_2)$). See Figure~\ref{flow_cone_r}. 

For each $i=1,$\ldots$,n-1$, chose points $x_i$, $y_i$ where $x_i\in N_i^-$ and $y_i\in N_{i+1}^-$ (resp., $x_i\in N_i^+$ and $y_i\in N_{i+1}^+$). 
Denote by $A=\{\{x_i,y_i\} \mid  i=1,$\ldots$,n-1\}$ the set of these points.
Given $x\in N\setminus \{p\}$, there exists $x^-\in N_j^-$ (resp., $x^+ \in N_j^+$), for some $j=1,\dots ,n$, where $x$ belongs to the orbit $u(p,x^-)$ (resp., $u(x^+,p)$). Define the multivalued map $\mathfrak{h}:N\rightarrow\widetilde{N}$ by:
$$ \mathfrak{h}(u(p,x)) = \left\{
\begin{array}{ll}
u(\tilde{p}, h_j^-(x))  \ \ (\text{resp.,} \ u(h_j^+(x),\tilde{p} )), &  \text{if} \ x\notin A \ \\
\{u(\tilde{p}, \tilde{p}'_i), u(\tilde{p}'_i,h_j^-(x))\}\ \  (\text{resp.,} \ \{u(h_j^+(x),\tilde{p}'_i), u(\tilde{p}'_i,\tilde{p})\}) , & \text{if} \ x\in A
\end{array} 
\right..
$$
Note that $\mathfrak{h}$ is a multivalued extension of the homeomorphisms $h_j^-$ (resp., $h_j^+$), i.e., $\mathfrak{h}|_{N_j^-}=h_j^-$ (resp., $\mathfrak{h}|_{N_j^+}=h_j^+$).

Consider the closed region
$$D_{i}=\{u(\tilde{p},\tilde{p}'_i),u(\tilde{p}'_i,h_j^-(x)) \mid x\in A, i=1,\dots ,n-1, j=1,\dots ,n\}$$ 
$$\text{(resp.,} \  D_{i}=\{u(h_j^+(x),\tilde{p}'_i),u(\tilde{p}'_i,\tilde{p}) \mid x\in A, i=1,\dots ,n-1, j=1,\dots ,n\}).$$
Define the projection map $\mathfrak{p}:\widetilde{N}\rightarrow N$ by
$$ \mathfrak{p}(u(\tilde{x},\tilde{y})) = \left\{
\begin{array}{ll}
\mathfrak{h}^{-1}(u(\tilde{x},\tilde{y})), &  \text{if} \ u(\tilde{x},\tilde{y})\notin D_i \\
u(p,(h^-_j)^{-1}(\tilde{y}))\  \text{(resp.,} \ u((h_j^+)^{-1}(\tilde{x}),p)),  & \text{if} \ u(\tilde{x},\tilde{y})\in D_i,  \ y\neq \tilde{p}'_i \ 
 \text{(resp.,} \ x\neq \tilde{p}'_i) \\
p,& \text{if} \ \tilde{x},\tilde{y}\in \{\tilde{p},\tilde{p}'_i\}
\end{array} 
\right.
$$

\begin{figure}[H]
    \centering
        \includegraphics[width=0.55\textwidth]{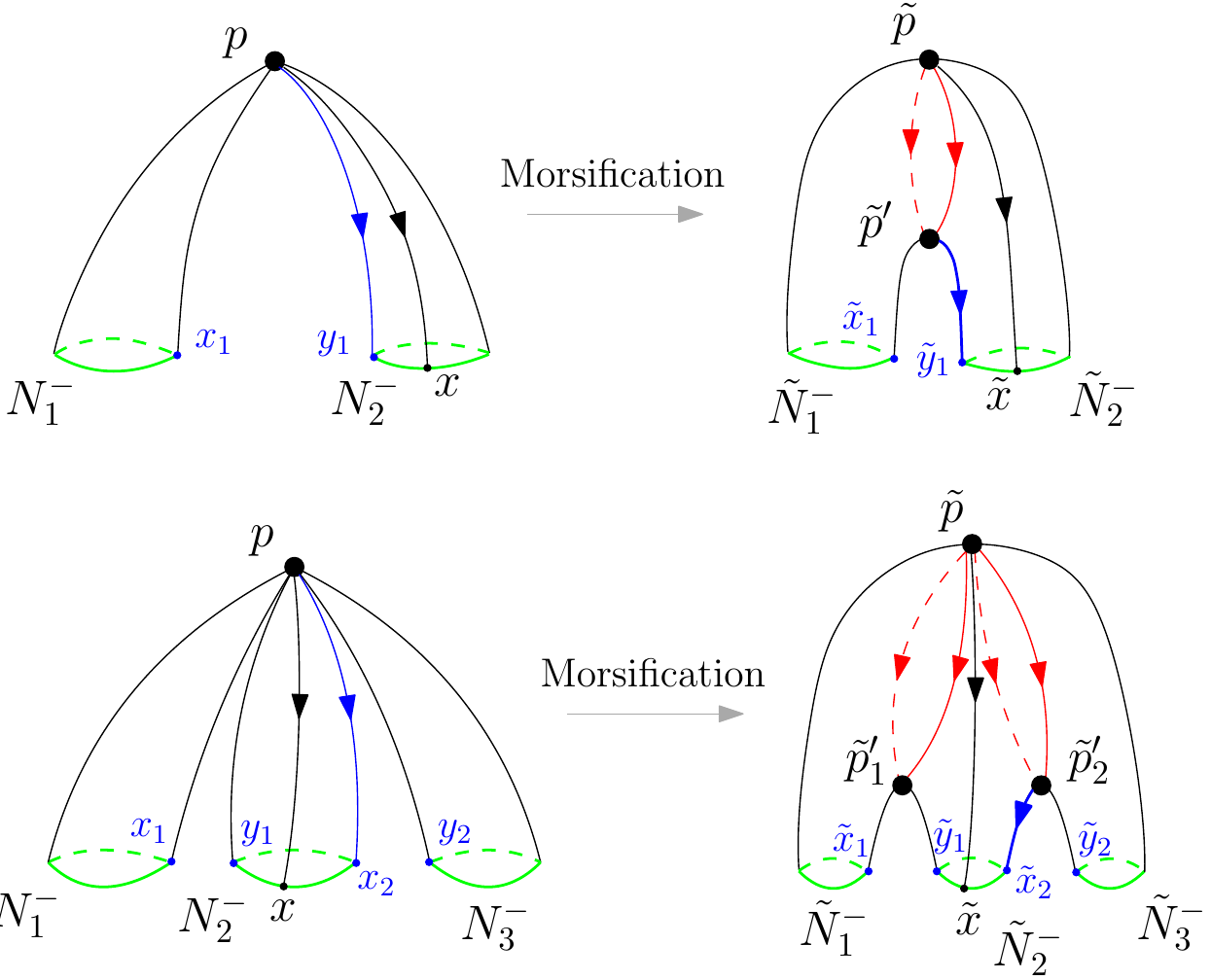}
    \caption{Isolating blocks for  repelling  cone singularities and their Morsifications.}\label{flow_cone_r}
\end{figure}

\begin{itemize}
 \item[2)] Let $p$ be a saddle cone singularity and $N$ its isolating block. 
\end{itemize}
%

 There are two cases to consider, the first being the case where the boundary of the exit and entering sets of $N$ are  disconnected and the second where they are  connected.

\begin{enumerate}

\item[2.1)] Consider the case where the boundaries $N^-$ and $N^+$ of the singular block $N$ are both  disconnected, i.e. $N^-_i\simeq S^1$ and $N^+_i\simeq S^1, i=1,2$
 The Morsified block $\widetilde{N}$, is a sphere with 4 holes, corresponding to the boundaries  $\widetilde{N}^-_i\simeq S^1$ and $\widetilde{N}^+_i\simeq S^1, i=1,2$, corresponding to the connected component of  the exit set $\widetilde{N}^-$ and  entering set $\widetilde{N}^+$, respectively.  See Figure~\ref{flow_cone_s}.

For each $i=1,2$, note that $W^u(p)\cap N^-_i$ is a unique point. Denote this point by $x_i^-$ and consider $u(p,x_i^-)$ the orbit that connects $p$ and $x^-_i$. Similarly, consider $x^+_i=W^s(p)\cap N^+_i$ and $u(x_i^+,p)$ the orbit that connects $x^+_i$ and $p$.
See Figure~\ref{flow_cone_s}. Let $A=\{x_i^-, x_i^+ \mid i=1,2\}$.

Consider a multivalued map 
$\mathfrak{h}^-_i:N^-_i\rightarrow\widetilde{N}^-_i$, such that 
 $\mathfrak{h}_i^-(x_i^-)=\{a_i^-,b_i^- \mid a_i\neq b_i\}$,
$\mathfrak{h}_i^-({N_i^-\setminus\{x_i^-\}})= \widetilde{N}_i^-\setminus[a_i^-,b_i^-]$, 
and $\mathfrak{h}^-_i$ restricted to  $N_i^-\setminus\{x_i^-\}$ 
is a homeomorphism which preserves the counterclockwise orientation on the boundaries.
Similarly, consider a multivalued map 
$\mathfrak{h}^+_i:N^+_i\rightarrow\widetilde{N}^+_i$,
where $\mathfrak{h}_i^+(x_i^+)=\{a_i^+,b_i^+ \mid a_i^+\neq b_i^+\}$ and 
$\mathfrak{h}_i^+({N_i^+\setminus\{x_i^+\}}) = \widetilde{N}_i^+\setminus[a_i^+,b_i^+]$.
Given $x\in N\setminus \{p\}$ such that $x\notin W^u(p) \cup W^s(p)$, there exist $x^+ \in N^+_i$ and $x^- \in N_i^-$ such that 
$x$ belongs to the orbit $u(x^+, x^-)$. If $x\in W^u(p) \cup W^s(p)$ then 
$x$ is on the orbit $u(x_i^+,p)$ or $u(p,x_i^-)$, for some $i=1,2$. 
Define the multivalued map $\mathfrak{h}:N \rightarrow \widetilde{N}$ by
$$ \mathfrak{h}(u(x,y)) = \left\{
\begin{array}{ll}
u(\mathfrak{h}_i^+(x),\mathfrak{h}_i^-(y)), & \text{if} \ x,y\notin A\cup \{p\} \\
u(\mathfrak{h}_i^+(x),\tilde{p})\cup u(\mathfrak{h}_i^+(x),\tilde{p}'), &  \text{if} \ x\in A \ \text{and} \ x^-=p \\
u(\tilde{p},\mathfrak{h_i}^-(y))\cup u(\tilde{p}',\mathfrak{h}_i^-(y)),&  \text{if} \ x=p \ \text{and} \ y\in A \\
\{\tilde{p},\tilde{p}'\},&  \text{if} \ x = p = y
\end{array} 
\right. .
$$

Consider $\varphi_{ij}: (a^+_i,b^+_i)\rightarrow (a^-_j,b^-_j)$ a homeomorphism which preserves the orientation, where $i,j=1,2$, with $i\neq j$. Given
$\tilde{x}^+\in (a^+_1,b^+_1)$, 
let $\varphi_{12} (\tilde{x}^+)=\tilde{x}^-\in (a^-_2,b^-_2)$. 
Consider $u(\tilde{x}^+,\tilde{x}^-)$ an orbit that connects $\tilde{x}^+$ and $\tilde{x}^-$.
Analogously, given
$\tilde{x}^+\in (a^+_2,b^+_2)$, 
let $\varphi_{21} (\tilde{x}^+)=\tilde{x}^-\in (a^-_1,b^-_1)$. 
Consider $u(\tilde{x}^+,\tilde{x}^-)$ an orbit that connects $\tilde{x}^+$ and $\tilde{x}^-$.

Consider the closed region $D_{ij}=\varphi_{ij}(a_i^+,b_i^+)\cup \{\mathfrak{h}(u(x,y)) \mid x,y\in A\cup\{p\}\}$. Define the projection map $\mathfrak{p}:\widetilde{N}\rightarrow N$  by
$$ \mathfrak{p}(u(\tilde{x},\tilde{y})) = \left\{
\begin{array}{ll}
\mathfrak{h}^{-1}(u(\tilde{x},\tilde{y})), \ \ \  \text{if} \ u(\tilde{x},\tilde{y})\notin D_{ij} \\
\{u(x_i^+,p),u(p,x_j^-)\}, \ \ \ \text{if} \ u(\tilde{x},\tilde{y})\in D_{ij}
\end{array} 
\right. .
$$

\begin{figure}[H]
    \centering
        \includegraphics[width=0.63\textwidth]{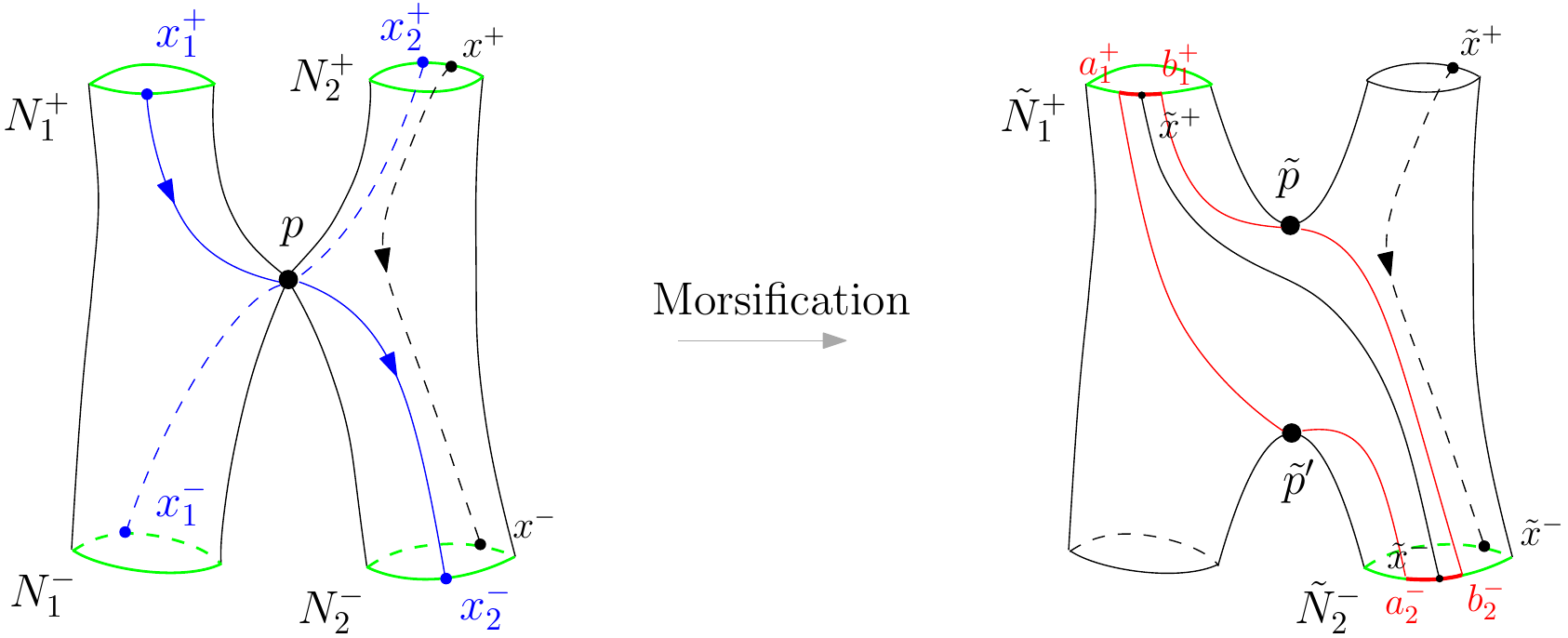}
    \caption{Isolating block for a saddle cone singularity and its Morsification.}\label{flow_cone_s}
\end{figure}

\item[2.2)] Now, consider the case where the block $N$  has connected boundaries  $N^-\simeq S^1$ and $N^+\simeq S^1$. The Morsified block $\widetilde{N}$ is a torus minus 2 disks, i.e, with  boundaries  $\widetilde{N}^-\simeq S^1$ and $\widetilde{N}^+\simeq S^1$,  corresponding to  the exit set and  entering set, respectively. See Figure~\ref{flow_cone_s2}. \label{morsificar_cone}

Let $x^-_1,x^-_2\in N^-$ be the points in $W^u(p)\cap N^-$ and $u(p,x_i^-)$ be the orbit that connects $p$ and $x^-_i, i=1,2$. Consider $x^+_1, x^+_2\in N^+$ as  points in $W^u(p)\cap N^+$ and $u(x_i^+,p)$ the orbit that connects $x^+_i, i=1,2$ and $p$. See Figure~\ref{flow_cone_s2}. Let $A=\{x_i^+,x_i^- \mid i=1,2\}$.
Consider the arcs $C_1^- = (x_1^-, x_2^-)$ and $C_2^- = (x_2^-, x_1^-)$ in $N^-$  as well as  $C_1^+ = (x_1^+, x_2^+)$ and $C_2^+ = (x_2^+, x_1^+)$ in $N^+$ with counterclockwise orientation.

Consider a  multivalued map
$\mathfrak{h}^-:N^-\rightarrow\widetilde{N}^-$,
where $\mathfrak{h}^-(x_i^-)=\{a_i^-,b_i^-\}$,  
$\mathfrak{h}^-(C_1^-) = (b_1^-,a_2^-)$, $\mathfrak{h}^-(C_2^-) = (b_2^-,a_1^-)$, and 
 $\mathfrak{h}^-$ restricted to $N^-\setminus\{x_1^-,x_2^-\}$ 
is a homeomorphism which preserves the counterclockwise orientation on the boundaries.
Similarly, consider a  multivalued map
$\mathfrak{h}^+:N^+\rightarrow\widetilde{N}^+$,
where 
$\mathfrak{h}^+(x_i^+)=[a_i^+,b_i^+]$, 
$\mathfrak{h}^-(C_1^+) = (b_1^+,a_2^+)$, $\mathfrak{h}^-(C_2^+) = (b_2^+,a_1^+)$
and $\mathfrak{h}^+(x_i^+)$ restricted to $N^+\setminus\{x_1^+, x_2^+\}$ 
is a homeomorphism that preserves the counterclockwise orientation on boundaries. 
Given $x\in N\setminus \{p\}$ and $x\notin W^u(p) \cup W^s(p)$, there exist $x^+ \in N^+_i$ and $x^- \in N_i^-$ such that 
$x$ belongs to the  orbit $u(x^+, x^-)$. If $x\in W^u(p) \cup W^s(p)$ then 
$x$ is in the orbit $u(x_i^+,p)$ or $u(p,x_i^-)$, for some $i=1,2$. 
Define the multivalued map $\mathfrak{h}:N \rightarrow \widetilde{N}$ by
$$ \mathfrak{h}(u(x,y)) = \left\{
\begin{array}{ll}
u(\mathfrak{h}_i^+(x),\mathfrak{h}_i^-(y)), &  \text{if} \ x,y\notin A\cup \{p\} \\
u(\mathfrak{h}_i^+(x^),\tilde{p})\cup u(\mathfrak{h}_i^+(x),\tilde{p}'), & \text{if} \ x\in A \ \text{and} \ x^-=p \\
u(\tilde{p},\mathfrak{h_i}^-(y))\cup u(\tilde{p}',\mathfrak{h}_i^-(y)), & \text{if} \ x=p \ \text{and} \ y\in A \\
\{\tilde{p},\tilde{p}'\},  & \text{if} \ x = p = y
\end{array} 
\right..
$$

Let $\varphi_{ij}: (a^+_i,b^+_i)\rightarrow (a^-_j,b^-_j)$ be a homeomorphism which preserves orientation, where $i,j=1,2, i\neq j$. Given
$\tilde{x}^+\in (a^+_1,b^+_1)$, 
let $\varphi_{12} (\tilde{x}^+)=\tilde{x}^-\in (a^-_2,b^-_2)$. 
Let $u(\tilde{x}^+,\tilde{x}^-)$ be an orbit that connects $\tilde{x}^+$ and $\tilde{x}^-$.
Analogously, given
$\tilde{x}^+\in (a^+_2,b^+_2)$, 
let $\varphi_{21} (\tilde{x}^+)=\tilde{x}^-\in (a^-_1,b^-_1)$. 
Let $u(\tilde{x}^+,\tilde{x}^-)$  be an orbit that connects $\tilde{x}^+$ and $\tilde{x}^-$.

Consider the closed region $D_{ij}= \varphi_{ij}(a_i^+,b_i^+)\cup \{\mathfrak{h}(u(x,y)) \mid x,y\in A\cup\{p\}\}$. Define the  projection map $\mathfrak{p}:\widetilde{N}\rightarrow N$ by
$$ \mathfrak{p}(u(\tilde{x},\tilde{y})) = \left\{
\begin{array}{ll}
\mathfrak{h}^{-1}(u(\tilde{x},\tilde{y})), & \text{if} \ u(\tilde{x},\tilde{y})\notin D_{ij} \\
\{u(x_i^+,p),u(p,x_j^-)\}, & \text{if} \ u(\tilde{x},\tilde{y})\in D_{ij}
\end{array} 
\right..
$$

\end{enumerate}
%

\begin{figure}[H]
    \centering
        \includegraphics[width=0.63\textwidth]{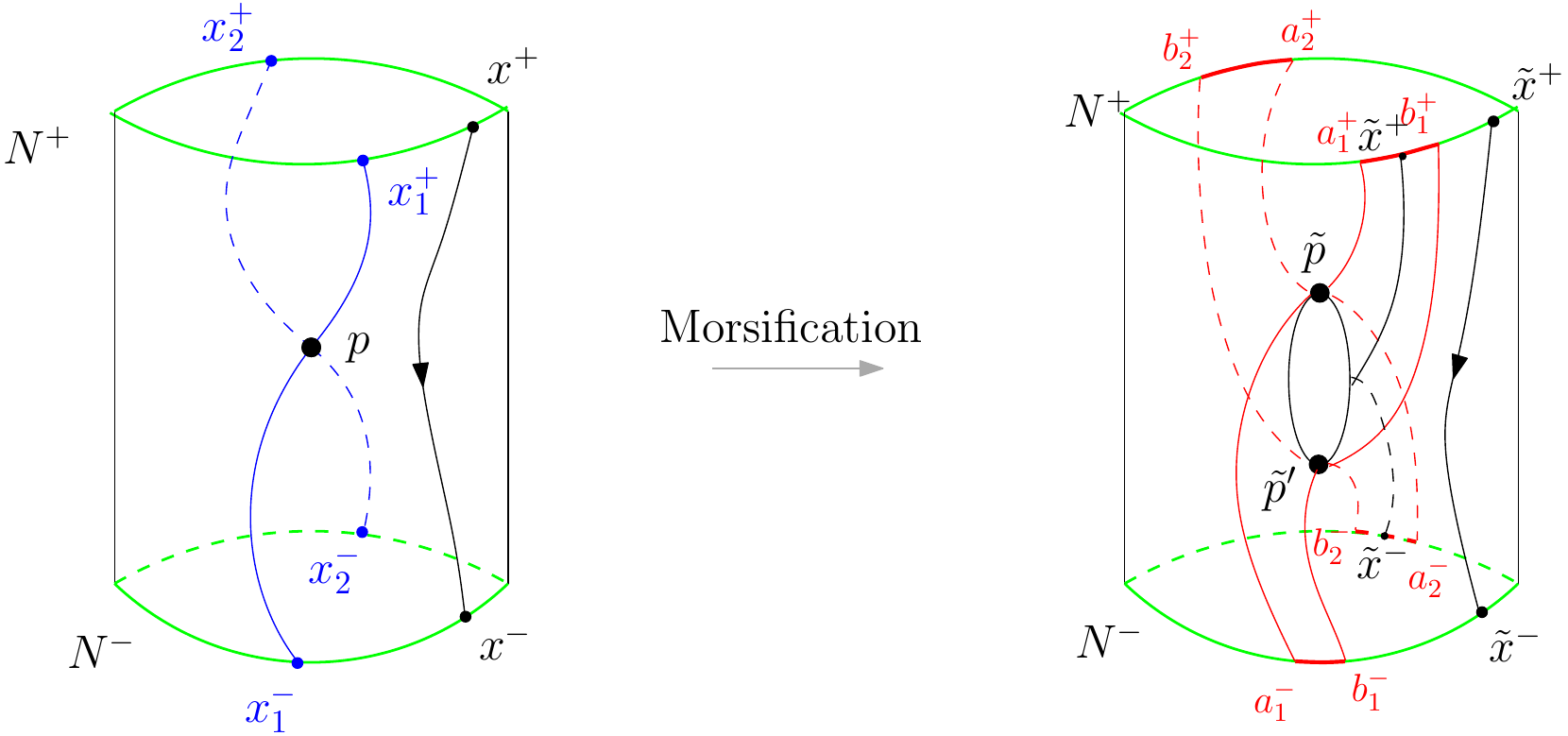}
    \caption{Isolating block for a saddle cone singularity and its Morsification.}\label{flow_cone_s2}
\end{figure}

\end{proof}

Combinatorially the isolating blocks for cone singularities together with its Morsification can be seen as the   Lyapunov (semi)graphs in Figure~\ref{fig_graph_morsef-conev}. 

\begin{figure}[H]
    \centering
        \includegraphics[width=0.8\textwidth]{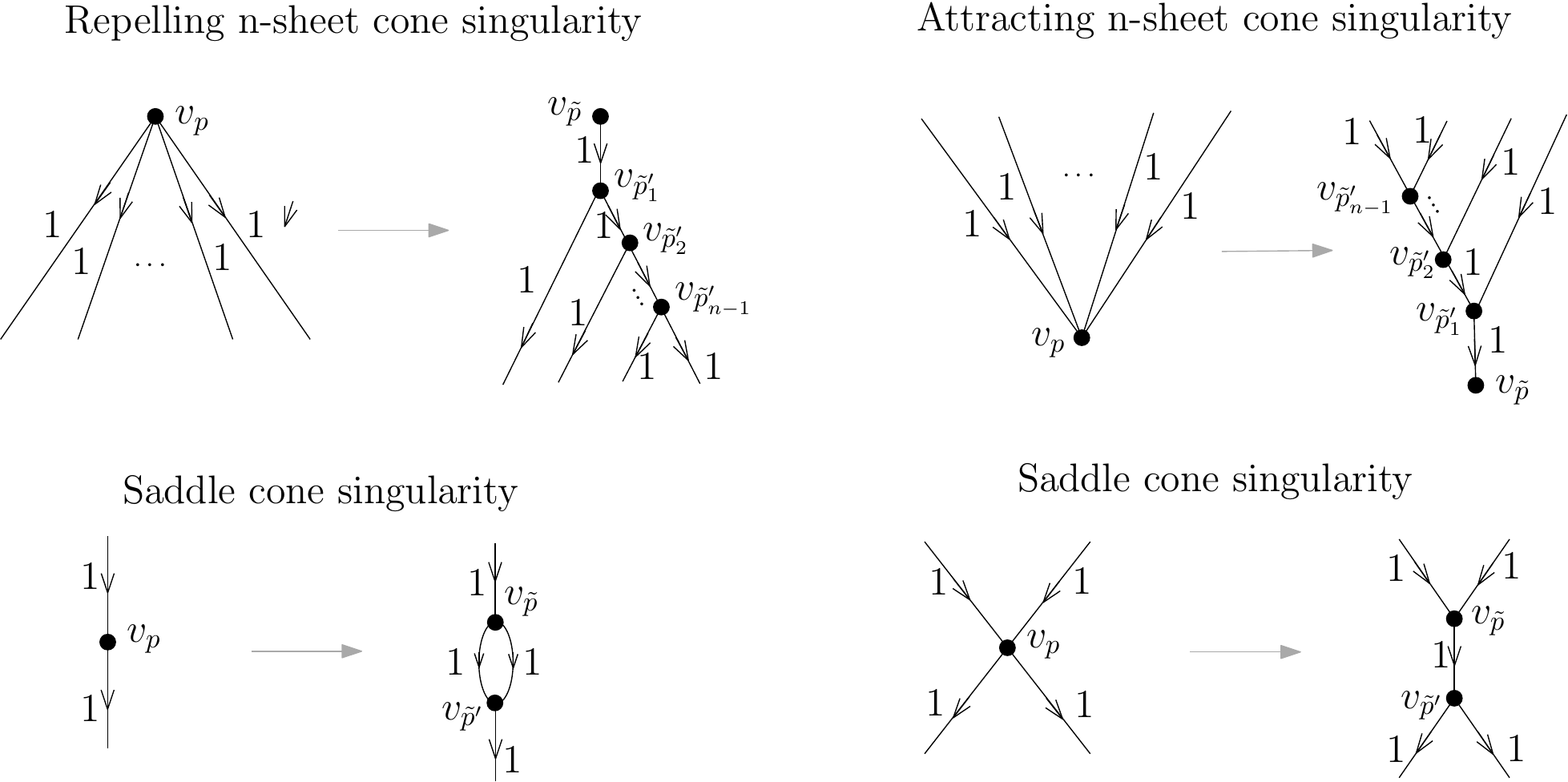}
    \caption{Morsification of a Lyapunov semigraph with a vertex associated to a cone singularity.}\label{fig_graph_morsef-conev}
\end{figure}

\subsection{Morsification of Whitney Singularities}

Let $p$ be a Whitney singularity in $M\in\mathfrak{M}(\mathcal{GW})$ 
and $N$ be an isolating block for $p$ with GS-flow $\varphi_{X}$, where $X\in\mathfrak{X}_{\mathcal{GW}}(M)$. Consider the boundaries $N^-$  and $N^+$ of the block $N$ which constitute the exit and entering sets of $\varphi_{X}$, respectively. Next it is shown how to Morsify the GS-flow on $N$ to obtain a regular flow on a smooth isolating block $\widetilde{N}$.

\begin{proposition}\label{prop_whitney}
Let $M\in\mathfrak{M}(\mathcal{GW})$ be a singular 2-manifold, $X\in\mathfrak{X}_{\mathcal{GW}}(M)$ a GS-vector field on $M$ and $\varphi_{X}$ the GS-flow  associated to $X$.   Given a Whitney singularity $p$ and  an isolating block $N$ for $p$, there  exists a Morsification $(\widetilde{N},\varphi_{\widetilde{X}})$  such that each singular orbit of $\varphi_{X}$ admits a duplication of the orbits in $N$.
\end{proposition}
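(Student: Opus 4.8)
The plan is to follow the same template established in Proposition~\ref{prop_cone} for cone singularities, namely to build the quadruple $(\widetilde{N},\varphi_{\widetilde{X}},\mathfrak{h},\mathfrak{p})$ explicitly, but now adapted to the Whitney case where the relevant regularization phenomenon is the \emph{duplication} of singular orbits rather than the collapsing of a droplet region. First I would separate the argument into cases according to the nature of $p$: the attracting and repelling $n$-sheet Whitney super-singularities (handled by time-reversal symmetry, so only one needs a full treatment), and the two saddle cases $s_s$ and $s_u$ where the stable (resp.\ unstable) manifold is singular. In each case the model for $\widetilde{N}$ is read off from the Poincar\'e--Hopf equality (Theorem~\ref{PH-equality}) and the weight table: the Morsified block must have the same boundary components $N^\pm$, with $\widetilde{N}$ chosen as the surface of the correct genus/number of holes so that its first Betti number matches the prescribed weight, and carrying a regular flow with one regular singularity of the same index as $p$ together with $m(p)=n$ regular saddles needed to realize the branching.

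The key construction is the duplication map along singular orbits. For the attracting $n$-sheet Whitney, recall that $p$ is obtained by identifying the center points $p_i$ and certain radii of $n$ discs $D_i^A$; the singular set $\mathscr{SP}(N)$ inside $N$ is exactly the union of these identified radii, each of which is a flow orbit accumulating on $p$. On $\widetilde{N}$ I would place the $n$ discs side by side (desingularized), introduce a regular saddle $\tilde p_i'$ on each formerly-identified radius, and define $\mathfrak{h}$ to send each point $x$ on a singular orbit to the \emph{pair} of points obtained by lifting that orbit to the two sheets it belonged to, exactly as in the cone case where $\mathfrak{h}(x_i)=\{a_i^-,b_i^-\}$; off the singular orbits and off the stable/unstable manifolds of the new saddles, $\mathfrak{h}$ is the orientation-preserving homeomorphism extending the boundary identifications $h_j^\pm$. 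The projection $\mathfrak{p}$ is then defined to be $\mathfrak{h}^{-1}$ away from the closed duplication regions $D_i$ bounded by the doubled orbits, and to collapse each $D_i$ back onto the corresponding singular orbit of $N$ (sending the saddle $\tilde p_i'$ and the singularity $\tilde p$ to $p$). One checks $\mathfrak{h}\circ\mathfrak{p}=\mathrm{id}_{\widetilde{N}}$ and that $\mathfrak{h}$ restricted to the complement of $\mathscr{SP}(N)$ together with the saddle-separatrices is a homeomorphism, which is precisely the Morsification axiom; verifying that $\widetilde{N}$ is an isolating block for $\varphi_{\widetilde{X}}$ amounts to checking that the exit set $\widetilde{N}^-$ is still closed, which holds because the duplication only splits orbits and does not change the qualitative entering/exiting structure on $\partial\widetilde{N}=\partial N$.

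For the saddle cases $s_s$ and $s_u$ I would argue analogously to case 2) of Proposition~\ref{prop_cone}: the singular separatrix of $p$ (the singular stable manifold in the $s_s$ case) is a single orbit that must be duplicated, so $\mathfrak{h}$ sends the unique intersection point $x_i^\pm = W^{s/u}(p)\cap N^\pm$ to an arc $[a_i^\pm,b_i^\pm]$ and the orbit $u(x_i^+,p)$ (or $u(p,x_i^-)$) to a pair of orbits flowing to the regular saddle $\tilde p'$; one then patches the resulting opening with orbits $u(\tilde x^+,\tilde x^-)$ governed by an orientation-preserving homeomorphism $\varphi_{ij}$ between the two new boundary arcs, and defines $\mathfrak{p}$ to collapse that closed region $D_{ij}$ onto the union of the two separatrix orbits of $p$. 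Doing this for every subcase in the $\mathcal{W}$ rows of the weight table exhausts the possibilities. The main obstacle I expect is not any single construction but the bookkeeping: ensuring that in the $n$-sheet Whitney case the pattern of radius-identifications (some discs sharing two radii, the rest sharing one, per the definition) is matched by a consistent placement of the $n$ new saddles and a consistent global orientation convention on all the boundary circles, so that the piecewise-defined $\mathfrak{h}$ is genuinely well defined and the duplicated regions $D_i$ are pairwise disjoint — this is the step where an explicit figure (as the paper indeed supplies) does most of the real work, and where a careless choice would make $\mathfrak{h}$ multivalued in an uncontrolled way rather than exactly on the singular set.
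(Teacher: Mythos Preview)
Your template is the right idea, but you have over-fitted to the cone case and this leads to a genuine error. In the paper's proof the Morsified block for an $n$-sheet Whitney attractor or repeller carries \emph{exactly one} regular singularity $\tilde p$ on a disk $\widetilde N\simeq D^2$; no auxiliary saddles $\tilde p_i'$ are introduced. The ``duplication'' in the statement refers only to the fact that the multivalued map $\mathfrak h$ sends each singular orbit $u(p,x_i^-)$ to the pair $\{u(\tilde p,a_i^-),u(\tilde p,b_i^-)\}$, both of which have the \emph{same} $\alpha$-limit $\tilde p$. Likewise, for the saddle cases $s_s,s_u$ the Morsified block is a sphere with three holes containing a single regular saddle $\tilde p$; there is no second saddle $\tilde p'$ and hence no patching homeomorphism $\varphi_{ij}$ or region $D_{ij}$ as in the cone-saddle argument. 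The reason the cone case needs extra saddles while the Whitney case does not is that the cone singular locus is the isolated point $p$, and separating the sheets there forces new critical behavior, whereas the Whitney singular locus is a one-dimensional fold through $p$, and opening that fold merely splits each singular orbit into two parallel regular orbits without creating new rest points.

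A related mistake is your claim that $\partial\widetilde N=\partial N$ and that one can match the weight $b_1^+$ from the table. For a Whitney attractor the entering boundary of $N$ has $b_1^+=2$ (it is a bouquet of circles), which cannot be the smooth boundary of a $2$-manifold; the Morsification necessarily changes the boundary to a single smooth circle (or, in the alternative the paper mentions, to $n$ disjoint circles). So the Poincar\'e--Hopf bookkeeping you propose does not apply directly here, and your count of ``$m(p)=n$ regular saddles'' has no role in the construction. The actual proof is therefore shorter and more elementary than your outline: for each nature one writes down $\widetilde N$ with its single regular singularity, defines $\mathfrak h^\pm$ on the boundary by sending each singular boundary point $x_i^\pm$ to a pair $\{a_i^\pm,b_i^\pm\}$, extends along orbits, and defines $\mathfrak p$ as $\mathfrak h^{-1}$ off the finitely many doubled orbits and as the obvious collapse on them.
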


\begin{proof} The proof follows by considering each type of singularity and  constructing a regular  isolating block with a smooth flow defined on it.
 \begin{itemize}

\item[1)] Let $p$ be a repelling (resp. attracting) $n$-sheet Whitney singularity and $N$ its isolating block.
 \end{itemize}

Consider a regular isolating block $\widetilde{N}$, homeomorphic to $D^2$, containing a regular repelling (resp. attracting) singularity $\tilde{p}$ with exit set $\widetilde{N}^-$ (resp. entering set $\widetilde{N}^+$) homeomorphic to $S^1$, as in  Figure~\ref{flow_whitney}.

Let $x^-_i\in N^-$ (resp. $x^+_i\in N^+$) be points associated to the singular orbit $u(p,x_i^-)$ (resp., $u(x_i^+,p)$) that connects $p$ and $x^-_i,$ (resp. $p$ and $x^+_i$) for $i=1,$\ldots$,n-1$.  
Define $A=\{x_i^-; i=1,\dots ,n-1\}$ (resp., $A=\{x_i^+; i=1,\dots ,n-1\}$).
Consider the arcs $C_1 = (x_1,x_1)$, $C_2^1 = (x_1,x_2)$, $C_2^2 = (x_2,x_1)$, $\ldots$ , 
$C_{n-1}^1=(x_{n-2},x_{n-1})$, $C_{n-1}^2 = (x_{n-1},x_{n-2})$ and $C_n = (x_{n-1},x_{n-1})$
in $N^-$ (resp., $N^+$) oriented counterclockwise.

Consider  a multivalued map
$\mathfrak{h}^-:N^-\rightarrow\widetilde{N}^-$, 
where $\mathfrak{h}^-(x_i^-)=\{a_i^-,b_i^-\}$,
$\mathfrak{h}^-(C_1)=(a_1^-,b_1^-)$, $\mathfrak{h}^-(C_2^1)=(a_1^-,a_2^-)$,
$\mathfrak{h}^-(C_2^2)=(b_2^-,b_1^-)$, $\ldots$ , $\mathfrak{h}^-(C_{n-1}^1)=(a_{n-2}^-,a_{n-1}^-)$,
$\mathfrak{h}^-(C_{n-1}^2)=(b_{n-1}^-,b_{n-2}^-)$,  $\mathfrak{h}^-(C_n^1)=(a_{n-1}^-,b_{n-1}^-)$, 
and $\mathfrak{h}^-$ restricted to $N^-\setminus\bigcup_{i=1}^{n-1}\{x_i^-\}$ 
is a homeomorphism which preserves the counterclockwise orientation on the boundaries.
Similarly, consider a multivalued map
$\mathfrak{h}^+:N^+\rightarrow\widetilde{N}^+$, 
where $\mathfrak{h}^+(x_i^+)=\{a_i^+,b_i^+\}$ 
$\mathfrak{h}^+(C_1)=(a_1^+,b_1^+)$, $\mathfrak{h}^+(C_2^1)=(a_1^+,a_2^+)$,
$\mathfrak{h}^+(C_2^2)=(b_2^+,b_1^+)$, $\ldots$ , $\mathfrak{h}^+(C_{n-1}^1)=(a_{n-2}^+,a_{n-1}^+)$,
$\mathfrak{h}^+(C_{n-1}^2)=(b_{n-1}^+,b_{n-2}^+)$, $\mathfrak{h}^+(C_n^1)=(a_{n-1}^+,b_{n-1}^+)$.  
and $\mathfrak{h}^+$ restricted to $N^+\setminus\bigcup_{i=1}^{n-1}\{x_i^+\}$
is a homeomorphism which preserves the counterclockwise orientation on the boundaries.
Given $x\in N\setminus \{p\}$, there exists $x^-\in N^-$ (resp., $x^+ \in N^+$), where $x$ belongs to the  orbit $u(p,x^-)$ (resp., $u(x^+,p)$). Define the multivalued map $\mathfrak{h}:N\rightarrow\widetilde{N}$ by:
$$ \mathfrak{h}(u(p,x)) = \left\{
\begin{array}{ll}
u(\tilde{p}, \mathfrak{h}^-(x)) \ \ (\text{resp.,} \ u(\mathfrak{h}^+(x),\tilde{p} )), & \text{if} \ x\notin A \ \\
\{u(\tilde{p},a_i^-), u(\tilde{p},b_i^-)\} \ \  (\text{resp.,} \ \{u(a_i^+,\tilde{p}), u(b_i^+,\tilde{p})\},&   \text{if} \ x\in A
\end{array} 
\right.
$$
%
%
%
%

Consider the closed set $D=\{u(\tilde{p},a_i^-),u(\tilde{p},b_i^-);i=1,\dots ,n-1\}$ (resp., $D=\{u(a_i^+,\tilde{p}),u(b_i^+,\tilde{p});i=1,\dots ,n-1\}$). Define the projection map 
$\mathfrak{p}:\widetilde{N}\rightarrow N$ as
$$ \mathfrak{p}(u(\tilde{x},\tilde{y})) = \left\{
\begin{array}{ll}
\mathfrak{h}^{-1}(u(\tilde{x},\tilde{y})), & \text{if} \ u(\tilde{x},\tilde{y})\notin D \ \\
u(p, x_i^- ) \ \  (\text{resp.} \ u(x_i^+,p)), & \text{if} \ u(\tilde{x},\tilde{y})\in D
\end{array} 
\right. .
$$

\begin{figure}[H]
    \centering
        \includegraphics[width=0.52\textwidth]{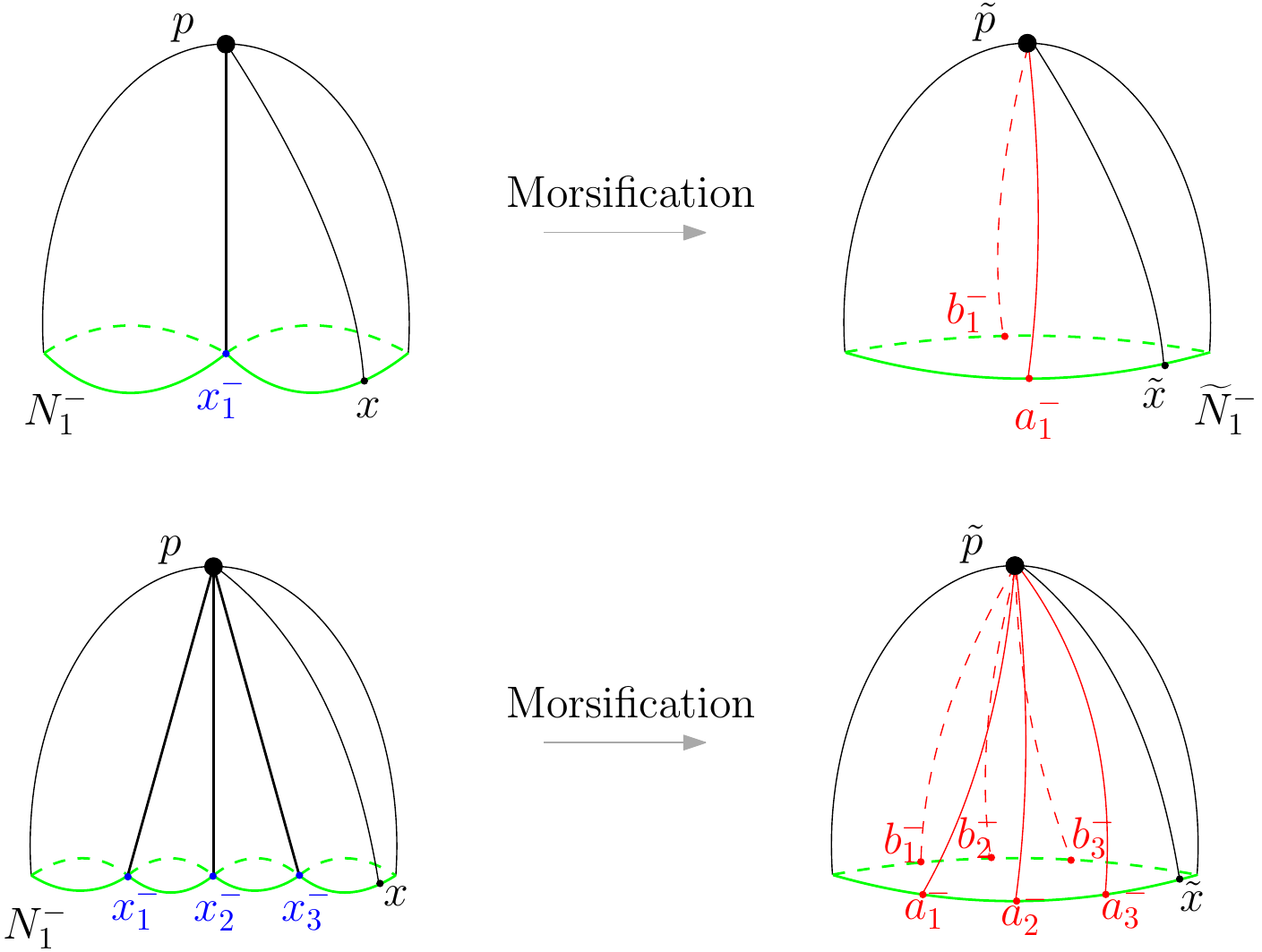}
    \caption{Isolating blocks for a repelling  Whitney singularity and its Morsification.}\label{flow_whitney}
\end{figure}

Another possible Morsification for a repelling $n$-sheet Whitney singularity is a disjoint union of $n$ repeller disks. 


%
\begin{itemize}
\item[2)]  Let $p$ be a saddle Whitney singularity of  $s_s$-nature. (If  $p$ has  $s_u$-nature the prove is completely analogous by using the reverse flow.) 
\end{itemize}
Let $N$ be an isolating   block for $p$.
One has two cases to consider, first when the exit  set of $N$ is disconnected and secondly when its is connected.


%
\begin{enumerate}
\item[2.1)] Consider the case where  the exit set  $N^-$ is disconnected, i.e. $N^-_i$ is  homeomorphic to $S^1$ for $ i=1,2$. The regular  isolating  block $\widetilde{N}$ is a sphere with 3 holes containing a regular singularity of saddle nature $\tilde{p}$ with entering set $\widetilde{N}^+$  homeomorphic to $S^1$ and exit set $\widetilde{N}^-$ with exactly two boundary components homeomorphic to $S^1$.  See Figure~\ref{flow_w_s2}. 
Let $x^+\in N^+$ 
be the point belonging to a singular orbit $u(x^+,p)$. 
Define the multivalued map 
$\mathfrak{h}^+:N^+\rightarrow\widetilde{N}^+$
by $\mathfrak{h}^+(x^+)=\{a^+,b^+\}$ 
and $\mathfrak{h}^+({N^+\setminus\{x^+\}}) = \widetilde{N}^+\setminus\{a^+,b^+\}$, such that  $\mathfrak{h}^+$ restricted to $N^+\setminus\{x^+\}$
is a homeomorphism which preserves the counterclockwise orientation on the boundaries. Consider the trivial homeomorphisms $h_i:N_i^-\rightarrow \widetilde{N}_i^-$.
Define the multivalued map  $\mathfrak{h}:N\rightarrow\widetilde{N}$ as
$$ \mathfrak{h}(u(x,y)) = \left\{
\begin{array}{ll}
u(\mathfrak{h}^+(x),h_j(y)), &   \text{if} \ x\neq x^+ \ \text{and} \ x\neq p \\
u(\tilde{p},h_j(y)), &   \text{if} \ x\neq x^+ \ \text{and} \ x= p \\
\{u(a^+,\tilde{p}), u(b^+,\tilde{p})\}, &  \text{if} \ x=x^+
\end{array} 
\right.
$$

Consider the closed set $D=\{u(a^+,\tilde{p}),u(b^+,\tilde{p})\}$. Define the projection map $\mathfrak{p}:\widetilde{N}\rightarrow N$ by
$$ \mathfrak{p}(u(\tilde{x},\tilde{y})) = \left\{
\begin{array}{ll}
\mathfrak{h}^{-1}(u(\tilde{x},\tilde{y})), &  \text{if} \ u(\tilde{x},\tilde{y})\notin D \\
u(x^+,p), &  \text{if} \ u(\tilde{x},\tilde{y})\in D
\end{array} 
\right. '.
$$
%

\begin{figure}[H]
    \centering
        \includegraphics[width=0.62\textwidth]{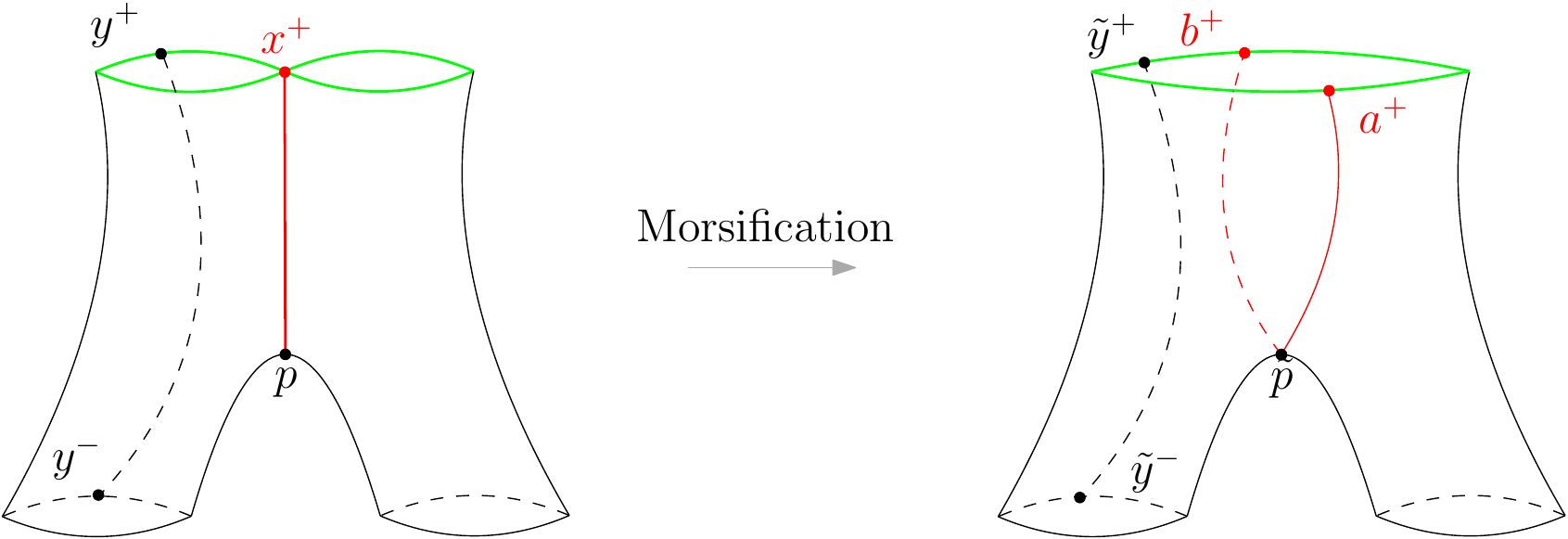}
   \caption{Isolating blocks for a saddle Whitney singularity and its Morsification.}\label{flow_w_s2}
\end{figure}

\item[2.2)] Consider the case where  the exit set  $N^-$  is connected, homeomorphic to $S^1$. The regular  isolating  block $\widetilde{N}$ is a sphere with 3 holes containing a regular singularity of saddle nature $\tilde{p}$ with exit set $\widetilde{N}^-$  homeomorphic to $S^1$ and entering set $\widetilde{N}^+$ with exactly two boundary components homeomorphic to $S^1$. See Figure~\ref{flow_w_s1}. 
Let $x^+\in N^+$ 
be the point in the singular orbit $u(x^+,p)$. 
Consider a  multivalued map 
$\mathfrak{h}^+:N^+\rightarrow\widetilde{N}^+$,
such that  $\mathfrak{h}^+(x^+)=\{a^+,b^+\}$, 
$\mathfrak{h}^+({N^+\setminus\{x^+\}}) = \widetilde{N}^+\setminus\{a^+,b^+\}$, and $\mathfrak{h}^+$ restricted to $N^+\setminus\{x^+\}$
is a homeomorphism which preserves the counterclockwise orientation on the boundaries.
Considering the trivial homeomorphism $h:N^-\rightarrow \widetilde{N}^-$, one defines the multivalued map $\mathfrak{h}:N\rightarrow\widetilde{N}$ as
$$ \mathfrak{h}(u(x,y)) = \left\{
\begin{array}{ll}
u(\mathfrak{h}(x),h_j(y)), &  \text{if} \ x\neq x^+ \ \text{and} \ x\neq p \\
u(\tilde{p},h_j(y)), &  \text{if} \ x\neq x^+ \ \text{and} \ x= p \\
\{u(a^+,\tilde{p}), u(b^+,\tilde{p})\}, & \text{if} \ x=x^+
\end{array} 
\right..
$$

Consider the closed set $D=\{u(a^+,\tilde{p}),u(b^+,\tilde{p})\}$. Define the projection map $\mathfrak{p}:\widetilde{N}\rightarrow N$ by
$$ \mathfrak{p}(u(\tilde{x},\tilde{y})) = \left\{
\begin{array}{ll}
\mathfrak{h}^{-1}(u(\tilde{x},\tilde{y})), & \text{if} \ u(\tilde{x},\tilde{y})\notin D \\
u(x^+,p), &  \text{if} \ u(\tilde{x},\tilde{y})\in D
\end{array} 
\right. .
$$

\begin{figure}[H]
    \centering
        \includegraphics[width=0.6\textwidth]{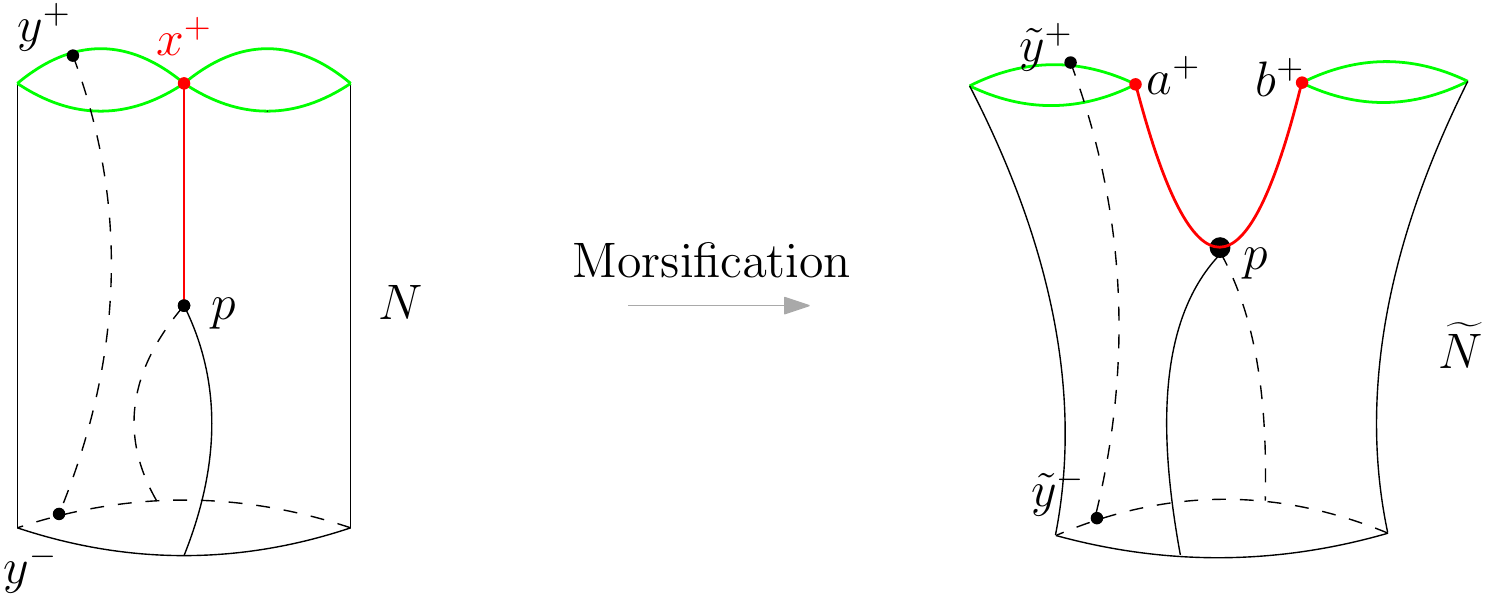}
   \caption{Isolating blocks for  a saddle  Whitney singularity and its Morsification. }\label{flow_w_s1}
\end{figure}

\end{enumerate}

\end{proof}

%
%

Combinatorially the isolating blocks for Whitney singularities together with its Morsification can be seen as the   Lyapunov (semi)graphs in Figure~\ref{fig_graph_morsef-whitney}. 

\begin{figure}[H]
    \centering
        \includegraphics[width=0.7\textwidth]{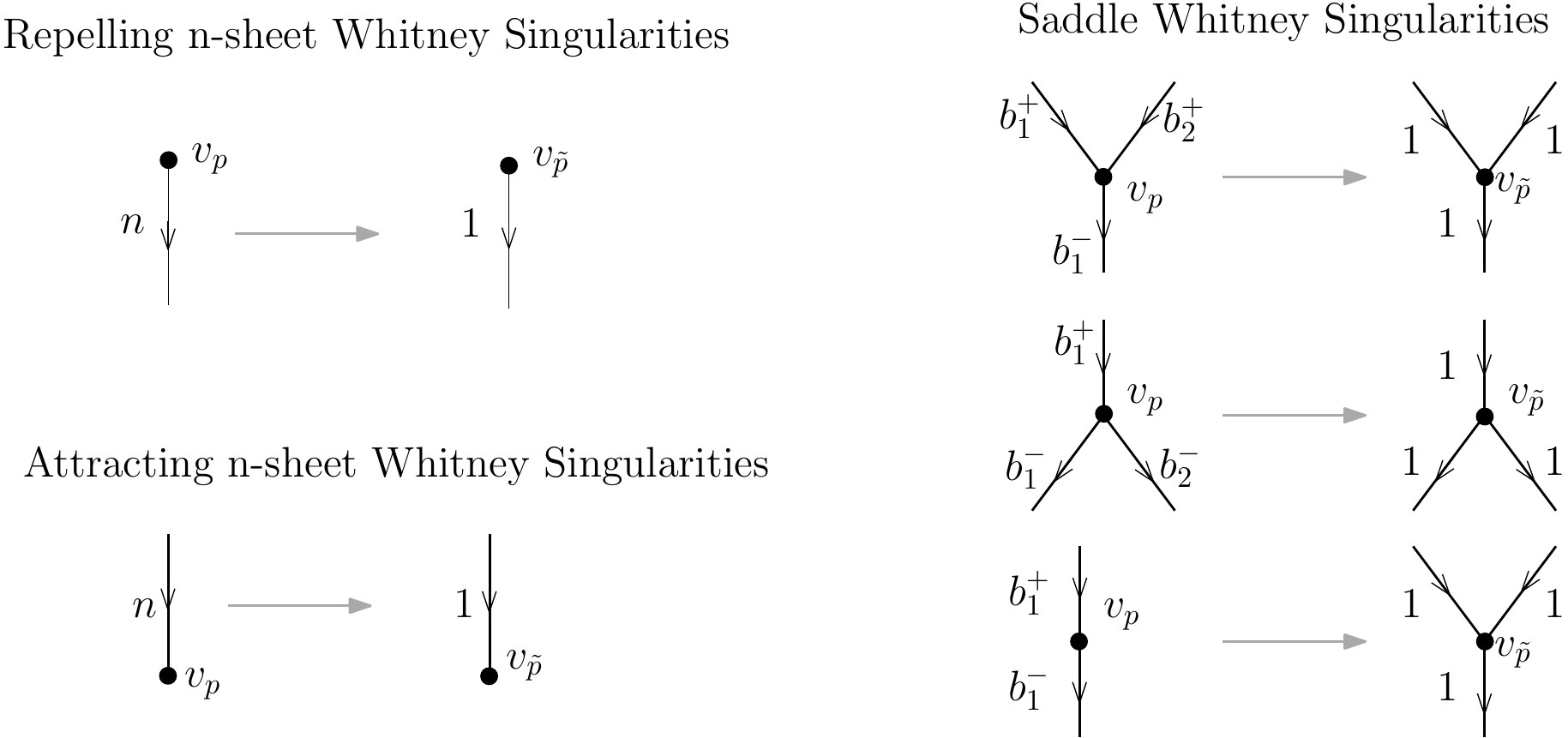}
    \caption{Morsification of a Lyapunov semigraph with vertex associated to a Whitney singularity.}\label{fig_graph_morsef-whitney}
\end{figure}

\subsection{Morsification of Double and Triple Crossing Singularities}

Let $p$ be a double crossing singularity in $M\in\mathfrak{M}(\mathcal{GD})$ 
and $N$ be an isolating block for $p$ with GS-flow $\varphi_{X}$ where $X\in\mathfrak{X}_{\mathcal{GD}}(M)$. Consider the boundaries $N^-$  and $N^+$ of the isolating block $N$ which constitute the exit  and entering sets of $\varphi_{X}$, respectively. Next it is shown how to Morsify the GS-flow on $N$ to obtain a regular flow on a smooth isolating block $\widetilde{N}$.  Considering a Morsification of all  isolating blocks of singularities of $M$, one can  glue them together  to form a flow on a disconnected  smooth surface $\widetilde{M}$.

\begin{proposition}\label{prop_duplo}
Let $M\in\mathfrak{M}(\mathcal{GD})$ be a singular 2-manifold, $X\in\mathfrak{X}_{\mathcal{GD}}(M)$ a GS-vector field on $M$ and $\varphi_{X}$ the GS-flow  associated to $X$.   Given a double crossing singularity $p$ and  an isolating block $N$ for $p$, there  exists a Morsification $(\widetilde{N},\varphi_{\widetilde{X}})$  such that each singular orbit of $\varphi_{X}$ admits a duplication of  orbits in $N$.
\end{proposition}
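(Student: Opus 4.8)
The plan is to mimic the strategy of Propositions~\ref{prop_cone} and~\ref{prop_whitney}: for each type and nature of double crossing singularity appearing in the $\mathcal{D}$-portion of the table, I will exhibit an explicit quadruple $(\widetilde{N},\varphi_{\widetilde{X}},\mathfrak{h},\mathfrak{p})$. The distinguishing feature of the double crossing case is that the part of $\mathscr{SP}(N)$ meeting $N$ contains the one-dimensional fold $M(\mathcal{D})\cap N$, along which two regular sheets are identified; hence the multivalued map $\mathfrak{h}$ is genuinely two-valued along the whole band of orbits crossing that fold, and this is precisely the ``duplication of orbits'' asserted in the statement. Consequently $\widetilde{N}$ will in general be \emph{disconnected}, with one smooth component per sheet, and the gluing of the Morsified blocks produces the disconnected smooth surface $\widetilde{M}$ mentioned above.

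First I would treat the attracting (resp.\ repelling) $n$-sheet double crossing $p$. Recall it is obtained from $n$ radial discs $D_0,D_1,\dots,D_{n-1}$ by identifying one diameter $d_i$ of each $D_i$, $i\ge 1$, with a distinct diameter of $D_0$ and collapsing all centers to $p$. I set $\widetilde{N}=\bigsqcup_{i=0}^{n-1}\widetilde{N}^i$, where each $\widetilde{N}^i\cong D^2$ carries a regular attracting (resp.\ repelling) singularity $\tilde p_i$ with entering boundary $\widetilde{N}^{i,+}\cong S^1$ (resp.\ exiting boundary $\widetilde{N}^{i,-}\cong S^1$), chosen so that the boundary components of $\widetilde{N}$ realize the first Betti number forced by Theorem~\ref{PH-equality} (for $n=2$ this is the value $b_1^+=3$ listed for $a^2$). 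On the interior of each sheet $\mathfrak{h}$ is the evident homeomorphism of orbits $u(\tilde p_i,h^-_i(x))$ (resp.\ $u(h^+_i(x),\tilde p_i)$); for $x$ on the fold $d_i$, $\mathfrak{h}(x)$ is the pair consisting of one orbit in $\widetilde{N}^0$ and one in $\widetilde{N}^i$, produced by the two local boundary homeomorphisms of the sheets meeting along $d_i$. The projection $\mathfrak{p}$ is $\mathfrak{h}^{-1}$ off the duplicated band $D=\bigcup_i\mathfrak{h}\bigl(\{\text{orbits through }d_i\}\bigr)$ and on $D$ it re-identifies the two copies of each orbit through the fold, sending $\tilde p_0$ and the $\tilde p_i$ to $p$; one then checks $\mathfrak{h}\circ\mathfrak{p}=\mathrm{id}_{\widetilde{N}}$ and that $\mathfrak{h}$ is a homeomorphism off $\mathscr{SP}(N)$.

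Next I would handle the saddle natures. For $sa$ (resp.\ $sr$), where $p$ is the identification along a fold of a regular saddle with a regular attractor (resp.\ repeller), I take $\widetilde{N}$ to be the disjoint union of a regular saddle block (a sphere with the number of holes dictated by $e_v^-$ and $e_v^+$) and a regular attracting (resp.\ repelling) disc; $\mathfrak{h}$ is trivial on the boundary components disjoint from the fold, two-valued along the fold, and handles the orbits of $W^u(p)\cup W^s(p)$ through $p$ exactly as in the saddle-cone case of Proposition~\ref{prop_cone}. For $ss_s$ (resp.\ $ss_u$), where $p$ is the identification of two regular saddles along their stable (resp.\ unstable) manifolds, $\widetilde{N}$ is the disjoint union of two regular saddle blocks; each sub-row of the table with its own pair $(e_v^-,e_v^+)$ prescribes a distinct boundary-connectivity pattern, which is realized by choosing the number of holes on each block accordingly and applying the same split-and-duplicate recipe. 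In every sub-case one verifies that $\widetilde{N}$ is an isolating block for $\varphi_{\widetilde{X}}$, for instance by exhibiting it as a connected component of a regular level set of a Lyapunov function for $\varphi_{\widetilde{X}}$, so that $\widetilde{N}^-$ is closed.

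I expect the main obstacle to be the bookkeeping along the one-dimensional fold. In the Whitney case only finitely many orbits were singular, so the duplication was supported on arcs; here a whole two-dimensional band of $N$ is duplicated, and one must (i) choose the local homeomorphisms on the two sheets compatibly so that $\mathfrak{h}$ is well defined and its quotient by $\mathfrak{p}$ literally recovers $N$, (ii) keep the counterclockwise orientations coherent so that the reassembled $\widetilde{M}$ is a genuine smooth (disconnected) 2-manifold, and (iii) confirm that in each of the many $ss_s$ and $ss_u$ sub-rows the first Betti number of $\partial\widetilde{N}$ matches the weight prescribed by Theorem~\ref{PH-equality}. It is the case-by-case organization of these checks, rather than any single hard idea, that constitutes the real work.
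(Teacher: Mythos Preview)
Your proposal is correct and follows essentially the same approach as the paper: a case-by-case construction of the quadruple $(\widetilde{N},\varphi_{\widetilde{X}},\mathfrak{h},\mathfrak{p})$ according to nature, taking $\widetilde{N}$ to be a disjoint union of regular blocks (one per sheet for $a^n$/$r^n$, a saddle block plus a sink/source disc for $sa$/$sr$, two saddle blocks for $ss_s$/$ss_u$), with $\mathfrak{h}$ two-valued along the fold $M(\mathcal{D})\cap N$ and $\mathfrak{p}$ re-identifying the duplicated orbits. The paper carries out these constructions explicitly with named boundary arcs and points rather than invoking Theorem~\ref{PH-equality} for the Betti-number check, but the architecture is the same.
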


\begin{proof}

Consider the different  type of double crossing singularities. 
\begin{enumerate}

\item[1)] Let $p$ be a repelling (resp., attracting) $n$-sheet double crossing singularity. Consider a smooth block formed by $n$ disjoint discs,  $\widetilde{N}\simeq \sqcup_{i=1}^n D^2_i$, containing $n$ repelling (resp., attracting) regular singularities $\tilde{p}_1,$\ldots$,\tilde{p}_n$ and having exit set   $\widetilde{N}^-$ (resp. entering set  $\widetilde{N}^+$) homeomorphic  to  a disjoint union of $n$ circles, as in Figure~\ref{flow_duplo}.

Let $A_i=\{x_i,y_i\}$, where  $x_i,y_i\in N^-$ (resp., $x_i,y_i\in N^+$) be the points associated to the singular orbits $u(p,x_i)$ (resp., $u(x_i,p)$) that connects  $p$ and $x_i$,  
and  $u(p,y_i)$ (resp., $u(y_i,p)$) that connects $p$ and $y_i$, 
$i=1,$\ldots$,n-1$. 

Consider the external arcs $C_1^1 = (y_1,x_1)$, $C_2^1 = (x_1,x_2)$, $C_2^2 = (y_2,y_1)$, $\ldots$ , 
$C_{n-1}^1=(x_{n-2},x_{n-1})$, $C_{n-1}^2 = (y_{n-1},y_{n-2})$ and $C_n^1 = (x_{n-1},y_{n-1})$
in $N^-$ (resp., $N^+$) as well as 
$c_1^1 = (y_1,x_1)$, $c_1^2 = (x_1,y_1)$, $c_2^1 = (y_2,x_2)$, 
$c_2^2 = (x_2,y_2)$ ,$\ldots$, 
$c_{n-1}^1=(y_{n-1},x_{n-1})$ and $c_{n-1}^2 = (x_{n-1},y_{n-1})$
in $N^-$ (resp., $N^+$) with counterclockwise orientations.

Define the multivalued map 
$\mathfrak{h}^-:N^-\rightarrow\widetilde{N}^-$, 
by $\mathfrak{h}^-(x_i)=\{a_i,c_i\}$, $\mathfrak{h}^-(y_i)=\{b_i,d_i\}$, 
$\mathfrak{h}^-(C_1)=(b_1,a_1)$, $\mathfrak{h}^-(C_2^1)=(a_1,a_2)$,
$\mathfrak{h}^-(C_2^2)=(b_2,b_1)$, $\ldots$ , $\mathfrak{h}^-(C_{n-1}^1)=(a_{n-2},a_{n-1})$,
$\mathfrak{h}^-(C_{n-1}^2)=(b_{n-1},b_{n-2})$, $\mathfrak{h}^-(C_n)=(a_{n-1},b_{n-1})$,
 $\mathfrak{h}^-(c_1^1)=(d_1,c_1)$, $\mathfrak{h}^-(c_1^2)=(c_1,d_1)$,
 $\mathfrak{h}^-(c_2^1)=(d_2,c_2)$, $\mathfrak{h}^-(c_2^2)=(c_2,d_2)$,$\ldots$,
 $\mathfrak{h}^-(c_{n-1}^1)=(d_{n-1},c_{n-1})$ and $\mathfrak{h}^-(c_{n-1}^2)=(c_{n-1},d_{n-1})$,
such that  $\mathfrak{h}^-$ restricted to  $N^-\setminus\bigcup_{i=1}^{n-1}\{x_i,y_i\}$ 
is a homeomorphism which preserves orientation on the boundaries.
Analogously, define the multivalued map  $\mathfrak{h}^+:N^+\rightarrow\widetilde{N}^+$.
Define the  multivalued map   $\mathfrak{h}:N\rightarrow\widetilde{N}$ by
$$ \mathfrak{h}(u(p,x)) = \left\{
\begin{array}{ll}
u(\tilde{p}_1,\mathfrak{h}^-(x)), \text{if} \  x\in C^*_i \\
u(\tilde{p}_i,\mathfrak{h}^-(x)), \text{if} \  x\in c^*_{i-1} \\
\{u(\tilde{p}_1,\mathfrak{h}^-(x)), u(\tilde{p}_{i+1},\mathfrak{h}^-(x))\}, \text{if} \ x\in A_i
\end{array} 
\right.
$$

The case where $p$ is an attracting double singularity, the prove follows analogously. 

If  $x\in A_i$, then $x=x_i$ ou $x=y_i$. Without loss of generality, suppose that  $x=x_i$. Hence, the orbit  $u(p,x_i)$  is mapped by  $\mathfrak{h}$ to  $u(\tilde{p}_1, a_i)$ and $u(\tilde{p}_{i+1},c_i)$.

Finally, consider the closed set $D=\bigcup_{i=1}^n D_i$, where
$$D_i =\{u(\tilde{p}_1,\mathfrak{h}^-(x)),u(\tilde{p}_{i+1},\mathfrak{h}^-(x));x\in A_i\}.
$$
Define the map $\mathfrak{p}:\widetilde{N}\rightarrow N$ by
$$ \mathfrak{p}(u(\tilde{x},\tilde{y})) = \left\{
\begin{array}{ll}
\mathfrak{h}^{-1}(u(\tilde{x},\tilde{y})), \text{if} \ u(\tilde{x},\tilde{y})\notin D \\
u(p,\mathfrak{h}^{-1}(\tilde{y})), \text{if} \ u(\tilde{x},\tilde{y})\in D_i
\end{array} 
\right. .
$$
See Figure~\ref{flow_duplo}.

\begin{figure}[H]
    \centering
        \includegraphics[width=0.50\textwidth]{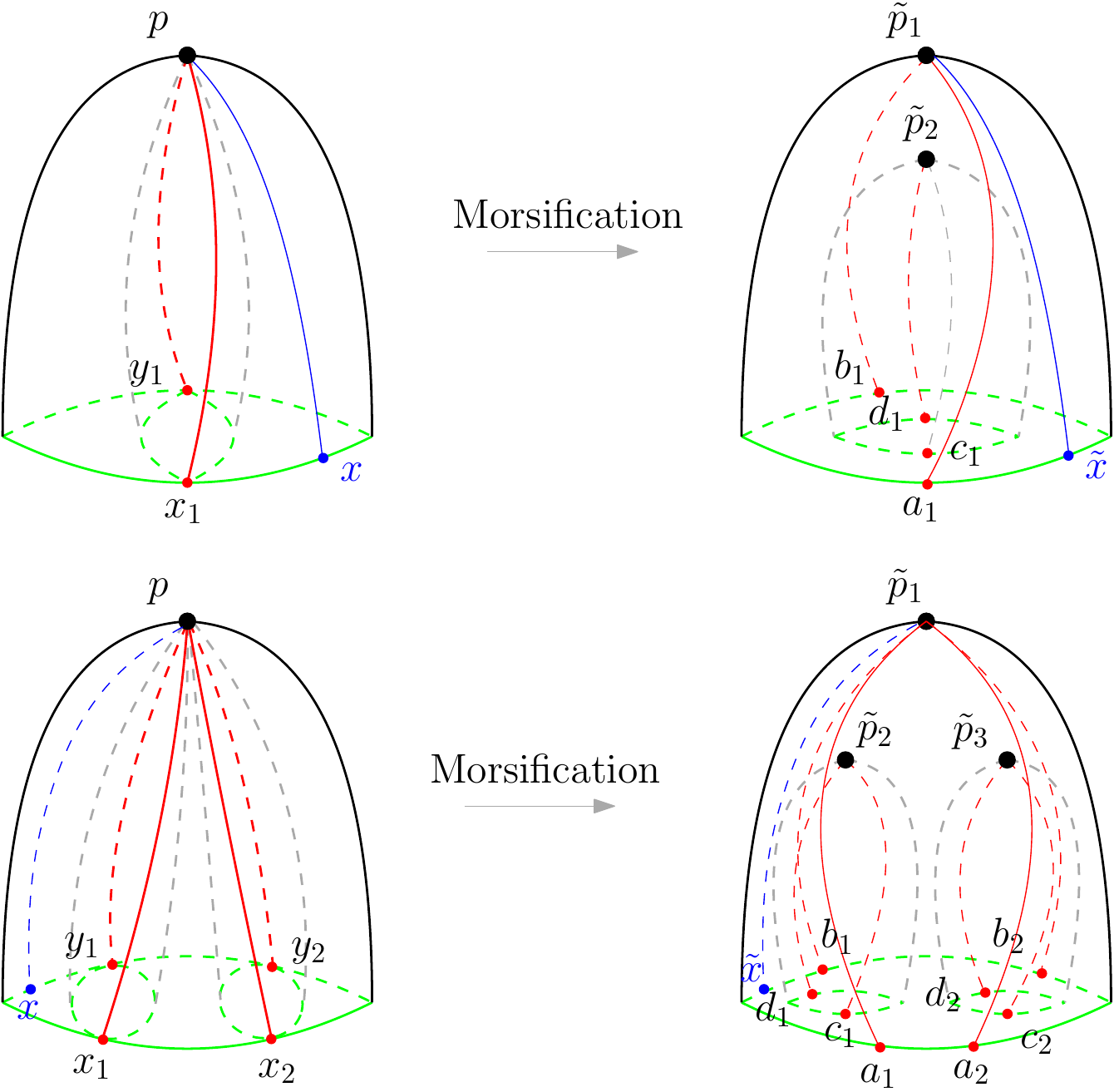}
   \caption{Isolating blocks for  a repelling $2$-sheet and $3$-sheet double crossing singularities and their Morsification.}\label{flow_duplo}
\end{figure}

\item[2)] Let  $p$ be a saddle double crossing singularity of $sa$  (resp., $sr$) nature.  Let $N$ be an isolating block for $p$, and consider the smooth disjoint  block 
$\widetilde{N}$, containing two regular  singularities  $\tilde{p}_1$ and $\tilde{p}_2$, where  $\tilde{p}_1$  is a saddle and $\tilde{p}_2$ is  an attractor (resp., repeller).

The isolating block  $\widetilde{N}$  has $\widetilde{N}^-$  and  $\widetilde{N}^+$ as exit and entering sets, respectively, where each connected component is homeomorphic to 
$S^1.$ There are two cases to be considered. 

\begin{itemize}
\item $\widetilde{N}^-$ (resp., $\widetilde{N}^+$) is disconnected;
\item $\widetilde{N}^-$  (resp., $\widetilde{N}^+$)  connected. 
\end{itemize}
\begin{enumerate}
\item[2.1] Let $N^-_i, i=1,2$, be the connected components of the exit set. 

Let $x^+,y^+\in N^+$ be the points of the singular orbits  $u(x^+,p)$ e $u(y^+,p)$.
Consider the external arcs $C_1 = (y^+,x^+)$, $C_2 = (x^+,y^+)$, 
and the internal arcs
$c_1= (y^+,x^+)$, $c_2 = (x^+,y^+)$ in $N^+$ with counterclockwise orientations. 
Define  the multivalued map 
$\mathfrak{h}^+:N^+\rightarrow\widetilde{N}^+$
by $\mathfrak{h}^+(x^+)=\{a^+,c^+\}$, $\mathfrak{h}^+(y^+)=\{b^+,d^+\}$ and
$\mathfrak{h}^+({N^+\setminus\{x^+,y^+\}}) = \widehat{N}^+\setminus\{a^+,b^+,c^+,d^+\}$, such that the map  $\mathfrak{h}^+$ restricted to $N^+\setminus\{x^+,y^+\}$ is a homeomorphism which preserves the orientation on the boundaries. Consider the trivial homeomorphisms $h_j:N_j^-\rightarrow \widetilde{N}^-_j$.
Define the multivalued map $\mathfrak{h}:N\rightarrow \widetilde{N}$ by
$$ \mathfrak{h}(u(x,y)) = \left\{
\begin{array}{ll}
u(\mathfrak{h}^+(x),h_i(y)), \text{if} \ x\in C_i \\
u(\mathfrak{h}^+(x),\tilde{p}_2), \text{if} \ x\in c_i  \\
\{u(\mathfrak{h}^+(x),\tilde{p}_1),u(\mathfrak{h}^+(x),\tilde{p}_2)\}, \text{if} \ x\in \{x^+,y^+\}
\end{array} 
\right..
$$

If $x_1\in C_i$, the orbit  $u(x_1,y_1)$ is mapped by  $\mathfrak{h}$  to $u(\tilde{x}_1,\tilde{y_1})$, where $\tilde{x}_1=\mathfrak{h}^+(x_1)$ and $\tilde{y}_1=h_i(y_1)$. If $x_2\in c_i$, the orbit
$u(x_2,p)$ is mapped by  $\mathfrak{h}$ to  $u(\tilde{x}_2,\tilde{p}_2)$, where $\tilde{x}_2=\mathfrak{h}^+(x_2)$.

If $x=x^+$, the orbit $u(x^+,p)$ is mapped by $\mathfrak{h}$ to $u(a^+,\tilde{p}_1)$ and $u(c^+,\tilde{p}_2)$. Analogously for  $x=y^+$.

Finally, consider the closed set $D=\{u(\mathfrak{h}^+(x),\tilde{p}_1),u(\mathfrak{h}^+(x),\tilde{p}_2)\}$. Define $\mathfrak{p}:\widetilde{N}\rightarrow N$ by
$$ \mathfrak{p}(u(\tilde{x},\tilde{y})) = \left\{
\begin{array}{ll}
\mathfrak{h}^{-1}(u(\tilde{x},\tilde{y})), \text{if} \ u(\tilde{x},\tilde{y})\notin D \\
u(\mathfrak{h}^{-1}(\tilde{x}),p), \text{if} \ u(\tilde{x},\tilde{y})\in D
\end{array} 
\right..
$$
See Figure~\ref{flow_duplo2}.

\begin{figure}[H]
    \centering
        \includegraphics[width=0.6\textwidth]{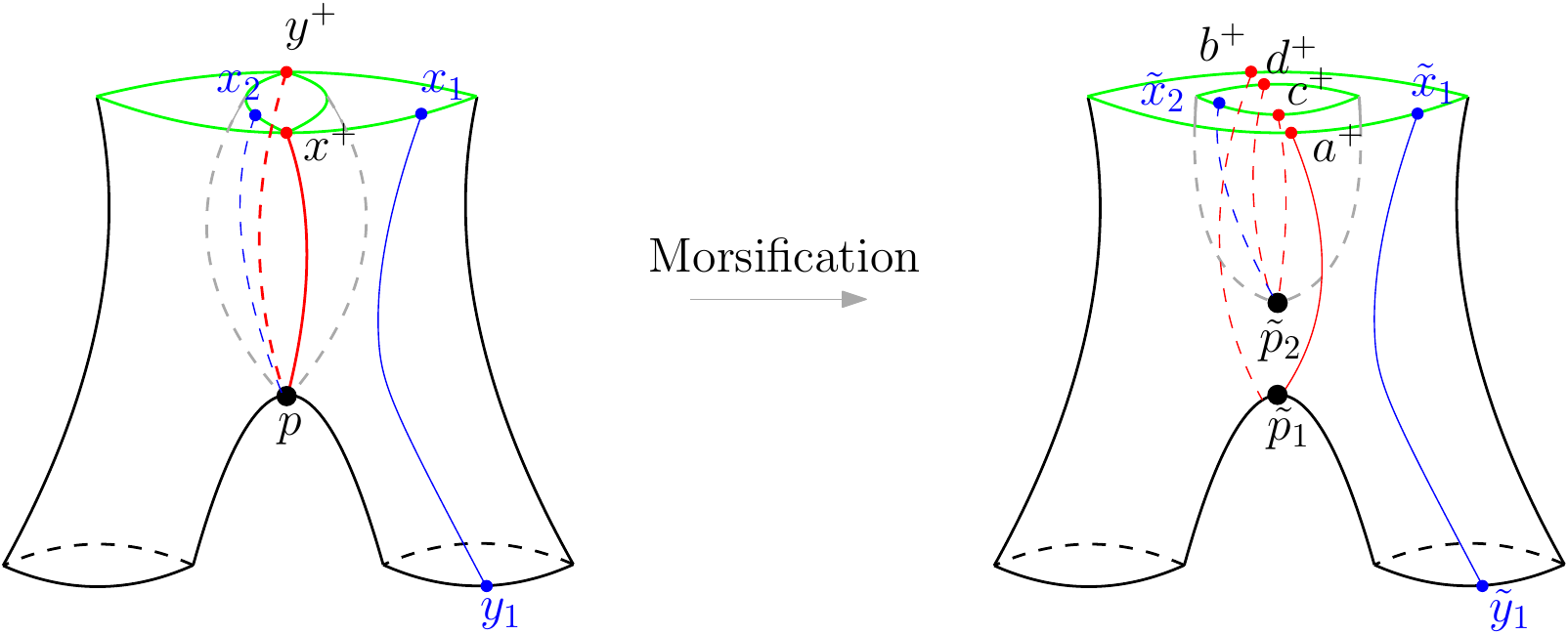}
   \caption{Isolating block for  a saddle double crossing singularity of $sa$-nature and its Morsification.}\label{flow_duplo2}
\end{figure}

\item[2.2] Suppose that $N^-$ and $N^+$ are both connected.  Let $\widetilde{N}$ be the disjoint union of a $2$-sphere minus three discs, $\widetilde{N}_1$, and an attracting disc, $\widetilde{N}_2$.   The entering set of $\widetilde{N}$ is a disjoint union of three circles $C_i$, $i=1,2,3$, where the entering set of $\widetilde{N}_1$ is $C_1$ and $C_2$ and the entering set of   $\widetilde{N}_2$ is $C_3$.

Let $x^+, y^+\in N^+$ be the points on the singular orbits $u(x^+,p)$ and $u(y^+,p)$.
Consider the arcs   $c_1 = (x^+,x^+)$, $c_2 = (y^+,y^+)$ 
$c_3= (y^+,x^+)$, $c_4 = (x^+,y^+)$ in $N^+$ with counterclockwise orientation. Define the multivalued map 
$\mathfrak{h}^+:N^+\rightarrow\widetilde{N}^+$, 
by $\mathfrak{h}^+(x^+)=\{a^+,c^+\}$, $\mathfrak{h}^+(y^+)=\{b^+,d^+\}$,
$\mathfrak{h}^+(c_1) = {C_i}$, for $i=1,2$, $\mathfrak{h}^+(c_3)$ is the arc $(c^+, d^+)$ in $ \widetilde{C_3}$, and  $\mathfrak{h}^+(c_4)$ is the arc $(d^+,c^+)$ in $ \widetilde{C_3}$,  such that the map $\mathfrak{h}^+$ restricted to $N^+\setminus\{x^+,y^+\}$ is a homemomorphism which preserve orientation on the boundary.
Consider the trivial homemomorphism $\mathfrak{h}^-:N^-\rightarrow\widetilde{N}^-$.
Define the multivalued map $\mathfrak{h}:N\rightarrow\widetilde{N}$ by
$$ \mathfrak{h}(u(x,y)) = \left\{
\begin{array}{ll}
u(\mathfrak{h}^+(x),\mathfrak{h}^-(y)), \ \text{if} \ x\in c_1\cup c_2 \\
u(\mathfrak{h}^+(x),\tilde{p}_2), \ \text{if} \ x\in c_3\cup c_4  \\
\{u(\mathfrak{h}^+(x),\tilde{p}_1),u(\mathfrak{h}^+(x),\tilde{p}_2)\}, \text{if} \ x\in \{x^+,y^+\}
\end{array} 
\right. .
$$

If $x=x^+$, the orbit $u(x^+,p )$  is mapped by $\mathfrak{h}$ to $u(a^+,\tilde{p}_1)$ and $u(c^+,\tilde{p}_2)$. If  $x=y^+$,  the orbit $u(y^+,p )$  is mapped by $\mathfrak{h}$ to $u(b^+,\tilde{p}_1)$ and $u(d^+,\tilde{p}_2)$.

Finally consider the closed set  $D=\{u(\mathfrak{h}^+(x),\tilde{p}_1),u(\mathfrak{h}^+(x),\tilde{p}_2)\}$. Define $\mathfrak{p}:\widetilde{N}\rightarrow N$ by
$$ \mathfrak{p}(u(\tilde{x},\tilde{y})) = \left\{
\begin{array}{ll}
\mathfrak{h}^{-1}(u(\tilde{x},\tilde{y})), \text{if} \ u(\tilde{x},\tilde{y})\notin D \\
u(\mathfrak{h}^{-1}(\tilde{x}),p), \text{if} \ u(\tilde{x},\tilde{y})\in D
\end{array} 
\right..
$$
See Figure~\ref{flow_duplo3}.



\end{enumerate}
%

\begin{figure}[H]
    \centering
        \includegraphics[width=0.65\textwidth]{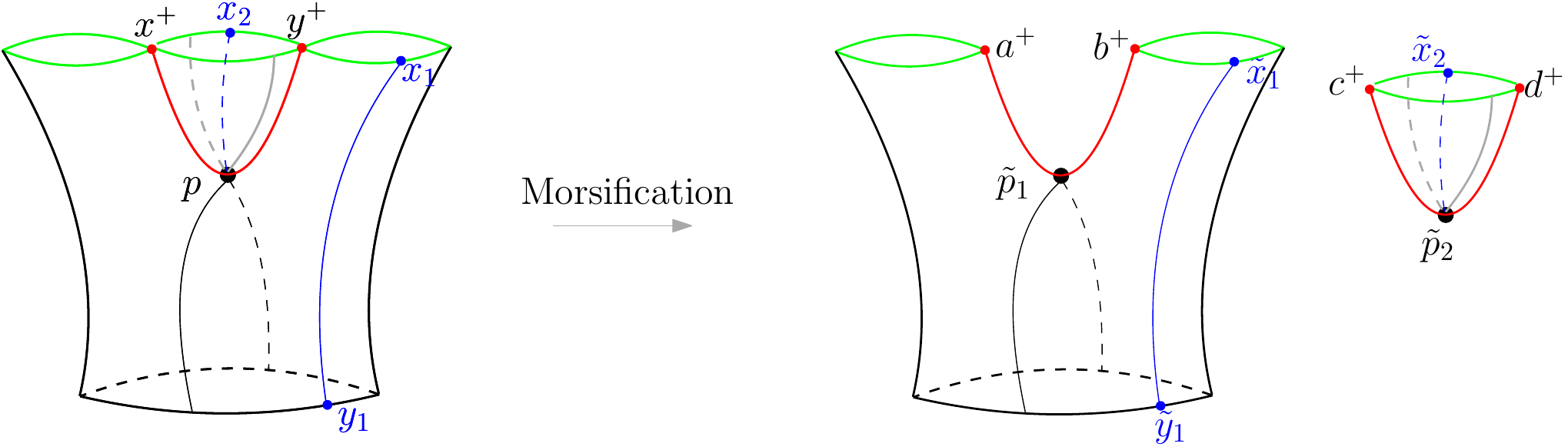}
   \caption{Isolating block for  a saddle double crossing singularity of $sa$-nature and its Morsification.}\label{flow_duplo3}
\end{figure}

\item[3)] Let $p$ be a  double crossing saddle singularity of $ss_s$-nature ($ss_u$-nature).
Let  $N$ be an isolating block for  $p$ and  consider the smooth isolating  block $\widetilde{N}$, containing two regular  saddle singularities   $\tilde{p}_1$ and $\tilde{p}_2$.

The isolating block  $\widetilde{N}$  has $\widetilde{N}^-$  and  $\widetilde{N}^+$ as exit and entering sets, respectively. We will consider the following cases:
\begin{itemize}
\item $\widetilde{N}^+$ (resp., $\widetilde{N}^-$) is connected;
\item $\widetilde{N}^+$  (resp., $\widetilde{N}^-$) is disconnected. 
\end{itemize}
\begin{enumerate}
\item[3.1)] Consider the isolating block $N$ in Figure~\ref{flow_duplo4} with exit set homeomorphic to four disjoint circles $N^{-}_{ij}$, where  
 $N^-_{1j}$ are the external boundaries and  $N^-_{2j}$ are the internal boundaries, $j=1,2$.
Let  $\widetilde{N}$ the a smooth isolating block formed by the disjoint union of two isolating blocks $\widetilde{N}_1$ and $\widetilde{N}_2$, for the regular saddle and attracting singularities $\tilde{p}_1$ and $\tilde{p}_2$, respectively.   In $\widetilde{N}_1$ the exit set is $\widetilde{N}^-_{1j}$ and the entering set  $\widetilde{N}^+_1$ and in  $\widetilde{N}_2$ the exit set is $\widetilde{N}^-_{2j}$ and the entering set is $\widetilde{N}^+_2$, homeomorphic to  $S^1$, $j=1,2$.

Let $x^+,y^+\in N^+$ be  points on the singular orbits 
$u(x^+,p)$ and $u(y^+,p)$.
Consider the external arcs  $C_1 = (y^+,x^+)$, $C_2 = (x^+,y^+)$ 
and the internal arcs
$c_1= (y^+,x^+)$, $c_2 = (x^+,y^+)$ in $N^+$  with counterclockwise orientation. Define the multivalued map  
$\mathfrak{h}^+:N^+\rightarrow\widetilde{N}^+$
by $\mathfrak{h}^+(x^+)=\{a^+,c^+\}$, $\mathfrak{h}^+(y^+)=\{b^+,d^+\}$ e 
$\mathfrak{h}^+({N^+\setminus\{x^+,y^+\}}) = \widetilde{N}^+\setminus\{a^+,b^+,c^+,d^+\}$, such that  $\mathfrak{h}^+$ restricted to $N^+\setminus\{x^+,y^+\}$ is an orientation preserving homeomorphism. Consider the trivial homeomorphisms  $h_{ij}:N^-_{ij}\rightarrow\widetilde{N}^-_{ij}$. Define the multivalued mpa  $\mathfrak{h}:N\rightarrow \widetilde{N}$ by 
$$ \mathfrak{h}(u(x,y)) = \left\{
\begin{array}{ll}
u(\mathfrak{h}^+(x),h_{ij}(y)), \text{if} \ x\in C_i\cup c_i \ \text{and} \ x\neq p \\
u(\tilde{p}_1,h_{1j}(y)), \text{if} \ y\in N^-_{1j} \ \text{and} \ x=p \\
u(\tilde{p}_2,h_{2j}(y)), \text{if} \ y\in N^-_{2j} \ \text{and} \ x=p \\
\{u(\mathfrak{h}^+(x),\tilde{p}_1),u(\mathfrak{h}^+(x),\tilde{p}_2)\}, \text{se} \ x\in \{x^+,y^+\}
\end{array} 
\right.
$$

If $x_1\in C_i$, the  orbit $u(x_1,y_1)$ is mapped by  $\mathfrak{h}$ to $u(\tilde{x}_1,\tilde{y_1})$,where $\tilde{x}_1=\mathfrak{h}^+(x_1)$ and  $\tilde{y}_1=h_{1j}(y_1)$. If $x_2\in c_i$, the orbit
$u(x_2,y_2)$ is mapped by  $\mathfrak{h}$ to $u(\tilde{x}_2,\tilde{y}_2)$, where $\tilde{x}_2=\mathfrak{h}^+(x_2)$ and $\tilde{y_2}=h_{2j}(y_2)$.

If $x\in N^-_{1j}$, the orbit $u(p,x)$ is mapped by  $\mathfrak{h}$ to $u(\tilde{p}_1,\tilde{x})$, where  $\tilde{x}=h_{1j}(x)$. If $x\in N^-_{2j}$, the orbit  $u(x,p)$ is mapped by $\mathfrak{h}$ to $u(\tilde{p}_2,\tilde{x})$, where $\tilde{x}=h_{2j}(x)$. 

If $x=x^+$, the orbit $u(x^+,p)$ is mapped by  $\mathfrak{h}$ to $u(a^+ ,\tilde{p}_1)$ and $u(c^+,\tilde{p}_2)$. Similarly, if  $x=y^+$.

Finally, consider the closed set   $D=\{u(\mathfrak{h}^+(x),\tilde{p}_1),u(\mathfrak{h}^+(x),\tilde{p}_2)\}$. Define $\mathfrak{p}:\widetilde{N}\rightarrow N$ by
$$ \mathfrak{p}(u(\tilde{x},\tilde{y})) = \left\{
\begin{array}{ll}
\mathfrak{h}^{-1}(u(\tilde{x},\tilde{y})), \text{se} \ u(\tilde{x},\tilde{y})\notin D \\
u(\mathfrak{h}^{-1}(\tilde{x}),p), \text{se} \ u(\tilde{x},\tilde{y})\in D
\end{array} 
\right.
$$
See Figure~\ref{flow_duplo4}.
\end{enumerate}
%

\begin{figure}[H]
    \centering
        \includegraphics[width=0.55\textwidth]{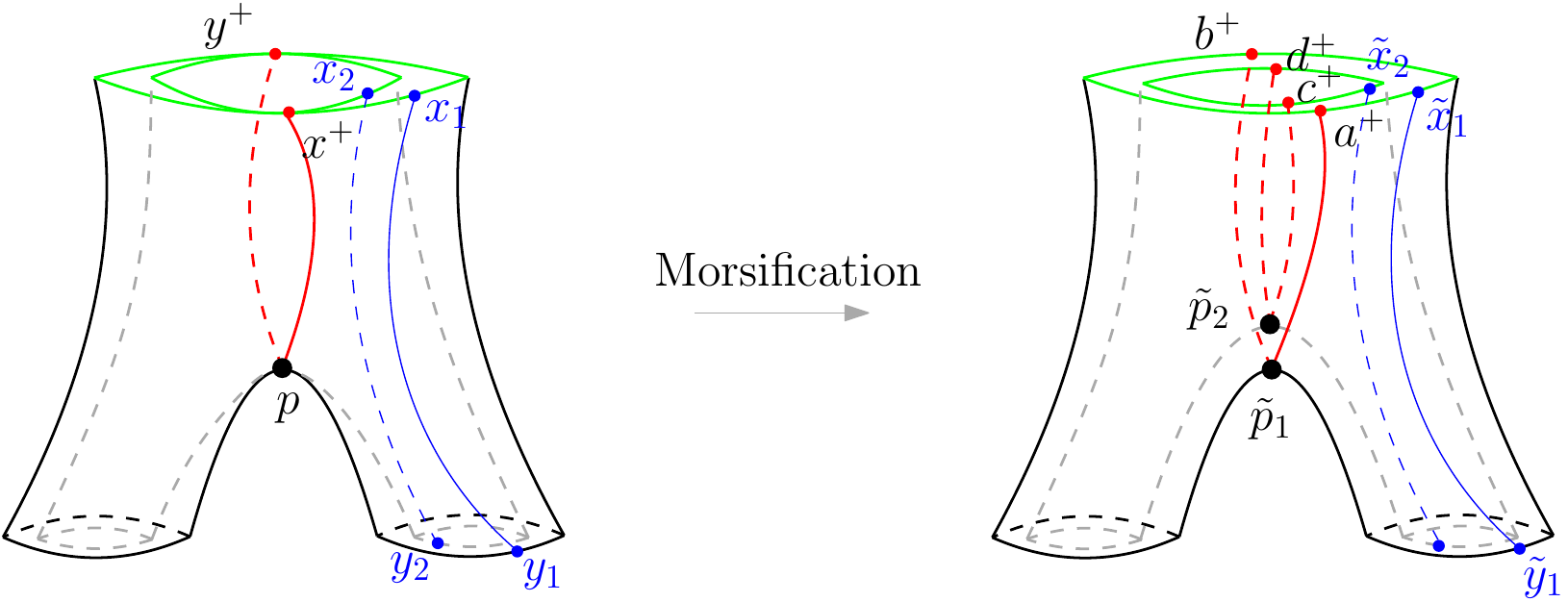}
   \caption{Isolating blocks for  a saddle double crossing singularity of $ss_s$-nature and its Morsification.}\label{flow_duplo4}
\end{figure}

\item[3.2)] This case follows with a similar proof. See Figure~\ref{flow_duplo5}.

\begin{figure}[H]
    \centering
        \includegraphics[width=0.55\textwidth]{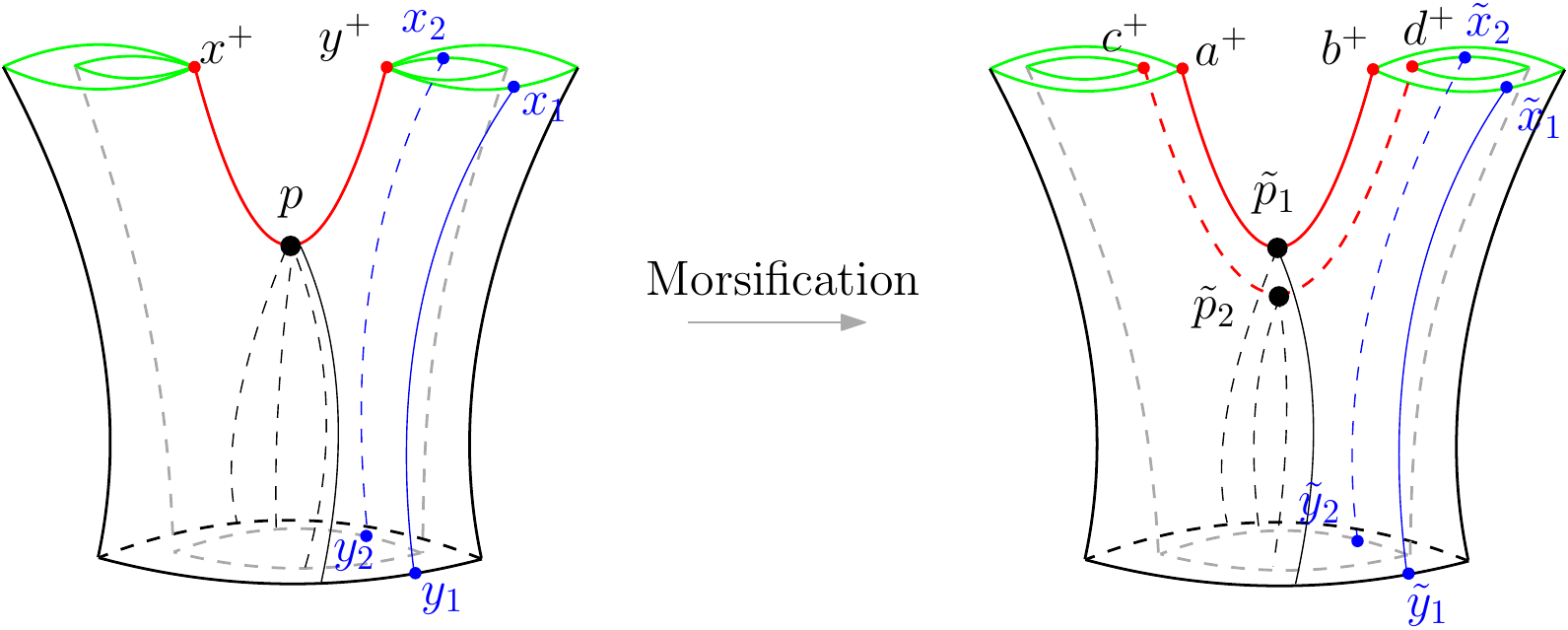}
   \caption{Isolating blocks for  a saddle double crossing singularity of $ss_s$-nature and its Morsification.}\label{flow_duplo5}
\end{figure}

\end{enumerate}
\end{proof}

\begin{proposition}\label{prop_triplo}
Let $M\in\mathfrak{M}(\mathcal{GT})$ be a singular 2-manifold, $X\in\mathfrak{X}_{\mathcal{GT}}(M)$ a GS-vector field on $M$ and $\varphi_{X}$ the GS-flow  associated to $X$.   Given a triple crossing singularity $p$ and  an isolating block $N$ for $p$, there  exists a Morsification $(\widetilde{N},\varphi_{\widetilde{X}})$  such that each singular orbit of $\varphi_{X}$ admits a triplication of  orbits in $N$.
\end{proposition}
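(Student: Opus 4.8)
The plan is to carry over, essentially verbatim, the strategy of the proof of Proposition~\ref{prop_duplo}, replacing the two sheets of a double crossing by the three sheets of a triple crossing and, accordingly, each \emph{duplication} of a singular orbit by a \emph{triplication}. As in that proof, I would organize the argument by the nature of $p$: first the attracting (resp.\ repelling) $n$-sheet triple crossing singularities, $n=2k+1$; then the saddle singularities of $ssa$ nature, split into the subcase in which the exit set $N^-$ is disconnected and the subcase in which it is connected; the $ssr$ case then follows by reversing the flow.

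For an attracting (resp.\ repelling) $n$-sheet triple crossing singularity I would let $\widetilde N$ be the disjoint union of the smooth isolating discs for the regular attractors (resp.\ repellers) $\tilde p_0$, $\tilde p_i^1$, $\tilde p_i^2$ associated with the discs $D_0,D_i^1,D_i^2$ entering the definition of $p$, so that the entering set $\widetilde N^+$ (resp.\ exit set $\widetilde N^-$) is a disjoint union of circles homeomorphic to $N^+$ (resp.\ $N^-$). On each boundary circle of $N$ the three families of identified diameters $\{d_{0,i}^1,d_{0,i}^2\}$, $\{d_i^1,\partial_i^1\}$, $\{d_i^2,\partial_i^2\}$ cut out arcs; I would send these arcs homeomorphically and orientation-preservingly onto the corresponding arcs of $\partial\widetilde N$, declaring that each boundary point lying on a singular orbit has three preimages, one on each of the three sheets meeting along that orbit. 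The multivalued map $\mathfrak h$ is then the extension of these boundary maps along the flow lines: a regular orbit $u(p,x)$ goes to the single orbit through the unique lift of $x$, while a singular orbit goes to the set of three orbits through its three lifts, one issuing from (resp.\ ending at) each of $\tilde p_0,\tilde p_i^1,\tilde p_i^2$. Finally $\mathfrak p$ is taken to be $\mathfrak h^{-1}$ off the closed \emph{triplication region} $D$ swept out by these triplicated orbits, and to collapse $D$ back onto the corresponding singular orbit of $\varphi_X$, exactly as in Proposition~\ref{prop_duplo}. One then checks directly that the four Morsification axioms hold and that $\widetilde N$ is an isolating block for $\varphi_{\widetilde X}$: $\widetilde N^-$ is closed, being a union of exit sets of smooth blocks, and $\mathfrak h\circ\mathfrak p=\mathrm{id}$ by construction.

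For a saddle triple crossing singularity of $ssa$ nature I would take $\widetilde N$ to be the disjoint union of smooth isolating blocks for two regular saddles $\tilde p_1,\tilde p_2$ and one regular attractor $\tilde p_3$, the number and first Betti numbers of the boundary components of $\widetilde N^\pm$ being dictated by the $\mathcal T$-rows of the weight table in Section~\ref{sec:GSVF}; the two singular orbits reaching $p$ then determine points on $N^+$ whose $\mathfrak h^+$-preimage splits into three points, one on each sheet, the incoming regular orbits being distributed among $\widetilde N_1,\widetilde N_2,\widetilde N_3$ according to which external or internal arc of $N^+$ they meet, and each singular orbit being triplicated into one orbit landing on each of $\tilde p_1,\tilde p_2,\tilde p_3$. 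The connected subcase is handled as in case~2.2 of Proposition~\ref{prop_duplo}, taking the saddle block to be a sphere with the appropriate number of holes. The main obstacle is not conceptual but combinatorial bookkeeping: for every admissible pairing of entering and exit boundary components listed for $\mathcal T$, one must verify that the arcs cut out on $\partial N$ by the now three families of identified diameters can be matched, orientation-preservingly, with arcs of $\partial\widetilde N$ so that $\widetilde N$ is a smooth surface of the correct type with $\widetilde N^-$ closed and with the Conley index of the maximal invariant set preserved. Passing from two sheets to three roughly doubles the number of arcs and subcases, but each verification is identical in spirit to the corresponding one in Proposition~\ref{prop_duplo}, and the Poincaré--Hopf equality of Theorem~\ref{PH-equality} is the consistency check that pins down the combinatorial data in each case.
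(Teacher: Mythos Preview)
Your proposal is correct and follows exactly the approach the paper takes: the paper's own proof consists of the single sentence ``The proof follows the same steps as the previous one,'' i.e.\ it defers entirely to the argument of Proposition~\ref{prop_duplo} with duplication replaced by triplication. Your write-up is in fact a more detailed elaboration of that same strategy than the paper itself provides.
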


\begin{proof}
The proof follows the same steps as the previous one.
\end{proof}

Combinatorially the isolating blocks for double and triple singularities together with its Morsification can be seen as the   Lyapunov (semi)graphs in Figures~\ref{fig_graph_morsef-double} and~\ref{fig_graph_morsef-triple}. By considering the opposite direction on the graphs in Figure~\ref{fig_graph_morsef-double}, we obtain the graphs for attracting  double crossing singularities.

\begin{figure}[H]
    \centering
        \includegraphics[width=0.8\textwidth]{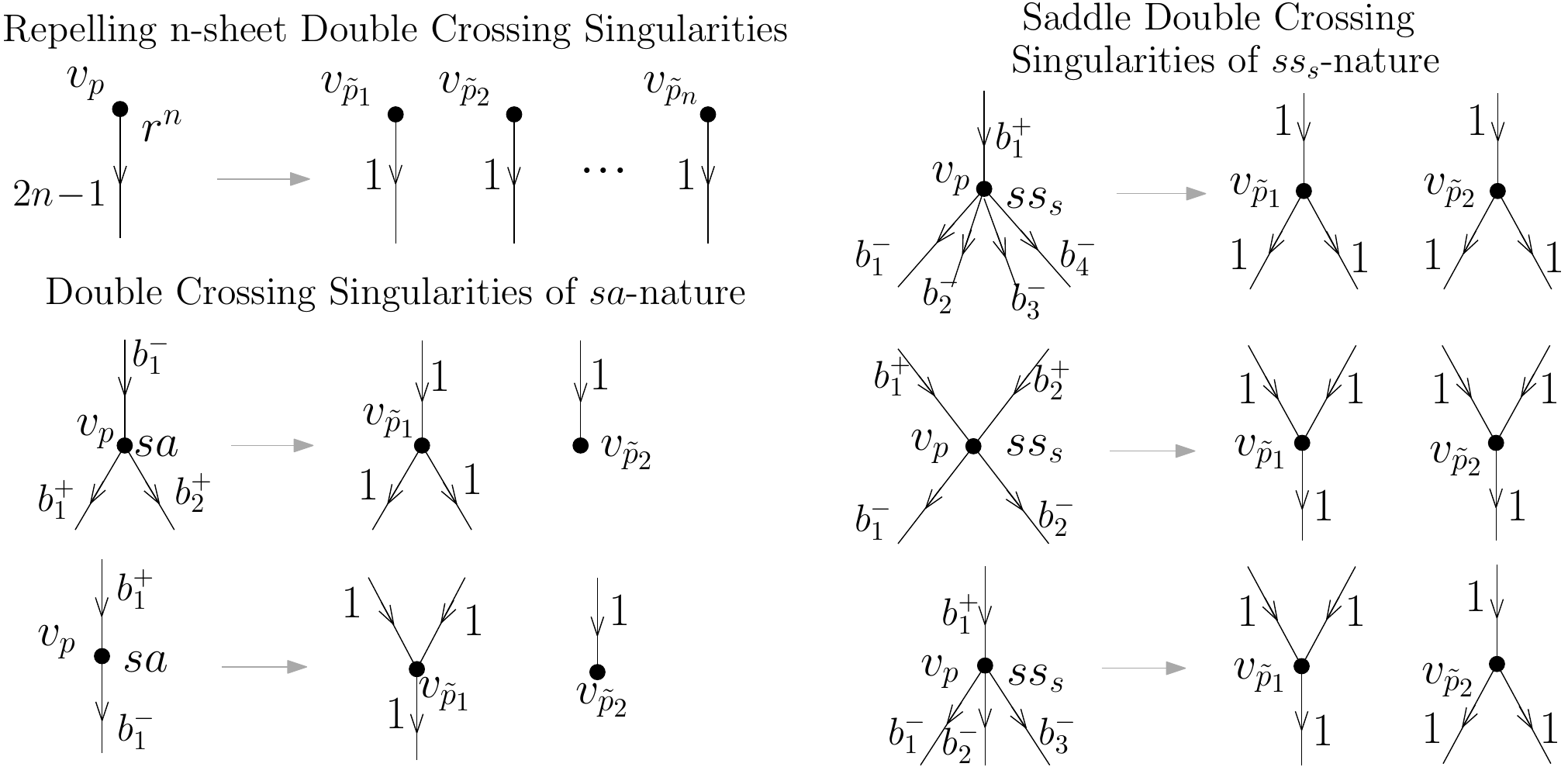}
    \caption{Morsification of a Lyapunov semigraph with vertex associated to a double crossing singularity.}\label{fig_graph_morsef-double}
\end{figure}

There are other isolating blocks for saddle double crossing singularities which are not consider in this work.

\begin{figure}[H]
    \centering
        \includegraphics[width=0.8\textwidth]{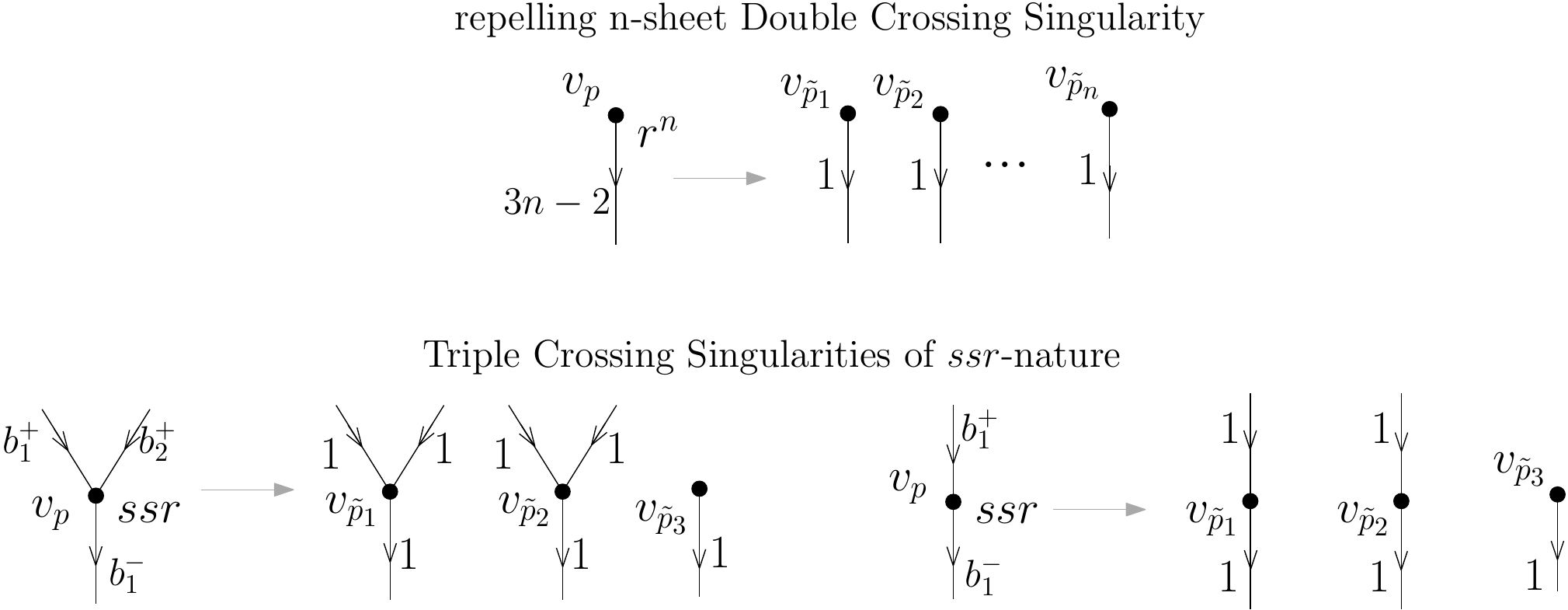}
    \caption{Morsification of a Lyapunov semigraph with vertex associated to a triple crossing singularity.}\label{fig_graph_morsef-triple}
\end{figure}

\section{Gutierrez-Sotomayor Chain Complex}\label{sec:GSCC}

\subsection{Morse Chain Complex}

Let $M$ be a smooth $n$-manifold.
A smooth function $f:M \rightarrow \mathbb{R}$ is called a {\it Morse function} if each critical point of $f$ is nondegenerate, i.e. the Hessian matrix of $f$ at  $p$, $H^{f}_{p}$,   is non-singular. The Morse index $ind_{f}(p)$ of a critical point $p$ is the dimension of the maximal subspace where $H^{f}_{p}$  is negative definite.   Moreover, if $M$ is a closed manifold, then the set of critical points of  a Morse function is finite. 

Fix  a Riemannian metric $g$ on $M$ and let $f:M\rightarrow \mathbb{R}$ be a smooth Morse function. The identity $g(\nabla f,\cdot) = df(\cdot)$
uniquely determines a gradient vector field $\nabla f$ on $M$. Denote the flow associated to $-\nabla f $ by $\varphi_{f}$, which is called the  {\it negative gradient flow}. 
The singularities of the vector field $-\nabla f$ correspond to the critical points of $f$.

A Morse function $f$ is called a {\it Morse-Smale function} if, for each $x,y \in Crit(f)$,   the unstable manifolds  of $\varphi_{f}$ at $x$, $W^{u}(x)$, and the stable manifold of $\varphi_{f}$ at $y$, $W^{s}(y)$, intersect transversally. We define $(f,g)$ as a Morse-Smale pair.
Hereafter, in this subsection, assume that $f$ is a Morse-Smale function, unless stated otherwise. In this case,  the negative gradient flow $\varphi_{f}$ is also called a {\it Morse flow}.

Given $x,y \in Crit(f)$, the {\it connecting manifold} of $x$ and $y$ is given by  
$ {\mathcal{M}}_{ xy}  := W^u(x) \cap W^s(y).$  The connecting manifold ${\mathcal{M}}_{ xy} $ is the set containing  all points $p\in M$ such that $\omega(p) = y$ and $\alpha(p)=x$.
The {\it moduli space} between $x$ and $y$ is defined by
${{\mathcal M}}^{x}_{y}(a):={\mathcal M}_{ xy} \cap f^{-1}(a),$ 
 where $a$ is a regular value between $f(x)$ and $f(y)$.  
The space ${{\mathcal M}}^{x}_{y}(a)$ is  a set of points that are in 1-1 correspondence to the orbits running from $x$ to $y$. For different choices of regular values  $a_{1}, a_{2}$ there is a natural identification between $\mathcal {M}^{x}_{y}(a_{1})$ and  $\mathcal {M}^{x}_{y}(a_{2})$ given by the flow. Hence, one uses the notation $\mathcal {M}^{x}_{y}$ for the moduli space.
Whenever $f$ is a Morse-Smale function, the connecting manifolds and the moduli spaces are orientable closed submanifolds of $M$ of 
 dimensions $ \mathrm{dim} ({\mathcal{M}}_{ xy} )= ind_f (x) - ind_f (y),  $ and $\  \mathrm{dim} ({{\mathcal M}}^{x}_{y}) = ind_f (x) - ind_f (y) - 1$, respectively.

Once orientations are chosen for  $W^{u}(x)$ and $W^{u}(y)$, these induce an orientation on ${\mathcal M}_ {xy}$  denoted by $[{\mathcal M}_ {xy}]_{ind}$, for $x,y\in Crit(f)$.
The procedure given in~\cite{Weber} to obtain this orientation is:
\begin{enumerate}
\item[(1)] If $ind_{f}(y)>0$ , then 
\begin{enumerate}
    \item   Let  ${\mathcal V}_{{\mathcal M}_ {xy}}W^{s}(y)$ be the  normal bundle of $W^{s}(y)$ restricted to ${\mathcal M}_{xy}$. Consider the fiber ${\mathcal V}_{y}W^{s}(y)$ with an orientation given by the isomorphism
    $$ T_{y}W^{u}(y) \oplus T_{y}W^{s}(y) \simeq T_{y}M \simeq {\mathcal V}_{y}W^{s}(y) \oplus T_{y}W^{s}(y).$$
    The orientation on the fiber at $y$ determines an orientation on the normal bundle  ${\mathcal V}_{{\mathcal M}_{xy}}W^{s}(y)$ restricted to the submanifold ${\mathcal M}_{xy}$.

    \item The orientation on ${\mathcal M}_ {xy}$ is determined by the isomorphism
 $T_{{\mathcal M}_{xy}}W^{u}(x) \simeq  T{\mathcal M}_{xy}  \oplus {\mathcal V}_{{\mathcal M}_{xy}}W^{s}(y). $
\end{enumerate}
\item[(2)] If $ind_{f}(y)=0$, then ${\mathcal
V}_{y}W^{s}(y)=0$. Hence, $T_{{\mathcal M}_{xy}}W^{u}(x) \simeq T{\mathcal M}_{xy}.$
\end{enumerate}
Note that there are no restrictions on the orientability of the manifold  $M$.

Given $x,y\in Crit(f)$ with  $ind_{f} (x) - ind_{f} (y) = 1$, let  $u \in
{{\mathcal M}}^{x}_{y}$. The {\it characteristic sign} $n_{u} $ of the orbit ${\mathcal O}(u)$ through $u$ is defined via the identity $[{\mathcal O}(u)]_{ind} = n_u [\dot{u}]$, where $[\dot{u} ]$ and $[{\mathcal O}(u)]_{ind}$ denote the orientations on $\mathcal{O}(u)$ induced by the flow and by $\mathcal{M}_{xy}$, respectively.  The \emph{intersection number} of $x$ and $y$ is defined by
$$n(x,y) = \displaystyle\sum_{u
\in {{\mathcal M}}^{x}_{y}} n_u.
$$

The intersection number between $x$ and $y$ counts, with sign, the flow lines from $x$ to $y$. In the literature there are other ways to count such flow lines with orientations, for example, see~\cite{BH1}.

Fix an arbitrary   orientation for the unstable manifolds $W^{u}(x)$, for each $x \in Crit(f)$, and denote by $Or$ the set of these choices. 
The {\it Morse graded group} $C= \{C_{k}(f)\}$  is defined as the free abelian groups generated by the critical points of $f$  and graded by their Morse index, i.e.,
$$ C_{k}(f) := \bigoplus_{x \in Crit_k(f)} \mathbb{Z} \langle x\rangle ,$$
where $\langle x\rangle$ denotes the pair consisting of the critical point $x$ of $f$ and the orientation chosen on $W^{u}(x)$.
The {\it Morse boundary operator}
$\partial_{k}(x) : {C}_{k}(f) \longrightarrow {C}_{k-1}(f)$
is given on a generator $x$ of ${C}_{k}(f)$ by
\begin{equation}\label{eq_ope_bor_def}
\partial_k\langle x\rangle := \displaystyle\sum_{y \in
Crit_{k-1}(f)} n(x,y) \langle y\rangle ,
\end{equation}
and it is extended by linearity to general chains.

The pair $(C_{\ast}(f),\partial_{\ast})$ is a chain complex, that is, $\partial $ is of degree $-1$ and $\partial \circ \partial = 0$. This chain complex is called a  {\it Morse chain complex}. 

The proof that  $\partial \circ \partial = 0$ follows  by analyzing  the  1-dimensional connected components of the moduli space $\mathcal{M}^{x}_{z}$, where $x\in Crit_{k}(f)$ and $z \in Crit_{k-2}(f)$, which can be either diffeomorphic to $(0,1)$ or to $S^1$, as in Figure~\ref{mod-space-morse}.
In~\cite{Weber}, it is proved that if $(u,v)$ and $(\tilde{u},\tilde{v})$ are two broken flow lines corresponding to the ends of a noncompact connected component of $\mathcal{M}^x_z$, then the {\it null cycle condition} is satisfied, i.e. $n_un_v + n_{\tilde{u}}n_{\tilde{v}}=0$.

\begin{figure}[h]
\centering
\includegraphics[scale=1]{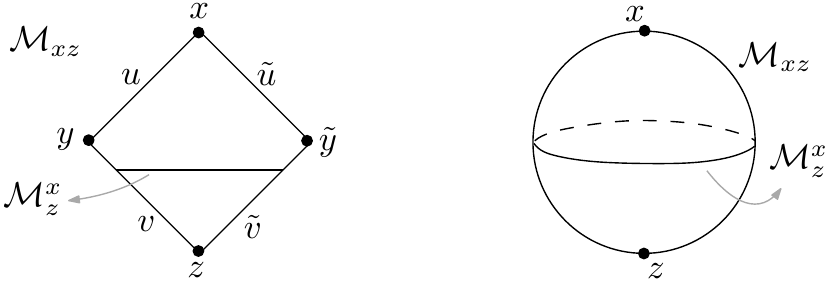}
\caption{Possible connected components of $\mathcal{M}_{xz}$ for  $x\in Crit_{k}(f)$ and $z \in Crit{k-2}(f)$. }\label{mod-space-morse}
\end{figure}

The {\it Morse homology groups} with integer coefficients are defined by
$$ HM_{k}(M,f,g,Or;\mathbb{Z}) = \dfrac{\Ker \ \partial_{k} }{\text{Im}  \ \partial_{k+1}}, \quad \forall k\in \mathbb{Z}.
$$
In~\cite{Weber}, it was proved  that,  for two choices of Morse-Smale pairs $(f^{1},g^{1})$ and $(f^{2},g^{2})$ with  orientations $Or^{1}$ and $Or^{2}$ on all unstable manifolds, the associated Morse homology groups $HM_{k}(M,f^{1},g^{1},Or^{1};\mathbb{Z})$ and $HM_{k}(M,f^{2},g^{2},Or^{2};\mathbb{Z})$ are naturally  isomorphic, for all $k\in \mathbb{Z}$. Hence, this homology is simple denoted by $HM_{\ast}(M,\mathbb{Z})$. Moreover, one has that 
$ HM_{\ast}(M;\mathbb{Z}) \cong H^{sing}(M;\mathbb{Z}),$
i.e., the Morse homology of $M$ is isomorphic to the singular homology of $M$.

\subsection{Gutierrez-Sotomayor Chain Complex}

Let $M\in\mathfrak{M}(\mathcal{GS})$ be a compact singular 2-manifold, $X\in\mathfrak{X}_{\mathcal{GS}}(M)$ a GS-vector field on $M$ and $\varphi_{X}$
 the Gutierrez-Sotomayor flow on $M$ associated to $X$. In this section, one  defines a chain complex for a given GS-flow analogous to the Morse chain complex of a Morse-Smale flow. We will start by obtaining the characteristic signs of the flow lines on $M$ from the characteristic signs of the flow lines on the smooth surface $\widetilde{M}$ obtained by a Morsification process. Subsequently, it is possible to define a GS-chain group and a GS-boundary map, as in the Definition~\ref{def_complex_GS}. 


Given $x,y \in Sing(X)$, define the \emph{\textbf{connecting manifold}} of $x$ and $y$ by
$${\mathcal{M}}_{ xy} := {\mathcal M}_ {xy}(X,M) := W^u(x) \cap W^s(y),$$
where $W^s$, $W^u$ are the stable and unstable sets of the singularity, respectively. In other words, the connecting manifold ${\mathcal{M}}_{ xy} $ is composed by the points $p \in M$ such that $\omega(p) = y$ and $\alpha(p)=x$.
The \emph{\textbf{moduli space}} between the singularities $x$ and $y$ is defined as the quotient of the connecting manifold ${\mathcal{M}}_{ xy}$ by the natural action of $\mathbb{R}$ on the flow lines, i.e.,
$$ {\mathcal{M}}_{y}^{x} := {\mathcal M}_{xy} / \mathbb{R}.$$
%

Define the nature numbers of a GS-singularity as follows:
\begin{definition} Denote by 
$Sing(X)$  the set of singularities of a vector field $X\in\mathfrak{X}_{\mathcal{GS}}(M)$.
Given $p\in Sing(X)$, 
define $\eta_k(p)$ as the $k$-th \textbf{nature number} of $p$, where:
\begin{itemize}
\item $k=2$ represents the repelling nature $r$;
\item $k=1$ represents the saddle nature $s$;
\item $k=0$ represents the attracting nature $a$.
\end{itemize}
\end{definition} 

Two singularities  $x$  and $y$ are said to be \emph{\textbf{consecutive}}  if $\eta_k(x)$ and $\eta_{k-1}(y)$ are both non zero, for some $k=1,2$.

For example, if $p$ is a  triple crossing singularity of  $ssa$ nature, we have that $\eta_2(p)=0$, $\eta_1(p)=2$ e $\eta_0(p)=1$.

Note that in the Morse-Smale case, each singularity of  index $k$ has only one nature, implying that it contributes with only one generator  for the $k$-th Morse chain group.  
The same holds for  cone singularities and  Whitney singularities.  However, this is not the case for the double and triple crossing singularities, since they have at least two natures. Hence, these type of singularities will have more than one generator  in the GS-chain groups associated with them. 
Moreover, for a  double or triple crossing singularity $x$,  the singularities associated to $x$ by the Morsification process are in one-to-one correspondence with the collection of nature numbers of $x$.

In order to distinguish the generators provided by  a GS-singularity $x$, we denote the generators of the nature of a singularity $x$ by 
$$\{h_k^i(x) \mid \ i=1,\dots ,\eta_k(x),  k=0,1,2\},$$
where $h_k^i(x)$ represents a generator of $k$-nature of the singularity $x$. The advantage  of this notation is that this set  $\{h_k^i(x) \mid  x\in Sing(X), \ i=1,\dots ,\eta_k(x)\}$ will generate the  $k$-chain group of  the GS-chain complex that we define below.

\begin{definition}\label{def_complex_GS}     
Given a GS-flow $\varphi_X$, the \textbf{Gutierrez-Sotomayor chain group}  $C^{\mathcal{GS}}_k(M,X) $ with integer coefficients  graded by the nature of the singularities is the free abelian group generated by the set  of GS-singularities $Sing(X)$ of the vector field  $X$, i.e.:
$$ C^{\mathcal{GS}}_k(M,X) := \bigoplus_{x \in Sing(X)} \Bigg( \bigoplus_{i=1}^{\eta_k(x)}\mathbb{Z} \langle h_k^i(x)\rangle \Bigg), \ \ \ \   k \in\mathbb{Z},$$
where $h_k^i(x)$ denotes a generator associated to the $k$-nature of the  singularity $x$.
The $ k$-th \textbf{Gutierrez-Sotomayor boundary map},
$\Delta^{\mathcal{GS}}_{k}:{C}^{\mathcal{GS}}_{k}(M,X) \rightarrow {C}^{\mathcal{GS}}_{k-1}(M,X)$, is given on a generator $h_k^i(x)$ by
$$\Delta^{\mathcal{GS}}_k\langle h_k^i(x)\rangle := \displaystyle\sum_{y \in Sing(X)}\Bigg(\displaystyle\sum_{j=1}^{\eta_{k-1}(y)} n(h_k^i(x),h_{k-1}^j(y)) \langle h_{k-1}^j(y)\rangle \Bigg),$$
and it is extended by linearity to general chains.
\end{definition}

The overarching idea is to make use of the Morsification of the GS-flow defined in Section~\ref{sec:MGSVFIB}, in order to define a GS-intersection number from the   Morse counterpart.  Intersection numbers depends heavily on the smooth structure of the manifold, i.e, the existence of tangent and normal bundles.
The number $n(h_k^i(x),h_{k-1}^j(y))$ is called  the GS-intersection number  of the  generators $h_k^i(x)$ and $h_{k-1}^j(y)$ and will be defined in the following subsections as the sum $\displaystyle\sum n_u,$ over all flow lines  $u\in \mathcal{M}^{h_k^i(x)}_{h_{k-1}^j(y)}$, where $n_u$ is the GS-characteristic sign  of the flow line  $u$. This process relies on the Morsification process.

Furthermore, in the following subsections, we prove that, given $M\in \mathfrak{M}(\mathcal{GS}) $ and $ X\in \mathfrak{X}_{\mathcal{GS}}(M)$, the pair  $(C^{\mathcal{GS}}_*(M,X) , \Delta^{\mathcal{GS}}_*)$ 
is  a chain complex which we refer to as a \textbf{\emph{Gutierrez-Sotomayor chain complex}}. 


Throughout this section, a GS-chain complex will be defined for flows associated to vector fields  restricted to flows associated to vector fields $X$ in $\mathfrak{X}_{\mathcal{GC}}(M)$, $\mathfrak{X}_{\mathcal{GW}}(M)$, $\mathfrak{X}_{\mathcal{GD}}(M)$  and $\mathfrak{X}_{\mathcal{GT}}(M)$.

\subsection{Gutierrez-Sotomayor complex for cone  singularities}

In the previous section, we defined the Morsification process of a given   GS-flow containing only regular and cone type singularities on  a singular 2-manifold $M\in\mathfrak{M}(\mathcal{GC})$  in order to obtain a smooth manifold   $\widetilde{M}\in\mathfrak{M}(\mathcal{R})$ with a smooth Morse flow $\widetilde{\varphi}_{\widetilde{X}}$  on it.
Therefore, one can attach to each flow line of $\widetilde{\varphi}_{\widetilde{X}}$  a characteristic sign. Now, the idea is to transfer  these signs to the corresponding flow lines of the singular flow $\varphi_X$.

We now define the transfer process of  characteristic signs from the Morse setting to the GS-setting.

\begin{definition}[Characteristic signs of flows lines of $\mathfrak{X}_{\mathcal{GC}}(M)$]\label{characteristicGS} Consider  $x,y \in Sing(X)$ singularities of consecutive natures, where  $X\in \mathfrak{X}_{\mathcal{GC}}(M)$.
The \textbf{GS-characteristic sign} $n_{u}$ of a flow line $u \in \mathcal{M}_{xy} $ is defined as follows:
\begin{enumerate}
%
\item Let   $x$ be a singularity of repeller nature and $y$ a cone singularity  of saddle nature. Denote by $\tilde{y},\tilde{y}'$ the singularities associated to $y$ by the Morsification process, and by   $\tilde{x}$ the singularity of repeller nature associated to $x$.  It is easy to see that $\mathcal{M}^{x}_{y}\approx\widetilde{\mathcal{M}}^{\tilde{x}}_{\tilde{y}}\approx\widetilde{\mathcal{M}}^{\tilde{x}}_{\tilde{y}'}$.
 Hence, given $u\in\mathcal{M}^{x}_{y}$ there are corresponding flow lines   $\tilde{u}\in\widetilde{\mathcal{M}}^{\tilde{x}}_{\tilde{y}}$ and $\tilde{u}'\in\widetilde{\mathcal{M}}^{\tilde{x}}_{\tilde{y}'}$ in the Morsified flow.
Define
$$ n_u :=
\left\{
\begin{array}{ll}
n_{\tilde{u}}, & \text{if}  \ \ n_{\tilde{u}}=n_{\tilde{u}'}\\
\ 0, & \text{if}  \ \  n_{\tilde{u}}\neq n_{\tilde{u}'}\\
\end{array}
\right. 
$$
\item Let   $x$ be a cone singularity  of saddle nature and $y$ a singularity of attractor nature. Denote by $\tilde{x},\tilde{x}'$ the singularities associated to $x$ by the Morsification process, and by   $\tilde{y}$ the singularity of repeller nature associated to $y$.  It is easy to see that $\mathcal{M}^{x}_{y}\approx\widetilde{\mathcal{M}}^{\tilde{x}}_{\tilde{y}}\approx\widetilde{\mathcal{M}}^{\tilde{x}'}_{\tilde{y}}$.
Hence, given $u\in\mathcal{M}^{x}_{y}$ there are corresponding flow lines $\tilde{u}\in\widetilde{\mathcal{M}}^{\tilde{x}}_{\tilde{y}}$ and $\tilde{u}'\in\widetilde{\mathcal{M}}^{\tilde{x}'}_{\tilde{y}}$ in the Morsified flow.
Define
$$ n_u :=
\left\{
\begin{array}{ll}
n_{\tilde{u}}, & \text{if}  \ \ n_{\tilde{u}}=n_{\tilde{u}'}\\
\ 0, & \text{if}  \ \  n_{\tilde{u}}\neq n_{\tilde{u}'}\\
\end{array}
\right. 
$$
\item For the other  cases, one has that $\mathcal{M}^{x}_{y}\approx\widetilde{\mathcal{M}}^{\tilde{x}}_{\tilde{y}}$. For each
$u\in\mathcal{M}^{x}_{y}$, define  $n_u := n_{\tilde{u}}$. 
\end{enumerate}
\end{definition}

Once the characteristic signs are well defined for the flow lines of $\varphi_{X}$  with $X\in\mathfrak{X}_{\mathcal{GC}}(M)$,  the \emph{\textbf{GS-intersection number}} between consecutive singularities $x$ and $y$ is defined as
$$n(x,y):=\sum_{u\in\mathcal{M}^x_y} n_u,$$
This sum is finite since the moduli space is compact.
i.e., the sum of the characteristic signs of the flow lines connecting $x$ to $y$. Thus, the GS-boundary map $
 \Delta_*^{\mathcal{GC}}$, described  in  Definition~\ref{def_complex_GS}, is well defined for GS-flows  on  
$M\in\mathfrak{M}(\mathcal{GC})$. 
Now, we will prove that $(C^{\mathcal{GC}}_*(M,X),\Delta_*^{\mathcal{GC}})$ is in fact a chain complex for $X\in\mathfrak{X}_{\mathcal{GC}}(M)$ and $M\in\mathfrak{M}(\mathcal{GC})$.

\begin{lemma}\label{lemmagrafoGS}
Let $G_{\mathcal{GC}}$ be the  graph of  the  matrix associated to the GS-boundary map $\Delta^{\mathcal{GC}}$ and let  $y\in Sing(X)$ be a cone singularity of saddle nature.
Therefore, the incidence degree of the vertex  $v_y$ is null or $2$, and in the latter case, the two edges are both positively or negatively incident in $v_y$.  Consequently, there is no cycle in $G({\Delta^\mathcal{GC}})$ containing  $v_y$.
\end{lemma}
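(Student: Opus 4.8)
The plan is to exploit the explicit description of the Morsification of a saddle cone singularity given in Proposition~\ref{prop_cone} (cases 2.1 and 2.2), together with the transfer rule for characteristic signs in Definition~\ref{characteristicGS}, items (1) and (2). Recall that when $y$ is a cone singularity of saddle nature, its Morsification $(\widetilde{N},\varphi_{\widetilde{X}})$ contains exactly two regular singularities, $\tilde{y}$ of saddle nature and $\tilde{y}'$ of attractor (or, in the reversed-flow picture, repeller) nature; equivalently, $\eta_1(y)=1$ and $\eta_1(y)$ contributes a single generator $h_1^1(y)$, while in the Morse complex of $\widetilde{M}$ the point $y$ ``splits'' into a saddle and an extremum. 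The key structural fact is that the single saddle generator $h_1^1(y)$ of the GS-complex receives (as a $1$-chain boundary target) flow lines only from repellers, and emits flow lines only to attractors, with no flow lines to or from another saddle — because a saddle cone singularity, by its very nature (two stable and two unstable separatrices, with the extra stratum being a cone point), has $\mathcal{M}_{xy}$ nonempty only when $x$ is a repeller and $y$ is this saddle, or when $x$ is this saddle and $y$ is an attractor.

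First I would translate ``incidence degree of $v_y$ in $G_{\mathcal{GC}}$'' into a statement about the matrix of $\Delta^{\mathcal{GC}}$: the edges incident to $v_y$ correspond to nonzero entries $n(h_2^i(x),h_1^1(y))$ (incoming, from repellers $x$) and $n(h_1^1(y),h_0^j(z))$ (outgoing, to attractors $z$). I would argue case by case using the Poincaré–Hopf table for $\mathcal{C}$-type saddles ($e_v^-=e_v^+=1$ with $b_1^-=b_1^+$, or $e_v^-=e_v^+=2$ with $b_1^-+b_2^-=b_1^++b_2^+$): the isolating block $N$ for a saddle cone has exactly one unstable separatrix hitting each of the (one or two) exit boundary components and one stable separatrix hitting each entering component, so counting connecting orbits shows that the total number of flow-line bundles incident at $v_y$ is exactly $2$ (one in, one out) in the connected-boundary case and exactly $2$ again in the disconnected case after the identifications forced by $W^u(p)\cap N^-_i$ and $W^s(p)\cap N^+_i$ being single points. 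Then I would invoke Definition~\ref{characteristicGS}(1) and (2): the GS-characteristic sign $n_u$ of such a flow line equals $n_{\tilde u}$ precisely when $n_{\tilde u}=n_{\tilde u'}$ and is $0$ otherwise; since $\tilde u$ and $\tilde u'$ are the two lifts joining the \emph{same} pair of Morsified singularities through the duplicated region $D_{ij}$, an orientation bookkeeping on $\widetilde{N}$ (using that the homeomorphisms $h_j^\pm$ and $\varphi_{ij}$ are taken orientation-preserving) forces $n_{\tilde u}=n_{\tilde u'}$, hence $n_u=\pm 1$ and the edge is genuinely present; conversely both incident edges carry the \emph{same} sign because both lifts pick up the same relative orientation, which gives the ``both positively or both negatively incident'' clause.

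From ``incidence degree $0$ or $2$ at every saddle-cone vertex, and when $2$ the two edges have the same sign'' the no-cycle conclusion is then essentially graph-theoretic: any cycle through $v_y$ must enter along one incident edge and leave along the other, but both incident edges point the same way (both into $v_y$ or both out of $v_y$) in the directed graph $G(\Delta^{\mathcal{GC}})$ induced by the grading degree $-1$ of $\Delta^{\mathcal{GC}}$, so no directed path can traverse $v_y$; since $\Delta^{\mathcal{GC}}$ lowers the nature grading by one, a cycle in $G(\Delta^{\mathcal{GC}})$ would in any case have to be a directed cycle, which is impossible in a graded complex, and the refined incidence statement is what will later be needed to run the cancellation arguments. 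I would close by noting $\Delta^{\mathcal{GC}}\circ\Delta^{\mathcal{GC}}=0$ is not needed here, only the local count.

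The main obstacle I anticipate is the orientation bookkeeping showing $n_{\tilde u}=n_{\tilde u'}$ and that the two incident edges carry a common sign: the Morsification maps $\mathfrak h$ are multivalued and the duplicated orbits live in the regions $D_i$, $D_{ij}$, so one must carefully track how the chosen orientations of the unstable manifolds $W^u(\tilde y)$, $W^u(\tilde y')$ on $\widetilde{N}$ interact with the flow-induced orientations along both lifts. This is the step where a case-by-case check over the entries of the $\mathcal{C}$-row of the Poincaré–Hopf table (saddle with $e_v^-=e_v^+=1$ versus $e_v^-=e_v^+=2$, and their reversed-flow counterparts) is unavoidable, and where Figure~\ref{flow_cone_s} and Figure~\ref{flow_cone_s2} must be consulted to fix the local pictures.
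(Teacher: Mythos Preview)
Your proposal rests on two mistaken claims about the Morsification of a saddle cone singularity, and as a result misses the mechanism that actually drives the lemma.

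First, the Morsification of a saddle cone $y$ does \emph{not} produce a saddle and an extremum. In both cases 2.1 and 2.2 of Proposition~\ref{prop_cone}, the Morsified block $\widetilde{N}$ contains two regular \emph{saddles} $\tilde y$ and $\tilde y'$ (a sphere with four holes, or a torus with two holes, each carrying two saddles). This is why Definition~\ref{characteristicGS}(1) can speak of $\widetilde{\mathcal{M}}^{\tilde x}_{\tilde y}$ and $\widetilde{\mathcal{M}}^{\tilde x}_{\tilde y'}$ both being nonempty moduli spaces of consecutive index: a single flow line $u\in\mathcal{M}^x_y$ lifts to \emph{two} genuine saddle-connections $\tilde u,\tilde u'$, one into each Morsified saddle. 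Your picture of a saddle paired with an attractor/repeller is the Morsification of an $n$-sheet \emph{repelling} or \emph{attracting} cone, not of a saddle cone.

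Second, and more seriously, your assertion that orientation bookkeeping ``forces $n_{\tilde u}=n_{\tilde u'}$, hence $n_u=\pm 1$'' is exactly backwards. The signs $n_{\tilde u}$ and $n_{\tilde u'}$ depend on the independently chosen orientations of $W^u(\tilde y)$ and $W^u(\tilde y')$, and there is no local reason for them to agree. The entire content of the lemma is that \emph{some} of these transferred signs are forced to vanish, reducing the a priori degree from $4$ to at most $2$. The paper's argument supposes for contradiction that the degree is $4$ (two incoming edges from repellers $x_1,x_2$ and two outgoing edges to attractors $z_1,z_2$), then applies the Morse-theoretic null cycle condition $n_{\tilde u_i}n_{\tilde v_i}+n_{\tilde u'_i}n_{\tilde v'_i}=0$ from~\cite{Weber} to the noncompact components of $\widetilde{\mathcal{M}}_{\tilde x_i\tilde z_i}$ in the Morsified flow. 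This forces, for each $i$, either $n_{\tilde u_i}\neq n_{\tilde u'_i}$ or $n_{\tilde v_i}\neq n_{\tilde v'_i}$, so by Definition~\ref{characteristicGS} one of $n_{u_i},n_{v_i}$ is zero---contradicting degree $4$. A further sign comparison (using that the two unstable separatrices of each $\tilde y,\tilde y'$ carry opposite signs) shows that when the degree is $2$ the surviving edges are both incoming or both outgoing. Your purely local count of separatrices cannot see this: the reduction from $4$ to $2$ is a \emph{global} consequence of the null cycle condition on $\widetilde{M}$, not of the isolating-block geometry alone.
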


\begin{proof}
Let $y\in Sing(X)$   be a cone singularity of saddle nature. Since, $y$ has saddle nature, the positively and negatively incident edges appear in pairs and up to two positively incident edges and two negatively incident edges. Hence, the incident degree of  $v_y$ belongs to the set $\{0,2,4\}$.

Suppose that the incident degree of the vertex $v_y$ is four.  This is also the case for the vertices  $v_{\tilde{y}}$ and  $v_{\tilde{y}'}$ of the Morsified boundary map $\widetilde{\Delta}$, where  $\tilde{y},\tilde{y}'$ are the singularities associated to $y$ by the Morsification process. Therefore, there are  two vertices 
$v_{\tilde{x}_1}$, $v_{\tilde{x}_2}$     corresponding to  repeller singularities and two vertices 
$v_{\tilde{z}_1}$, $v_{\tilde{z}_2}$ corresponding to attractor singularities which   belong to  distinct cycles in $G(\widetilde{\Delta})$ involving the vertices  $v_{\tilde{y}}$ and  $v_{\tilde{y}'}$. Moreover, these cycles can be chosen matching the ends of noncompact connected components of  the moduli spaces $\widetilde{\mathcal{M}}_{\tilde{x}_1\tilde{z}_1}$ and $\widetilde{\mathcal{M}}_{\tilde{x}_2\tilde{z}_2}$. Let   $(\tilde{u}_i,\tilde{v}_i)\in\widetilde{\mathcal{M}}^{\tilde{x}_i}_{\tilde{y}}\times\widetilde{\mathcal{M}}^{\tilde{y}}_{\tilde{z}_i}$ and 
$(\tilde{u}'_i,\tilde{v}'_i)\in\widetilde{\mathcal{M}}^{\tilde{x}_i}_{\tilde{y}'}\times\widetilde{\mathcal{M}}^{\tilde{y}'}_{\tilde{z}_i}$, 
be the broken orbits that correspond the ends of such component of  $\widetilde{\mathcal{M}}_{\tilde{x}_i\tilde{z}_i}$,  for $i=1,2$, see Figure~\ref{figura_lema}.

\begin{figure}[H]
    \centering
        \includegraphics[width=0.45\textwidth]{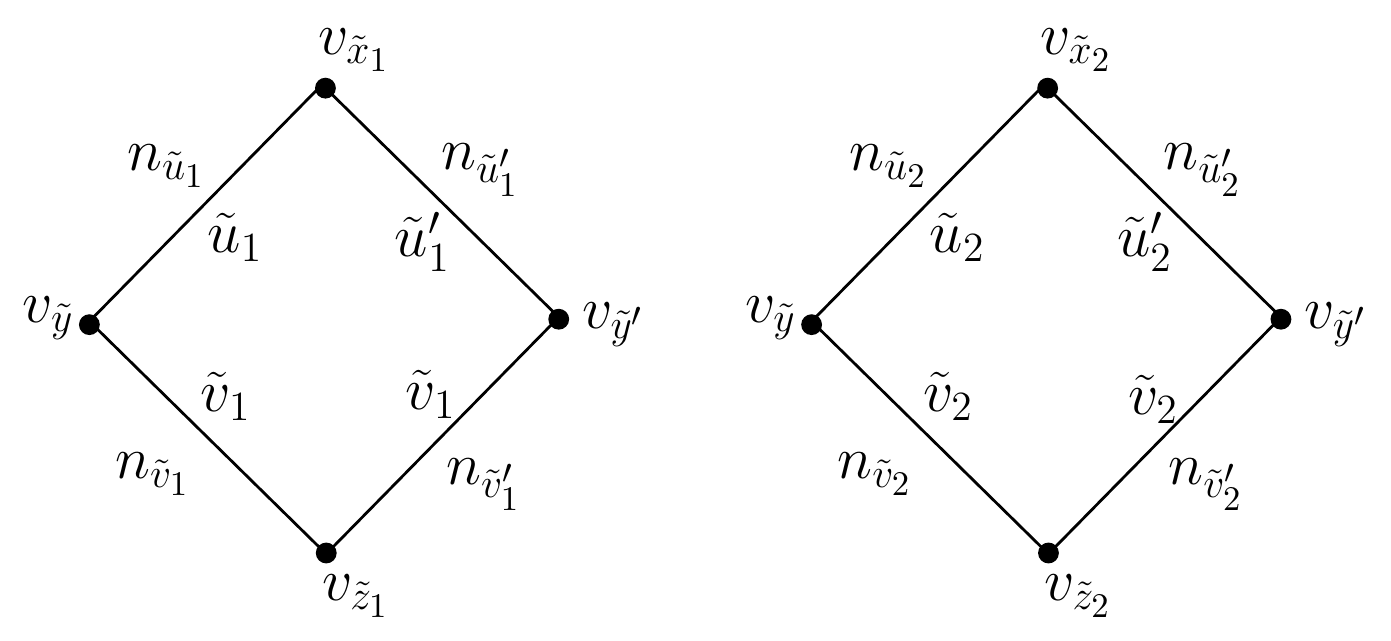} \vspace{-0.3cm}
   \caption{Connected components of  $\widetilde{\mathcal{M}}_{\tilde{x}_1\tilde{z}_1}$ and $\widetilde{\mathcal{M}}_{\tilde{x}_2\tilde{z}_2}$. }\label{figura_lema}
\end{figure}

Since these spaces correspond to moduli spaces of order two of a Morse flow, then 
$n_{\tilde{u}_i}.n_{\tilde{v}_i}+n_{\tilde{u}'_i}.n_{\tilde{v}'_i}=0$, for $i=1,2$, as proved in~\cite{Weber}. Thus, for each $i=1,2$, there are exactly two possibilities for the characteristic signs:
$$ \left\{
\begin{array}{ll}
n_{\tilde{u}_i} = n_{\tilde{u}'_i} \\
n_{\tilde{v}_i} \neq n_{\tilde{v}'_i}
\end{array}
\right. 
\hspace{.5cm}
or
\hspace{.7cm}
\left\{
\begin{array}{ll}
n_{\tilde{u}_i} \neq n_{\tilde{u}'_i} \\
n_{\tilde{v}_i} = n_{\tilde{v}'_i}
\end{array}.
\right. 
$$

If the first (resp., second) possibility holds, then by the sign transfer process, one has that
$n_{v_i} =0$   (resp., $n_{u_i} =0$). See Figure~\ref{figura_lema2}. In any case, it contradicts  the assumption that there are four incident edges to the vertex $v_y$ of
$G(\Delta^{\mathcal{GC}})$. Hence, the incidence degree of $v_y$  is either $0$ or $2$.

\begin{figure}[H]
    \centering
        \includegraphics[width=0.40\textwidth]{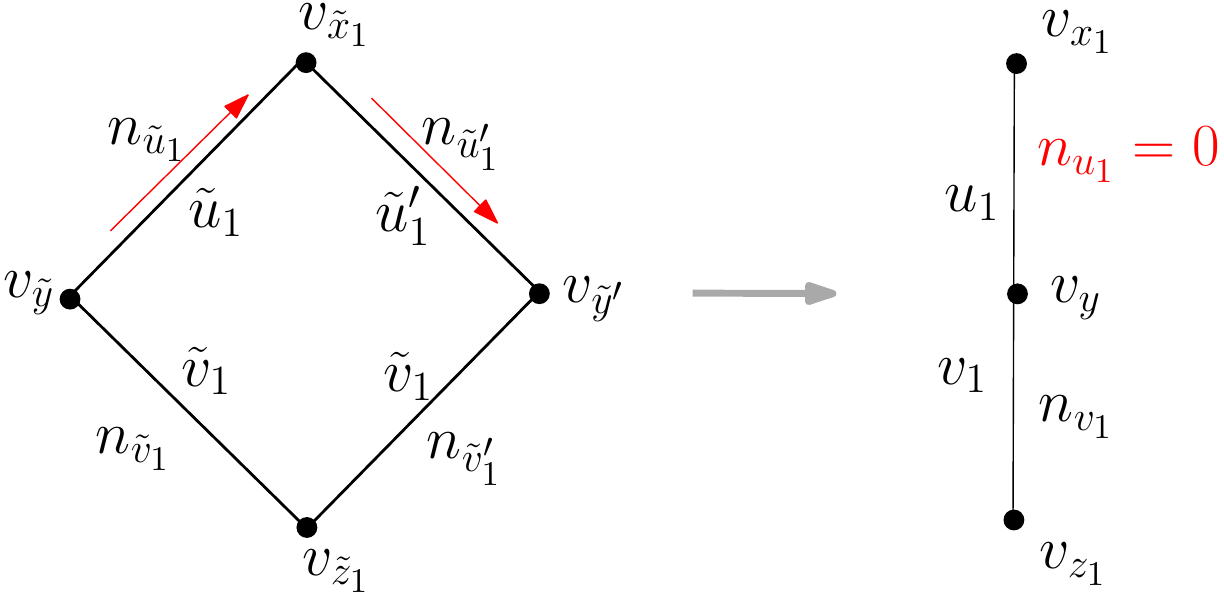}\hspace{2cm}
           \includegraphics[width=0.40\textwidth]{fig_lemma1-2.pdf}
        \vspace{-0.3cm}
        
   \caption{Transfer of characteristic signs of the ends of the noncompact connected components of  $\widetilde{\mathcal{M}}_{\tilde{x}_1\tilde{z}_1}$ and $\widetilde{\mathcal{M}}_{\tilde{x}_2\tilde{z}_2}$ }\label{figura_lema2}
\end{figure}

Now, we need to prove that if the incidence degree of  $v_y$  is $2$ then both edges are positively incident or both are negatively incident to $v_y$. Since  the characteristic signs of the two flow lines on the unstable manifold of a saddle are opposite, then $n_{\tilde{v}_2}= - n_{\tilde{v}_1}$ and $ n_{\tilde{v}'_2} = - n_{\tilde{v}'_1}$. Thus: 
$$ \left\{
\begin{array}{ll}
n_{\tilde{u}_1} = n_{\tilde{u}'_1} \\
n_{\tilde{v}_1} \neq n_{\tilde{v}'_1}
\end{array}
\right. 
\hspace{.5cm}
\Rightarrow
\hspace{.7cm}
\left\{
\begin{array}{ll}
n_{\tilde{u}_2} = n_{\tilde{u}'_2} \\
n_{\tilde{v}_2} \neq n_{\tilde{v}'_2}
\end{array}
\right. 
$$
and
$$ \left\{
\begin{array}{ll}
n_{\tilde{u}_1} \neq n_{\tilde{u}'_1} \\
n_{\tilde{v}_1} = n_{\tilde{v}'_1}
\end{array}
\right. 
\hspace{.5cm}
\Rightarrow
\hspace{.7cm}
\left\{
\begin{array}{ll}
n_{\tilde{u}_2} \neq n_{\tilde{u}'_2} \\
n_{\tilde{v}_2} = n_{\tilde{v}'_2}
\end{array}.
\right. 
$$
In other words, if the incidence degree of  $v_y$ is two then the two edges are  positively or negatively incident to it.
\end{proof}

The previous lemma is essential in the proof that  $(C^{\mathcal{GC}}_*(M,X),\Delta_*^{\mathcal{GC}})$ is a chain complex.

\begin{theorem}\label{prop_operador}
Let $\Delta_{*}^{\mathcal{GC}}$ be the GS-boundary map associated to $\varphi_{X}$, where $X\in \mathfrak{X}_{\mathcal{GC}}(M)$. Then  $\Delta_{k-1}^{\mathcal{GC}}\circ \Delta_{k}^{\mathcal{GC}} = 0$, for all $k\in\mathbb{Z}$.
\end{theorem}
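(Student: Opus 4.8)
The plan is to reduce the identity $\Delta^{\mathcal{GC}}_{k-1}\circ\Delta^{\mathcal{GC}}_k=0$ to the corresponding identity $\widetilde\partial\circ\widetilde\partial=0$ for the Morse chain complex of the Morsified flow $\varphi_{\widetilde X}$ on $\widetilde M$, which is already known (Weber's null cycle condition quoted in the excerpt). Fix generators $h^i_k(x)\in C^{\mathcal{GC}}_k$ and $h^\ell_{k-2}(z)\in C^{\mathcal{GC}}_{k-2}$; then the coefficient of $h^\ell_{k-2}(z)$ in $\Delta^{\mathcal{GC}}_{k-1}\Delta^{\mathcal{GC}}_k\langle h^i_k(x)\rangle$ is
\[
\sum_{y\in Sing(X)}\sum_{j=1}^{\eta_{k-1}(y)} n(h^i_k(x),h^j_{k-1}(y))\,n(h^j_{k-1}(y),h^\ell_{k-2}(z)),
\]
and I must show this vanishes. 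Since in $\mathfrak{X}_{\mathcal{GC}}(M)$ the only singularity that can sit in the middle grading $k-1$ and have both nonzero $\eta_{k-1}$ and a saddle nature connecting repellers to attractors is either a regular saddle or a cone saddle (attractors correspond to $k-2=0$ being attracting, repellers to $k=2$), the sum over $y$ splits into terms where $y$ is a regular saddle and terms where $y$ is a cone saddle.

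For the regular-saddle terms, the sign-transfer of Definition~\ref{characteristicGS}(3) gives $n_u=n_{\widetilde u}$ on the nose, so each broken orbit $(u,v)$ through a regular saddle $y$ corresponds to a unique broken orbit $(\widetilde u,\widetilde v)$ through the associated regular saddle $\widetilde y$ of $\varphi_{\widetilde X}$, with $n_u n_v=n_{\widetilde u}n_{\widetilde v}$; these terms therefore appear, paired and cancelling, exactly as in Weber's argument applied to the $1$-dimensional components of $\widetilde{\mathcal M}^{\tilde x}_{\tilde z}$. For the cone-saddle terms the key input is Lemma~\ref{lemmagrafoGS}: a cone saddle $v_y$ has incidence degree $0$ or $2$, and when it is $2$ the two edges are \emph{both} entering or \emph{both} exiting. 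If the degree is $0$ the term contributes nothing. If the degree is $2$ with both edges entering $v_y$, then $\eta_{k-1}(y)$ is realized but there is no outgoing edge, so every product $n(h^i_k(x),h^j_{k-1}(y))\,n(h^j_{k-1}(y),h^\ell_{k-2}(z))$ has the second factor zero; symmetrically if both edges exit, the first factor vanishes. Hence no cone saddle contributes to the double sum at all, and the identity reduces to the regular-saddle contribution, which cancels.

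The step I expect to be the main obstacle is making the correspondence between broken orbits of $\varphi_X$ through regular saddles and broken orbits of $\varphi_{\widetilde X}$ completely rigorous near the \emph{other} (non-regular, non-middle) singularities — i.e. checking that the Morsification's multivalued map $\mathfrak h$ does not create or destroy $1$-dimensional components of the relevant moduli spaces, so that the bijection of ends of noncompact components is genuine and the pairing of broken orbits is exactly Weber's. Concretely one must verify that $\widetilde{\mathcal M}^{\tilde x}_{\tilde z}$, as $x,z$ range over the singularities associated to the GS-singularities of index $k,k-2$, decomposes into arcs and circles whose arc-ends match up under $\mathfrak h$ with the GS broken orbits through regular saddles, while all contributions routed through cone saddles are killed by Lemma~\ref{lemmagrafoGS}. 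Once this compatibility is in place, $\widetilde\partial\circ\widetilde\partial=0$ on $\widetilde M$ transfers verbatim to $\Delta^{\mathcal{GC}}_{k-1}\circ\Delta^{\mathcal{GC}}_k=0$.
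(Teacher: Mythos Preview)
Your proposal is correct and follows essentially the same route as the paper: use Lemma~\ref{lemmagrafoGS} to show that cone saddles contribute nothing to the double sum (the paper phrases this as ``no cycle in $G(\Delta^{\mathcal{GC}})$ contains $v_y$''), and then observe that the remaining broken orbits through regular saddles lift verbatim to the Morsified flow, where Weber's null cycle condition applies. The obstacle you flag---that the pairing of broken orbits via $1$-dimensional components of $\widetilde{\mathcal M}^{\tilde x}_{\tilde z}$ might mix regular and Morsified-cone saddles---is handled in the paper by noting that the surviving cycles in $G(\Delta^{\mathcal{GC}})$ are already cycles in $G(\widetilde\Delta)$, so each pair lifts as a pair and the cancellation $n_{u_i}n_{v_i}+n_{u_j}n_{v_j}=0$ transfers directly.
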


\begin{proof}
Given a  singularity of repeller nature $x$  and  a singularity of attractor nature $z$, consider  
$${\mathcal B}^{1}_{xz}:=\{ (u,v) \ | \ u\in \mathcal{M}^x_y, v \in \mathcal{M}^y_z, \ \text{for } \ y\in Sing(X) \ \text{of saddle nature}\}.$$
With this notation, one can write the composition  $\Delta_{k-1}^{\mathcal{GC}}\circ \Delta_{k}^{\mathcal{GC}}$ as follows:
 \begin{eqnarray}
 \Delta^{\mathcal{GC}}_{k-1}\circ \Delta^{\mathcal{GC}}_{k}(x) & = &\displaystyle\sum_{z\in Sing(X)}\Bigg(\displaystyle\sum_{y\in
        Sing(X)}n(x,y)n(y,z)\Bigg)z  \nonumber \\
        &= &\displaystyle\sum_{z\in Sing(X)}\Bigg(\displaystyle\sum_{y\in
        Sing(X)}\displaystyle\sum_{u\in\mathcal M^x_y}\displaystyle\sum_{v\in\mathcal M^y_z}n_{u}n_{v} \Bigg)z \nonumber  \\
        &=&\displaystyle\sum_{z\in Sing(X)}\Bigg(\displaystyle\sum_{(u,v)\in {\mathcal B}_{xz}^{1}}n_{u}n_{v}\Bigg)z\nonumber \\
      &=& \displaystyle\sum_{z\in Sing(X)}\left(  \displaystyle\sum(n_{u_{i}}n_{v_{i}}+n_{{u}_{j}}n_{{v}_{j}}) \right)z\nonumber
       \end{eqnarray}
where the sum in the last equality is over the ends of the connected components of $\mathcal{M}_{z}^x$, for all $z\in Sing(X)$ of attractor  nature.

In terms of  the graph of the map $G(\Delta^{\mathcal{GC}})$, fixing $x$ and $z$, each term  $n_{u_{i}}n_{v_{i}}+n_{{u}_{j}}n_{{v}_{j}}$ of the last sum corresponds to a 
cycle in $G({\Delta^\mathcal{GC}})$ connecting the vertices  $v_x$ and $v_z$. By Lemma~\ref{lemmagrafoGS}, no cycle in $G({\Delta^\mathcal{GC}})$ contains a vertex $v_y$, where $y$ is a cone singularity of saddle nature. Therefore, the cycles in the graph $G({\Delta^\mathcal{GC}})$ are also cycles in the graph $G(\widetilde{\Delta})$, where $\widetilde{\Delta}$ is the Morsified boundary map. Since the cycles we are considering correspond to  the ends of noncompact connected components of the moduli space $\mathcal{M}_{x}^z$ of order 2, then    $n_{u_{i}}n_{v_{i}}+n_{{u}_{j}}n_{{v}_{j}}=0$.
It follows that $\Delta^{\mathcal{GC}}_{k-1}\circ\Delta^{\mathcal{GC}}_k =0$.
\end{proof}

We have shown that the pair  $(C^{\mathcal{GC}}_*(M,X),\Delta_*^{\mathcal{GC}})$ is in fact a chain complex whenever $X \in \mathfrak{X}_{{\mathcal{GC}}}(M)$ and $M\in\mathfrak{M}(\mathcal{GC})$.

\begin{example}
 Consider a GS-flow $\varphi_{X}$  defined on a singular manifold $M$  and its Morsification $(\widetilde{M},\varphi_{\widetilde{X}})$ as in Figure~\ref{figura_ex_cone_man}, where the characteristic sign transfer process is illustrated. Consider as well, the choice of orientations on the unstable manifolds  of the critical points of $\widetilde{M}$.  The GS-characteristic signs on the orbits of $\varphi_{X}$  are obtained from this choice as shown in Figure~\ref{figura_ex_cone_man}.  



\begin{figure}[H]
    \centering
        \includegraphics[width=0.8\textwidth]{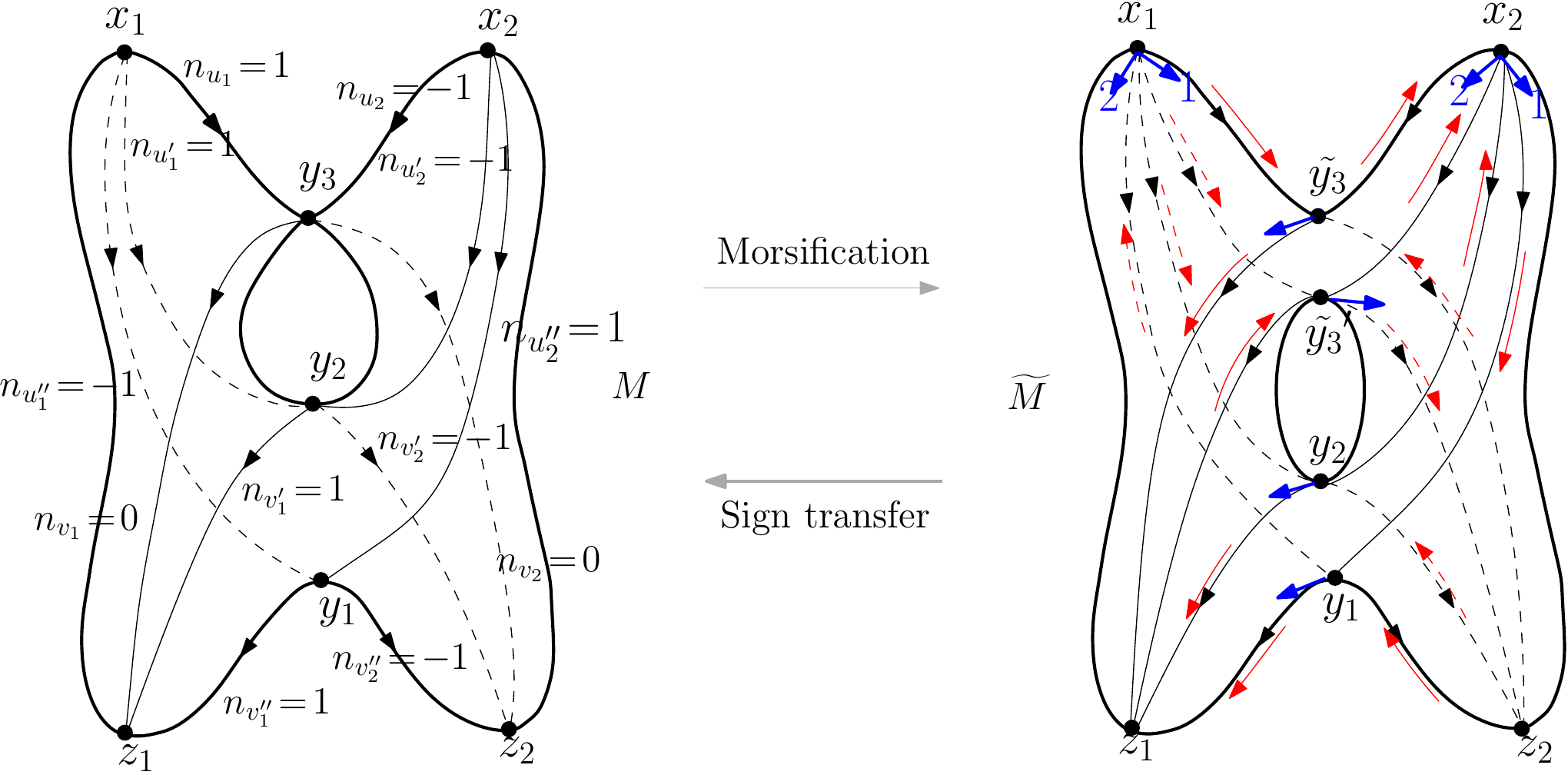}
   \caption{A GS-flow on a pinched torus with cone singularities and its Morsification.}\label{figura_ex_cone_man}
\end{figure}


Let us examine this example in more detail. For instance, in the sign transfer process, consider  the connecting manifold $\widetilde{\mathcal{M}}_{x_1z_2}$ with its ends given by the broken flow lines $(\tilde{u_1},\tilde{v_2})$ and $(\tilde{\tilde{u_1}},\tilde{\tilde{v_2}})$. 
One has that  $n_{\tilde{u_1}}=1=n_{\tilde{\tilde{u_1}}}$. On the other hand, since  $n_{u_1}=1$ and
$n_{\tilde{v_2}}=-1\neq1=n_{\tilde{\tilde{v_2}}}$, one has that $n_{v_2}=0$.  See Figure~\ref{figura_ex_characteristic2}.
Analogously, the same analysis holds for $\mathcal{M}_{x_2z_1}$, obtaining
$n_{u_2}=-1$ and $n_{v_1}=0$.  

\begin{figure}[H]
    \centering
        \includegraphics[width=0.450\textwidth]{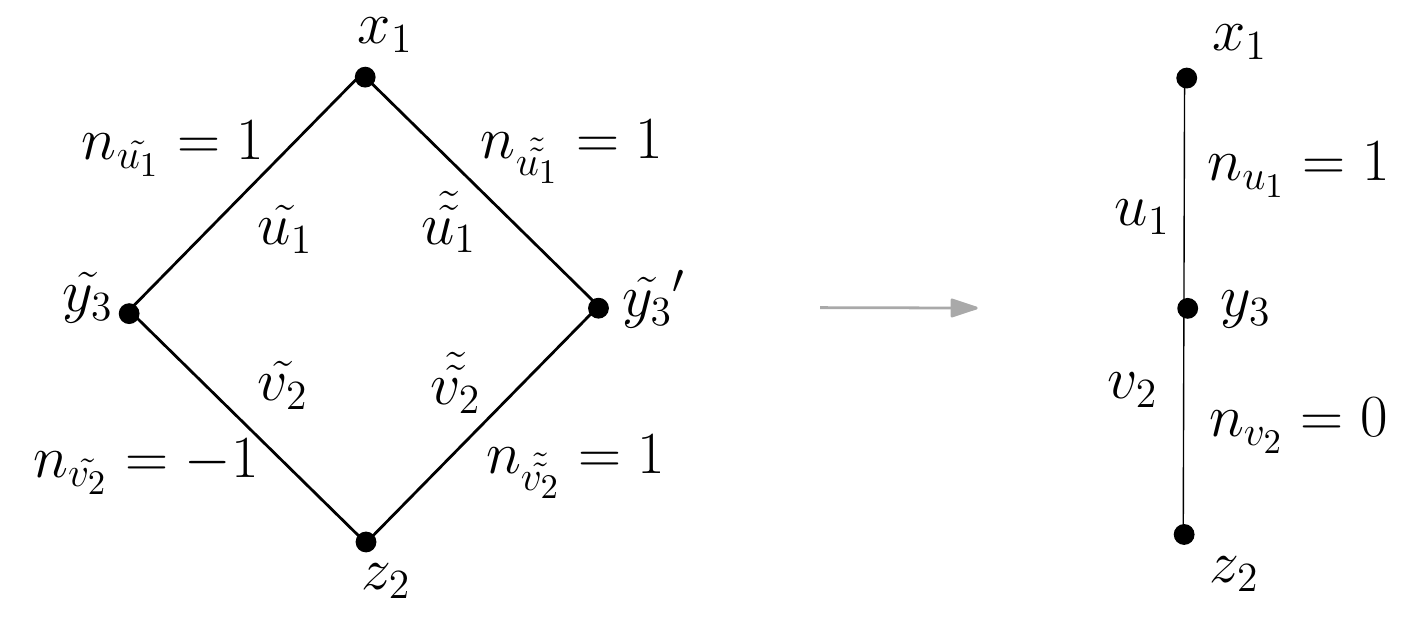}
   \caption{Characteristic sign tranfer.}\label{figura_ex_characteristic2}
\end{figure}

The remaining orbits inherit the same characteristic signs as in $\widetilde{M}$, since all but  $y_3$ are regular singularities.

%

The GS-chain groups are:
$ C_2^{\mathcal{GC}}(M,X)=\mathbb{Z} \langle x_1\rangle \oplus \mathbb{Z} \langle x_2\rangle$,
$C_1^{\mathcal{GC}}(M,X)=\mathbb{Z} \langle y_1\rangle \oplus \mathbb{Z} \langle y_2\rangle \oplus\mathbb{Z} \langle y_3\rangle $,
$C_0^{\mathcal{GC}}(M,X)=\mathbb{Z} \langle z_1\rangle \oplus \mathbb{Z} \langle z_2\rangle$
and $C_k^{\mathcal{GC}}(M)=0, k\neq 0,1,2$. The GS-intersection numbers are:
$n(x_1,y_1)=n_{u_1''}=-1, \ n(x_2,y_1)=n_{u_2''}=1, \ 
n(x_1,y_2)=n_{u_1'}=1, \ n(x_2,y_2)=n_{u_2'}=-1, \ 
n(x_1,y_3)=n_{u_1}=1, \ n(x_2,y_3)=n_{u_2}=-1, \ 
n(y_1,z_1)=n_{v_1''}=1, \ n(y_1,z_2)=n_{v_2''}=-1, \ 
n(y_2,z_1)=n_{v_1'}=1, \ n(y_2,z_2)=n_{v_2'}=-1, \ 
n(y_3,z_1)=n_{v_1}=0$ and $n(y_3,z_2)=n_{v_2}=0$. Therefore, the  GS-boundary maps  $\Delta^{\mathcal{GC}}_2:C_2\rightarrow C_1$, $\Delta^{\mathcal{GC}}_1:C_1\rightarrow C_0$ and $\Delta_0^{\mathcal{GC}}:C_0\rightarrow 0$ are given by:
$$ \Delta^{\mathcal{GSC}}_2(x_1)=-\langle y_1\rangle + \langle y_2 \rangle + \langle y_3 \rangle, \ 
\Delta_2^{\mathcal{GC}}(x_2)=\langle y_1\rangle - \langle y_2 \rangle - \langle y_3 \rangle $$
$$\Delta^{\mathcal{GC}}_1(y_1)=\langle z_1\rangle - \langle z_2\rangle, \ 
\Delta^{\mathcal{GC}}_1(y_2)=\langle z_1\rangle - \langle z_2\rangle, \ 
\Delta^{\mathcal{GC}}_1(y_3)=0$$
$$\Delta^{\mathcal{GC}}_0(z_1)=0=\Delta_0(z_2),$$
respectively. Hence, the matrix associated to the GS-boundary map $\Delta^{\mathcal{GC}}$  and its  graph $G_{\mathcal{GC}}$ are as in Figure~\ref{figura_matriz_graph2}.

\begin{figure}[H]
    \centering
        \includegraphics[width=0.7\textwidth]{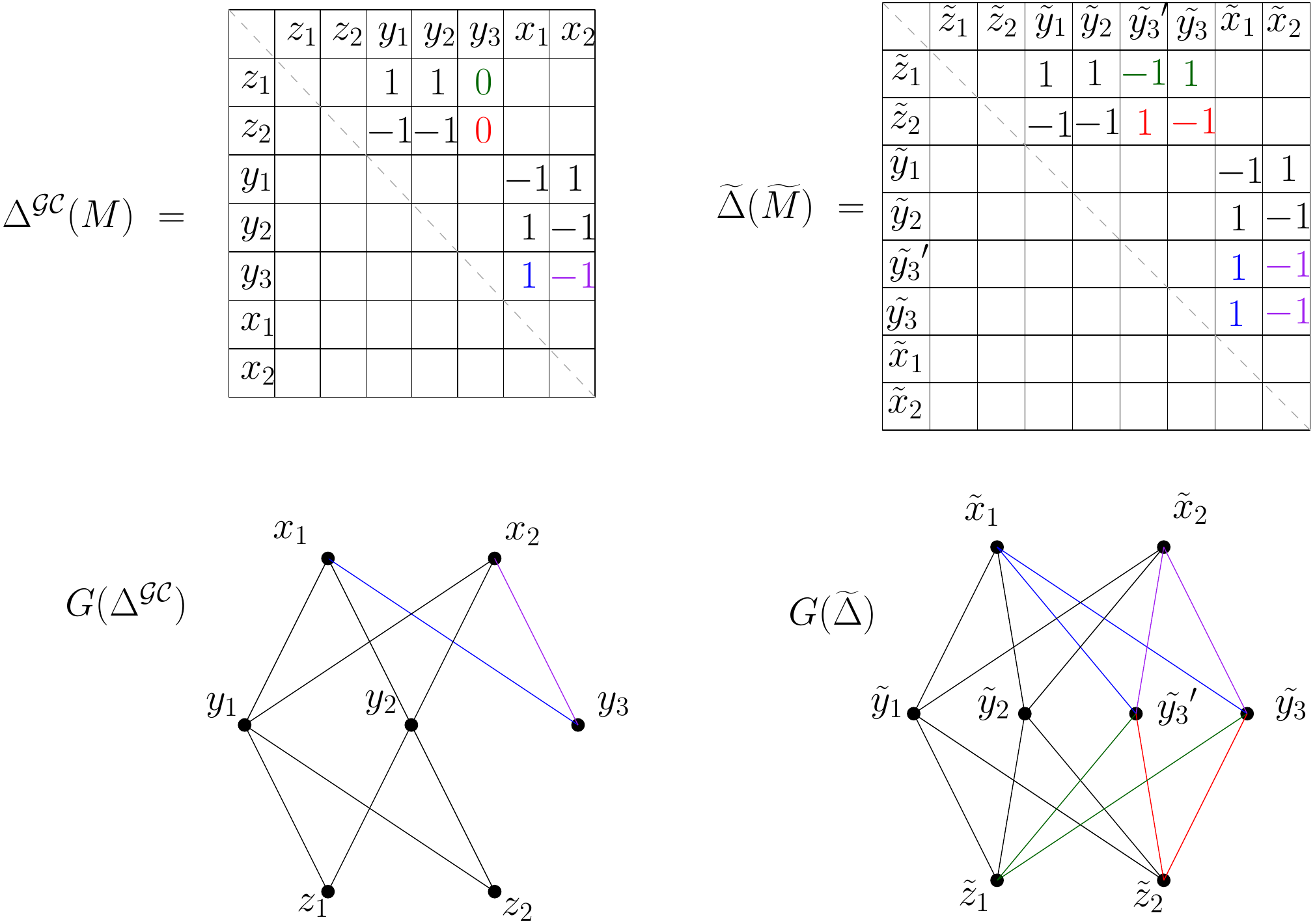}
   \caption{GS-boundary map  and its matrix graph and its associated Morsification.}\label{figura_matriz_graph2}
\end{figure}

It is also interesting to verify that Lemma~\ref{lemmagrafoGS} holds in this example. Note that the orbits connecting $\tilde{y}_3$ and $\tilde{z}_1$, $\tilde{y}_3'$ and $\tilde{z}_1$  have opposite characteristic signs. Hence the characteristic sign of the orbit connecting $y_3$ and $z_1$ is zero. The same holds for the orbit that connects $y_3$ and $z_2$.
Note that  $\widetilde{G}$, the graph associated to  $\widetilde{\Delta}$ has  cycles containing the vertices   $v_{\tilde{y_3}}$ and $v_{\tilde{y_3}'}$. However, the graph $G_{\mathcal{GC}}$ of the boundary map  $\Delta^{\mathcal{GC}}$ has no cycle containing $v_{y_3}$. This is always the case, as it was proven in Lemma~\ref{lemmagrafoGS}, which asserts that vertices associated to saddle cone singularities have only either  two positively  incident or two negatively incident edges. Hence, the cycles in  $G_{\mathcal{GC}}$ are also in  $\widetilde{G}$ and inherit the  null cycle condition.
\hfill $\bigtriangleup$
\end{example}

\subsection{Gutierrez-Sotomayor complex for Whitney, double and triple crossing singularities}

Given a singular flow $\varphi_{X}$ on  a singular 2-manifold $M \in \mathfrak{M}(\mathcal{GS})$ associated to $X\in \mathfrak{X}_{\mathcal{GS}}(M)$, where $\mathcal{S}=\mathcal{W},\mathcal{D}$ or $\mathcal{T}$, the Morsification process gives us 
a  Morsified  flow $\varphi_{\widetilde{X}}$ on  a smooth manifold $\widetilde{M} \in \mathfrak{M}(\mathcal{GC})$ associated to $\widetilde{X}\in \mathfrak{X}(\mathcal{GC})$.
In what follows, we define characteristic signs for the flow lines of $\varphi_{X}$ by means of the  transfer process  of the characteristic signs of the orbits of the Morse flow $\varphi_{\widetilde{X}}$. 

For the case of a vector field  $X \in \mathfrak{X}_{\mathcal{GW}}(M)$, note that  each Whitney singularity $x$ of $X$  has an associated   regular singularity $\widetilde{x}$ in $\widetilde{X}\in \mathfrak{X}(\mathcal{GC})$, which makes the sign transfer process  straightforward. 

\begin{definition}[Characteristic signs of flows lines of  $ \mathfrak{X}_{\mathcal{GW}}(M)$]\label{characteristicGS_w}
Let $x,y \in Sing(X)$ be singularities of consecutive natures and  $X\in \mathfrak{X}_{\mathcal{GW}}(M)$.
The  \textbf{GS-characteristic sign} $n_{u}$ of a flow line $u \in \mathcal{M}_{xy} $ is defined as follows:
\begin{enumerate}
\item If  $u$ does not belong to the singular part of $M$, define $n_u = n_{\tilde{u}}$;
\item If $u$ belongs to the singular part of $M$, define $n_u =0$.
\end{enumerate}
\end{definition}

The numbers $n_u$ are  well defined, since the orbits of $\varphi_{X}$ in the  regular part of $M$ has exactly one corresponding orbit in the Morsified flow $\varphi_{\widetilde{X}}$.

In the context of flows $\varphi_{X}$  where $X\in \mathfrak{X}_{\mathcal{GW}}(M)$, the GS-intersection number between consecutive singularities $x$ and $y$ is defined as $n(x,y):=\sum_{u\in\mathcal{M}^x_y} n_u$.

Now, consider the case of crossing singularities. 
If $x$ is a double or a triple crossing singularity then it is associated to  at least two singularities in the Morsified flow.  In fact,
if $x$ is a $n$-sheet double crossing singularity of attractor (resp., repeller) nature, then there are $n$ regular singularities  $h_0^1(x),\ldots , h_0^n (x)\in Sing(\widetilde{X})$  (resp., $h_2^1(x),\ldots , h_2^n (x)\in Sing(\widetilde{X})$) of  attractor  (resp., repeller)  nature associated to $x$ via the Morsification  process. Throughout this paper, assume that $h_0^1(x)$ (resp., $h_2^1(x)$) corresponds  to the singularity of the external manifold and $h_0^2(x), \dots ,h_0^n(x)$  (resp., $h_2^2(x), \dots ,h_2^n(x)$) correspond to the singularities of the internal manifolds.
If $x$ is a double crossing singularity of $sa$-nature (resp., $sr$-nature), then there exist exactly two singularities associated to $x$ via the Morsification process, namely, 
a regular saddle or saddle cone singularity $h_1^1(x)$  and a  regular attracting singularity (resp., repelling) $h_0^1(x)$. If $x$ is a double crossing singularity of $ss_s$- or $ss_u$-nature, then there are two saddle cone or regular saddle  singularities $h_1^1(x)$ and $h_1^2(x)$  associated to $x$ via the Morsification process. 

Note that, given  $u\in\mathcal{M}_{xy}$, if   $x$ and $y$ are both double crossing singularities of consecutive natures, then $u$ is a singular orbit.  Hence, there are exactly two flow lines  in $\varphi_{\widetilde{X}}$ which project to $u$ by the Morsification process, namely, $\tilde{u}^e $ and $ \tilde{u}^i,$ the first one is in the external manifold and the second one is in the internal manifold. Although $x$ and $y$ are considered to be consecutive singularities, one of  the flow lines  $\tilde{u}^e $ and $ \tilde{u}^i$ maybe be connecting non consecutive singularities  in the Morsified flow in $\varphi_{\widetilde{X}}$. In this sense, to simplify notation we will consider $n_{\tilde{u}}=0$ whenever $u$ is a flow line between non consecutive points. For example, this is the case of a double crossing singularity of $sr$-nature and a double crossing singularity of $a^2$-nature.

%
%

\begin{definition}[Characteristic signs of flows lines of  $ \mathfrak{X}_{\mathcal{GD}}(M)$]\label{characteristicGS_D} Let $x,y \in Sing(X)$ be singularities of consecutive natures and  $X\in \mathfrak{X}_{\mathcal{GD}}(M)$.
The  \textbf{GS-characteristic sign} $n_{u}$ of a flow line $u \in \mathcal{M}_{xy} $ is defined as follows:
\begin{enumerate}
\item If $u$ does not belong to the singular part of $M$, define $n_u = n_{\tilde{u}}$;
\item If $u$ belongs to the singular part of $M$, define $n_u$ to be the pair  $n_u =(n_u^e,n_u^i):=(n_{\tilde{u}^e},n_{\tilde{u}^i})$, where $n_{\tilde{u}^e}$ and  $n_{\tilde{u}^i}$ are the characteristic signs of the flow lines $\tilde{u}^e$ and $\tilde{u}^i$, respectively.
\end{enumerate}
\end{definition}

Note that, for vector fields having double crossing singularities, the characteristic sign of an orbit is defined in terms of the signs of the orbits of their corresponding singularities  through the Morsified process. Hence, it is natural to define the intersection number between $h_k^j(x)$ and $h_{k-1}^{\ell}(y)$ by 
$$n(h_k^j(x),h_{k-1}^{\ell}(y))=\sum n_{\tilde{u}},$$
where the sum is over the flow lines $\tilde{u}\in\widetilde{\mathcal{M}}^{h_k^j(x)}_{h_{k-1}^{\ell}(y)}$, for  $k=1,2$, $j=1,\dots ,\eta_k(x)$ and $\ell = 1,\dots ,\eta_{k-1}(y)$. 

Now, consider the case of triple crossing singularities. 
If  $x$ is an $(2n+1)$-sheet triple crossing singularity of $a^{2n+1}$-nature (resp., $r^{2n+1}$-nature), then there are $2n+1$ regular attracting  (resp., repelling) singularities $h_0^1(x),\ldots , h_0^{2n+1} (x)\in Sing(\widetilde{X})$ 
(resp., $h_2^1(x),\ldots , h_2^{2n+1} (x)\in Sing(\widetilde{X})$)
  associated  to $x$ via the Morsification process. Assume that 
 $h_0^1(x)$ (resp., $h_2^1(x)$) corresponds to the singularity of the external manifold and 
$h_0^2(x), \dots ,h_0^{2n+1}(x)$  (resp., $h_2^2(x), \dots ,h_2^{2n+1}(x)$) correspond to the singularities of the middle and internal manifolds.  More specifically, the singularities  $h_0^2(x), \dots ,h_0^{n+1}(x)$ (resp., $h_2^2(x), \dots ,h_2^{n+1}(x)$) belong to the middle manifolds and  $h_0^{n+2}(x), \dots ,h_0^{2n+1}(x)$ (resp., $h_2^{n+2}(x), \dots ,h_2^{2n+1}(x)$) belong to the internal manifolds.
If $x$ is a triple crossing singularity of  $ssa$-nature (resp.,  $ssr$-nature), then there are  regular saddle or  saddle cone singularities $h_1^1(x),$ $ h_1^2(x)$ and a regular attracting (resp., repelling) singularity $h_0^1(x)$ (resp., $h_2^1(x)$) associated to $x$ via the Morsification process.

Note that, given a orbit $u\in\mathcal{M}_{xy}\subset M$ where $x$ and $y$ are consecutive triple crossing singularities, then $u$ is a singular orbit. Hence, there are  at most three orbits in $\varphi_{\widetilde{X}}$ which projects to $u$ by the Morsfication process, namely, 
$$\tilde{u}^e\in\widetilde{\mathcal{M}}_{h_k^1(x)h_{k-1}^1(y)},  
\tilde{u}^m\in\widetilde{\mathcal{M}}_{h_k^{j_1}(x)h_{k-1}^{\ell_1}(y)}\ \text{e} \ \tilde{u}^i\in\widetilde{\mathcal{M}}_{h_k^{j_2}(x)h_{k-1}^{\ell_2}(y)},$$
where $k=1,2$, $j_1,j_2\in \{2,\dots ,\eta_k(x)\}$ and $\ell_1,\ell_2\in\{2,\dots ,\eta_{k-1}(y)\}$. One has that $\tilde{u}^e$ belongs to the external manifold, $\tilde{u}^m$ belongs to the middle manifold and $\tilde{u}^i$ belongs to the internal manifold.

In other to simplify the notation, we consider  $n_{\tilde{u}^i}=0$ when $h_k^j(x)$ and $h_{k-2}^{\ell}(y)$ are not consecutive generators in the Morsified flow.

\begin{definition}[Characteristic signs of flows lines of  $ \mathfrak{X}_{\mathcal{GT}}(M)$]\label{characteristicGS_T}
 Let $x,y \in Sing(X)$ be singularities of consecutive natures and  $X\in \mathfrak{X}_{\mathcal{GT}}(M)$.
The  \textbf{characteristic sign} $n_{u}$ of a flow line $u \in \mathcal{M}_{xy} $ is defined as follows:
\begin{itemize}
\item If $u$ does not belong to the singular part of $M$, define $n_u = n_{\tilde{u}}$;
\item If $u$ belongs to the singular part of $M$, define $n_u$ to be the triple  $n_u =(n_u^e,n_u^m,n_u^i):=(n_{\tilde{u}^e},n_{\tilde{u}^m},n_{\tilde{u}^i})$, where $n_{\tilde{u}^e}$, $n_{\tilde{u}^m}$ and  $n_{\tilde{u}^i}$ are the characteristic signs of the flow lines $\tilde{u}^e$, $\tilde{u}^m$ and $\tilde{u}^i$, respectively.
\end{itemize}
\end{definition}

Note that, for vector fields having triple crossing singularities, the characteristic sign of a flow line is defined in terms of the characteristic sign of the orbits of  their corresponding singularities  through the Morsification process. Hence, it is natural to define the intersection number between $h_k^j(x)$ and $h_{k-1}^{\ell}(y)$ as
$$n(h_k^j(x),h_{k-1}^{\ell}(y))=\sum n_{\tilde{u}},$$
where the sum is over flow lines $\tilde{u}\in\widetilde{\mathcal{M}}^{h_k^j(x)}_{h_{k-1}^{\ell}(y)}$, for  $k=1,2$, $j=1,\dots ,\eta_k(x)$ and $\ell = 1,\dots ,\eta_{k-1}(y)$. 


Now it remains to prove that the GS-boundary map $
 \Delta_*^{\mathcal{GS}}$, described  in  Definition~\ref{def_complex_GS}, is well defined for GS-flows  on  
$M\in\mathfrak{M}(\mathcal{GS})$, $\mathcal{S} = \mathcal{W},\mathcal{D},\mathcal{T}$.

\begin{proposition}\label{prop_matriz} 
Consider a flow $\varphi_X$ in  $M\in\mathfrak{M}(\mathcal{GS})$ associated to a vector field $X\in\mathfrak{X}_{\mathcal{GS}}(M)$, where $\mathcal{S} = \mathcal{W}, \mathcal{D}$ or $ \mathcal{T}$. Let   $\widetilde{\varphi}_{\widetilde{X}}$ be the flow in $\widetilde{M}\in\mathfrak{M}(\mathcal{GS})$  obtained by the Morsification process of  ${X}	\in\mathfrak{X}_{\mathcal{GS}}(M)$, then the boundary map   $\Delta^{\mathcal{GS}}$ associated to $\varphi_X$  is equal to  the boundary map $\widetilde{\Delta}^{\mathcal{GC}}$ associated to $\widetilde{\varphi}_{\widetilde{X}}$.
\end{proposition}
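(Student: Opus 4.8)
The plan is to identify, via the Morsification constructed in Proposition~\ref{prop_whitney} (resp.\ Proposition~\ref{prop_duplo}, Proposition~\ref{prop_triplo}), the generating set of $C^{\mathcal{GS}}_*(M,X)$ with that of $C^{\mathcal{GC}}_*(\widetilde{M},\widetilde{X})$ in a grading preserving way, and then to check that the two boundary matrices have equal entries. For the identification: a regular singularity of $X$ is fixed by the Morsification, while a GS-singularity $x$ of type $\mathcal{S}$ is replaced by exactly the singularities $h^i_k(x)$, $i=1,\dots,\eta_k(x)$, $k=0,1,2$, that enter the construction of $\widetilde{X}$; since the nature grading of Definition~\ref{def_complex_GS} agrees with the Morse index (equivalently the cone--nature) grading on $\widetilde{M}$ (repeller $\leftrightarrow$ index $2$, saddle $\leftrightarrow$ index $1$, attractor $\leftrightarrow$ index $0$), this identification preserves degree. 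Consequently both $\Delta^{\mathcal{GS}}$ and $\widetilde{\Delta}^{\mathcal{GC}}$ are matrices indexed by the same set, and it suffices to prove $n\big(h^j_k(x),h^\ell_{k-1}(y)\big)=\widetilde{n}^{\mathcal{GC}}\big(h^j_k(x),h^\ell_{k-1}(y)\big)$ for every pair of consecutive generators, the right-hand side being the corresponding entry of $\widetilde{\Delta}^{\mathcal{GC}}$.

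For $\mathcal{S}=\mathcal{D}$ and $\mathcal{S}=\mathcal{T}$ the equality is immediate from the definitions. By Definition~\ref{characteristicGS_D} (resp.\ Definition~\ref{characteristicGS_T}) and the formula for the intersection number stated right after it, $n\big(h^j_k(x),h^\ell_{k-1}(y)\big)$ is by construction $\sum n_{\tilde u}$ over the flow lines $\tilde u\in\widetilde{\mathcal{M}}^{h^j_k(x)}_{h^\ell_{k-1}(y)}$ of the Morsified flow, with the convention $n_{\tilde u}=0$ when these two singularities fail to be consecutive in $\widetilde{\varphi}_{\widetilde{X}}$. When they are consecutive this is exactly the $\big(h^j_k(x),h^\ell_{k-1}(y)\big)$-entry of $\widetilde{\Delta}^{\mathcal{GC}}$, and when they are not, that entry vanishes as well since $\widetilde{\Delta}^{\mathcal{GC}}$ has degree $-1$. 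It remains only to observe that the explicit maps $\mathfrak{h}$ and $\mathfrak{p}$ of Propositions~\ref{prop_duplo} and~\ref{prop_triplo} set up a sign preserving bijection between the flow lines of $\varphi_X$ from $x$ to $y$ and those of $\widetilde{\varphi}_{\widetilde{X}}$ between the associated singularities: a regular flow line is carried across by the homeomorphism $\mathfrak{h}$ with its characteristic sign unchanged, and a singular flow line $u$ is carried to the family $\{\tilde u^e,\tilde u^i\}$ (resp.\ $\{\tilde u^e,\tilde u^m,\tilde u^i\}$) lying in the moduli spaces between the external and internal (resp.\ external, middle and internal) sheets, again with signs copied verbatim. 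Hence $\Delta^{\mathcal{GD}}=\widetilde{\Delta}^{\mathcal{GC}}$ and $\Delta^{\mathcal{GT}}=\widetilde{\Delta}^{\mathcal{GC}}$.

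The Whitney case needs one extra argument. Here $\eta_k(x)\le 1$, so each Whitney or regular singularity $x$ yields a single generator $h_k(x)\leftrightarrow\widetilde{x}$, and by Definition~\ref{characteristicGS_w} one has $n(x,y)=\sum_{u\in\mathcal{M}^x_y}n_u$ with $n_u=n_{\tilde u}$ when $u$ lies in the regular part of $M$ and $n_u=0$ when $u$ lies in the singular part. The homeomorphism $\mathfrak{h}$ of Proposition~\ref{prop_whitney} carries the regular flow lines of $\varphi_X$ from $x$ to $y$ bijectively, and with unchanged sign, onto those flow lines of $\widetilde{\varphi}_{\widetilde{X}}$ from $\widetilde{x}$ to $\widetilde{y}$ that are not created by duplicating a singular orbit, while each singular flow line $u$ is duplicated into two orbits $\tilde u_1,\tilde u_2$ of $\widetilde{\varphi}_{\widetilde{X}}$. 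Therefore
\begin{equation*}
\widetilde{n}^{\mathcal{GC}}(\widetilde{x},\widetilde{y}) \;=\; \sum_{u\text{ regular}} n_{\tilde u}\;+\;\sum_{u\text{ singular}}\big(n_{\tilde u_1}+n_{\tilde u_2}\big),
\end{equation*}
and $n(x,y)=\widetilde{n}^{\mathcal{GC}}(\widetilde{x},\widetilde{y})$ will follow once $n_{\tilde u_1}+n_{\tilde u_2}=0$ is established for every singular flow line $u$ joining consecutive singularities. To see this, recall from Proposition~\ref{prop_whitney} that the point $x^{\pm}\in\partial N$ through which $u$ passes is blown up by the multivalued map $\mathfrak{h}^{\pm}$ to an arc $[a^{\pm},b^{\pm}]$ of $\partial\widetilde{N}$, and that $\tilde u_1,\tilde u_2$ are the orbits of $\widetilde{\varphi}_{\widetilde{X}}$ through $a^{\pm}$ and $b^{\pm}$; these two orbits bound the thin region of $\widetilde{N}$ lying over the blown-up point and, continued through the glued surface $\widetilde{M}$, they end at the same downstream singularity $\widetilde{y}$, so $\tilde u_1,\tilde u_2\in\widetilde{\mathcal{M}}^{\widetilde{x}}_{\widetilde{y}}$. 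Since $\mathfrak{h}^{\pm}$ preserves the counterclockwise boundary orientation, the two orbits are met with opposite co-orientations relative to the chosen orientation of the unstable manifold, so the orientation recipe of~\cite{Weber} recalled above forces $n_{\tilde u_1}=-n_{\tilde u_2}$. (Equivalently, $\tilde u_1$, $\tilde u_2$ together with an arc of $\partial N$ bound a disk in $\widetilde{M}$ — a ``droplet'' — whose boundary must be a null chain, the same mechanism as in Lemma~\ref{lemmagrafoGS}.) This yields $\widetilde{n}^{\mathcal{GC}}(\widetilde{x},\widetilde{y})=\sum_{u\text{ regular}}n_{\tilde u}=n(x,y)$, hence $\Delta^{\mathcal{GW}}=\widetilde{\Delta}^{\mathcal{GC}}$.

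The main obstacle is precisely the sign cancellation $n_{\tilde u_1}+n_{\tilde u_2}=0$ in the Whitney case: one must track the induced orientations through the multivalued map $\mathfrak{h}$ and through the gluing of the Morsified blocks, and in particular confirm that the two duplicate orbits terminate at the same singularity $\widetilde{y}$, since otherwise their signs would be assigned to distinct matrix entries and the argument would break. Everything else is bookkeeping that has already been built into the definitions of the GS-characteristic signs in Section~\ref{sec:GSCC}.
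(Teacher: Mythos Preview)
Your proof is correct and follows essentially the same approach as the paper: identify the generators via the Morsification, reduce the $\mathcal{D}$ and $\mathcal{T}$ cases to the definitions of the GS-intersection numbers (which are built directly out of the Morsified signs), and handle the Whitney case by showing that the two duplicated orbits $\tilde u_1,\tilde u_2$ arising from a singular flow line have opposite characteristic signs. The paper's own proof simply asserts $n_{\tilde u_1}\neq n_{\tilde u_2}$ without further comment, whereas you supply an orientation argument through the multivalued boundary map $\mathfrak{h}^{\pm}$ and the droplet/disk bounded by the duplicates; this extra justification is the only substantive difference, and it addresses exactly the point the paper leaves implicit.
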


\begin{proof}
Given a flow line $u$ of $\varphi_X$ that does not belong to the singular part of $M$, the characteristic sign $n_u$ of $u$  is equal to the characteristic sign of its unique corresponding flow line $\tilde{u}$ in $\varphi_{\widetilde{X}}$. Therefore the intersection number between $x, y \in Sing(X)$ is equal to the intersection number  $\tilde{x},\tilde{y} \in Sing(\widetilde{X})$ whenever the connecting manifold $\mathcal{M}_{xy}$ is contained in the regular part of $M$. 

Consider orbits in a connecting manifold $\mathcal{M}_{xy}$ which belong to the singular part of $M$. This means that  both $x$ and $y$ are not regular singularities. The proof is done case by case  according to the type of vector field $X \in \mathfrak{X}_{\mathcal{GS}}(M)$ where $\mathcal{S}=\mathcal{W},\mathcal{D},\mathcal{T}$. 

\begin{itemize}
    \item Let $X \in \mathfrak{X}_{\mathcal{GW}}(M)$ and $x,y\in Sing(X)$ be Whitney singularities then $u\in\mathcal{M}_{xy}$ belongs to the singular part of $M$. Denote by $\tilde{x},\tilde{y}\in Sing(\widetilde{X})$ the Morsified singularities associated to $x$ and $y$, respectively.  The Morsified process matches $u$ to two flow lines $\tilde{u}_1,\tilde{u}_2\in\widetilde{M}^{\tilde{x}}_{\tilde{y}}$ with
$n_{\tilde{u}_1}\neq n_{\tilde{u}_2}$. Hence, $n(\tilde{x},\tilde{y})=0=n(x,y)$.

    \item Let $X \in \mathfrak{X}_{\mathcal{GD}}(M)$ and $x,y\in Sing(X)$ be double crossing singularities then $u\in\mathcal{M}_{xy}$ belongs to the singular part of $M$. 
Consider the flow lines of $\widetilde{X}$,
$$\tilde{u}^e\in\widetilde{\mathcal{M}}_{h_k^1(x)h_{k-1}^1(y)} \ \text{and} \ \tilde{u}^i\in\widetilde{\mathcal{M}}_{h_k^j(x)h_{k-1}^{\ell}(y)},$$
with $k=1,2$, $j\in \{2,\dots ,\eta_k(x)\}$ and $\ell\in\{2,\dots ,\eta_{k-1}(y)\}$. 
By the definition of transfer of signs, $n_u^e=n_{\tilde{u}^e}$ 
and $n_u^i=n_{\tilde{u}^i}$. Thus,
$$\qquad n(x,y)=\Bigg(\sum_{u\in\mathcal{M}_{xy}}n_{u}^e\ ,\sum_{u\in\mathcal{M}_{xy}}n_{u}^i\Bigg)
=\Bigg(\sum_{\tilde{u}^e}n_{\tilde{u}^e}\ ,\sum_{\tilde{u}^i}n_{\tilde{u}^i}\Bigg)
=\Big(n(h_k^1(x)h_{k-1}^1(y))\ ,\ n(h_k^j(x)h_{k-1}^{\ell}(y)\Big).$$

    \item Let $X \in \mathfrak{X}_{\mathcal{GT}}(M)$ and $x,y\in Sing(X)$ be triple crossing singularities, then $u\in\mathcal{M}_{xy}$ belongs to the singular part of $M$. Consider the flow lines of $\widetilde{X}$,
$$\tilde{u}^e\in\widetilde{\mathcal{M}}_{h_k^1(x)h_{k-1}^1(y)}, \tilde{u}^m\in\widetilde{\mathcal{M}}_{h_k^{j_1}(x)h_{k-1}^{\ell_1}(y)} \ \text{and} \ \tilde{u}^i\in\widetilde{\mathcal{M}}_{h_k^{j_2}(x)h_{k-1}^{\ell_2}(y)},$$
with $k=1,2$, $j_1,j_2\in \{2,\dots ,\eta_k(x)\}$ and $\ell_1,\ell_2\in\{2,\dots ,\eta_{k-1}(y)\}$. 
By the definition of sign transfer, one has that $n_u^e=n_{\tilde{u}^e},n_u^m=n_{\tilde{u}^m}$ 
and $n_u^i=n_{\tilde{u}^i}$ . Therefore,
$$n(x,y)=\Bigg(\sum_{u\in\mathcal{M}_{xy}}n_{u}^e,\sum_{u\in\mathcal{M}_{xy}}n_{u}^m,\sum_{u\in\mathcal{M}_{xy}}n_{u}^i\Bigg)
=\Bigg(\sum_{\tilde{u}^e}n_{\tilde{u}^e},\sum_{\tilde{u}^m}n_{\tilde{u}^m},\sum_{\tilde{u}^i}n_{\tilde{u}^i}\Bigg)$$
$$=\Big(n(h_k^1(x)h_{k-1}^1(y)),n(h_k^{j_1}(x)h_{k-1}^{\ell_1}(y),n(h_k^{j_2}(x)h_{k-1}^{\ell_2}(y))\Big).$$
\end{itemize}
 In any case, the proposition follows.
\end{proof}

\begin{corollary}
Let $\Delta_{*}^{\mathcal{GS}}$ be the GS-boundary map associated to $\varphi_{X}$ with $X\in \mathfrak{X}_{\mathcal{GS}}(M)$, where  $\mathcal{S} = \mathcal{W}, \mathcal{D}$ or $\mathcal{T}$. Then  $\Delta_{k-1}^{\mathcal{GS}}\circ \Delta_{k}^{\mathcal{GS}} = 0$, for all $k\in\mathbb{Z}$.
\end{corollary}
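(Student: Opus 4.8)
The plan is to reduce the statement to the cone case already settled in Theorem~\ref{prop_operador}, using Proposition~\ref{prop_matriz} as the bridge. First I would record the bookkeeping underlying the Morsification of Section~\ref{sec:MGSVFIB}: given $X\in\mathfrak{X}_{\mathcal{GS}}(M)$ with $\mathcal{S}=\mathcal{W},\mathcal{D}$ or $\mathcal{T}$, the Morsified flow $\widetilde{\varphi}_{\widetilde{X}}$ lives on $\widetilde{M}\in\mathfrak{M}(\mathcal{GC})$ with $\widetilde{X}\in\mathfrak{X}_{\mathcal{GC}}(\widetilde{M})$, and the regular and saddle-cone singularities of $\widetilde{X}$ are exactly the generators $h_k^i(x)$, $x\in Sing(X)$, $i=1,\dots,\eta_k(x)$, of $C^{\mathcal{GS}}_\ast(M,X)$, in a grading-preserving way. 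This gives a canonical isomorphism of graded abelian groups $C^{\mathcal{GS}}_\ast(M,X)\cong C^{\mathcal{GC}}_\ast(\widetilde{M},\widetilde{X})$.

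Next I would invoke Proposition~\ref{prop_matriz}, which states precisely that under this identification the GS-boundary map $\Delta^{\mathcal{GS}}$ coincides with the cone GS-boundary map $\widetilde{\Delta}^{\mathcal{GC}}$ of the Morsified flow: for regular flow lines the characteristic signs agree with those of the unique lift, and for singular flow lines the entry-wise definitions of $n_u$ in Definitions~\ref{characteristicGS_w}, \ref{characteristicGS_D} and~\ref{characteristicGS_T} were arranged so that $n(h_k^j(x),h_{k-1}^\ell(y))$ equals the corresponding cone intersection number in $\widetilde{M}$, with the convention $n_{\tilde u}=0$ for lifts joining non-consecutive singularities absorbing the degenerate cases. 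Hence, transported through the isomorphism, $\Delta^{\mathcal{GS}}_{k-1}\circ\Delta^{\mathcal{GS}}_k$ is the same map as $\widetilde{\Delta}^{\mathcal{GC}}_{k-1}\circ\widetilde{\Delta}^{\mathcal{GC}}_k$.

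Finally, since $\widetilde{M}\in\mathfrak{M}(\mathcal{GC})$ and $\widetilde{X}\in\mathfrak{X}_{\mathcal{GC}}(\widetilde{M})$, Theorem~\ref{prop_operador} applies verbatim to $\widetilde{\Delta}^{\mathcal{GC}}$, giving $\widetilde{\Delta}^{\mathcal{GC}}_{k-1}\circ\widetilde{\Delta}^{\mathcal{GC}}_k=0$ for all $k$; pulling this back yields $\Delta^{\mathcal{GS}}_{k-1}\circ\Delta^{\mathcal{GS}}_k=0$. The one step requiring genuine care — and the main obstacle — is confirming that Proposition~\ref{prop_matriz} truly gives an equality of chain \emph{maps} rather than of isolated matrix entries: one must check that no generator of $C^{\mathcal{GS}}_\ast$ is left unmatched, that the grading by nature on the crossing side aligns with the Morse-index grading on the cone side, and that the $n_{\tilde u}=0$ convention for non-consecutive lifts does not silently discard a contribution that the null-cycle argument of Theorem~\ref{prop_operador} relies upon. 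Once this is verified, the corollary is immediate.
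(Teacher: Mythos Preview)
Your proposal is correct and follows exactly the paper's own approach: the paper's proof is the single line ``It follows directly from Theorem~\ref{prop_operador} and Proposition~\ref{prop_matriz},'' which is precisely the reduction you spell out. Your additional care about whether Proposition~\ref{prop_matriz} yields an equality of chain maps rather than of isolated entries is reasonable diligence, but note that this is exactly the content of that proposition as stated, so there is no residual gap.
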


\begin{proof}
It follows directly from Theorem~\ref{prop_operador} and Proposition~\ref{prop_matriz}. 
\end{proof}

Hence, the pair  $(C^{\mathcal{GS}}_*(M,X),\Delta^{\mathcal{GS}}_*)$ is indeed a chain complex,  whenever  $M\in\mathfrak{M}(\mathcal{GS})$,  $X\in\mathfrak{X}_{\mathcal{GS}}(M)$ and  $\mathcal{S} = \mathcal{W}, \mathcal{D}$ or $ \mathcal{T}$.

\subsubsection{Examples}

\begin{example}\label{ex_w3}

Consider a GS-flow $\varphi_{X}$  defined on a singular manifold $M \in \mathfrak{M}(\mathcal{GW})$  and its Morsification $(\widetilde{M},\varphi_{\widetilde{X}})$, as in Figure~\ref{figura_ex_whitney_man}.
   Considering a choice of orientations on the unstable manifolds  of the critical points of $\widetilde{M}$,  the GS-characteristic signs on the orbits of  $\varphi_{X}$   are obtained, by the Definition~\ref{characteristicGS_w}.
   
 \begin{figure}[H]
    \centering
            \includegraphics[width=0.80\textwidth]{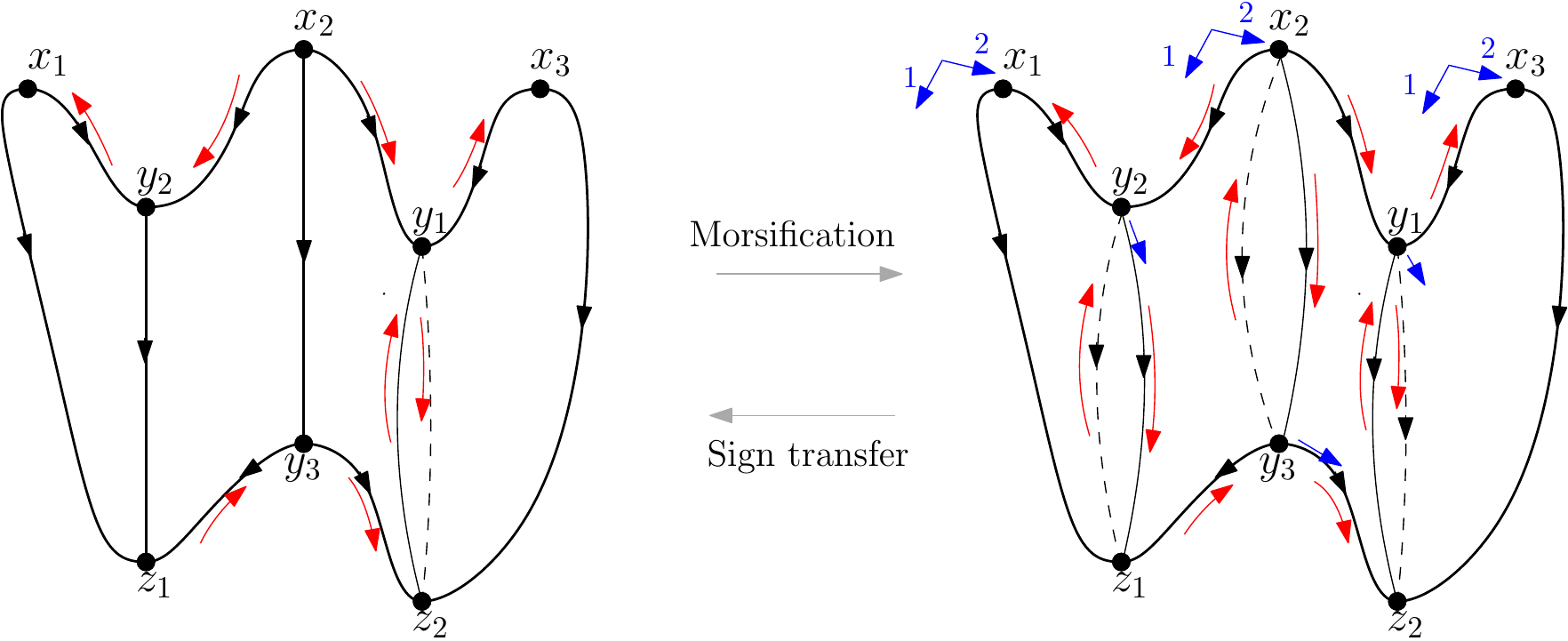}             
   \caption{A GS-flow with Whitney singularities and its Morsification.}\label{figura_ex_whitney_man}
\end{figure}

The GS-chain groups are:
$ C_2^{\mathcal{GW}}(M)=\mathbb{Z} \langle x_1\rangle \oplus \mathbb{Z} \langle x_2\rangle \oplus \mathbb{Z} \langle x_3\rangle,$
$C_1^{\mathcal{GW}}(M)=\mathbb{Z} \langle y_1\rangle \oplus \mathbb{Z} \langle y_2\rangle \oplus\mathbb{Z} \langle y_3\rangle, $
$C_0^{\mathcal{GW}}(M)=\mathbb{Z} \langle z_1\rangle \oplus \mathbb{Z} \langle z_2\rangle $
and $C_k^{\mathcal{GW}}(M)=0, k\neq 0,1,2$.  The GS- intersection numbers are:
$n(x_1,y_2)=-1, \ n(x_2,y_1)=1, \ n(x_2,y_2)=1, \ 
n(x_2,y_1)=1, \ n(x_3,y_1)=0, \ 
n(x_2,y_3)=0,  \ n(y_1,z_2)=0, \ n(y_2,z_1)=0, \  n(y_3,z_2)=1, \ n(y_3,z_1)-1.   $
The GS-boundary maps $\Delta^{\mathcal{GW}}_2:C_2\rightarrow C_1$, $\Delta^{\mathcal{GW}}_1:C_1\rightarrow C_0$ and $\Delta^{\mathcal{GW}}_0:C_0\rightarrow 0$  are given by:
$$ \Delta^{\mathcal{GW}}_2(x_1)=-\langle y_2\rangle , \ 
\Delta^{\mathcal{GW}}_2(x_2)=\langle y_1\rangle + \langle y_2 \rangle , \ 
\Delta^{\mathcal{GW}}_2(x_3)=- \langle y_1\rangle, $$
$$\Delta^{\mathcal{GW}}_1(y_1)=0, \ 
\Delta^{\mathcal{GW}}_1(y_2)=0, \ 
\Delta^{\mathcal{GW}}_1(y_3)=\langle z_2\rangle - \langle z_1\rangle.$$
Therefore, the matrix of the GS-boundary map  $\Delta^{\mathcal{GW}}$ is as in Figure~\ref{figura_ex_whitney_matriz-new}.

 \begin{figure}[H]
    \centering
                \includegraphics[width=0.40\textwidth]{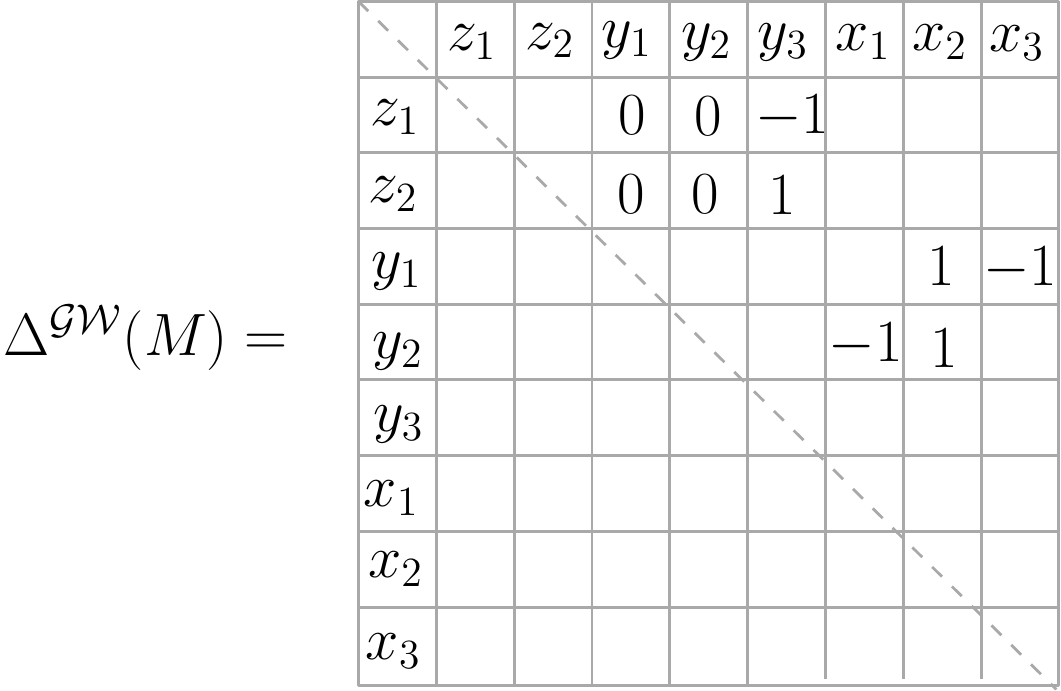}
   \caption{GS-boundary operator $\Delta^{\mathcal{GW}}(M)$ .}\label{figura_ex_whitney_matriz-new}
\end{figure}\hfill $\bigtriangleup$

\end{example}

\begin{example}\label{ex_d}
 Consider a GS-flow $\varphi_{X}$  defined on a singular manifold $M$  and its  Morsification $(\widetilde{M},\varphi_{\widetilde{X}})$  as in Figure~\ref{fig_caracteristico_d1}.  Considering a choice of orientations on the unstable manifolds  of the critical points of $\widetilde{M}$,  the GS-characteristic signs on the orbits of $(\widetilde{M},\varphi_{\widetilde{X}})$   are obtained. By  Definition~\ref{characteristicGS_D}, one gets the  GS-characteristic signs of the orbits in $\varphi_{X}$. 

\begin{figure}[H]
    \centering
        \includegraphics[width=0.75\textwidth]{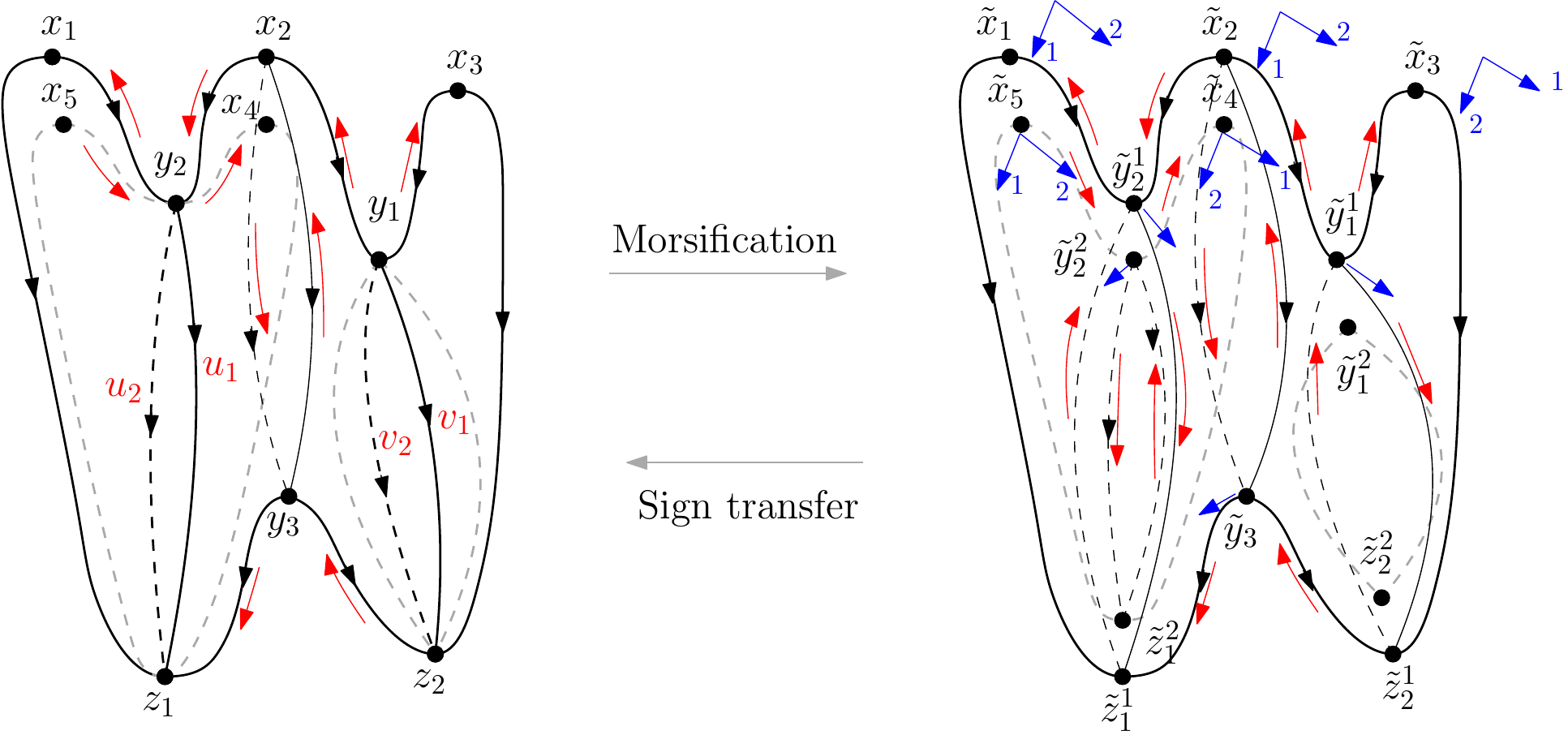}
   \caption{A GS-flow with double crossing singularities and its Morsification.}\label{fig_caracteristico_d1}
\end{figure}

Since the  orbits $u_1, u_2, v_1$ and $v_2$  are singular orbits, hence the GS-characteristic sign is given by a pair:
$n_{u_1}=(n_{u_1^1},$ $n_{u_1^2})=(1,-1), $   $ n_{u_2}=(n_{u_2^1},$ $ n_{u_2^2})=(-1,1),$
$n_{v_1}=(n_{v_1^1},$ $n_{v_1^2})=(1,0),  n_{v_2}=(n_{v_2^1},$ $n_{v_2^2})=(-1,0).$
The groups of the GS-chain complex are given by
$$ C_2^{\mathcal{GD}}(M)=\bigoplus_{i=1}^5 \mathbb{Z} \langle x_i\rangle \oplus \mathbb{Z} \langle y^2_1 \rangle$$
$$C_1^{\mathcal{GD}}(M)=\mathbb{Z} \langle y^1_1\rangle \oplus \mathbb{Z} \langle y_2^1\rangle \oplus\mathbb{Z} \langle y_2^2\rangle \oplus\mathbb{Z} \langle y_3\rangle, \qquad C_0^{\mathcal{GD}}(M)=\mathbb{Z} \langle z_1^1\rangle \oplus \mathbb{Z} \langle z_1^2\rangle \oplus \mathbb{Z} \langle z_2^1\rangle \oplus \mathbb{Z} \langle z_2^2\rangle$$
and $C_k^{\mathcal{GD}}(M)=0, k\neq 0,1,2$. 
Note that $y_1$ is a double crossing singularity of saddle-repelling  nature ($sr$), hence it is associated to a generator in  $C_2^{\mathcal{GD}}(M)$ and a generator in $C_1^{\mathcal{GD}}(M)$. Analogously, the double crossing singularity $y_3$ has saddle-saddle nature ($ss_s$), hence it is associated to two generators in  $C_1^{\mathcal{GD}}(M)$. Finally, each attracting double crossing singularity  $z_1$ and $z_2$ is associated to two generators in $C_0^{\mathcal{GD}}(M)$. The GS-intersection numbers are:
$n(x_1,y_2^1)=-1, \ $ $
n(x_2,y_2^1)=1,$ $ \ n(x_2,y_1^1)=-1, $
$n(x_2,y_3)=0, \ $ $ n(x_3,y_1^1)=1, \ $ $n(x_4,y_2^2)=-1, \ $ $ n(x_5,y_2^2)=1, $ 
$n(y_3,z_1^1)=1, \ $ $ n(y_3,z_2^1)=-1, $
$n(y_1,z_2)=n_{v_1}+n_{v_2}=(1,0)+(-1,0)=(0,0), $
$n(y_2,z_1)=n_{u_1}+n_{u_2}=(1,-1)+(-1,1)=(0,0).   $
Therefore, the GS-boundary operator $\Delta_2^{\mathcal{GD}}:C_2\rightarrow C_1$, $\Delta_1^{\mathcal{GD}}:C_1\rightarrow C_0$ and $\Delta_0^{\mathcal{GD}}:C_0\rightarrow 0$ are given by:
$$ \Delta_2^{\mathcal{GD}}(x_1)=-\langle y_2^1\rangle, \ 
\Delta^{\mathcal{GD}}_2(x_2)=-\langle y_1^1 \rangle + \langle y_2^1\rangle, \ 
\Delta^{\mathcal{GD}}_2(x_3)=-\langle y_1^1\rangle, \
\Delta^{\mathcal{GD}}_2(x_4)=-\langle y_2^2\rangle, \
\Delta^{\mathcal{GD}}_2(x_5)=-\langle y_2^2\rangle, $$
$$\Delta^{\mathcal{GD}}_1(y_3)=\langle z_1^1\rangle - \langle z_2^1\rangle, \ 
\Delta^{\mathcal{GD}}_1(y_1^1)=0, \ 
\Delta^{\mathcal{GD}}_1(y_2^1)=0, \ 
\Delta^{\mathcal{GD}}_1(y_2^2)=0, \ \Delta^{\mathcal{GD}}_0(z_1)=\Delta^{\mathcal{GD}}_0(z_2)=0.$$
The matrix  $\Delta^{\mathcal{GD}}$ of the GS-boundary operator is shown in Figure~\ref{fig_flow_d}.

\begin{figure}[H]
    \centering
        \includegraphics[width=0.5\textwidth]{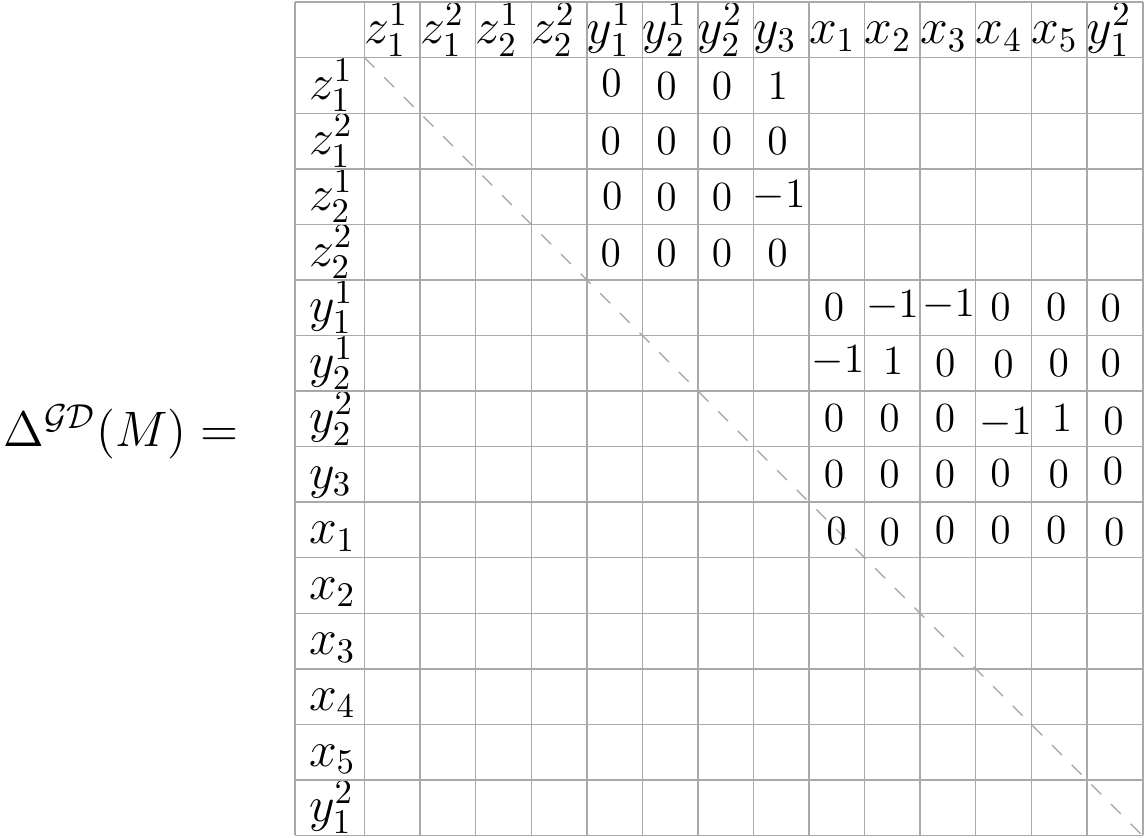}
   \caption{GS-boundary operator $\Delta^{\mathcal{GD}}(M)$.}\label{fig_flow_d}
\end{figure}\hfill $\bigtriangleup$

\end{example}

\section{Dynamical Homotopical  Cancellation Theorem for GS-flows }\label{DCTGSF}

In this section we prove a   homotopical cancellation theorem for consecutive singularities  of a Gutierrez-Sotomayor vector field.

In the  homotopical cancellation process of singularities in  $\mathcal\mathcal{S} = \mathcal{C}$, $\mathcal{W}$, $\mathcal{D}$ or $\mathcal{T}$,
 three singularities in $\mathcal\mathcal{S}$, one of saddle nature $y$ and two of attracting (resp., repelling) nature $z_1,z_2$ (resp., $x_1,x_2$),  give rise to a new singularity of attracting  nature $\bar{z}$  (resp., repelling nature  $\bar{x}$). Droplets or folds associated to these singularities are topological invariants  and are registered in the  singularity  type number, see definition in Section \ref{sec:GSVF}. 
After the   homotopical cancellation of $y$ and $z_i$ (resp., $x_i$), $i=1$ or $2$, $\bar{z}$ is the new singularity (resp., $\bar{x}$) and the type number of $\bar{z}$ is related to the types numbers $m(y)$, $m(z_1)$ and $m(z_2)$  (resp. $m(y)$, $m(x_1)$ and $m(x_2)$) of $y$, $z_1$ and $z_2$ (resp., $y$, $x_1$ and $x_2$). 
We say that  $\bar{z}$ (resp., $\bar{x}$) \textbf{\emph{inherits the type numbers}} $m(y)$, $m(z_1)$ and $m(z_2)$ as follows:
$$
m(\bar{z}) =
m(y) + m(z_1) + m(z_2)  \ \ \  (\text{resp.,} \ m(\bar{x}) =
m(y) + m(x_1) + m(x_2) )
$$
for $\mathcal\mathcal{S} = \mathcal{C}$, $\mathcal{W}$, $\mathcal{D}$ or $\mathcal{T}$.  Recall, that the type number of a regular singularity is always zero. Hence, as a consequence, whenever $z_1$ and $z_2$ are regular $\bar{z}$ will inherit a type number equal to zero.

For example, in Figure~\ref{fig:exemplo_cone_intro2}, the three singularities: $z_1$ a regular type singularity of attracting nature, 
$y_2$ a cone type singularity of saddle nature and $z_2$ a $2$-sheet cone type singularity of attracting nature, are involved in the  homotopical cancellation process. More specifically, they give rise to $\bar{z}_{1}$ which 
 inherits the types of $z_1$, $z_2$ and $y_2$, i.e., $z_1$ contributes zero, $y_2$ contributes one and $z_2$ contributes 2, hence $\bar{z}_1$ is a 3-sheet cone type singularity of attracting nature.
For more examples, see Subsection \ref{subsec:6.3}.

\begin{definition}
 Let  $X\in \mathfrak{X}_{\mathcal{GS}}(M)$ be a Gutierrez-Sotomayor vector field on $M\in\mathfrak{M}(\mathcal{GS})$, for  $\mathcal{S}=\mathcal{C}$ or $\mathcal{W}$. Let $p, q \in Sing(X)$ be consecutive singularities of $k$ and $k\! -\!1$ nature numbers, respectively.
One says that $p$ and $q$  are \textbf{dynamically homotopically cancelled}  and that together with $q'$ give rise to  $\bar{q}'$ if 
 there is a singular $2$-manifold $M'$ of the same homotopy type as $M$ and 
 there exists a vector field  $X'\in  \mathfrak{X}_{\mathcal{GS}}(M')$
 which is topologically equivalent to $X$ outside of a neighborhood $V$ of the flow lines $u_1$ and $u_2$ which is given  as follows:

 \begin{enumerate}
     \item If $k=1$, then ${\mathcal{M}}^{p}_{q}=\{u_1\}$ and ${\mathcal{M}}^{p}_{q'}=\{u_2\}$, where $q'$ is the other singularity of attracting nature connecting with $p$.    
     Moreover, the vector field $X'\in  \mathfrak{X}_{\mathcal{GS}}(M')$ on $V$ contains only one $n$-sheet $\mathcal{S}$-singularity $\bar{q}'\in \mathfrak{X}_{\mathcal{GS}}(M')$ of attracting nature which inherits the type numbers of $p$, $q$ and  $q'$.  In this case,   $p$ and $q$   together with $q'$ give rise to  $\bar{q}'$.

     \item  If $k=2$, then  ${\mathcal{M}}^{p}_{q}=\{u_1\}$ and ${\mathcal{M}}^{p'}_{q}=\{u_2\}$, where $p'$ is the other singularity of repelling nature connecting with $q$.
       Moreover, the vector field $X'\in  \mathfrak{X}_{\mathcal{GS}}(M')$ on $V$ contains only one singularity $\overline{p}'$ of repelling nature.  In this case, $p$ and $q$   together with ${p}'$ give rise to an $n$-sheet $\mathcal{S}$-singularity $\bar{p}' \in \mathfrak{X}_{\mathcal{GS}}(M')$ which inherits the type numbers of
$p$, $p'$ and $q$.
     
 \end{enumerate}
\label{def:dynhotcan}
\end{definition}

Note that if  $X\in\mathfrak{X}_{\mathcal{GS}}(M)$ is  a GS-vector field on $M\in\mathfrak{M}(\mathcal{GS})$ with only regular singularities, then the GS  homotopical cancellation of consecutive singularities coincides with the notion of cancellation of critical points established by Smale in the Morse setting. This follows, since  Morse critical points have a unique generator and hence $q'= \overline{q}'$ and $p' =\overline{p}'$ in Definition~\ref{def:dynhotcan}.

Consider $M\in\mathfrak{M}(\mathcal{GS})$ a closed 2-manifold and a GS-flow $\varphi_{X}$ on $M$ associated to a GS-vector field
$X\in\mathfrak{X}_{\mathcal{GS}}(M)$, where $\mathcal{S}=\mathcal{C}, \mathcal{W}, \mathcal{D} $ or $\mathcal{T}$. 
Given consecutive singularities $x$ and $y$, suppose that:
\begin{itemize}
\item the GS-intersection number $n(x,y)$ is $\pm 1$, when $\mathcal{S} = \mathcal{C}$ or $\mathcal{W}$;
\item one of the coordinates of the GS-intersection number  $n(x,y)$ is equal to $\pm 1$, when $\mathcal{S} = \mathcal{D}$ or $ \mathcal{T}$.
\end{itemize}

 The theorems in this section  guarantee that under these conditions,  it is always possible to dynamically homotopically cancel the singularities with GS-intersection number equal to $\pm 1$.

\begin{theorem}[Dynamical   Homotopical Cancellation Theorem  for GS-flows - Cases $\mathcal{C}$ and $\mathcal{W}$]\label{teo_cancelamento}
 Let  $X\in \mathfrak{X}_{\mathcal{GS}}(M)$ be a Gutierrez-Sotomayor vector field on $M\in\mathfrak{M}(\mathcal{GS})$, for  $\mathcal{S}=\mathcal{C}$ or $\mathcal{W}$. Let $p, q \in Sing(X)$ be consecutive singularities of $k$ and $k\! -\!1$ nature numbers, respectively.
If $n(p,q)$ is non zero, then
 $p$ and $q$  dynamically homotopically cancelled.
\end{theorem}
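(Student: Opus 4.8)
The plan is to reduce the singular cancellation to the classical smooth Morse cancellation via the Morsification process established in Section~\ref{sec:MGSVFIB}, then transport the resulting smooth flow back to a singular flow by the inverse construction. I would treat the cone case ($\mathcal{S}=\mathcal{C}$) and the Whitney case ($\mathcal{S}=\mathcal{W}$) together, splitting into the two subcases $k=1$ (a saddle $p$ and an attractor $q$, with $q'$ the second attractor adjacent to $p$) and $k=2$ (a repeller $p$ and a saddle $q$, with $p'$ the second repeller adjacent to $q$); these are time-reversals of each other, so I would prove $k=1$ and invoke the reversed flow for $k=2$.

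First I would set up the local picture: since $n(p,q)$ is non-zero, by the weight/structure constraints in the tables of Section~\ref{sec:GSVF} (and the fact that saddles have at most two flow lines on each invariant manifold) $n(p,q)=\pm 1$ in the cone/Whitney case, so $\mathcal{M}^p_q=\{u_1\}$ is a single orbit; let $u_2$ be the unique orbit in $\mathcal{M}^p_{q'}$. I would take an isolating block $N$ of the maximal invariant set $\{p,q,q'\}\cup\{u_1,u_2\}$ and apply Theorem~\ref{teo_morsificacao_fluxo} to obtain a Morsification $(\widetilde N,\varphi_{\widetilde X})$ which is a smooth isolating block. Under Morsification, $p$ (a saddle cone or a Whitney saddle) is replaced by a regular saddle $\widetilde p$ (for Whitney) or a regular saddle $\widetilde p$ together with bookkeeping of its droplet/fold structure; the attractors $q,q'$ are replaced by regular attractors $\widetilde q,\widetilde q'$, while their type numbers are recorded. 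By Proposition~\ref{prop_matriz} and Definition~\ref{characteristicGS}/\ref{characteristicGS_w}, the intersection number $n(\widetilde p,\widetilde q)=n(p,q)=\pm1$ survives in the smooth flow (for the cone case one must check, using Lemma~\ref{lemmagrafoGS}, that the relevant connecting orbit is not one of the singular ones that gets sign $0$).

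Next I would invoke the classical Smale cancellation theorem for Morse--Smale flows: since $\widetilde p,\widetilde q$ are consecutive critical points of index $k,k-1$ joined by a single flow line with intersection number $\pm1$, there is a smooth flow $\varphi_{\widetilde X'}$ on $\widetilde N$, equal to $\varphi_{\widetilde X}$ near $\partial \widetilde N$, in which $\widetilde p$ and $\widetilde q$ are cancelled, leaving only $\widetilde q'$ (with the flow line from $\widetilde p$ that went to $\widetilde q'$ now flowing from the boundary to $\widetilde q'$). Then I would run the Morsification construction \emph{backwards}: reassemble the droplets/folds recorded from $p$, $q$, $q'$ onto the single remaining regular attractor $\widetilde q'$, producing a generalized GS-singularity $\bar q'$ whose type number is $m(p)+m(q)+m(q')$ by the additivity convention stated just before Definition~\ref{def:dynhotcan}. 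This yields a singular 2-manifold $M'$ and $X'\in\mathfrak{X}_{\mathcal{GS}}(M')$ agreeing with $X$ outside a neighborhood $V$ of $u_1\cup u_2$, and $M'$ has the homotopy type of $M$ because the smooth cancellation is a homotopy equivalence on $\widetilde N$ and the (un)gluing maps $\mathfrak h,\mathfrak p$ of the Morsification are homotopy equivalences respecting the strata.

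The main obstacle I anticipate is the last step: verifying that gluing the droplet/fold data onto $\widetilde q'$ actually produces one of the \emph{admissible} generalized GS-singularities (an $n$-sheet cone, resp.\ Whitney, of attracting nature) rather than some ill-defined object, and that the resulting $V$ is genuinely homeomorphic-preserving of the ``number of singular regions.'' This is essentially a case-by-case bookkeeping over the singularity types, and one must be careful that the identifications of radii/diameters prescribed in the definition of super attractors are compatible with how the Morsification split them apart; for the Whitney case in particular the ``duplication of singular orbits'' clause in Proposition~\ref{prop_whitney} is what guarantees the folds can be re-identified consistently. Once this compatibility is checked type by type, the homotopy equivalence $M'\simeq M$ and the local topological equivalence of $X'$ with $X$ outside $V$ follow formally, completing the proof.
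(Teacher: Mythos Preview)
Your approach is genuinely different from the paper's, and there is a concrete gap in the cone case.

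The paper does \emph{not} go through Morsification for $\mathcal{S}=\mathcal{C},\mathcal{W}$. Instead it argues directly on the singular space: it takes the isolated invariant set $C(p,q,q')=\{p,q,q'\}\cup\mathcal{O}(u_1)\cup\mathcal{O}(u_2)$, chooses an isolating block $V$ for it, and observes that the image of the path $\delta=\gamma(q')*\gamma(u_2)*\gamma(p)*\gamma^{-1}(u_1)*\gamma(q)$ is contractible (it is an arc, not a loop). Collapsing this arc to a point $\bar q'$ produces a new space $\bar V$ of the same homotopy type with the same boundary, and one simply installs an attracting radial vector field on $\bar V$ with singularity $\bar q'$ (which automatically inherits the correct type number $m(p)+m(q)+m(q')$). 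Gluing $\bar V$ to $M\setminus V$ gives $M'$ and $X'$. That is the whole proof; no Smale theorem, no smooth structure, no inverse Morsification.

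Your route breaks down at the sentence ``Under Morsification, $p$ (a saddle cone \ldots) is replaced by a regular saddle $\widetilde p$.'' That is false: by Proposition~\ref{prop_cone}, a \emph{saddle cone} Morsifies to \emph{two} regular saddles $\widetilde p,\widetilde p'$, and an $n$-sheet attracting cone Morsifies to one regular attractor together with $n-1$ regular saddles. So after one Smale cancellation of $\widetilde p$ with $\widetilde q$ you are left with $\widetilde p'$ and several residual saddles from $q$ and $q'$; there is no obvious single application of Smale that kills exactly the right collection, and the ``run Morsification backwards'' step would have to reassemble this ensemble into a single admissible $n$-sheet cone attractor, which you do not justify. (Your parenthetical about Lemma~\ref{lemmagrafoGS} shows you sensed something delicate here, but the issue is structural, not just a sign check.) The Whitney case is better behaved because a Whitney saddle does Morsify to a single regular saddle, but the cone case needs a different argument.

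It is worth noting that your Morsification-then-Smale strategy is exactly what the paper \emph{does} use for Theorem~\ref{teo_cancelamento2} (the $\mathcal{D}$ and $\mathcal{T}$ cases), where it is natural because those singularities carry several generators and one cancels a single generator pair in the Morsified flow. For $\mathcal{C}$ and $\mathcal{W}$, where each singularity has a single generator, the paper's direct contraction argument is both shorter and avoids the bookkeeping obstacle above.
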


\begin{proof}

Consider $k=1$, i.e.  $p$ is a saddle and $q$ is an attracting singularity. The assumption that  $n(p,q)$  is non zero guarantees that there exists a singularity $q'$  of attracting  nature, distinct from $p$ and connecting with $q$. 
 Let $C(p,q,q')$ be the set composed by the singularities $p$, $q$ and $q'$ and the flow lines ${\mathcal{M}}^{p}_{q}=\{u_1\}$ and ${\mathcal{M}}^{p}_{q'}=\{u_2\}$ connecting them. $C(p,q,q')$  is an isolated invariant set which is an attractor. Hence,  choose an isolating block $V$ of $C(p,q,q')$ such that the vector field $X$ is transversal to the boundary of $V$, $\partial \overline{V}$.

 Given a point $x \in M$, denote by $\gamma(x)$ the orbit through the point $x$.
Clearly $\gamma(p)=p$ and $\gamma(q)=q$ and $\gamma(q')=q'$. Consider a path $\delta$  which has image coinciding with the  juxtaposition of the  paths
 $$\gamma(q') * \gamma(u_2)* \gamma(p)* \gamma^{-1}(u_1)* \gamma(q),$$
i.e., $\delta$ is the path going from $q$ to $q'$ through the orbits of $u_1$ and $u_2$. Since $\delta$ is not a closed path, one can contract it to a point $\overline{q}'$,  obtaining a  new topological space $\overline{V}$ preserving the boundary $\partial \overline{V} $ of the same homotopy type as $V$, where the type number of $\overline{q}'$ is equal to $m(\overline{q}') =
m(q) + m(q') + m(p) $. 
Now consider an attracting singular vector field on $\overline{V}$ such that $\overline{q}'$  is a singularity of attracting nature and the vector field is transverse to the boundary of $\overline{V}$.  

The result follows by gluing  $M \setminus V$ and $\overline{V}$ together with their  respective vector fields.

 If $k=2$, the proof is analogous by considering the reverse flow. 
\end{proof}

\begin{definition}

 Let  $X\in \mathfrak{X}_{\mathcal{GS}}(M)$ be a Gutierrez-Sotomayor vector field on $M\in\mathfrak{M}(\mathcal{GS})$, for  $\mathcal{S}=\mathcal{D}$ or $\mathcal{T}$. Let $p, q \in Sing(X)$ be consecutive singularities,  $h_k^j(p)$ and $h_{k-1}^{\ell}(q)$ be the respective  consecutive generators of their natures, where $j\in\{1,\dots ,\eta_k(p)\}$ and $\ell\in\{1,\dots ,\eta_{k-1}(q)\}$.
One says that $p$ and $q$ are \textbf{dynamically homotopically  cancelled} if 
 there exists a vector field  $X'\in  \mathfrak{X}_{\mathcal{GS}}(M')$  which is topologically equivalent to $X$ outside of a neighborhood $V$
 of the flow lines $u_1$ and $u_2$ which are  given as follows:
 \begin{enumerate}
     \item If $k=1$, then $\widetilde{\mathcal{M}}^{h^{j}_{k}(p)}_{h_ {k-1}^{\ell}(q)}=\{u_1\}$ and $\widetilde{\mathcal{M}}^{h^{j}_{k}(p)}_{h_ {k-1}^{i}(q')}=\{u_2\}$, where $q'$ is the other singularity of attracting nature connecting with $p$  such that $h_ {k-1}^{i}(q')$ and $h_ {k-1}^{\ell}(q)$ belong to the same connected component of the Morsification $\widetilde{M}$.    
     Moreover, the vector field $X'\in \mathfrak{X}_{\mathcal{GS}}(M')$ on $V$ contains only one singularity $\overline{q}'$  whose  generators  match the union of all  the generators of  $p, q$ and $q'$  excluding the cancelled pair of  generators $h_k^j(p)$ and $ h_{k-1}^{\ell}(q)$.   In this case, $p$ and $q$  together with  ${q}'$
 give rise to an $n$-sheet $\mathcal{S}$-singularity      $\overline{q}' \in \mathfrak{X}_{\mathcal{GS}}(M')$
   which    inherits the type numbers of $p$, $q$ and $q'$.
     
     \item  If $k=2$, then $\widetilde{\mathcal{M}}^{h^{j}_{k}(p)}_{h_ {k-1}^{\ell}(q)}=\{u_1\}$ and $\widetilde{\mathcal{M}}^{h^{i}_{k}(p')}_{h^ {\ell}_{k-1}(q)}=\{u_2\}$, where $p'$ is the other singularity of repelling nature connecting with $q$  such that $h_{k}^{i}(p')$ and $h_ {k}^{j}(p)$ belong to the same connected component of  the Morsification $\widetilde{M}$.
       Moreover, the vector field $X'\in \mathfrak{X}_{\mathcal{GS}}(M')$ on $V$ contains only one singularity $\overline{p}'$ whose  generators    match the union of all the generators of $p, q$ and $p'$ excluding the cancelled pair of  generators $h_k^j(p)$ and $ h_{k-1}^{\ell}(q)$.  In this case, $p$ and $q$   together with  ${p}'$ give rise to an $n$-sheet $\mathcal{S}$-singularity  $\overline{p}'\in \mathfrak{X}_{\mathcal{GS}}(M')$  which    inherits the type numbers of $p$, $q$ and $p'$.
 \end{enumerate}
\end{definition}

For example, in Figure~\ref{fig:ex_duplo-cancel}, the three singularities: $z_1$ and $z_2$ are $2$-sheet double crossing type singularities of attracting nature and  
$y$ is a regular  type singularity of saddle nature; are involved in the  dynamical homotopical cancellation process. More specifically, they give rise to $\bar{z}_{2}$ which 
 inherits the types of $z_1$, $z_2$ and $y$, i.e., $z_1$ contributes 1, $y$ contributes zero and $z_2$ contributes 1, hence $\bar{z}_2$ is a 3-sheet  double crossing type singularity of attracting nature.
For more examples, see Subsection \ref{subsec:6.3}.

\begin{figure}[h!]
    \centering
        \includegraphics[width=0.8\textwidth]{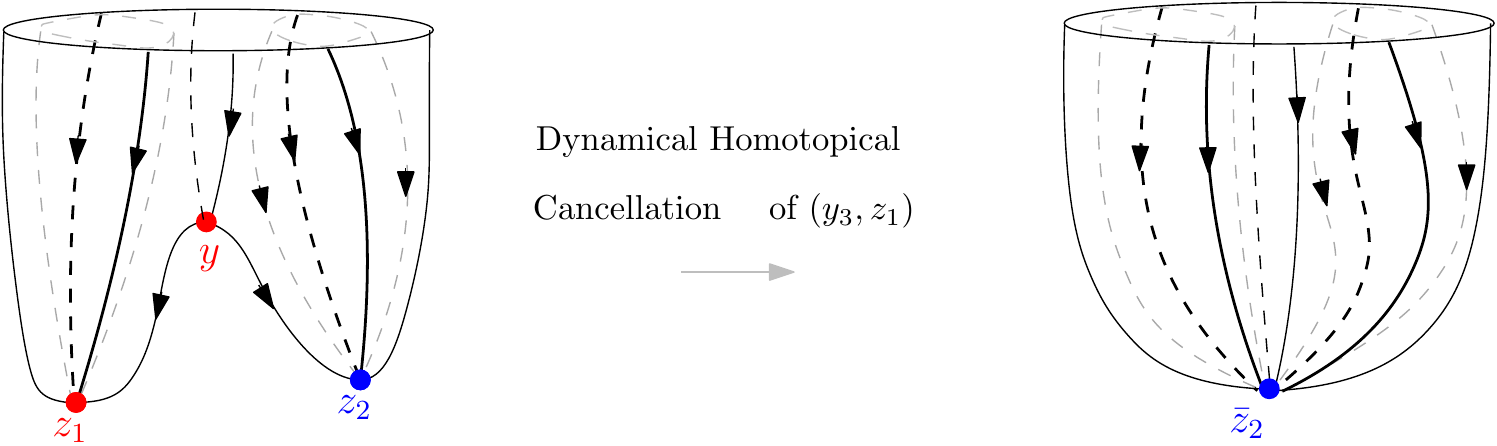}
    \caption{Dynamical Homotopical Cancellation of the singularities $y$ and $z_1$.}\label{fig:ex_duplo-cancel}
\end{figure}

\begin{theorem}[Dynamical   Homotopical Cancellation Theorem  for GS-flows - Cases $\mathcal{D}$ and $\mathcal{T}$]\label{teo_cancelamento2}
 Let  $X\in \mathfrak{X}_{\mathcal{GS}}(M)$ be a Gutierrez-Sotomayor vector field on $M\in\mathfrak{M}(\mathcal{GS})$, for  $\mathcal{S}=\mathcal{D}$ or $\mathcal{T}$. Let $p, q \in Sing(X)$ be consecutive singularities,  $h_k^j(p)$ and $h_{k-1}^{\ell}(q)$ be the respective  consecutive generators of their natures, where $j\in\{1,\dots ,\eta_k(p)\}$ and $\ell\in\{1,\dots ,\eta_{k-1}(q)\}$.
If $n(h_k^j(p), h_{k-1}^{\ell}(q))$ is non zero for some $j$ and $\ell$, then 
 $p$ and $q$ are dynamically homotopically  cancelled.
\end{theorem}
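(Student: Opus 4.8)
The plan is to reduce the $\mathcal{D}$ and $\mathcal{T}$ cases to the already-established Morse cancellation theorem via the Morsification process, exactly as Theorem~\ref{prop_matriz} reduces the chain-complex statements. First I would observe that, by hypothesis, $n(h_k^j(p), h_{k-1}^{\ell}(q)) = \pm 1$ for some choice of generators $j,\ell$. By the definition of the GS-intersection numbers for crossing singularities (Definitions~\ref{characteristicGS_D} and~\ref{characteristicGS_T}) and by Proposition~\ref{prop_matriz}, this coordinate of $n(p,q)$ equals the Morse intersection number $n(h_k^j(p), h_{k-1}^\ell(q))$ computed in the Morsified flow $\varphi_{\widetilde X}$ on $\widetilde M \in \mathfrak{M}(\mathcal{GC})$, where $h_k^j(p)$ and $h_{k-1}^\ell(q)$ are genuine regular (or cone-saddle, when $k=1$) singularities lying in a single connected component $\widetilde M_0$ of $\widetilde M$. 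Hence $\widetilde{\mathcal M}^{h_k^j(p)}_{h_{k-1}^\ell(q)} = \{u_1\}$ is a single flow line with characteristic sign $\pm 1$.

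Next I would carry out the cancellation in $\widetilde M_0$. Since $h_k^j(p)$ and $h_{k-1}^\ell(q)$ are consecutive singularities of the smooth (or cone) flow on $\widetilde M_0$ joined by a single flow line $u_1$, they together with the companion singularity $q'$ (resp.\ $p'$) — whose relevant Morsified generator $h_{k-1}^i(q')$ (resp.\ $h_k^i(p')$) lies in the same component $\widetilde M_0$ and is connected to $h_k^j(p)$ (resp.\ $h_{k-1}^\ell(q)$) by the flow line $u_2$ — form an isolated invariant set inside an isolating block $\widetilde V \subset \widetilde M_0$, transverse to $\varphi_{\widetilde X}$. Applying Theorem~\ref{teo_cancelamento} (the $\mathcal{C},\mathcal{W}$ cancellation theorem, which covers regular and cone singularities) on $\widetilde V$, one contracts the path $\gamma(q') * \gamma(u_2) * \gamma(h_k^j(p)) * \gamma^{-1}(u_1)* \gamma(h_{k-1}^\ell(q))$ to a single attracting (resp.\ repelling) singularity $\overline{q}'$ (resp.\ $\overline{p}'$) in a new block $\overline V$ of the same homotopy type, with $m(\overline{q}')$ inheriting the relevant type numbers, and glues $\widetilde M_0 \setminus \widetilde V$ to $\overline V$. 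The remaining components of $\widetilde M$ are left untouched.

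Finally I would pass back through the Morsification projection $\mathfrak p$. The block $V$ in $M$ containing the GS-singularities $p,q,q'$ (resp.\ $p,q,p'$) and the orbits $u_1, u_2$ is, by construction of the Morsification (Propositions~\ref{prop_cone}–\ref{prop_triplo}), obtained from the corresponding blocks in $\widetilde M$ by the identifications encoded in $\mathfrak p$ along the singular diameters. Performing the same identifications on $\overline V \sqcup (\text{untouched components})$ yields a singular block $V'$ and a GS-vector field $X' \in \mathfrak X_{\mathcal{GS}}(M')$ with a single $n$-sheet $\mathcal{S}$-singularity $\overline{q}'$ (resp.\ $\overline{p}'$) whose collection of generators is the union of those of $p,q,q'$ (resp.\ $p,q,p'$) minus the cancelled pair $h_k^j(p), h_{k-1}^\ell(q)$, and whose singularity type number is $m(p)+m(q)+m(q')$ (resp.\ $m(p)+m(q)+m(p')$); outside $V'$ the flow is unchanged, hence $X'$ is topologically equivalent to $X$ there, and $M'$ has the homotopy type of $M$ because both the Morse cancellation and the gluing identifications preserve homotopy type. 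The main obstacle will be the bookkeeping in this last step: verifying that the identifications of the Morsified block descend consistently to produce a \emph{single} legitimate generalized GS-singularity of the correct type and nature — in particular checking case-by-case (using the nature tables and the structure of $n$-sheet crossing singularities) that the surviving generators assemble into an admissible $n$-sheet $\mathcal{S}$-singularity and that no unintended extra singular orbits are created when the cancelled flow line $u_1$ is removed before re-identification.
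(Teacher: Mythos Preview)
Your proposal is correct and follows essentially the same route as the paper: pass to the Morsification $\widetilde{M}$, cancel the pair $h_k^j(p),\,h_{k-1}^\ell(q)$ in the smooth (or $\mathcal{GC}$) setting inside the appropriate connected component, and then re-identify along the singular (fold) orbits to descend to a new singular manifold $M'$ and vector field $X'$. The only cosmetic difference is that the paper invokes Smale's cancellation theorem directly for the smooth step, whereas you appeal to Theorem~\ref{teo_cancelamento}; and where you flag the ``bookkeeping'' obstacle, the paper's proof handles it by observing that each singular orbit $\gamma$ through $p,q,q'$ duplicates to $\gamma^1,\gamma^2$ in $\widetilde{M}$, only one of which lands in the cancelled block, and after cancellation that copy now limits on $h_{k-1}^{\ell'}(q')$, so the re-identification goes through.
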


\begin{proof}
 Firstly, one considers the Morsified manifold  $\widetilde{M} \in \mathfrak{M}(\mathcal{GC})$  and the Morsified  flow $\varphi_{\widetilde{X}}$ on  $\widetilde{M}$ associated to $\widetilde{X}\in \mathfrak{X}(\mathcal{GC})$. One also needs to be aware that, in the Morsification process, the generators of  double or  triple crossing points will give rise to singularities in  $\mathfrak{X}_{\mathcal{GS}}(\widetilde{M})$  which may be in  distinct connected components of $\widetilde{M}$.
 
Consider $k=1$, i. e. suppose that  $q$  is a singularity having at least one generator of attracting nature and   $p$ is a singularity having at least one generator of saddle nature.   Since $n(h_k^j(p), h_{k-1}^{\ell}(q))$ is non zero for some $j$ and $\ell$, there is another singularity $q'$ of attracting nature  connecting with $p$. In other words, $n(h_k^j(p), h_{k-1}^{\ell'}(q'))$  is non zero, for some $\ell'$. Moreover, the generator $h_k^j(p)$ corresponds to a saddle singularity  and $ h_{k-1}^{\ell}(q)$ and $h_{k-1}^{\ell'}(q')$ correspond to   attracting singularities in the Morsified flow. Since they  belong to the same connected component of $\widetilde{X}$, one can apply the Smale's cancellation theorem   and cancel  $h_k^j(p)$  and $ h_{k-1}^{\ell}(q)$,  as in the Morse case. After the cancellation, one obtains a  Morsified flow $\varphi_{\widetilde{X'}}$ on  $\widetilde{M}$ which coincides with $\varphi_{\widetilde{X}}$ outside a neighborhood $V$ of the  set $C(h_k^j(p),h_{k-1}^{\ell}(q),h_{k-1}^{\ell'}(q'))$  composed by the singularities $h_k^j(p)$, $h_{k-1}^{\ell}(q)$ and $h_{k-1}^{\ell'}(q')$ and the flow lines  connecting them. 

An orbit $\gamma$ of the initial singular flow $\varphi_{X}$, which belongs to the singular part of $M$ and has $\omega$-limit set equal to  $p$, $q$ or $q'$, is a fold. Moreover, $\gamma$ admits a duplication $\gamma^1$ and $\gamma^2$ in $\varphi_{\widetilde{X}}$. Only one of these orbits has $\omega$-limit set equal to   $h_k^j(p)$, $ h_{k-1}^{\ell}(q)$ or $h_{k-1}^{\ell'}(q')$, and after the cancellation, this orbit will have $\omega$-limit set equal to the singularity  $h_{k-1}^{\ell'}(q')$.   Now considering all the connected components of $\widetilde{M}$  with the respective new Morse vector field $\widetilde{X}'$, one can identify  all the orbits $\gamma$ corresponding to singular orbits in the initial flow.  As a result, one obtains a new singular manifold $M'$ with a flow $\varphi_{X'}$ associated to 
 a vector field  $X'\in\mathfrak{X}_{\mathcal{GS}}(M')$
 which coincides with $X$, up to topological equivalence, outside of a neighborhood $V$ of the flow lines $u_1$ and $u_2$, as in the statement of the theorem.
 
 If $k=2$, the proof is analogous by considering the reverse flow. 
\end{proof}
%

\section{Detecting Dynamical  Homotopical Cancellations through Spectral Sequences }\label{DDCTSS}

 The use of algebraic tools  to extract dynamical information has 
 been explored in highly influential work, see~\cite{Franks2, M, M2}. Particularly, the use of spectral sequences has been used in  dynamical systems  (see~\cite{BLMdRS1, BLMdRS2, LMdRS}) as well as in computational topology (see~\cite{Edels, Z}). 

In this section, our main motivation is to establish how the algebraic cancellations in a spectral sequence of a filtered GS-chain complex affects dynamical homotopical cancellations within a GS-flow. 
In order to keep track of the changes of the differentials $d^r$ and the generators of the modules $E^r$  on each page  $(E^r,d^r)$ of the spectral sequence,  the Spectral Sequence Sweeping Algorithm (SSSA) was developed in~\cite{CdRS} and is presented in Section~\ref{subsec:appl1}.  It provides all of the algebraic cancellations that occur on each page, as well as, the new generators of the modules of the following page. With the complete information in hand of the algebraic cancellations in Section~\ref{subsec:appl2}, we make use of the Row Cancellation Algorithm (RCA),  which is the dynamical counterpart of the SSSA.
 We refer the reader to~\cite{CdRS,MdRS} for more details on RCA and SSSA.

In Section \ref{subsec:appl1}, we give a brief overview of basic definitions for spectral sequences and state the SSSA. In Section \ref{subsec:appl2}, we present the main homotopical cancellation results, Theorem \ref{cancel1} and Theorem \ref{cancel2}. In Section \ref{subsec:6.3}  examples of these theorems for flows on  $M\in\mathfrak{M}(\mathcal{GS})$, where  $\mathcal{S}=\mathcal{C},\mathcal{W}$ or $\mathcal{D}$ are proved.

\subsection{Spectral Sequence Sweeping Algorithm} \label{subsec:appl1}


Let $R$ be a principal ideal domain. A $k$-\textbf{\emph{spectral sequence}} $E$ over $R$ is a sequence $\{E^{r},\partial^{ r}\}_{r \geq k }$, such that 
\begin{enumerate}
\item[(1)] $E^{r}$ is a bigraded module over $R$, i.e., an indexed  collection of $R$-modules $E^{r}_{p,q}$, for all $p,q\in\mathbb{Z}$;
\item[(2)] $d^{r}$ is a differential of bidegree $(-r,r-1)$ on $E^{r}$, i.e., an indexed collection of homomorphisms  $d^{r}:E^{r}_{p,q}\rightarrow E^{r}_{p-r,q+r-1}$, for all $p,q\in\mathbb{Z}$, and $(d^{r})^{2}=0$;
\item[(3)] for all $r\geq k$, there exists an isomorphism 
$H(E^r)\approx E^{r+1}$, where
$$H_{p,q}(E^r)=\frac{\mbox{Ker}
d^r:E^{r}_{p,q}\to E^r_{p-r,q+r-1}}{\mbox{Im} d^r:E^{r}_{p+r,
q-r+1}\to E^r_{p,q}}. $$
\end{enumerate}

Let  $Z^k_{p,q}=Ker (d^k_{p,q}: E^k_{p,q} \rightarrow
E^k_{p,q-1})$ and $B^k_{p,q}=Im (d^k_{p,q+1}:E^k_{p,q+1} \rightarrow
E^k_{p,q})$, then $E^{k+1}=Z^k/B^k$ and
$$B^k\subseteq B^{k+1}\subseteq \ldots \subseteq B^r \subseteq\ldots \subseteq Z^r\subseteq \ldots\subseteq Z^{k+1}\subseteq Z^k. $$

Consider the bigraded modules
$Z^{\infty}=\cap_rZ^r, B^{\infty}=\cup_rB^r$ and  $E^{\infty}=Z^{\infty}/B^{\infty}$. The latter module is called the \textbf{\emph{limit of the spectral sequence}}.
A spectral sequence $E=\{E^{r},\partial^{r}\}$ is \textbf{\emph{convergent}} if given $p,q$  there is  $r(p,q)\geq k$ such that for all  $r\geq r(p,q)$, $d^r_{p,q}: E^{r}_{p,q} \rightarrow E^{r}_{p-r,q+r-1}$ is trivial.
A spectral sequence $E=\{E^{r},\partial^{r}\}$  is \textbf{\emph{convergent in the strong sense}} if given $p,q \in \mathbb{Z}$ there is  $r(p,q)\geq k$ such that $E^{r}_{p,q}\approx E^{\infty}_{p,q}$, for all $r\geq r(p,q)$.

Let $(C,\partial)$ be a chain complex. An \textbf{\emph{increasing filtration}} $F$ on $(C,\partial)$ is a sequence of submodules $F_pC$ of $C$ such that
 $F_pC\subset F_{p+1}C$, for all integer  $p$, and 
 the filtration is compatible with the gradation of $C$, i.e. $F_pC$ is a chain subcomplex of $C$ consisting of $\{F_{p}C_{q}\}$.
A filtration $F$ on $C$ is called \textbf{\emph{convergent}} if $\cap_{p}F_{p}C=0$ and $\cup_{p}F_{p}C = C$.  It is called {\it finite} if there are $p,p' \in \mathbb{Z}$ such that $F_{p}C=0$ and $F_{p'}C=C$. Also, it is said to be \textbf{\emph{bounded below}} if for any $q$ there is $p(q)$ such that $F_{p(q)}C_{q}=0$.

Given a filtration on $C$, the associated  bigraded module $G(C)$ is defined as
$$G(C)_{p,q}=\dfrac{F_{p}C_{p+q}}{F_{p-1}C_{p+q}}.$$

A filtration $F$ on $C$ induces a filtration $F$ on $H_{\ast}(C)$ defined by $F_{p}H_{\ast}(C) = \text{Im} \ [H_{\ast}(F_{p}C)\rightarrow H_{\ast}(C)].$ If the filtration $F$ on $C$ is convergent and bounded below then the same holds for the induced filtration on $H_{\ast}(C)$.

The following theorem (see~\cite{Sp}) shows that one can associate a spectral sequence to a filtered chain complex whenever the filtration is convergent and bounded below.

\begin{theorem}[Spanier,~\cite{Sp}]
Let $F$ be a convergent and bounded below filtration on a chain complex $C$. There is a convergent  spectral sequence with 
$$E^{0}_{p,q} = \dfrac{F_{p}C_{p+q}}{F_{p-1}C_{p+q}} =G(C)_{p,q}  \qquad \text{and} \qquad E^{1}_{p,q} \approx H_{p+q}\left(\dfrac{F_{p}C_{p+q}}{F_{p-1}C_{p+q}}\right)$$
and $E^{\infty}$ is isomorphic to the bigraded module $GH_{\ast}(C)$ associated to the induced filtration on $H_ {\ast}(C)$. 
\end{theorem}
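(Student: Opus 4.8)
The plan is to build the spectral sequence explicitly from the filtered complex by the classical construction, and then to identify its $E^{0}$, $E^{1}$ and $E^{\infty}$ terms; the identification of $E^{0}$ and $E^{1}$ will be essentially tautological, the relation $H(E^{r})\cong E^{r+1}$ will be the technical core, and the convergence and the identification $E^{\infty}\cong GH_{\ast}(C)$ will use, respectively, that the filtration is bounded below and that it is exhaustive.

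First I would introduce the bigraded modules of \emph{approximate cycles}
$$Z^{r}_{p,q} = \{\, x \in F_{p}C_{p+q} \;:\; \partial x \in F_{p-r}C_{p+q-1}\,\}, \qquad r \geq 0$$
(these live inside $C$; they are not to be confused with the submodules $Z^{r}\subseteq E^{r}$ of the general discussion above), noting $Z^{0}_{p,q} = F_{p}C_{p+q}$, $Z^{r}_{p,q}\supseteq Z^{r+1}_{p,q}$, and $Z^{\infty}_{p,q}:=\bigcap_{r}Z^{r}_{p,q}=\ker\partial\cap F_{p}C_{p+q}$. For $r\geq 1$ I set
$$E^{r}_{p,q} = \frac{Z^{r}_{p,q}}{\,Z^{r-1}_{p-1,q+1} + \partial Z^{r-1}_{p+r-1,\,q-r+2}\,}, \qquad E^{0}_{p,q}=\frac{F_{p}C_{p+q}}{F_{p-1}C_{p+q}}=G(C)_{p,q}.$$
A routine index check shows each summand lies in total degree $p+q$ and that $\partial Z^{r-1}_{p+r-1,q-r+2}\subseteq F_{p}C_{p+q}$ and $Z^{r-1}_{p-1,q+1}\subseteq Z^{r}_{p,q}$, so the quotient is defined. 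Since $\partial$ carries $Z^{r}_{p,q}$ into $Z^{r}_{p-r,q+r-1}$ and the two denominator summands of $E^{r}_{p,q}$ into those of $E^{r}_{p-r,q+r-1}$, it induces a well-defined map $d^{r}_{p,q}\colon E^{r}_{p,q}\to E^{r}_{p-r,q+r-1}$ of bidegree $(-r,r-1)$, and $(d^{r})^{2}=0$ because $\partial^{2}=0$.

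Next I would verify the three isomorphisms. For $r=0$ the differential $d^{0}$ is the map induced by $\partial$ on $G(C)_{p,\ast}$, so $E^{0}_{p,q}=G(C)_{p,q}$ holds by construction and $E^{1}_{p,q}=H_{p,q}(E^{0})$ is, by the definition of $d^{0}$, precisely the homology $H_{p+q}\bigl(F_{p}C_{\ast}/F_{p-1}C_{\ast}\bigr)$, which is the asserted formula for $E^{1}$. The core step is $H_{p,q}(E^{r})\cong E^{r+1}_{p,q}$. Writing $d^{r}[x]=[\partial x]$, I would show that $\ker d^{r}_{p,q}$ is the image in $E^{r}_{p,q}$ of $Z^{r+1}_{p,q}+Z^{r-1}_{p-1,q+1}+\partial Z^{r-1}_{p+r-1,q-r+2}$ (if $[\partial x]=0$, subtract a suitable element of $Z^{r-1}_{p-1,q+1}$ from $x$ to land in $Z^{r+1}_{p,q}$), and that $\operatorname{im} d^{r}_{p+r,q-r+1}$ is the image of $\partial Z^{r}_{p+r,q-r+1}+Z^{r-1}_{p-1,q+1}+\partial Z^{r-1}_{p+r-1,q-r+2}$. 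Using the inclusions $Z^{r}_{p-1,q+1}\subseteq Z^{r-1}_{p-1,q+1}$, $\partial Z^{r-1}_{p+r-1,q-r+2}\subseteq\partial Z^{r}_{p+r,q-r+1}$, $Z^{r}_{p-1,q+1}\subseteq Z^{r+1}_{p,q}$ and $\partial Z^{r}_{p+r,q-r+1}\subseteq Z^{r+1}_{p,q}$, together with the identity $Z^{r+1}_{p,q}\cap\bigl(Z^{r-1}_{p-1,q+1}+\partial Z^{r}_{p+r,q-r+1}\bigr)=Z^{r}_{p-1,q+1}+\partial Z^{r}_{p+r,q-r+1}$, the second and third isomorphism theorems then collapse $H_{p,q}(E^{r})$ onto $Z^{r+1}_{p,q}/\bigl(Z^{r}_{p-1,q+1}+\partial Z^{r}_{p+r,q-r+1}\bigr)=E^{r+1}_{p,q}$. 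This bookkeeping of the nested submodules is where the real work lies.

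Finally I would treat convergence and the limit term. Boundedness below gives, for each fixed $(p,q)$, an $r_{0}$ with $F_{p-r}C_{p+q-1}=0$ for $r\geq r_{0}$; hence $Z^{r}_{p,q}=Z^{\infty}_{p,q}$ and $Z^{r-1}_{p-1,q+1}=Z^{\infty}_{p-1,q+1}$ for $r\geq r_{0}$, and in particular $d^{r}_{p,q}=0$ for $r\geq r_{0}$, which is convergence. Using also that the filtration is exhaustive ($\bigcup_{p}F_{p}C=C$), the submodules $\partial Z^{r-1}_{p+r-1,q-r+2}$ increase with $r$ to $B^{\infty}_{p,q}:=\operatorname{im}\partial\cap F_{p}C_{p+q}$, so the limit term is $E^{\infty}_{p,q}=Z^{\infty}_{p,q}/\bigl(Z^{\infty}_{p-1,q+1}+B^{\infty}_{p,q}\bigr)$ (and $E^{r}_{p,q}=E^{\infty}_{p,q}$ for large $r$ when the filtration is finite). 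On the other hand the induced filtration $F_{p}H_{p+q}(C)=\operatorname{im}\bigl[H_{p+q}(F_{p}C)\to H_{p+q}(C)\bigr]$ corresponds, under the natural identification $H_{p+q}(F_{p}C)\cong Z^{\infty}_{p,q}/B^{\infty}_{p,q}$, to the submodule $Z^{\infty}_{p,q}/B^{\infty}_{p,q}$ of $H_{p+q}(C)$; taking the successive quotient and using $Z^{\infty}_{p-1,q+1}\cap B^{\infty}_{p,q}=B^{\infty}_{p-1,q+1}$ yields $GH_{p+q}(C)_{p,q}\cong Z^{\infty}_{p,q}/\bigl(Z^{\infty}_{p-1,q+1}+B^{\infty}_{p,q}\bigr)=E^{\infty}_{p,q}$, as claimed. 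The main obstacle I anticipate is exactly the submodule bookkeeping in the step $H(E^{r})\cong E^{r+1}$; an alternative that makes that step automatic is to assemble the long exact homology sequences of $0\to F_{p-1}C\to F_{p}C\to F_{p}C/F_{p-1}C\to 0$ into an exact couple and pass to successive derived couples, at the price of re-identifying $E^{0}$, $E^{1}$ and $E^{\infty}$ afterwards in the terms above.
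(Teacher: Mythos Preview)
Your proposal is correct and follows precisely the approach the paper indicates: the paper does not supply a proof but refers to Spanier and records the explicit formula $E^{r}_{p,q}=Z^{r}_{p,q}\big/\bigl(Z^{r-1}_{p-1,q+1}+\partial Z^{r-1}_{p+r-1,q-r+2}\bigr)$ with $Z^{r}_{p,q}=\{c\in F_{p}C_{p+q}\mid \partial c\in F_{p-r}C_{p+q-1}\}$, which is exactly the construction you carry out and whose properties you verify. One minor wording issue: the phrase ``the natural identification $H_{p+q}(F_{p}C)\cong Z^{\infty}_{p,q}/B^{\infty}_{p,q}$'' is not literally true (the map $H_{p+q}(F_{p}C)\to H_{p+q}(C)$ need not be injective); what you actually need, and what you use, is that its \emph{image} $F_{p}H_{p+q}(C)$ equals $Z^{\infty}_{p,q}/B^{\infty}_{p,q}$ inside $H_{p+q}(C)$.
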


This theorem is proved by expliciting  algebraic formulas for the modules $E^{r}$, which are given by
$$E^{r}_{p,q}=\dfrac{Z^{r}_{p,q}}{Z^{r-1}_{p-1,q+1}+\partial
Z^{r-1}_{p+r-1,q-r+2}},$$
where  $Z^r_{p,q}=\{c\in F_pC_{p+q}\,|\,\,\partial c\in F_{p-r}C_{p+q-1}\}.$

Despite the fact that $E^{\infty}$ does not determine $H_*(C)$ completely, it determines the bigraded module $GH_*(C)$, i.e.
$ E^{\infty}_{p,q}\approx GH_*(C)_{p,q}.$
Moreover, it is a well known fact (see~\cite{D}) that, whenever $GH_*(C)_{p,q}$ is free and the filtration is bounded, then  \begin{equation}\label{eq:seqspec}\displaystyle\bigoplus_{p+q=k}GH_*(C)_{p,q}\approx H_{p+q}(C).\end{equation}

The Spectral Sequence Sweeping Algorithm (SSSA)  was introduced in~\cite{CdRS}  in order to recover the modules and differentials of  a spectral sequence associated to a finite chain complex over $\mathbb{Z}$ with a finest filtration. More specifically, let $(C, \partial)$ be a finite chain complex such that each module $C_{k}$ is finitely generated.  Denote the generators of the $C_{k}$ chain module by  $h_{k}^{1}, \cdots, h_{k}^{c_{k}}$. One can reorder the set of the generators  of $C_{\ast}$ as 
$\{h_{0}^{1},\cdots, h_{0}^{\ell_{0}}, h_{1}^{\ell_{0}+ 1},\cdots, h_{1}^{\ell_{1}}, \cdots, h_{k}^{\ell_{k-1}+1},\cdots, h_{k}^{\ell_{k}}, \cdots \},$
where $\ell_{k}=c_{0}+\cdots + c_{k}$ {\footnote{In order to simplify notation, we use the index $f_{k}$ to denote the first column of $\Delta$ associated to a $k$-chain. Hence $f_{k}=\ell_{k-1}+1$. Moreover, $\ell_{k}$ denotes the last column of $\Delta$ associated to a $k$-chain. }}.  Let  $F$ be a finest filtration on $C$ defined by 
$$ F_{p}C_{k} = \displaystyle\bigoplus_{h^{\ell}_{k}, \ \ell \leq p+1} \mathbb{Z} \langle h^{\ell}_{k} \rangle , $$
for $p \in \mathbb{N}$.
The spectral sequence associated to $(C_{\ast},\partial_{\ast})$ with this finest filtration has a special property: the only $q$ for which $E^{r}_{p,q}$ is non-zero is $q=k-p$, where $k$ is the index of the chain in $F_{p}C \setminus F_{p-1}C$. Hence, in this case, we  omit reference to $q$. It is understood that $E^{r}_{p}$ is in fact $E^{r}_{p,k-p}$.  The SSSA presented below, provides an alternative way to obtain such modules as well as the differentials $d^{r}$'s.

We  can view the differential boundary map $\partial$  as the matrix $\Delta$ where the  column $j$ of $\Delta$ corresponds to the generator $h^{j}_k$ of $C_{\ast}$, and the submatrix $\Delta_{k}$ corresponds to the $k$-th boundary map $\partial_k$.
From now on, the boundary operator $\partial$ and the matrix $\Delta$  will be used interchangeably. 

For  completeness sake we give a summarized description of the Spectral Sequence Sweeping Algorithm below. For more details 
see~\cite{CdRS,MdRS}.

\vspace{0.5cm}

\noindent {\bf  Spectral Sequence Sweeping Algorithm - SSSA}

For a fixed diagonal $r$ parallel and to the right of the main diagonal, the method described below must be applied  simultaneously for all $k$. 
\vspace{0.3cm}

\noindent {\bf Initialization Step:}
\vspace{-0.15cm}
\begin{enumerate}
\item[(1)] Let $\xi_{1}$ be the first diagonal of $\Delta$ that contains non-zero entries $\Delta_{{i,j}}$ in $\Delta_{k}$, which will be called index $k$ primary pivots. Define $\Delta^{\xi_{1}}$ to be $\Delta$ with the $k$- index  {\it primary pivots} marked on the $\xi_{1}$-th diagonal.
\item[(2)] Consider the matrix $\Delta^{\xi_{1}}$. Let $\xi_{2}$ be the first diagonal  greater than $\xi_{1}$ which contains non-zero entries $\Delta^{\xi_{1}}_{{i,j}}$. The construction of $\Delta^{\xi_{2}}$ follows the procedure below. Given a non-zero entry  $\Delta^{\xi_{1}}_{{i,j}}$ on the $\xi_{2}$-th diagonal of $\Delta^{\xi_{1}}$:\\
\textbf{If}  $\Delta^{\xi_{1}}_{{s,j}}$  contains an index $k$ primary pivot for $s>i$, \textbf{then}  the numerical value of the given entry remains the same, $\Delta^{\xi_{2}}_{{i,j}}=\Delta^{\xi_{1}}_{{i,j}}$, and the entry is left unmarked.\\
\textbf{If} $\Delta^{\xi_{1}}_{{s,j}}$ does not contain a primary pivot for $s>i$:\\
\rule{0.5cm}{0pt}\textbf{then if} $\Delta^{\xi_{1}}_{{i,t}}$ contains a primary pivot, for $t< j$, \\
\rule{1cm}{0pt} \textbf{then}  define $\Delta^{\xi_{2}}_{{i,j}}=\Delta^{\xi_{1}}_{{i,j}}$ and mark the  entry $\Delta^{\xi_{2}}_{{i,j}}$ as a {\it change-of-basis pivot}.  \\
\rule{0.5cm}{0pt} \textbf{Else}, define $\Delta^{\xi_{2}}_{{i,j}}=\Delta^{\xi_{1}}_{{i,j}}$ and  permanently mark $\Delta^{\xi_{2}}_{{i,j}}$ as an  index $k$ {\it primary pivot}. \\
\end{enumerate}
\vspace{-0.5cm}

\noindent {\bf Iterative Step: }
Suppose by induction  that $\Delta^{\xi}$ is defined for all $\xi\leq r$ with the primary and change-of-basis pivots marked on the diagonals smaller or equal to $\xi$. 
Without loss of generality, one can assume that there is at least one change-of-basis pivot on the $r$-th diagonal of $\Delta^{r}$. Otherwise, define $\Delta^{r+1}=\Delta^{r}$ with primary pivots and change-of-basis pivots marked as in step $(2)$ below.
\begin{enumerate}
\item[(1)] {\bf Change of basis.}
Let $\Delta^{r}_{{i,j}}$ be a change-of-basis pivot in $\Delta^{r}_{k}$.  Denote by $\Delta^{r}_{i,t}$ the primary pivot in the $i$-th row, with $t<j$. Perform a change of basis  on $\Delta^{r}$ by adding or subtracting the column $t$ to the column $j$ of $\Delta^{r}$, in order to zero out the entry $\Delta^{r}_{{i,j}}$ without introducing non-zero entries in $\Delta^{r}_{{s,j}}$ for $s>i$. 

Define $T^{r}$ as the matrix which performs all the  change of basis on all of the $r$-th diagonal. Define $\Delta^{r+1} = (T^{r})^{-1}\Delta^{r} T^{r}$.

\item[(2)]  {\bf Markup. }
Given a non-zero entry  $\Delta^{r+1}_{{i,j}}$ on the $(r+1)$-th diagonal of $\Delta^{r+1}_{k}$:\\
\textbf{If}  $\Delta^{r+1}_{{s,j}}$  contains a primary pivot for $s>i$, \textbf{then} leave the entry $\Delta^{r+1}_{{i,j}}$  unmarked.\\
\textbf{If} $\Delta^{r+1}_{{s,j}}$ does not contain a primary pivot for $s>i$:\\
\rule{0.5cm}{0pt}\textbf{then if} $\Delta^r_{{i,t}}$ contains a primary pivot, for $t< j$, \\
\rule{0.5cm}{0pt} \textbf{then} mark $\Delta^r_{{i,j}}$ as 
a change-of-basis pivot.\\
\rule{0.5cm}{0pt} \textbf{Else} permanently mark $\Delta^r_{{i,j}}$ as a primary pivot.\\
\end{enumerate}
\vspace{-0.5cm}

\noindent {\bf Final Step:}\\
Repeat the above procedure until all diagonals have been considered.

\vspace{0.5cm}

According to the algorithm, if $\Delta_{{i,j}}^{r}$ is a change-of-basis pivot on the $r$-th diagonal of $\Delta^{r}_{k}$, then  once the corresponding change of basis has been performed, one obtains a new $k$-chain associated to  column $j$ of $\Delta^{r+1}$,  which will be  denoted by $\sigma_{k}^{j,r+1}$. Observe that $\sigma_{k}^{j,r+1}$ is a linear combination over $\mathbb{Z}$ of  columns $t$ and $j$ of $\Delta^{r}$, i.e., $\sigma_{k}^{j,r+1}$ is a linear combination over $\mathbb{Z}$ of $\sigma_{k}^{t,r}$and $ \sigma_{k}^{j,r}$.  Hence,
\begin{eqnarray}\label{eq:change-of-basis}
\sigma^{j,{r+1}}_{k} & = & \underbrace{\sum_{\ell = f_{k}}^{j}c^{j,r}_{\ell} h_k^{\ell}}_{\sigma^{j,r}_{k}} \ \ \  \pm \ \ \  \underbrace{\sum_{\ell = f_{k}}^{j-1}c^{j-1,r}_{\ell} h_k^{\ell}}_{\sigma^{j-1,r}_{k}} \ \  = \ \ \mathsf{c^{j,r+1}_{j}} h_{k}^{j}   + \mathsf{c^{j-1,r+1}_{j-1}} h_{k}^{j-1} + \cdots + \mathsf{c^{f_{k},r+1}_{f_{k}}} h_{k}^{f_{k}}  \nonumber
\end{eqnarray}
where $\mathsf{c^{\ell,r+1}_{k}} \in \mathbb{Z}$, for $\ell = f_{k}, \cdots, j$.  If $\Delta^{r}$ contains an index $k$ primary pivot in the entry $\Delta^{r}_{{s,\bar{\ell}}}$ with $s>i$ and $\bar{\ell}<j$, then $q_{\bar{\ell}}=0$.
Of course, the first column of any $\Delta_{k}$ cannot undergo changes of basis, since there is no column  to its left associated to a $k$-chain. 

The family of matrices $\{\Delta^{r}\}$ produced by the SSSA has several properties, such as: there is at most  one primary pivot in a fixed row or column; if the entry $\Delta^{r}_{j-r,j}$ is a primary pivot or a change-of-basis pivot, then $\Delta^{r}_{s,j} = 0$ for all $s>j-r$; $\Delta^{r}$ is a strictly  upper triangular boundary map, for each $r$.

In~\cite{CdRS} it is proved that the SSSA provides a system that spans the modules $E^{r}$ in terms of the original basis of $C_{\ast}$ and identifies all differentials $d^{r}_{p}:E^{r}_{p}\rightarrow E^{r}_{p-r}$ with primary and change-of -basis pivots on the $r$-th diagonal. 
A formula for the module $Z^{r}_{p,k-p}$ in terms of the chains $\sigma_{k}$'s  is 
\begin{equation}\label{eq:formuladoZ}
Z^{r}_{p,k-p} =  \mathbb{Z} \biggr[ \mu^{p+1,r}\sigma^{p+1,r}_{k} , \mu^{p,r-1}\sigma^{p,r-1}_{k}, \cdots, \mu^{f_{k},r-p-1+f_{k}}\sigma^{f_{k},r-p-1+f_{k}}_{k}  \biggr],
\end{equation}
where $f_{k}$ is the first column of $\Delta$ associated to a $k$-chain, and $\mu^{j,\xi}=0$ whenever there is a primary pivot on column $j$ below  row  $(p-r+1)$ and $\mu^{j,\xi}=1$ otherwise. 
Moreover, if $E^{r}_{p}$ and $E^{r}_{p-r}$ are both non-zero, then the differential $d^{r}:E^{r}_{p}\rightarrow E^{r}_{p-r}$ is induced by  multiplication by $\Delta^{r}_{p-r+1,p+1}$, whenever  this entry is either a primary pivot, change-of-basis pivot or a zero with a column of zero entries below it.

\subsection{Algebraic Cancellation and  Dynamical  Homotopical Cancellation  } \label{subsec:appl2}

Our goal in this subsection is to establish a global homotopical  cancellation result for GS-flows which follows closely the unfolding of its spectral sequence. In order to achieve this, we make use of the Row Cancellation Algorithm (RCA), which reflects more closely the effect  of dynamical homotopical cancellations on the modules $E^r$, while at the same time retaining the relevant  information given by the primary and change of bases pivots of the SSSA.

\begin{theorem}[Algebraic Cancellation and Dynamical Homotopical Cancellation  via Spectral Sequence]\label{cancel1}Let $(C^{\mathcal{GS}}(M,X),\Delta^{\mathcal{GS}})$ be the GS-chain complex associated to a GS-flow $\varphi_{X}$ on a singular 2-manifold $M\in\mathfrak{M}(\mathcal{GS})$, where $X\in\mathfrak{X}_{\mathcal{GS}}(M)$ and $\mathcal{S}=\mathcal{C},\mathcal{W},\mathcal{D}$ or $\mathcal{T}$. Let $(E^r,d^r)$ be the associated spectral sequence for a finest filtration  $F=\{F_pC^{\mathcal{GS}}\}$ on the chain complex.
\begin{enumerate}
    \item If $X\in\mathfrak{X}_{\mathcal{GC}}(M)$ or $X\in\mathfrak{X}_{\mathcal{GW}}(M)$, then the algebraic cancellations of the modules $E^r$ of the spectral sequence are in one-to-one correspondence with dynamical  homotopical cancellations of the singularities  of $X$.
    \item If $X\in\mathfrak{X}_{\mathcal{GD}}(M)$ or $X\in\mathfrak{X}_{\mathcal{GT}}(M)$,  then the algebraic cancellations of the modules $E^r$ of the spectral sequence are in one-to-one correspondence with dynamical homotopical  cancellations of the natures of the singularities  of $X$.
\end{enumerate}
Moreover, the order of homotopical  cancellation occurs as the gap $r$ increases with respect to the filtration $F$.
\end{theorem}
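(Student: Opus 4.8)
The plan is to reduce the statement to the already-established correspondence between algebraic cancellations in the SSSA and dynamical cancellations in the Morsified (smooth) flow, and then to transfer this correspondence back to the GS-flow via the Morsification. Concretely, I would proceed in three stages.

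\emph{Stage 1: Pass to the Morsification and invoke the smooth theory.} By Proposition~\ref{prop_matriz}, the GS-boundary matrix $\Delta^{\mathcal{GS}}$ associated to $\varphi_X$ coincides entrywise with the boundary matrix $\widetilde{\Delta}^{\mathcal{GC}}$ of the Morsified flow $\varphi_{\widetilde{X}}$ on $\widetilde{M}$ (in the crossing cases, each generator $h_k^j(x)$ of a $\mathcal{D}$- or $\mathcal{T}$-singularity corresponds precisely to one of the Morse singularities produced by the Morsification, and these generators are exactly the columns/rows of $\widetilde{\Delta}$). Hence, for the finest filtration $F$, the spectral sequences $(E^r,d^r)$ of $(C^{\mathcal{GS}}(M,X),\Delta^{\mathcal{GS}})$ and of the Morse chain complex $(C_*(\widetilde{M},\widetilde{X}),\widetilde{\Delta})$ are literally the same bigraded object, with the same SSSA run: the same primary pivots, change-of-basis pivots, chains $\sigma_k^{j,r}$, and differentials $d^r$ appear on each diagonal $r$. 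For the smooth Morse complex, the correspondence between the algebraic cancellations detected by the SSSA/RCA and Smale-type dynamical cancellations of pairs of consecutive critical points, in order of increasing gap $r$, is exactly the content of~\cite{BLMdRS1, BLMdRS2} (for the cone and Whitney cases the same argument applies verbatim, since $\widetilde{X}\in\mathfrak{X}(\mathcal{GC})$ and the relevant saddle-cone vertices are never in a cycle by Lemma~\ref{lemmagrafoGS}, so the RCA never attempts to cancel through them).

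\emph{Stage 2: Lift each Morse cancellation to a GS-cancellation.} A primary/change-of-basis pivot $\Delta^r_{p-r+1,\,p+1}$ equal to $\pm1$ on the $r$-th diagonal records a differential $d^r\colon E^r_p\to E^r_{p-r}$ that kills the two generators indexing row $p-r+1$ and column $p+1$; in the Morse picture this is a cancellation of a pair of consecutive critical points $\tilde p,\tilde q$ with $n(\tilde p,\tilde q)=\pm1$ along a single connecting orbit (after the change of basis encoded by $\sigma_k^{p+1,r}$). I would now invoke Theorem~\ref{teo_cancelamento} (cases $\mathcal{C},\mathcal{W}$) or Theorem~\ref{teo_cancelamento2} (cases $\mathcal{D},\mathcal{T}$): in the $\mathcal{C}$/$\mathcal{W}$ case the generators $\tilde p=p$, $\tilde q=q$ \emph{are} the GS-singularities and $n(p,q)=\pm1$, so the hypothesis of Theorem~\ref{teo_cancelamento} is met and $p,q$ are dynamically homotopically cancelled, yielding $M_{r+1},\varphi_{r+1}$ of the same homotopy type; in the $\mathcal{D}$/$\mathcal{T}$ case $\tilde p = h_k^j(p)$ and $\tilde q = h_{k-1}^\ell(q)$ are generators (natures) of GS-singularities lying in the same component of $\widetilde M$, and since $n(h_k^j(p),h_{k-1}^\ell(q))$ has a coordinate $\pm1$, Theorem~\ref{teo_cancelamento2} applies and the natures are dynamically homotopically cancelled, with $\bar q'$ (or $\bar p'$) inheriting the union of the remaining generators and the type numbers $m(p)+m(q)+m(q')$ (as in the proof of Theorem~\ref{teo_cancelamento2}, one Morsifies, performs Smale's cancellation on $\widetilde M$, and re-identifies the singular orbits to reassemble $M_{r+1}$). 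Conversely, every dynamical homotopical cancellation of consecutive (natures of) GS-singularities with intersection number/coordinate $\pm1$ removes exactly one pair of generators from the GS-chain complex, hence exactly one pair from $\widetilde\Delta$, hence, by the RCA analysis, corresponds to one algebraic cancellation $d^r$ on the appropriate diagonal. This gives the claimed one-to-one correspondence: in cases~1 between module cancellations and singularity cancellations, in case~2 between module cancellations and nature cancellations.

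\emph{Stage 3: Order and convergence.} The SSSA sweeps diagonals $r=1,2,\dots$ in increasing order, and a cancellation is registered precisely when the corresponding pivot first appears on diagonal $r$; under the finest filtration, $r$ equals the filtration gap $|p-(p-r)|$ between the two generators involved. Since each homotopical cancellation in Stage~2 is performed in response to such a pivot and produces $(M_{r+1},\varphi_{r+1})$ whose GS-chain complex is the one computed by $E^{r+1}$, the family $\{(M_r,\varphi_r)\}$ is obtained by carrying out the cancellations in order of increasing $r$; this proves the ``Moreover'' clause. When $r=\bar r$ is reached with $E^{\bar r}=E^\infty$, no further $\pm1$ pivots appear, so $\varphi_{\bar r}$ on $M_{\bar r}$ admits no further dynamical homotopical cancellation, which is the stated convergence statement.

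\emph{Main obstacle.} The delicate point is Stage~2 in the $\mathcal{D}$ and $\mathcal{T}$ cases: one must check that the change-of-basis chain $\sigma_k^{p+1,r}$ produced by the SSSA, which is a $\mathbb{Z}$-linear combination of generators, still corresponds under the Morsification to a legitimate \emph{geometric} connecting orbit situation (single orbit after handle-slides) lying in one connected component of $\widetilde M$, so that Theorem~\ref{teo_cancelamento2} is genuinely applicable — and that the reassembly (re-identifying folds/droplets across components) yields a bona fide GS-manifold $M_{r+1}$ with the correct type number $m(\bar q')$ rather than merely a homotopy-equivalent complex. Verifying that the bookkeeping of generators under repeated change of basis matches the bookkeeping of sheets/diameters under repeated Morsification-and-reidentification is where the case-by-case work of the earlier sections (Propositions~\ref{prop_duplo},~\ref{prop_triplo} and the sign-transfer Definitions~\ref{characteristicGS_D},~\ref{characteristicGS_T}) must be brought to bear.
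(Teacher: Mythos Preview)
Your route is genuinely different from the paper's, and it has a real gap in the cone case.

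The paper does \emph{not} prove Theorem~\ref{cancel1} by passing to the Morsification and importing the smooth correspondence from~\cite{BLMdRS1,BLMdRS2}. Instead it first proves Theorem~\ref{cancel2} (Family of GS-Flows via Spectral Sequences), which works entirely inside the GS setting: it shows that the RCA applied to $\Delta^{\mathcal{GS}}$ produces a family $\{\varphi^r\}$ of GS-flows such that each primary pivot on diagonal $r$ is literally the GS-intersection number of two consecutive generators of $\varphi^r$, whence Theorem~\ref{teo_cancelamento} (or~\ref{teo_cancelamento2}) applies. Theorem~\ref{cancel1} is then a short corollary: the Primary Pivots for Orientable Surfaces Theorem gives that every primary pivot is $\pm 1$, so every nonzero $d^r$ is an isomorphism and kills exactly one pair $E^r_p$, $E^r_{p-r}$; the Primary Pivots Equality Theorem identifies the SSSA pivots with the RCA pivots; and Theorem~\ref{cancel2} turns each such pivot into a dynamical homotopical cancellation at gap $r$. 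No Morsification enters the argument for~\ref{cancel1}.

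Your Stage~1 breaks for $\mathcal{S}=\mathcal{C}$. Proposition~\ref{prop_matriz} is stated only for $\mathcal{S}=\mathcal{W},\mathcal{D},\mathcal{T}$, and for good reason: a saddle cone singularity contributes a \emph{single} generator to $C_1^{\mathcal{GC}}$, whereas its Morsification produces \emph{two} regular saddles in $\widetilde M$. Hence $\Delta^{\mathcal{GC}}$ and the Morse matrix $\widetilde\Delta$ have different sizes and the two spectral sequences are \emph{not} ``literally the same bigraded object''. Your appeal to Lemma~\ref{lemmagrafoGS} (that saddle-cone vertices are never in a cycle of $G(\Delta^{\mathcal{GC}})$) concerns the structure of the GS graph, not any identification with the Morsified spectral sequence, and does not repair this. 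Consequently you cannot import the smooth SSSA/RCA correspondence for cones; you would have to redo the RCA analysis directly on $\Delta^{\mathcal{GC}}$, which is exactly what the paper does via Theorem~\ref{cancel2}. For $\mathcal{W},\mathcal{D},\mathcal{T}$ your Morsification shortcut is closer to viable (since Proposition~\ref{prop_matriz} does identify the matrices), but even there the iterative reassembly issue you flag as the ``main obstacle'' is precisely what the direct RCA argument in Theorem~\ref{cancel2} is designed to handle.
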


We  want to  associate the data of the spectral sequence $(E^r,d^r)$  with a  dynamical continuation of the initial flow, by means of homotopical  cancellations of the singularities and using as guide the family of matrices $\{\Delta^r\}_{\mathcal{GS}}$ produced by the SSSA, which codifies all data related to the modules and differentials of $(E^r,d^r)$. However, it is easy to see that the matrices $\{\Delta^r\}_{\mathcal{GS}}$ are not in general realized as the GS-boundary operator associated to a GS-flow. Moreover, the changes of basis caused by pivots in row $j-r$ reflect all the changes in connecting orbits caused by the cancellation of the consecutive generators $h_k^{j}$ and $h_{k-1}^{j-r}$. When we cancel the pair of generators $h_k^{j}$ and $h_{k-1}^{j-r}$, then
\begin{itemize}
    \item all the flow lines between the corresponding singularities associated to generators of $k$-nature  and $h_{k-1}^{j-r}$ are immediately removed and new connections are born; 
    \item also all the flow lines between $h_{k}^{j}$ and singularities associated to generators of  $(k-1)$-nature are immediately removed.
\end{itemize}
In this sense, in order to interpret  the SSSA as a dynamical homotopical cancellation, we have to perform the changes of basis that occur therein in a different order to reflect the death and birth of connections. More specifically, if $\Delta^{r}_{j-r,j}=\pm 1$ is a primary pivot marked in step $r$ of the SSSA
 all changes of basis caused by $\Delta^{r}_{j-r,j}$ must be performed in  step $r+1$. The  algorithm which reflects it is called the {\it Row cancellation  Algorithm} (RCA) and it was first introduced in~\cite{BLMdRS1,BLMdRS2}.
One emphasizes that whenever a primary pivot is marked, all the changes of basis caused by this pivot are performed in the next step.

\vspace{0.5cm}

\begin{bf}Row Cancellation Algorithm - RCA\end{bf}
\begin{description}
\item[\textbf{Initialization Step:}]\mbox{}\\
 $\begin{array}{@{}l}
\left[\begin{tabular}{l}
 $r=0$\\
 $\tilde{\Delta}^r=\Delta$\\
 $\tilde{T}^r=I$ ($m\times m$ identity matrix) \end{tabular}\right.\end{array}$

\item[\textbf{Iterative Step:}] (Repeated until all diagonals parallel and to the right of the main diagonal have been swept)\\
$\begin{array}{@{}l}
\left[\begin{tabular}{l}
\textbf{Matrix $\tilde{\Delta}$ update}\\
\begin{tabular}{@{\hspace{.5cm}}l}
$r\leftarrow r+1$ \\
$\tilde{\Delta}^r = (\tilde{T}^{r-1})^{-1} \tilde{\Delta}^{r-1}\tilde{T}^{r-1}$\\
\end{tabular}\end{tabular}\right.\\
\end{array}$
\\[5pt]
$\begin{array}{@{}l}
\left[\begin{tabular}{l}
\textbf{Markup}\\
\begin{tabular}{@{\hspace{.5cm}}l}
Sweep entries of $\tilde{\Delta}^r$ in the $r$-th diagonal: \\
\textbf{\textsf{If}} $\tilde{\Delta}^r_{j-r,j}\neq 0$ \textbf{\textsf{and}} $\tilde{\Delta}^r_{\textbf{\large .},j}$ does not contain a primary pivot\\
\rule{.5cm}{0pt}\textbf{\textsf{Then}} permanently mark $\tilde{\Delta}^r_{j-r,j}$ as a primary pivot\\
\end{tabular}\end{tabular}\right.\\
\end{array}$
\\[5pt]
$\begin{array}{@{}l}
\left[\begin{tabular}{l}
\textbf{Matrix $\tilde{T}^r$ construction}\\
\begin{tabular}{@{\hspace{.5cm}}l}
$\tilde{T}^r \leftarrow I$\\
\textbf{\textsf{For each}} primary pivot $\tilde{\Delta}^r_{p-r,p}$ such that $j<m$,
change the $p$-th row of $\tilde{T}^r$ as follows\\
\begin{tabular}{@{\hspace{.5cm}}l}
$\tilde{T}^r_{p,\ell}\leftarrow -(1/\tilde{\Delta}^r_{p-r,p})\tilde{\Delta}^r_{p-r,\ell}$, for $\ell=p+1,\dots,m$
\end{tabular}
\end{tabular}\end{tabular}\right.\\
\end{array}$

\item[\textbf{Final Step:}]\mbox{} \\
$\begin{array}{@{}l}\left[
\begin{tabular}{l}
\textbf{Matrix $\tilde{\Delta}$ update}\\
\begin{tabular}{@{\hspace{.5cm}}l}
$r\leftarrow r+1$ \\
$\tilde{\Delta}^r = (T^{r-1})^{-1} \tilde{\Delta}^{r-1} T^{r-1}$\\
\end{tabular}
\end{tabular}\right.
\end{array}$

\end{description}

\vspace{0.3cm}

In~\cite{BLMdRS2}, it was proved the Primary Pivots Equality Theorem  which states  that  the primary pivots on the $r$-th diagonal of $\widetilde{\Delta}^{r}$ marked in the $r$-th step of the RCA coincide with the ones on the $r$-th diagonal of $\Delta^r$ marked in the $r$-th step of the SSSA.
More details on this algorithm can be found in~\cite{BLMdRS2}. 


\begin{theorem}[Family of GS-Flows via Spectral Sequences]\label{cancel2} Let $(C^{\mathcal{GS}}(M,X),\Delta^{\mathcal{GS}})$ be the GS-chain complex associated to a GS-flow $\varphi_{X}$ on a singular 2-manifold $M\in\mathfrak{M}(\mathcal{GS})$, where $X\in\mathfrak{X}_{\mathcal{GS}}(M)$ and $\mathcal{S}=\mathcal{C},\mathcal{W},\mathcal{D}$ or $\mathcal{T}$. The RCA for the GS-boundary map $\Delta^{\mathcal{GS}}$ produces a family of GS-flows $\{\varphi^1=\varphi_X,\varphi^2,\ldots ,\varphi^{\omega}\}$ where $\varphi^r$ continues to $\varphi^{r+1}$ by cancelling  pairs of singularities  of $\varphi^{r}$ having gap $r$ with respect to the filtration $F$. Moreover, the flow $\varphi^{\omega}$ is a minimal GS-flow in the sense that there is no more possible  homotopical cancellations to be done.
\end{theorem}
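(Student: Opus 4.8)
The plan is to build the family $\{\varphi^1,\ldots,\varphi^{\omega}\}$ inductively, synchronizing the dynamics with the run of the RCA on $\Delta^{\mathcal{GS}}$, and then argue minimality of the terminal flow. First I would set $\varphi^1=\varphi_X$ with its GS-chain complex $(C^{\mathcal{GS}}(M,X),\Delta^{\mathcal{GS}})$ and finest filtration $F$, and recall from Section~\ref{subsec:appl1} that with a finest filtration the spectral sequence is concentrated on a single antidiagonal, so each $E^r_p$ is a rank-$0$ or rank-$1$ $\mathbb{Z}$-module and every nontrivial $d^r\colon E^r_p\to E^r_{p-r}$ is multiplication by the entry $\Delta^r_{p-r+1,p+1}$. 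The inductive hypothesis at stage $r$ is that $\varphi^r$ is a GS-flow on a singular surface $M^r$ of the same homotopy type as $M$, whose GS-chain complex is chain-isomorphic (via the accumulated change of basis $\prod_{\xi<r}\tilde T^{\xi}$ of the RCA) to $(C^{\mathcal{GS}}(M,X),\tilde\Delta^r)$, and such that the singularities surviving into $\varphi^r$ are in bijection with the generators indexing the columns of $\tilde\Delta^r$ that have not yet been involved in a primary pivot.

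The inductive step is where Theorem~\ref{cancel1} and Theorems~\ref{teo_cancelamento}--\ref{teo_cancelamento2} do the work. On the $r$-th diagonal of $\tilde\Delta^r$ the RCA marks certain entries $\tilde\Delta^r_{j-r,j}=\pm1$ as primary pivots; by the Primary Pivots Equality Theorem of~\cite{BLMdRS2} these are exactly the pivots the SSSA marks on the $r$-th diagonal, hence they record precisely the differentials $d^r$ that cause algebraic cancellation of $E^r_j$ and $E^r_{j-r}$. For each such pivot, the corresponding generators $h_k^{j}$ and $h_{k-1}^{j-r}$ satisfy the hypotheses of Theorem~\ref{teo_cancelamento} (cases $\mathcal C,\mathcal W$) or Theorem~\ref{teo_cancelamento2} (cases $\mathcal D,\mathcal T$): they are consecutive singularities (resp.\ generators of natures) with GS-intersection number (resp.\ one coordinate of it) equal to $\pm1$, because that intersection number is exactly the pivot entry. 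Applying the appropriate dynamical homotopical cancellation theorem simultaneously to all pivots on the $r$-th diagonal — their supports are disjoint since there is at most one primary pivot per row and per column — produces a new GS-vector field $X^{r+1}\in\mathfrak X_{\mathcal{GS}}(M^{r+1})$, topologically equivalent to $X^r$ outside disjoint isolating blocks, with $M^{r+1}$ of the same homotopy type as $M^r$. I would then check that the GS-chain complex of $\varphi^{r+1}$ equals $(C^{\mathcal{GS}},\tilde\Delta^{r+1})$ where $\tilde\Delta^{r+1}=(\tilde T^r)^{-1}\tilde\Delta^r\tilde T^r$: the cancellation removes the rows/columns $j-r,j$, the $\omega$-limit reassignment of the singular orbits described in the proof of Theorem~\ref{teo_cancelamento2} produces exactly the new connecting orbits recorded by the change of basis $\tilde T^r$ (whose $p$-th row is $-(1/\tilde\Delta^r_{p-r,p})\tilde\Delta^r_{p-r,\ell}$), and the type-number inheritance rule $m(\bar q')=m(q)+m(q')+m(p)$ guarantees the surviving generalized GS-singularity is again an admissible $n$-sheet singularity. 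Here the one-to-one correspondence of Theorem~\ref{cancel1} is what lets us say \emph{no} extra dynamical cancellations are forced and none are suppressed.

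Since $C^{\mathcal{GS}}$ is finitely generated and $F$ is finite, the RCA terminates after finitely many diagonals; let $\omega$ be the stage after the last diagonal has been swept, giving $\varphi^{\omega}$ on $M^{\omega}\simeq M$. For minimality I would argue: the matrix $\tilde\Delta^{\omega}$ has no further entry that is a primary pivot $\pm1$ available for cancellation — every remaining nonzero off-pivot entry lies in a column or row already blocked by a primary pivot, or is not a unit — so by the converse direction of Theorem~\ref{cancel1}, there is no pair of consecutive singularities (resp.\ generators of natures) of $\varphi^{\omega}$ with GS-intersection number $\pm1$ (resp.\ a coordinate equal to $\pm1$); by Definition~\ref{def:dynhotcan} and the fact that the cancellation theorems require exactly that unit condition, $\varphi^{\omega}$ admits no further dynamical homotopical cancellation, i.e.\ it is minimal. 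I expect the main obstacle to be the bookkeeping in the inductive step: verifying that the RCA's deferred-change-of-basis convention really mirrors the simultaneous death-and-birth of connecting orbits under the cancellation, particularly in the $\mathcal D$ and $\mathcal T$ cases where a single singularity carries several generators living on different sheets of the Morsification, so one must track which sheet each removed/created orbit belongs to and confirm that the $n$-sheet structure of $\bar q'$ (resp.\ $\bar p'$) is the one prescribed by the type-number formula rather than an accidental artifact of the Morsification choice.
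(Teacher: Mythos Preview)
Your inductive framework mirrors the paper's, but there is a genuine circularity: you repeatedly invoke Theorem~\ref{cancel1} (the one-to-one correspondence between algebraic and dynamical cancellations, and its ``converse direction'' for minimality) to carry out the inductive step and to conclude minimality of $\varphi^{\omega}$. In the paper, however, Theorem~\ref{cancel1} is proved \emph{after} and \emph{using} Theorem~\ref{cancel2} (see the phrase ``As it was shown in the proof of Theorem~\ref{cancel2}'' in the proof of Theorem~\ref{cancel1}). So your argument, as written, assumes what it is meant to establish.

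Relatedly, the technical heart of the paper's proof is something you defer to ``I would then check'': one must show directly that a single dynamical homotopical cancellation of $h_k^j$ with $h_{k-1}^{j-r}$ changes the GS-intersection numbers by the formula
\[
n(h_1^{p},h_0^{i},\varphi_{X'}) \;=\; n(h_1^{p},h_0^{j-r},\varphi_X) + n(h_1^{p},h_0^{i},\varphi_X)
\]
(and the analogous source--saddle version), so that the new boundary map is exactly the submatrix of $\tilde\Delta^{r+1}$ obtained by deleting rows and columns $j-r,j$. This computation is \emph{not} contained in Theorems~\ref{teo_cancelamento}--\ref{teo_cancelamento2}, which are purely existence statements and say nothing about characteristic signs; the paper carries it out separately, under a specific orientation convention (all unstable manifolds of repellers oriented the same way) that forces the signs of the newborn connections to agree with those of the dying ones. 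Without this step you cannot justify that the boundary map of $\varphi^{r+1}$ is governed by $\tilde\Delta^{r+1}$, and hence cannot close the induction; you should replace the appeals to Theorem~\ref{cancel1} by this direct intersection-number bookkeeping.
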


\begin{proof}
In order to prove the theorem, firstly one analyzes  the  local and global effects a homotopical cancellation of a pair of consecutive singularities has on the GS-boundary map $\Delta^{\mathcal{GS}}(M,\varphi_{X^\prime})$ of the new flow $\varphi_{X^\prime}$.  Secondly, one constructs a family of GS-flows  $\{\varphi^1=\varphi_X,\varphi^2,\ldots ,\varphi^{\omega}\}$ via the RCA in such way that the connections of the flow $\varphi^r$ are codified in the $r$-th matrix produced by the RCA.


To simplify the exposition, one considers first the cases where  $X \in \mathfrak{X}_{\mathcal{GS}}(M)$ for $\mathcal{S} = \mathcal{C}$ or $\mathcal{W}$, since in these cases each singularity posseses only one nature, hence there is only one  generator corresponding to  the nature of the singularity.

Without loss of generality, the set of orientations  of the unstable manifolds for the Morsified flow $\varphi_{\widetilde{X}}$ will be considered, within this proof,  as the one where all  orientations  of   the unstable manifolds of repeller singularities are the same. This assumption  guarantees that, whenever  $h_1$ is a saddle singularity,  the  flow lines  in $W^{s}(h_{1}) \backslash\{h_{1}\}$ either  have opposite characteristic signs or they are null. On the other hand, by definition, the flow lines  in $W ^{u}(h_{1})\backslash\{h_{1}\}$ always  have opposite  characteristic signs, if they are not null.

Throughout the proof, denote by $n(h_{k},h_{k-1},\varphi)$ the intersection number of $h_{k}$ and $h_{k-1}$ with respect to the flow  $\varphi$.

Let $h^{j}_{k}$ and $h^{j-r}_{k-1}$ be consecutive singularities of the vector field $X$.  By the Dynamical Homotopical Cancellation Theorem for GS-Flows (Theorem~\ref{teo_cancelamento}), if  $n(h^{j}_{k},h^{j-r}_ {k-1},\varphi_X)=\pm 1$ then  these singularities can be cancelled, i.e. there is a GS-flow $\varphi_{X^\prime}$ which coincides with $\varphi_X$ outside a neighborhood of $\{h^{j}_{k}, h_{k-1}^i,h^{j-r}_{k-1}\} \cup \mathcal{O}(u_1) \cup \mathcal{O}(u_2)$, up to homotopy, where $\mathcal{M}^{h^{j}_{k}}_{h_ {k-1}^{j-r}}=\{u_1\}$ and $\mathcal{M}^{h^{j}_{k}}_{h_ {k-1}^{i}}=\{u_2\}$.
For $k=1$ (resp., $k=2$), let $h^{j}_{1}$ (resp., $h^{j-r}_{1}$) be a saddle singularity   that  connects with the  attracting (resp., repelling)  singularities $h_{0}^{j-r}$ and $h_0^i$ (resp., $h_2^j$ and $h_2^p$). If $h^{j}_{1}$ cancels with $h^{j-r}_{0}$ (resp., $h^{j}_{2}$ cancels with $h_{1}^{j-r}$), then each saddle $h^{p}_{1}$ (resp., $h^i_1$) which connects with $h^{j-r}_{0}$ (resp., $h^{j}_{2}$) in $\varphi_{X}$ will connect with $h^{i}_{0}$ (resp., $h^{p}_{2}$) in $\varphi_{X^{\prime}}$. Since the old and new connections have the same characteristic signs, then
$$n(h^{p}_{1},h^{i}_{0},\varphi_{X^{\prime}}) = n(h^{p}_{1},h^{j-r}_{0},\varphi_X) + n(h^{p}_{1},h^{i}_{0},\varphi_X)$$
(resp., $n(h^{p}_{2},h^{i}_{1},\varphi_{X^{\prime}}) = n(h^{p}_{2},h^{j-r}_{1},\varphi_X) + n(h^{p}_{2},h^{i}_{1},\varphi_X)$).

\begin{figure}[h!]
    \centering
        \includegraphics[width=0.71\textwidth]{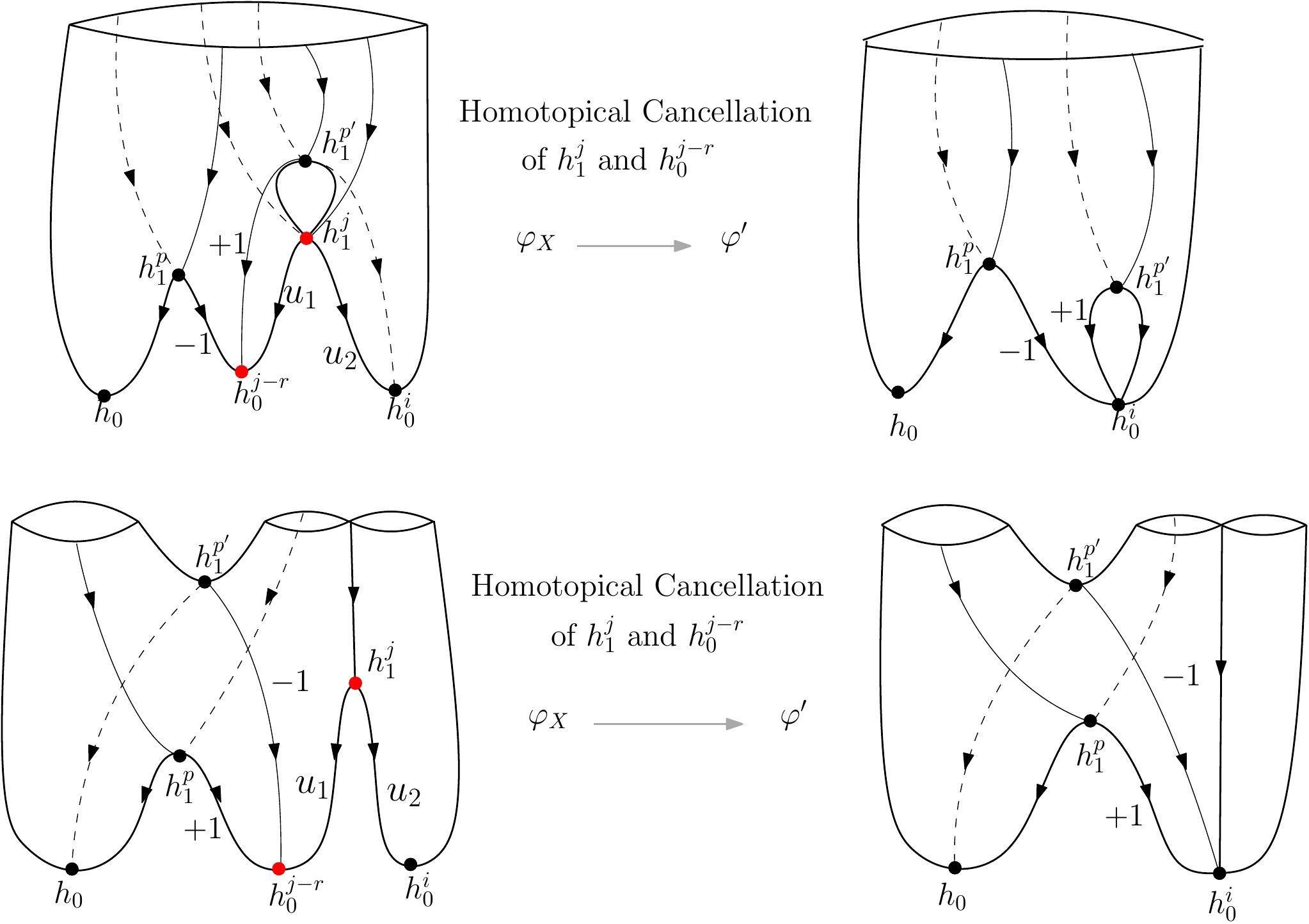}
    \caption{Birth and death of connections - characteristic signs.}\label{cancel_prova}
\end{figure}

Since the flow $\varphi_{X^{\prime}}$ coincides with the flow $\varphi_X$ outside a neighborhood $U$ of $\{h^{j}_{k}, h^{i}_{k-1}, h^{j-r}_{k-1}\} \cup \mathcal{O}(u_1) \cup \mathcal{O}(u_2)$, up to homotopy, the only intersection numbers that are modified after a homotopical cancellation are those $n(h^{p}_{1},h^{i}_{0})$, where $h^{p}_{1}$ is such that $\mathcal{M}^{h^{p}_{1}}_{h^{j-r}_{0}} \neq \emptyset$  in the case of saddle-sink  homotopical cancellation, and those $n(h^{p}_{2},h^{i}_{1})$, where $h^{p}_{2}$ is such that $\mathcal{M}^{h^{p}_{2}}_{h^{j-r}_{1}} \neq \emptyset$, in the case of source-saddle homotopical cancellation.

The GS-boundary map  $\Delta^{\mathcal{GS}}(M, X^{\prime})$ can be obtained  from $\Delta^{\mathcal{GS}}(M,X)$  in the following way:
\begin{itemize}
\item If a saddle singularity $h^{j}_{1}$ is cancelled with an attracting singularity $h_{0}^{j-r}$, then define the matrix $\widetilde{\Delta}$ to be the matrix obtained from  $\Delta^{\mathcal{GS}}(M,X)$ by replacing row $i$  by the sum of row $(j-r)$ to row $i$. Then  $\Delta^{\mathcal{GS}}(M, X^{\prime})$  is the submatrix of $\widetilde{\Delta}$ which does not contain rows  $j-r$, $j$ and neither columns $j-r$, $j$.
\item If a repelling singularity $h^{j}_{2}$ is cancelled with a saddle singularity   $h_{1}^{j-r}$, then define the matrix $\widetilde{\Delta}$ to be the matrix obtained from $\Delta^{\mathcal{GS}}(M,X)$ by replacing column $p$  by the sum of column $j$ to column $p$. Then $\Delta^{\mathcal{GS}}(M, X^{\prime})$ is the submatrix of $\widetilde{\Delta}$ which does not contain rows $j-r$, $j$ rows nor columns $j-r$, $j$.
\end{itemize}
It is  straightforward to see that this corresponds to the row operations performed by the RCA.

Consider the matrices $\{\widetilde{\Delta}^r\}$  produced by the RCA when applied to  $\Delta^{\mathcal{GS}}(M,X)$. Define $\varphi^{1} = \varphi_X$ and $\varphi^{r+1}$ to be a flow  obtained from $\varphi^{r}$ by cancelling all pairs of consecutive singularities corresponding to primary pivots on the $r$-th diagonal of  $\widetilde{\Delta}^{r}$. In order to show that these flows are well defined, we have to prove that whenever a primary pivot $\Delta^r_{j-r,j}$ on the $r$-th diagonal of $\widetilde{\Delta}^r$ is marked, it is actually an intersection number between two consecutive singularities  $h^{j}_{k}$ and $h^{j-r}_{k-1}$ of the flow $\varphi^r$ and hence they can be cancelled by the Dynamical Homotopical Cancellation Theorem (Theorem~\ref{teo_cancelamento}).

Since $\varphi^{1} = \varphi_X$, the GS-boundary map $\Delta^{\mathcal{GS}}(M,{\varphi^{1}})$ is $\widetilde{\Delta}^{1}$. Let $\Delta_{j-1,j}^{1}=\pm 1$ be a primary pivot on the first diagonal of $\widetilde{\Delta}^{1}$. By definition,  this primary pivot represents the  intersection number between two singularities of the flow $\varphi^{1}$, namely  $h^{j}_{k}$ and $h^{j-1}_{k-1}$,  which are consecutive since the gap between them is one. 
Using the Dynamical Homotopical Cancellation Theorem (Theorem~\ref{teo_cancelamento}), we can define a flow $\varphi^{2}$ by cancelling all pairs of consecutive singularities corresponding  to primary pivots on the first diagonal of  $\widetilde{\Delta}^{1}$. Moreover,   the GS-boundary map $\Delta^{\mathcal{GS}}(M,{\varphi^{2}})$ is the submatrix obtained from $\widetilde{\Delta}^{2}$ which does not contain the columns and rows corresponding to the cancelled singularities. Because of this and the fact that all non-zero entries of $\widetilde{\Delta}^{2}$ belong to $\Delta^{\mathcal{GS}}(M,{\varphi^{2}})$,  each non-zero entry of $\widetilde{\Delta}^{2}$ represents an intersection number between two singularities of $\varphi^{2}$. Observe that two singularities $h^{j}_{k}$ and $h_{k-1}^{j-2}$ of $\varphi^{2}$ with gap two in the filtration $F$ are consecutive in the flow $\varphi^{2}$ since all the gap 1 singularities have been cancelled in the previous stage.

Suppose that $\varphi^{r}$ is well defined, that is, each  primary pivot $\Delta^{r-1}_{j-(r-1),j}$ on the diagonal $(r-1)$ of $\widetilde{\Delta}^{r-1}$ corresponds to the intersection number of consecutive singularities $h^{j}_{k}$ and $h^{j-(r-1)}_{k-1}$ of $\varphi^{r-1}$ and the GS-boundary map $\Delta^{\mathcal{GS}}(M,{\varphi^{r}})$ is a submatrix of $\widetilde{\Delta}^{r}$ which does not contain  columns and rows of  $\widetilde{\Delta}^{r}$ corresponding to all
primary pivots marked until the diagonal $r-1$.  These correspond to all singularities of $\varphi$ of gap less than or equal to $r-1$.
Under these hypothesis  singularities $h^{i}_{k}$ and $h^{i-r}_{k-1}$ of $\varphi^{r}$ with gap $r$ with respect to the filtration $F$ are consecutive in the flow  $\varphi^{r}$. Hence two singularities corresponding to a primary pivot on the diagonal $r$ of $\widetilde{\Delta}^{r}$ can be cancelled, by the Dynamical Homotopical  Cancellation Theorem (Theorem~\ref{teo_cancelamento}). Therefore,  $\varphi^{r+1}$ is a well defined flow obtained from $\varphi^{r}$ by cancelling all pairs of critical points corresponding to primary pivots on the diagonal $r$ of  $\widetilde{\Delta}^{r}$. Moreover, the GS-boundary map $\Delta^{\mathcal{GS}}(M,{\varphi^{r+1}})$ is a submatrix of $\widetilde{\Delta}^{r+1}$ which does not contain  columns and rows of  $\widetilde{\Delta}^{r+1}$ corresponding to all primary pivots marked until step $r$.
The flow $\varphi_X$ continues to $\varphi^r$ for all $r$.

Assume that $X \in \mathfrak{X}_{\mathcal{GS}}(M)$ for $\mathcal{S} = \mathcal{D}$ or $\mathcal{T}$. The proof follows the same steps as above by considering the homotopical cancellation of consecutive generators of the natures of the singularities.
\end{proof}

\begin{proof}[Proof of Theorem~\ref{cancel1}] 
By the Primary Pivots for Orientable Surfaces Theorem, see~\cite{BLMdRS1,BLMdRS2},  the primary pivots are always equal to $\pm 1$ when working on orientable surfaces. 
Thus the differentials $d^{r}_p:E^r_{p}\to E^r_{p-r}$  induced by the primary pivots are isomorphisms and the ones associated to change of basis pivots always correspond to zero maps.
Consequently, if $d^{r}_p$ is non-zero differential, then,  at the next stage of the spectral sequence, the 
algebraic cancellation $E^{r+1}_{p}=E^{r+1}_{p-r}=0$ occurs. 

An algebraic cancellation
$E^{r+1}_{p}=E^{r+1}_{p-r}=0$ is associated to a primary pivot
$\Delta^{r}_{p-r+1,p+1}=\pm 1$ on the $r$-th diagonal of
$\Delta^{r}$ produced by the $r$-th step of the SSSA. The row $p-r+1$ is associated to 
$h_{k-1}^{p-r+1}\in F_{p-r}C^{\mathcal{GS}}_{k-1}\setminus F_{p-r-1}C^{\mathcal{GS}}_{k-1}$ and
the column $p+1$ is associated to $h_{k}^{p+1}\in F_pC^{\mathcal{GS}}_k\setminus
F_{p-1}C^{\mathcal{GS}}_k$ in a gradient flow $\varphi$ associated to $f$. By the Primary Pivots Equality Theorem in~\cite{BLMdRS2}, the primary pivot $\Delta^{r}_{p-r+1,p+1}=\pm 1$ is also a primary pivot $\widetilde{\Delta}^{r}_{p-r+1,p+1}=\pm 1$ of the RCA. As it was shown in the proof of Theorem~\ref{cancel2}, the primary pivot $\Delta^{r}_{p-r+1,p+1}$ corresponds  to the  intersection number of
two consecutive generators of natures of the singularities $h_{k}^{p+1}$ and $h_{k-1}^{p-r+1}$ of the flow $\varphi^r$. This pair can be homotopical cancelled by the Dynamical Homotopical Cancellation Theorem (Theorem~\ref{teo_cancelamento}).

Moreover,  $E^{r}_p$ and $E_{p-r}^r$ correspond to generators  of saddle and attractor (or repeller and saddle) natures, respectively,  with gap $r$ with respect to the filtration $F$. Therefore, the  dynamical and algebraic cancellations occur with increasing gap.
\end{proof}

\subsection{Examples}\label{subsec:6.3}

In this subsection we present some examples where we explore the algebraic cancellations of the modules of the spectral sequence  and their corresponding dynamical  homotopical cancellations.

Throughout this section, the primary pivots are the entries indicated by   darker edge and the change of basis pivots are indicated  by  dashed edges, null entries are left blank and the diagonal being swept is indicated with a gray line.  

\begin{example}
Consider the singular manifold  $M\in\mathfrak{M}(\mathcal{GC})$  and a GS-flow $\varphi_{X}$ associated to a vector field $X\in\mathfrak{X}_{\mathcal{GC}}(M)$  as in Figure~\ref{ex_cone1}.  Consider as well a choice of orientations on the unstable manifolds of the critical points of a Morsification  $\widetilde{M}$. Then we are able to determine the GS-characteristic signs of the orbits of $\varphi_{X}$ , as it is shown in Figure~\ref{ex_cone1}.

\begin{figure}[H]
    \centering
        \includegraphics[width=0.85\textwidth]{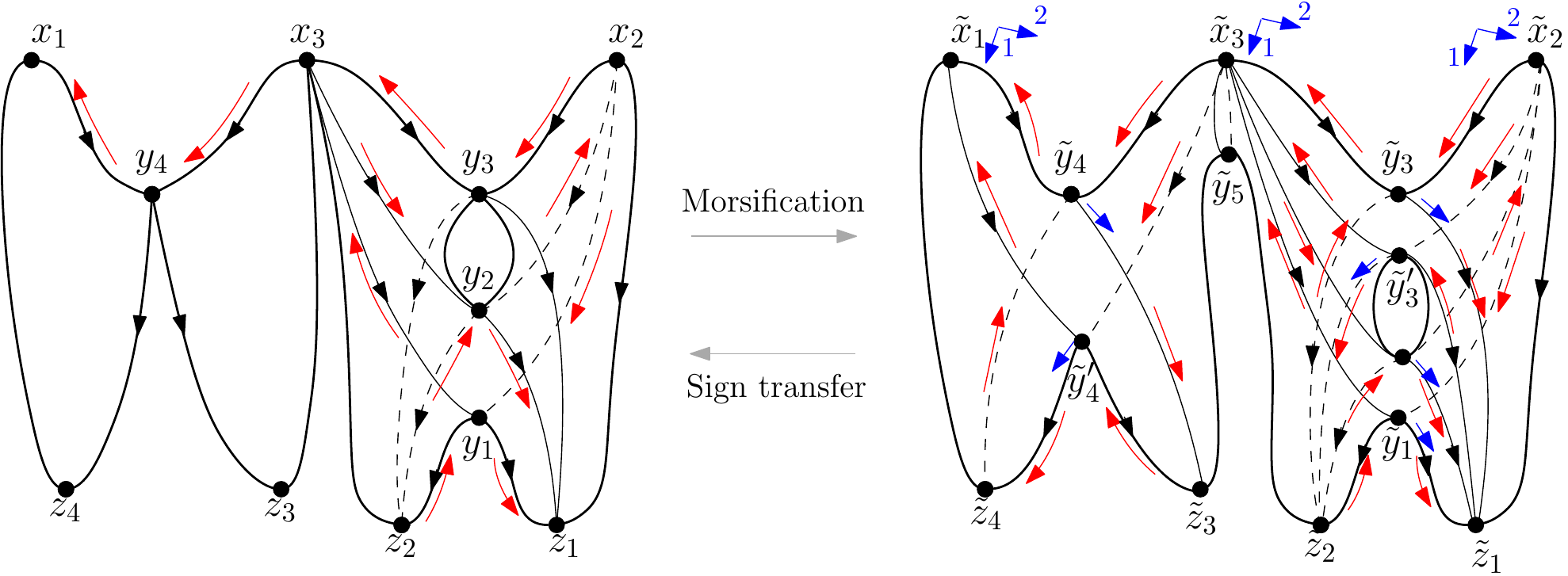}
    \caption{A GS-flow with cone singularities, its Morsification and sign tranfers.}\label{ex_cone1}
\end{figure}

The GS-chain groups are:
$C_0(M,X)^{\mathcal{GC}}=\mathbb{Z} [\langle z_1\rangle,\langle z_2 \rangle, \langle z_3\rangle, \langle z_4\rangle ]$, 
$C_1(M,X)^{\mathcal{GC}}=\mathbb{Z} [\langle y_1\rangle, \langle y_2\rangle, \langle y_3\rangle, \langle y_4\rangle ] $, $ C_2(M,X)^{\mathcal{GC}}=\mathbb{Z} [\langle x_1\rangle,\langle x_2\rangle,\langle x_3\rangle]$,
and $C_k(M)=0, k\neq 0,1,2$.
The GS-boundary operator $\Delta^{\mathcal{GS}}_{\ast}$ is  given by the matrix in Figure~\ref{cone_matriz0}.  

Consider a finest  filtration on the GS-chain complex $(C_{\ast}^{\mathcal{GC}}(M,X),\Delta^{\mathcal{GC}}_{\ast})$, namely, $F_0C^{\mathcal{GC}}  = \mathbb{Z}[z_1]$, $F_1C^{\mathcal{GC}}  = \mathbb{Z}[z_1, z_2]$, $F_2C^{\mathcal{GS}}  = \mathbb{Z}[z_1, z_2,z_3]$, $F_3C^{\mathcal{GS}}  = \mathbb{Z}[z_1, z_2,z_3,z_4]$, $F_4C^{\mathcal{GC}}  = \mathbb{Z}[z_1, z_2,z_3,z_4,y_1]$, $F_5C^{\mathcal{GC}}  = \mathbb{Z}[z_1, z_2,z_3,z_4,$ $y_1,y_2]$, $F_6C^{\mathcal{GC}}  = \mathbb{Z}[z_1, z_2,z_3,z_4,y_1,y_2,y_3]$,
$F_7C^{\mathcal{GC}}  = \mathbb{Z}[z_1, z_2,z_3,z_4,y_1,y_2,y_3,y_4]$,
$F_8C^{\mathcal{GC}}  = \mathbb{Z}[z_1, z_2,z_3,z_4,y_1,y_2,$ $y_3,y_4,x_1]$,
$F_9C^{\mathcal{GC}}  = \mathbb{Z}[z_1, z_2,z_3,z_4,y_1,y_2,y_3,y_4,x_1,x_2]$ and 
$F_{10}C^{\mathcal{GC}}  = \mathbb{Z}[z_1, z_2,z_3,z_4,y_1,y_2,y_3,y_4,x_1,x_2,x_3]$. The  spectral sequence associated to  $(C_{\ast}^{\mathcal{GC}}(M,X),\Delta^{\mathcal{GC}}_{\ast})$ enriched with the filtration $F$ is shown in Figure~\ref{spec}.

\begin{figure}[H]
      \centering
        \includegraphics[width=0.9\textwidth]{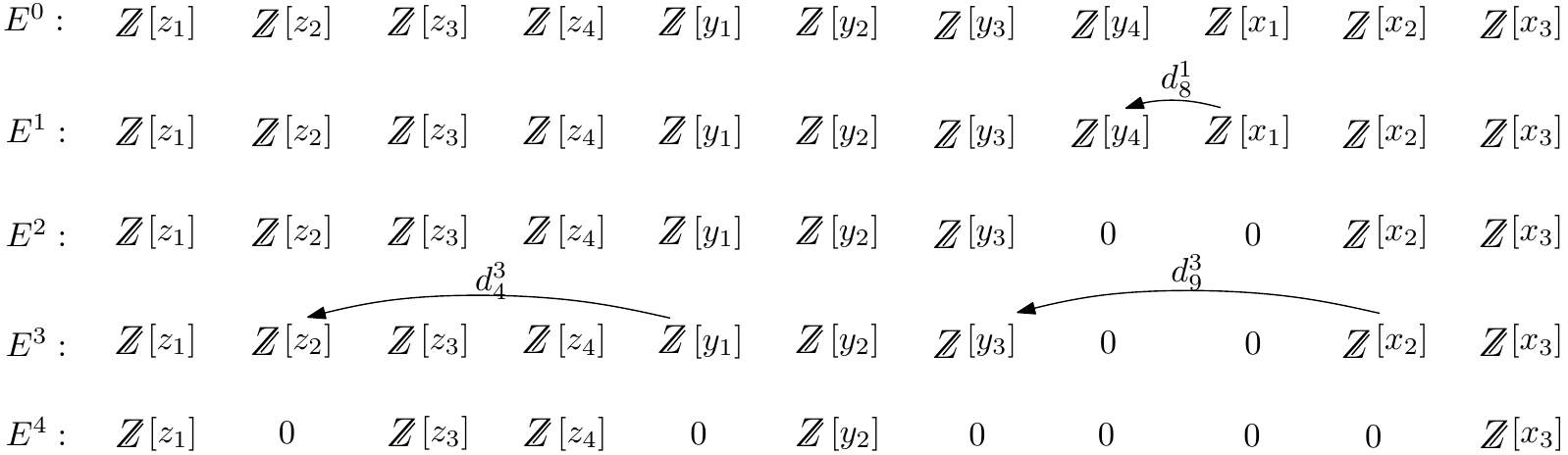}
    \caption{The spectral sequence for $(C_{\ast}^{\mathcal{GC}}(M,X),\Delta^{\mathcal{GC}}_{\ast})$ with filtration $F$.}\label{spec}
\end{figure}

Applying the  SSSA to the GS-boundary differential $\Delta^{\mathcal{GC}}$, one obtains the sequence of matrices  $\Delta^{1},\cdots,\Delta^{5}$ as in  Figures 
~\ref{cone_matriz1},$\cdots$,~\ref{cone_matriz5}, respectively, where the singularities are identified by  $h_0^i=z_i$, $h_1^{i+4}=y_i$, for $i=1\dots 4$ and $h_2^{i+8}=z_i$, for $i=1,\dots, 3$.

\begin{figure}[H]
\centering
\subfloat[$\Delta^0$, the GS-boundary operator.]{
\includegraphics[height=6.5cm]{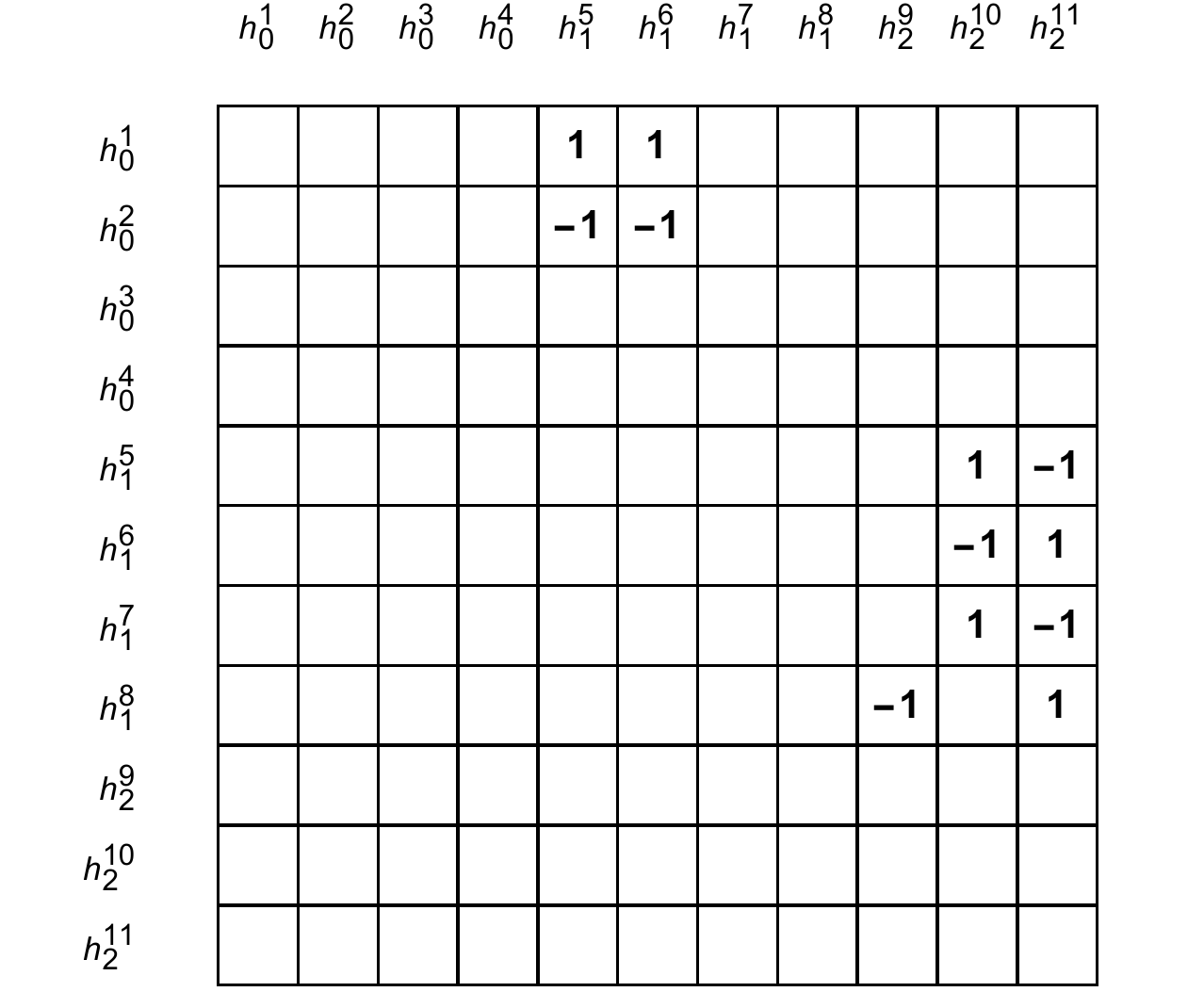}
\label{cone_matriz0}
}
\quad 
\subfloat[$\Delta^1$, sweeping 1-st diagonal.]{
\includegraphics[height=6.5cm]{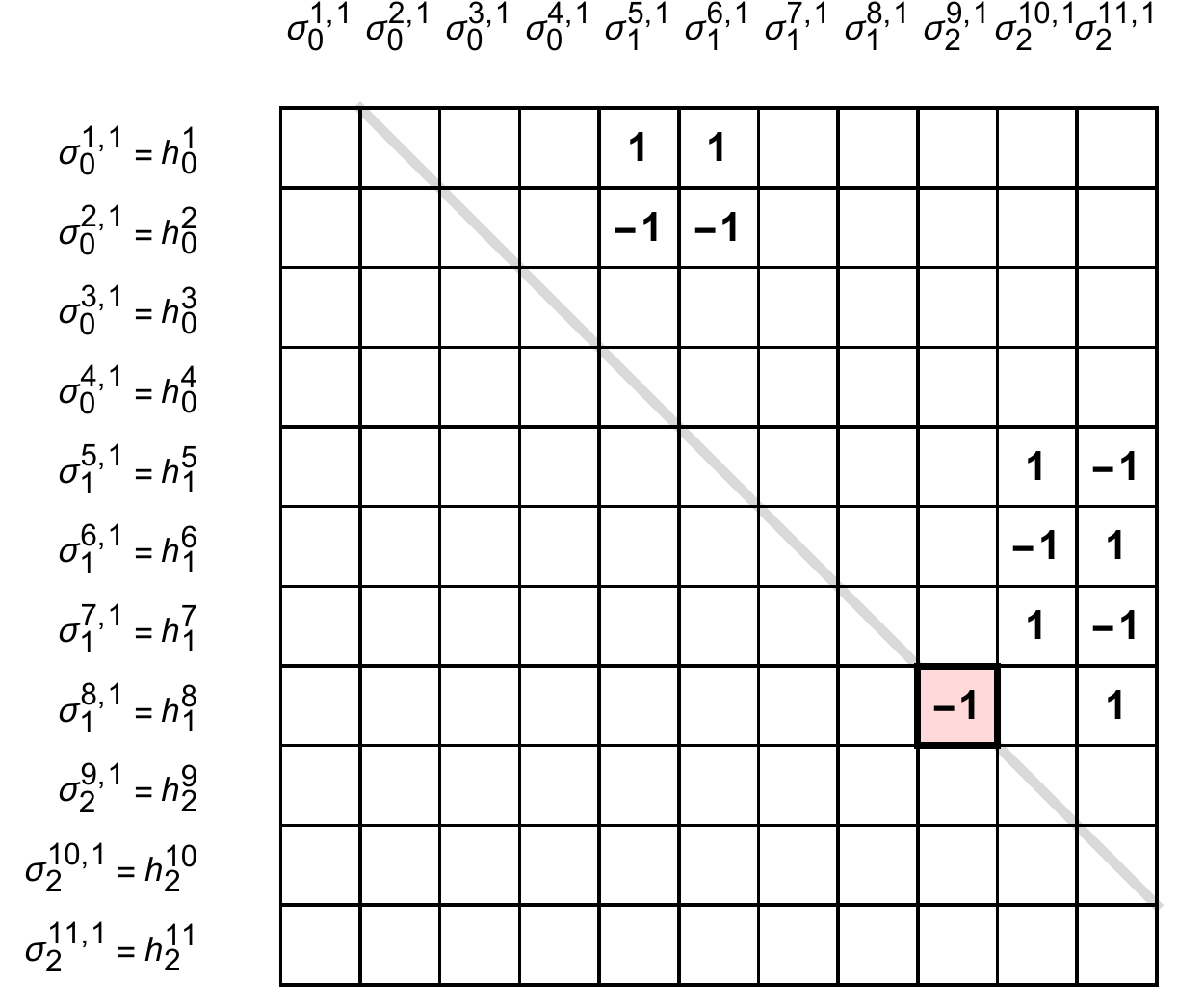}
\label{cone_matriz1}
}\\
\subfloat[$\Delta^2$, sweeping 2-nd diagonal.]{
\includegraphics[height=6.5cm]{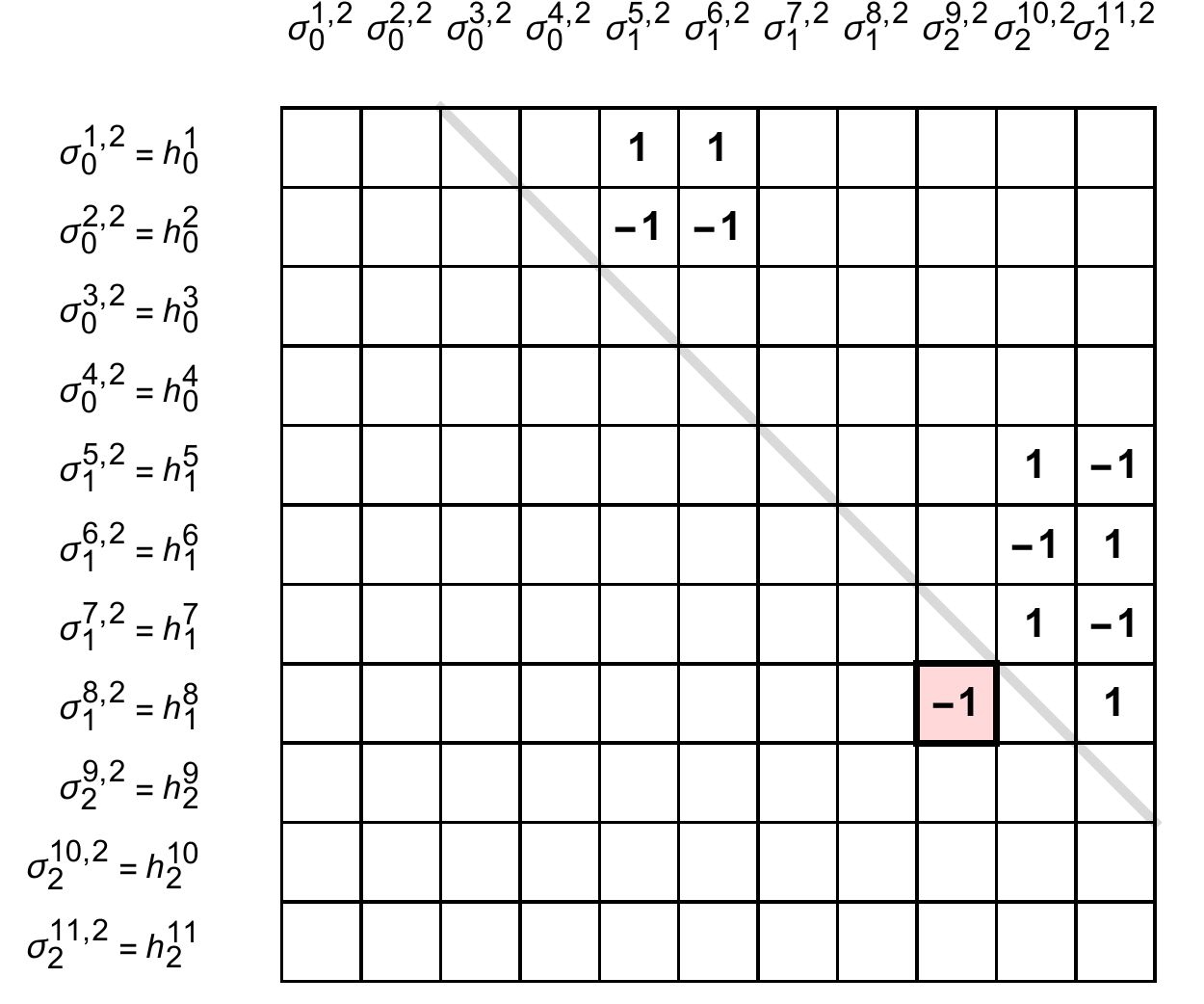}
\label{cone_matriz2}
}
\quad 
\subfloat[$\Delta^3$, sweeping 3-rd diagonal.]{
\includegraphics[height=6.5cm]{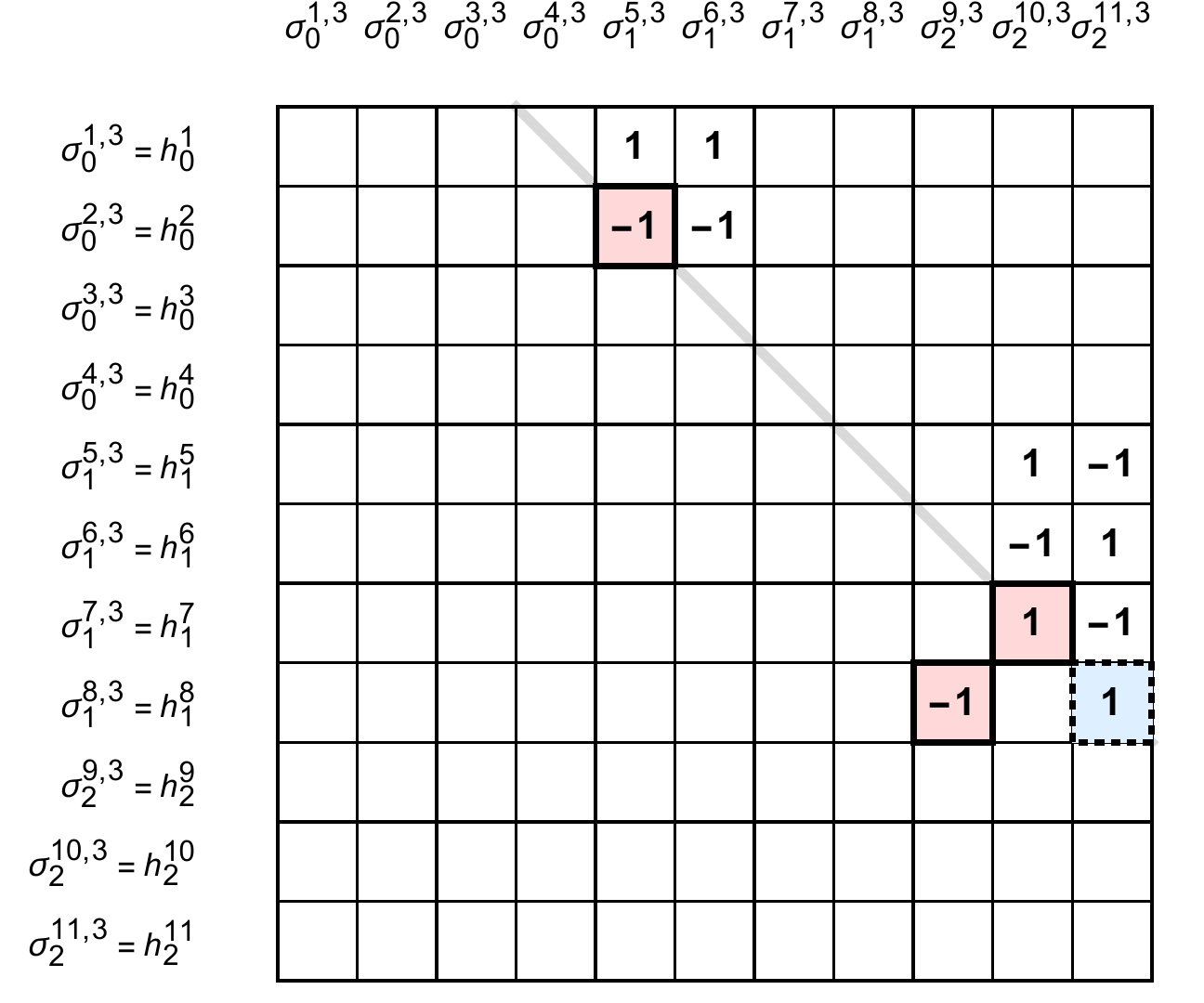}
\label{cone_matriz3}
}\\
\subfloat[$\Delta^4$, sweeping 4-th diagonal.]{
\includegraphics[height=6.5cm]{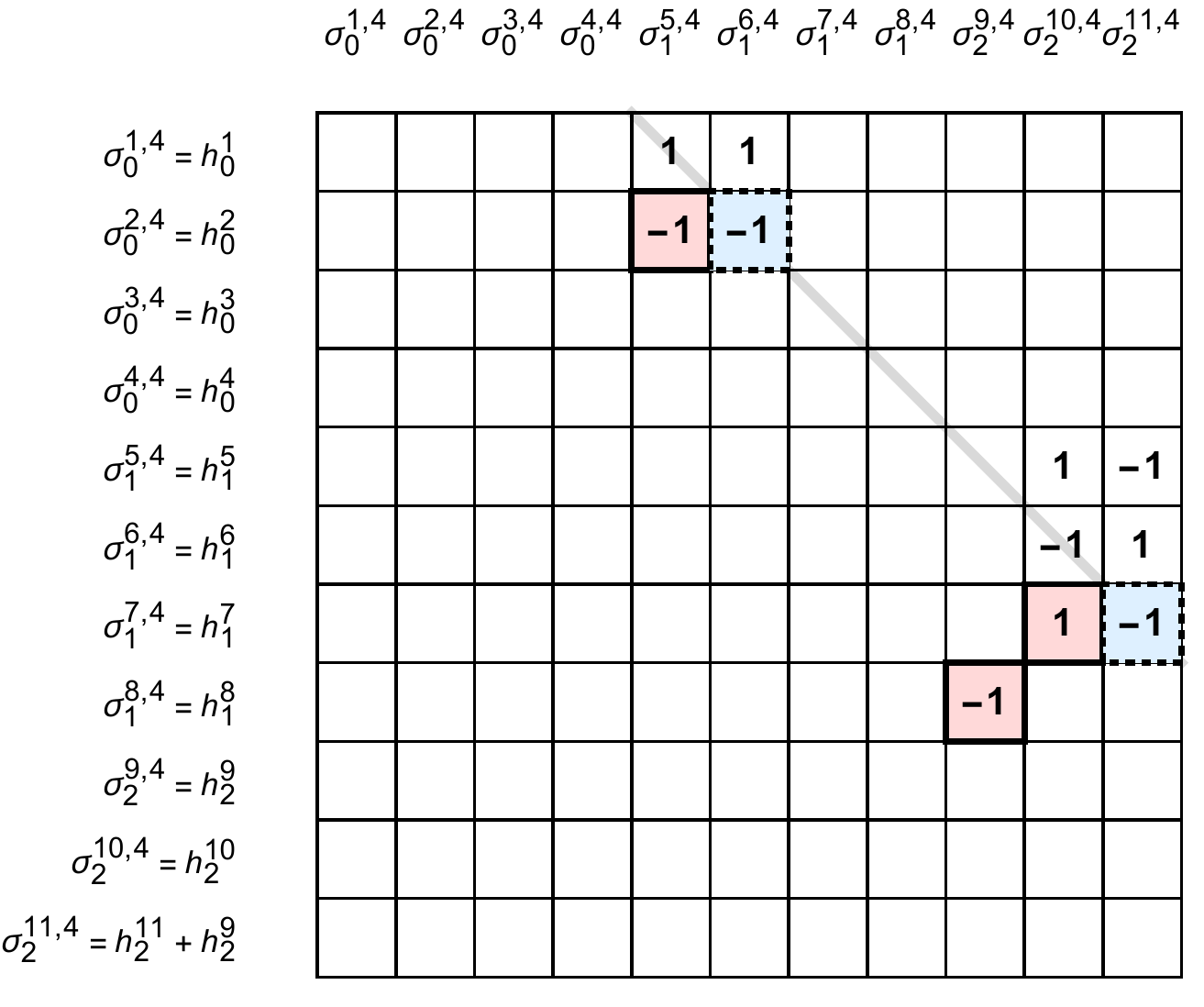}
\label{cone_matriz4}
}
\quad 
\subfloat[$\Delta^5$, sweeping 5-th diagonal.]{
\includegraphics[height=6.5cm]{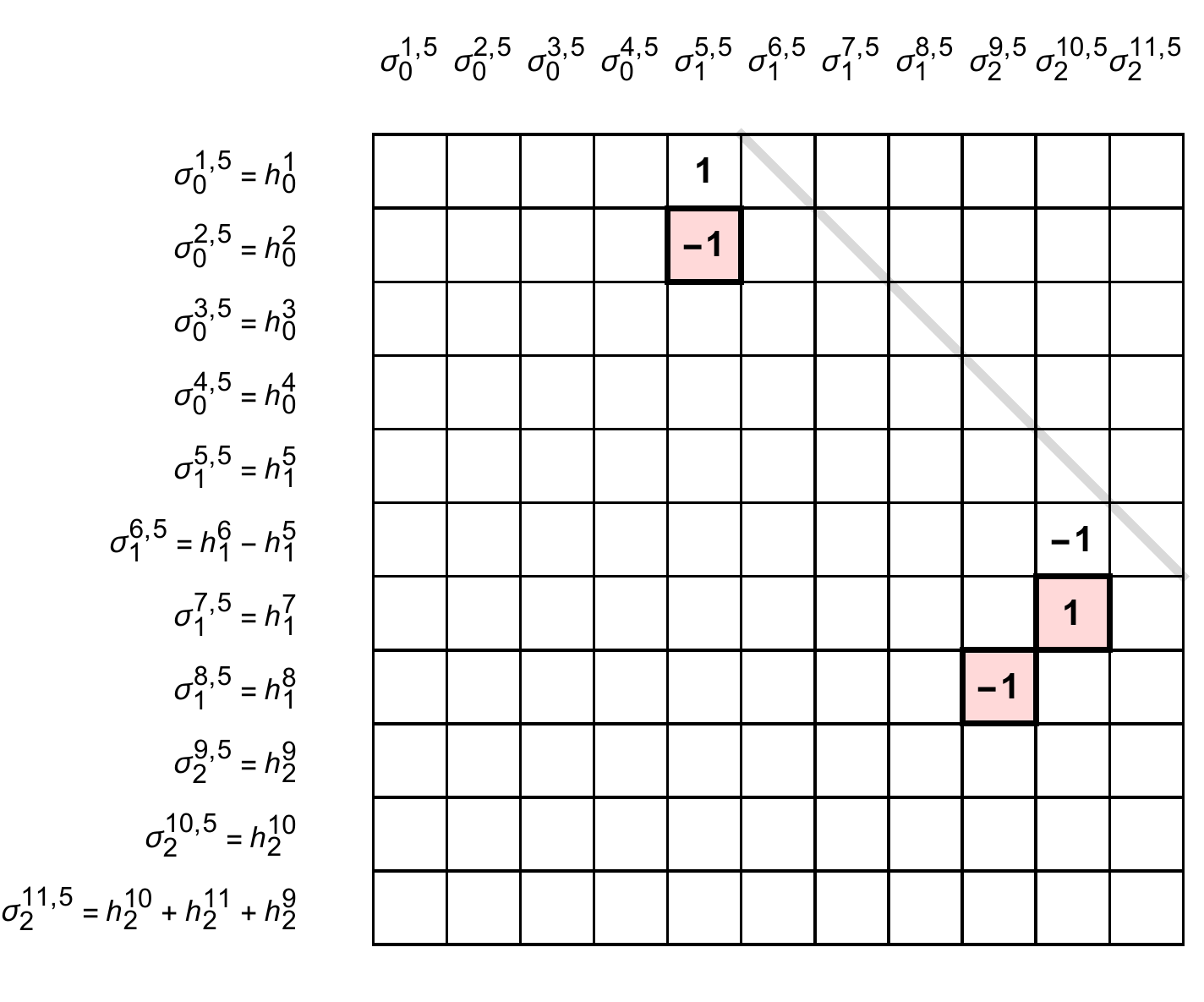}
\label{cone_matriz5}
}
\caption{Sequence of matrices produced by the SSSA.}
\label{fig001}
\end{figure}

%
%

As proven in Theorem~\ref{cancel1}, the primary pivots detect  algebraic cancellations of the modules of the spectral sequence. More specifically, 
\begin{itemize}
\item the primary pivot $\Delta^{1}_{8,9}$ detects the algebraic cancellation of the modules $E^{1}_{8}$ and $E^{1}_{7}$;
\item the primary pivot $\Delta^{2}_{7,10}$ detects the algebraic cancellation of the modules $E^{2}_{9}$ and $E^{2}_{1}$;
\item the primary pivot $\Delta^{2}_{2,5}$ detects the algebraic cancellation of the modules $E^{2}_{4}$ and $E^{2}_{1}$.
\end{itemize}
On the other hand, these algebraic cancellations are associated to dynamical cancellations by Theorem~\ref{cancel1}, namely:
\begin{itemize}
\item  the algebraic cancellation of  $E^{1}_{8}$ and $E^{1}_{7}$ determines the dynamical homotopical cancellation of the singularities $(x_1,y_4)$.
\item  the algebraic cancellation of  $E^{2}_{9}$ and $E^{2}_{1}$ determines the dynamical homotopical cancellation of the singularities $(x_2,y_3)$.
\item the algebraic cancellation of  $E^{2}_{4}$ and $E^{2}_{1}$ determines the dynamical homotopical cancellation of the singularities $(y_1,z_2)$.
\end{itemize}

Figure \ref{ex_cone1-newN000}  shows the dynamical homotopical cancellations of the pair of  singularities  $(x_1,y_4)$, $(x_2,y_3)$ and  $(y_1,z_2)$, respectively. 

\begin{figure}[H]
    \centering
        \includegraphics[width=0.8\textwidth]{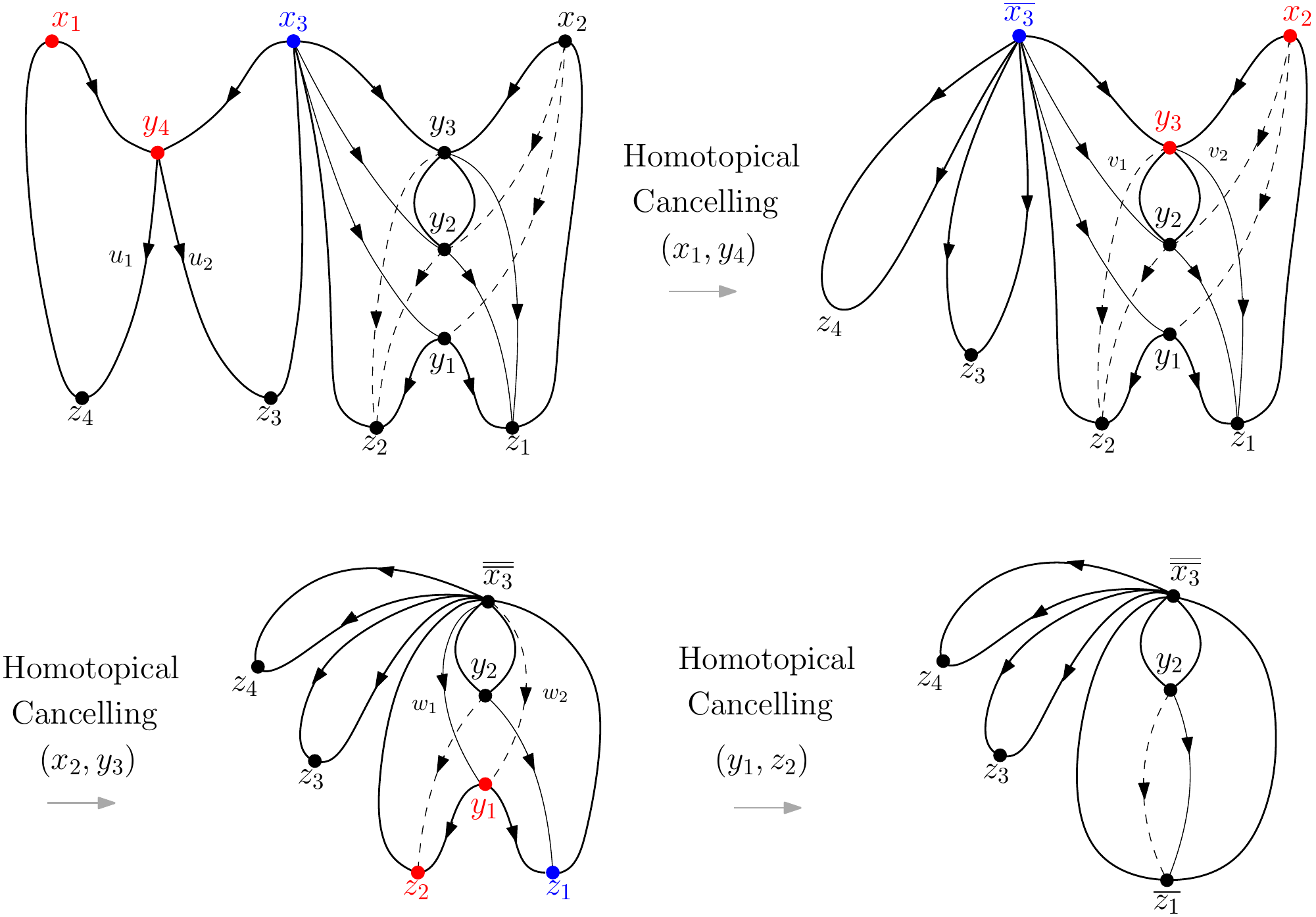}
    \caption{Homotopical Cancellation the pair of singularities $(x_1,y_4)$, $(x_2,y_3)$ and  $(y_1,z_2)$, sucessively.}\label{ex_cone1-newN000}
\end{figure}

%
%

\end{example}

\begin{example}
Consider the singular manifold 
$M\in\mathfrak{M}(\mathcal{GW})$ and the GS-flow $\varphi_{X}$ associated to a vector field   $X\in\mathfrak{X}_{\mathcal{GW}}(M)$ as in  Figure~\ref{figura_ex_whitney_man}.
The GS-chain complex associated to this flow is presented in Example~\ref{ex_w3}.
Consider a finest  filtration on  $(C_{\ast}^{\mathcal{GW}}(M,X),\Delta^{\mathcal{GW}}_{\ast})$, namely, $F_0C^{\mathcal{GW}}  = \mathbb{Z}[z_1]$, $F_1C^{\mathcal{GW}}  = \mathbb{Z}[z_1, z_2]$,   $F_2C^{\mathcal{GW}}  = \mathbb{Z}[z_1, z_2,y_1]$, $F_3C^{\mathcal{GW}}  = \mathbb{Z}[z_1, z_2,y_1,y_2]$, $F_4C^{\mathcal{GW}}  = \mathbb{Z}[z_1, z_2,y_1,y_2,y_3]$,
$F_5C^{\mathcal{GW}}  = \mathbb{Z}[z_1, z_2,y_1,y_2,y_3,x_1]$,
$F_6C^{\mathcal{GW}}  = \mathbb{Z}[z_1, z_2,y_1,y_2,y_3,x_1,x_2]$ and 
$F_{7}C^{\mathcal{GW}}  = \mathbb{Z}[z_1, z_2,y_1,y_2,y_3,x_1,x_2,x_3]$. The  spectral sequence associated to  $(C_{\ast}^{\mathcal{GW}}(M,X),\Delta^{\mathcal{GW}}_{\ast})$ enriched with the filtration $F$ is shown in Figure~\ref{specW}.

\begin{figure}[H]
      \centering
        \includegraphics[width=0.67\textwidth]{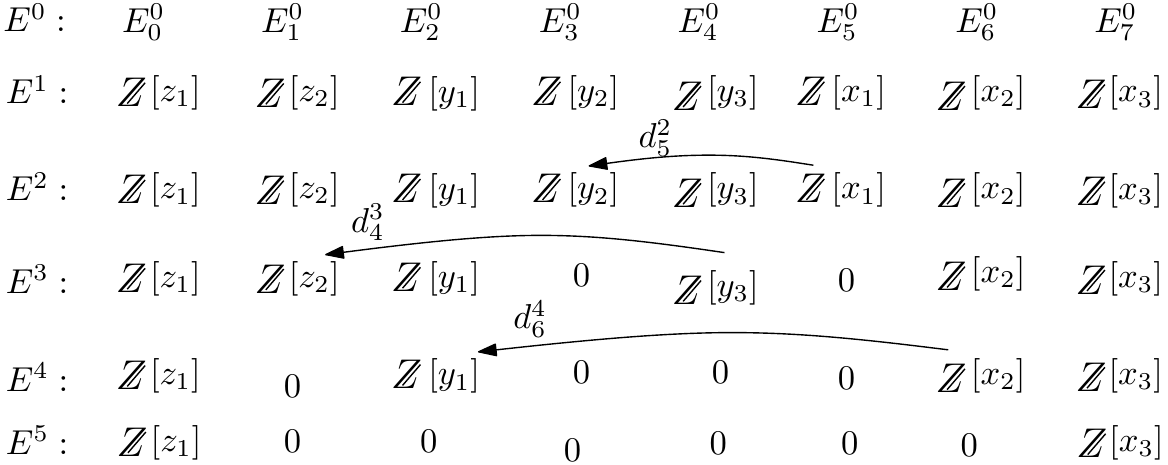}
    \caption{The spectral sequence for $(C_{\ast}^{\mathcal{GW}}(M,X),\Delta^{\mathcal{GW}}_{\ast})$ with filtration $F$.}\label{specW}
\end{figure}

Applying the  SSSA to the GS-boundary differential $\Delta^{\mathcal{GW}}$, one obtains the sequence of matrices  $\Delta^{1},\cdots,\Delta^{6}$ as in  Figures 
~\ref{whitney_matriz0},$\cdots$,~\ref{whitney_matriz5}, respectively, where the singularities are identified by   $h_0^i =z_i$, for $i=1,2$, $h_1^{i+2}=y_i$, for $i=1\dots 3$ and $h_2^{i+5}=x_i$, for $i=1\dots 3$. 


\begin{figure}[H]
\centering
\subfloat[$\Delta^1$, sweeping 1-st diagonal.]{
\includegraphics[height=6cm]{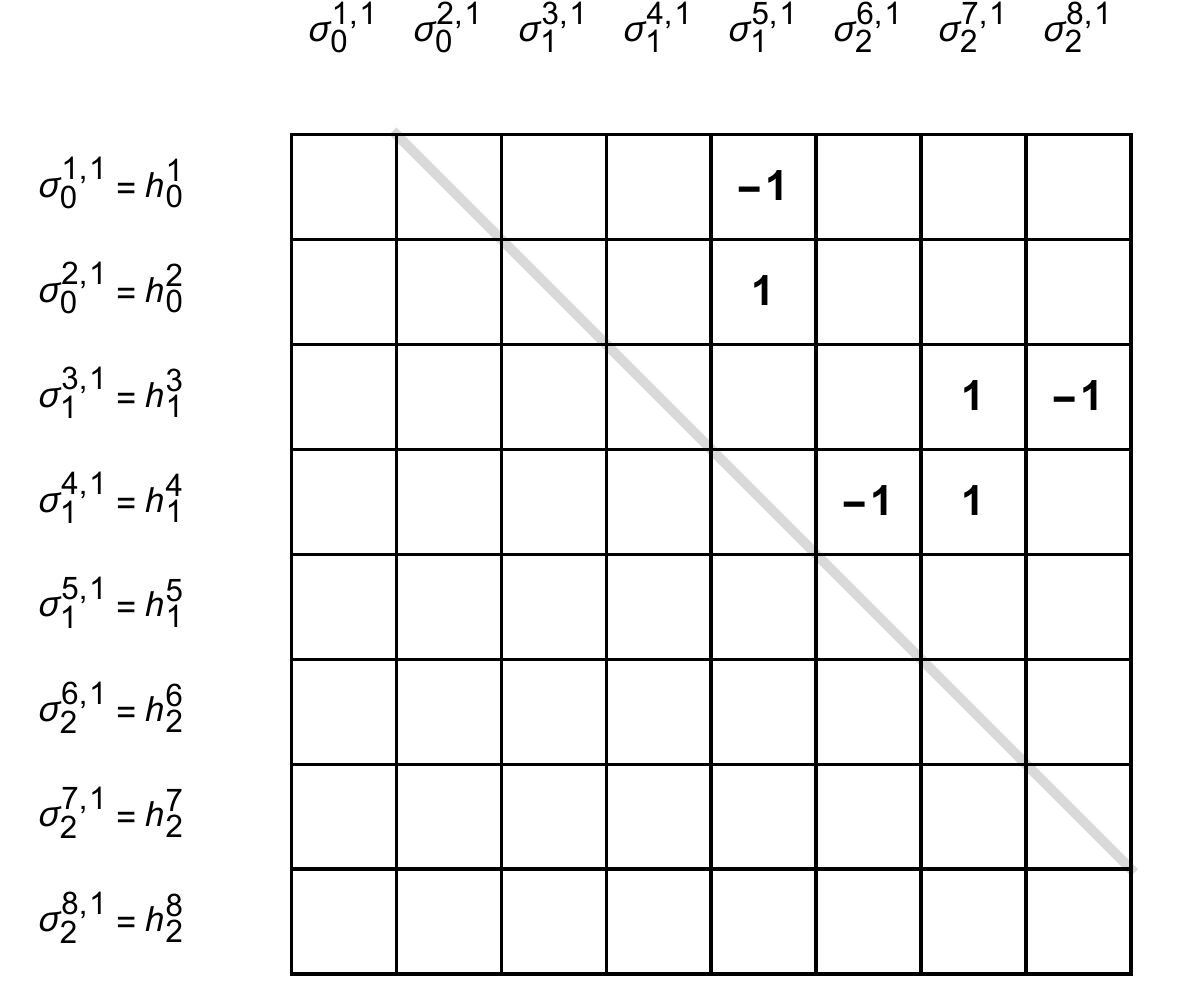}
\label{whitney_matriz0}
}
\quad 
\subfloat[$\Delta^2$, sweeping 2-nd diagonal.]{
\includegraphics[height=6cm]{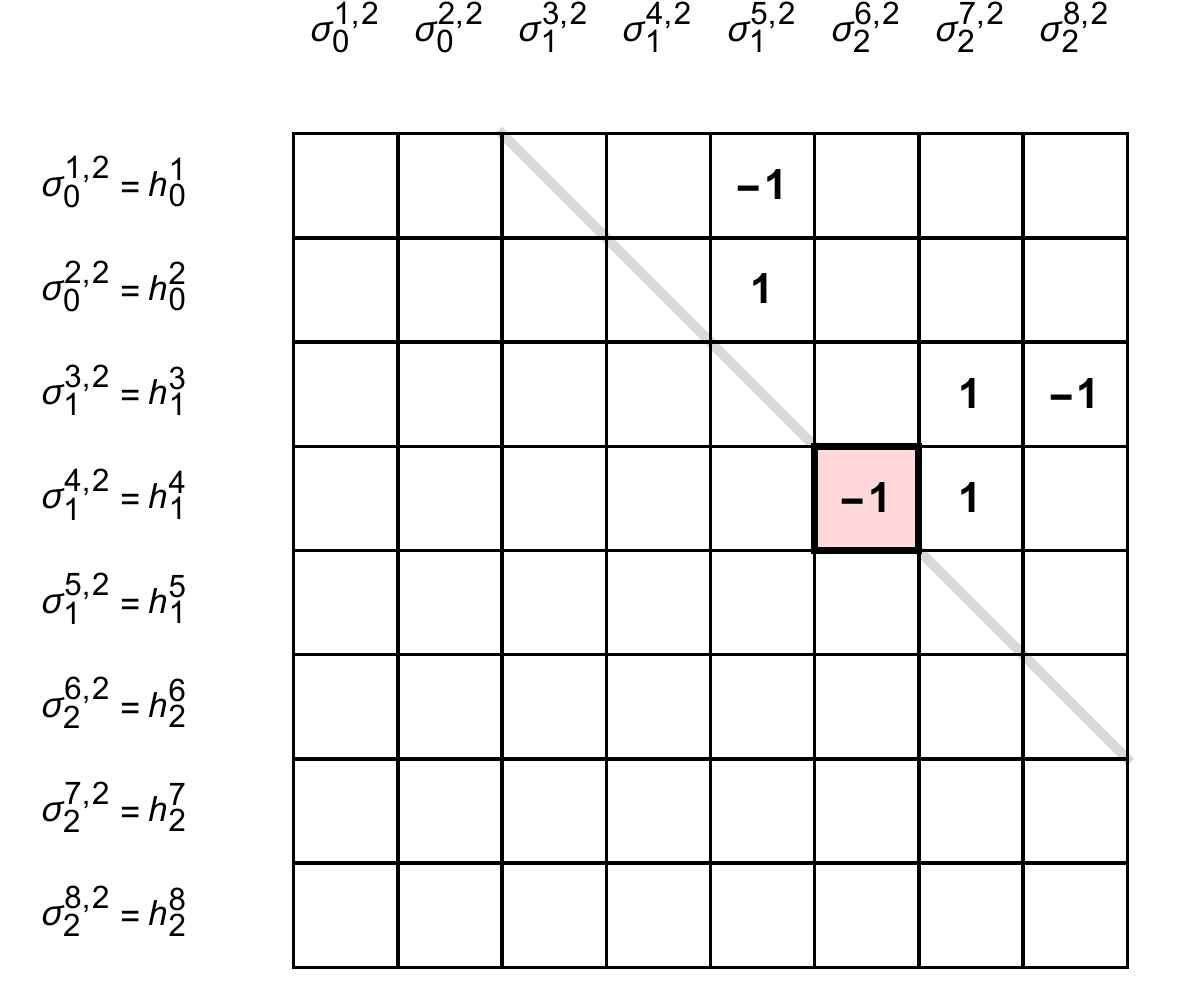}
\label{whitney_matriz1}
}\\
\subfloat[$\Delta^3$, sweeping 3-rd diagonal.]{
\includegraphics[height=6cm]{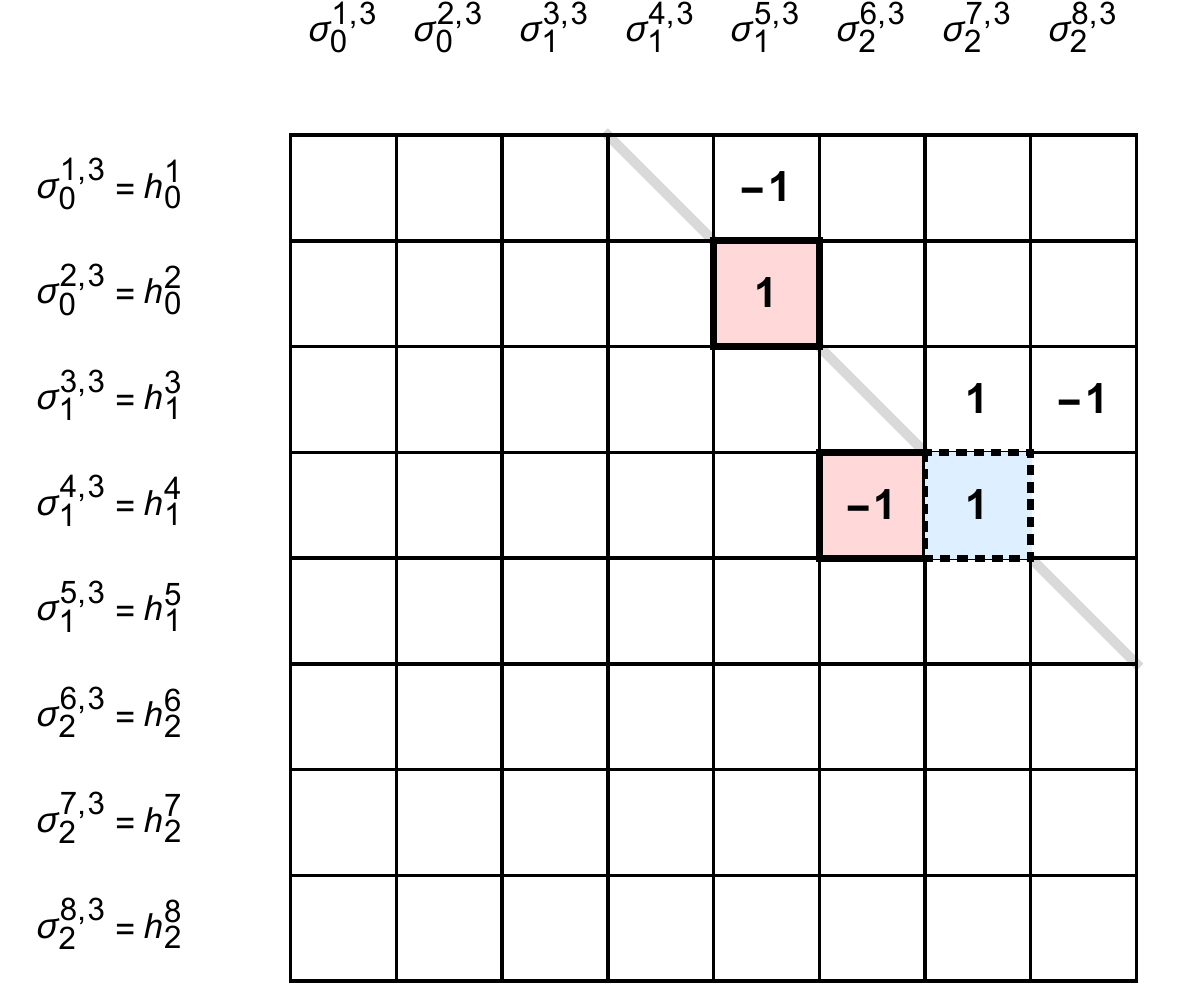}
\label{whitney_matriz2}
}
\quad 
\subfloat[$\Delta^4$, sweeping 4-th diagonal.]{
\includegraphics[height=6cm]{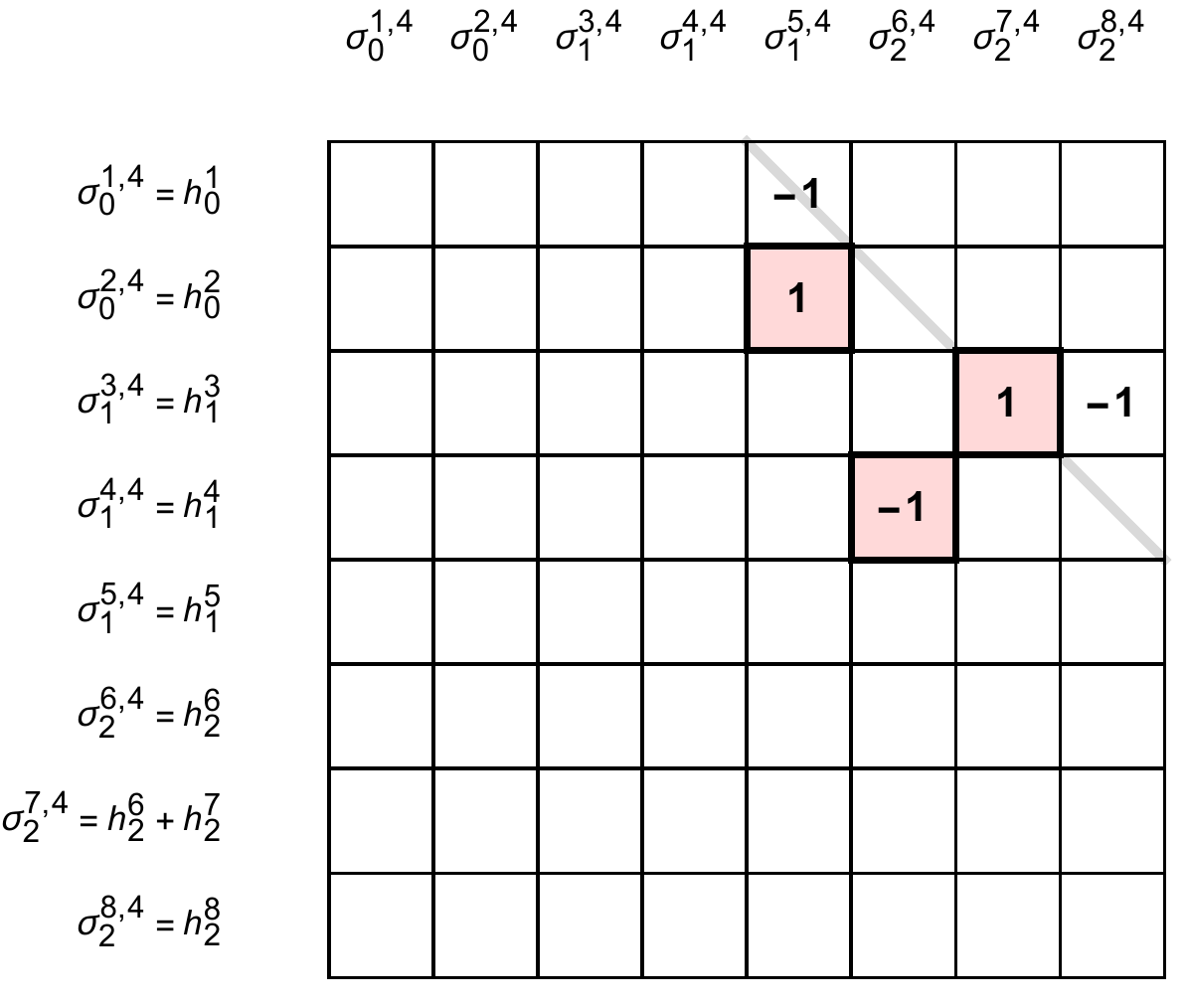}
\label{whitney_matriz3}
}\\
\subfloat[$\Delta^5$, sweeping 5-th diagonal.]{
\includegraphics[height=6cm]{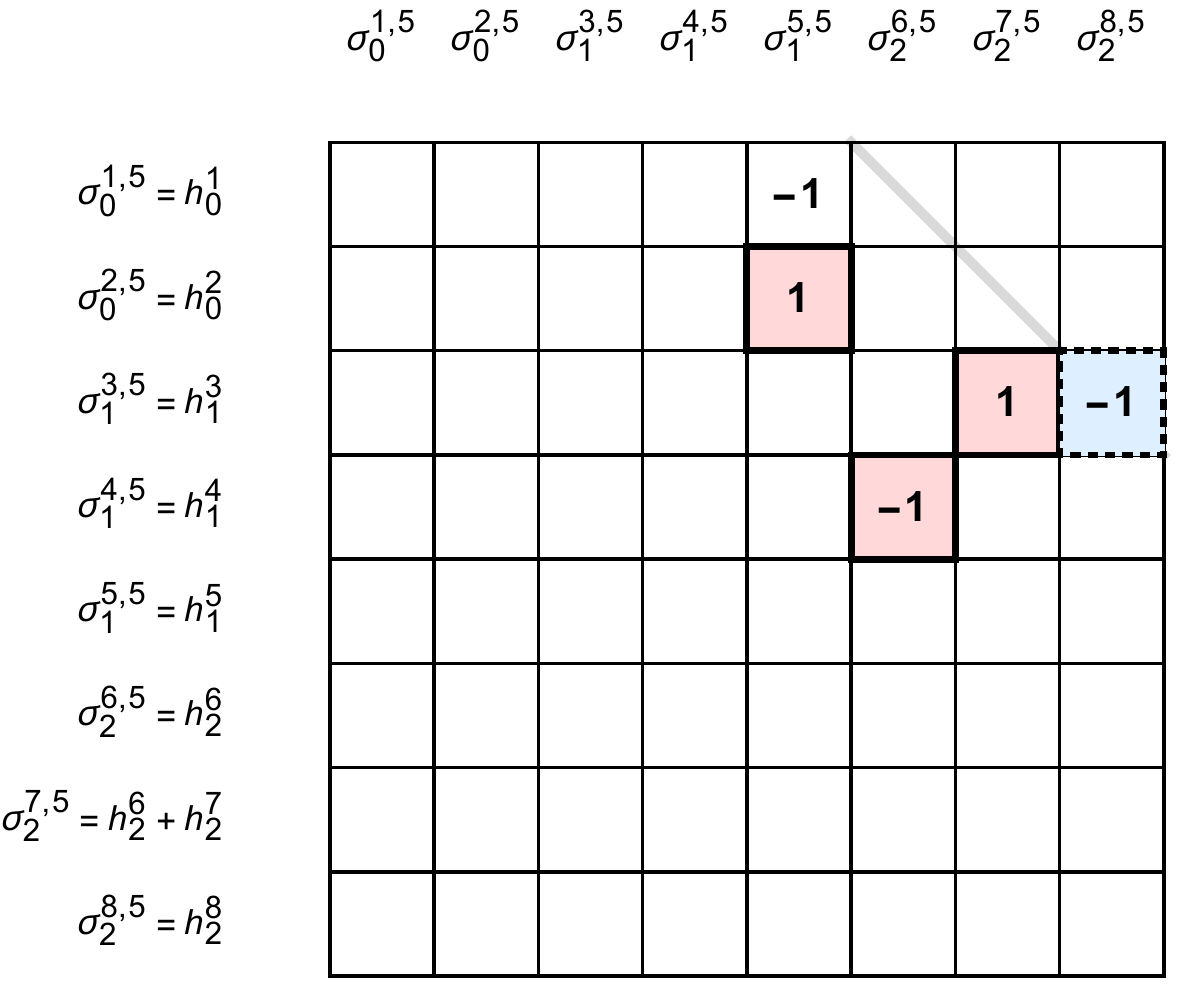}
\label{whitney_matriz4}
}
\quad 
\subfloat[$\Delta^6$, sweeping 6-th diagonal.]{
\includegraphics[height=6cm]{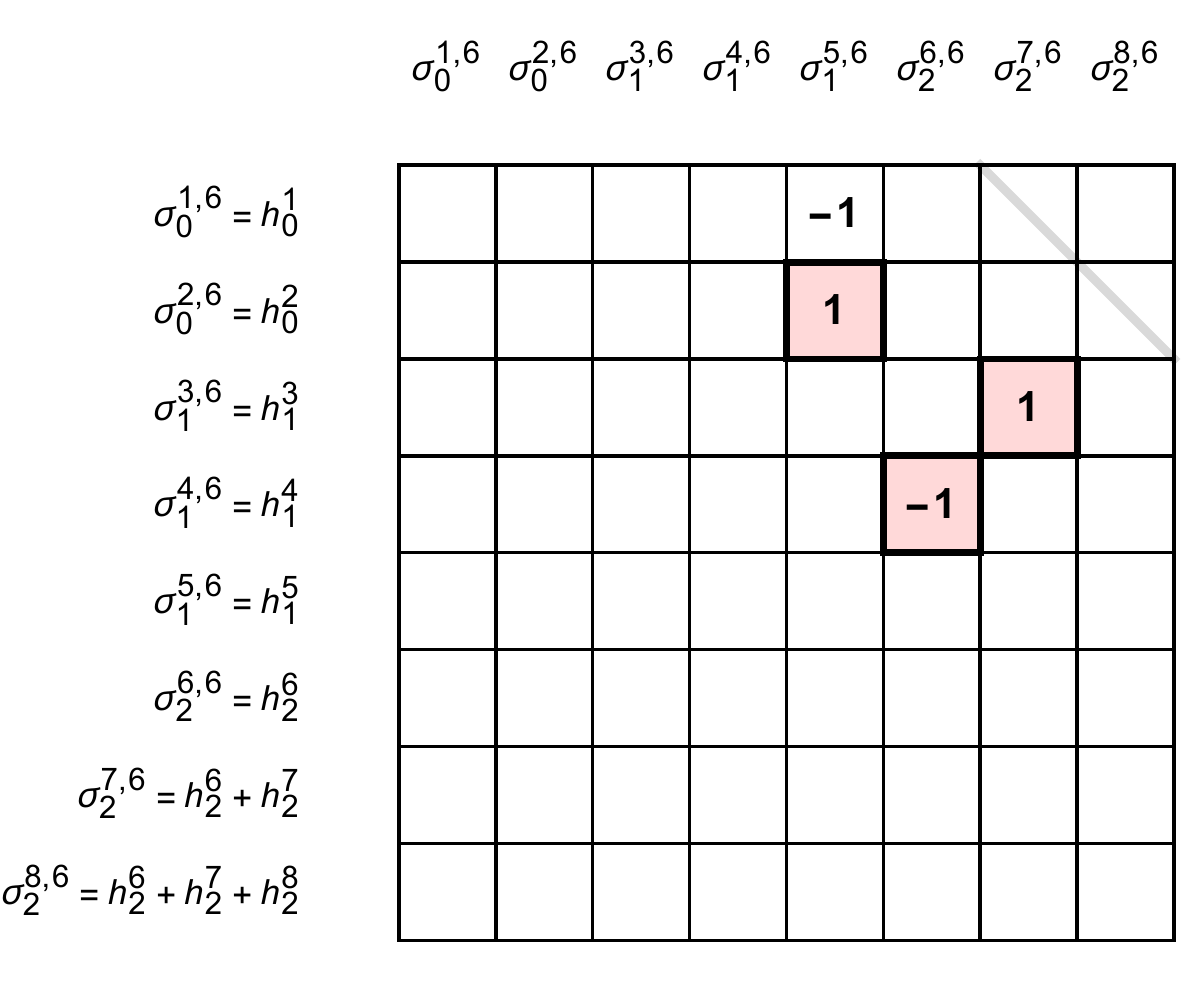}
\label{whitney_matriz5}
}
\caption{Sequence of matrices produced by the SSSA.}
\label{fig01}
\end{figure}

%
%

As proven in Theorem~\ref{cancel1}, the primary pivots detect  algebraic cancellations of the modules of the spectral sequence. More specifically, 
\begin{itemize}
\item the primary pivot $\Delta^{2}_{4,6}$ detects the algebraic cancellation of the modules $E^{2}_{5}$ and $E^{2}_{3}$;
\item the primary pivot $\Delta^{3}_{2,5}$ detects the algebraic cancellation of the modules $E^{3}_{4}$ and $E^{3}_{1}$;
\item the primary pivot $\Delta^{4}_{3,7}$ detects the algebraic cancellation of the modules $E^{4}_{6}$ and $E^{4}_{2}$.
\end{itemize}
On the other hand, these algebraic cancellations are associated to dynamical homotopical cancellations by Theorem~\ref{cancel1}, namely:
\begin{itemize}
\item  the algebraic cancellation of  $E^{1}_{5}$ and $E^{1}_{3}$ determines the dynamical homotopical cancellation of the singularities $(x_1,y_2)$.
\item  the algebraic cancellation of  $E^{3}_{4}$ and $E^{3}_{1}$ determines the dynamical homotopical cancellation of the singularities $(y_3,z_2)$.
\item the algebraic cancellation of $E^{4}_{6}$ and $E^{4}_{2}$ determines the dynamical homotopical cancellation of the singularities $(x_2,y_1)$.
\end{itemize}

Figure \ref{ex_whitney1-new2N000}  shows the dynamical cancellations of the pair of  singularities  $(x_1,y_2)$, $(y_3,z_2)$ and  $(x_2,y_1)$, respectively. 

\begin{figure}[H]
    \centering
        \includegraphics[width=0.85\textwidth]{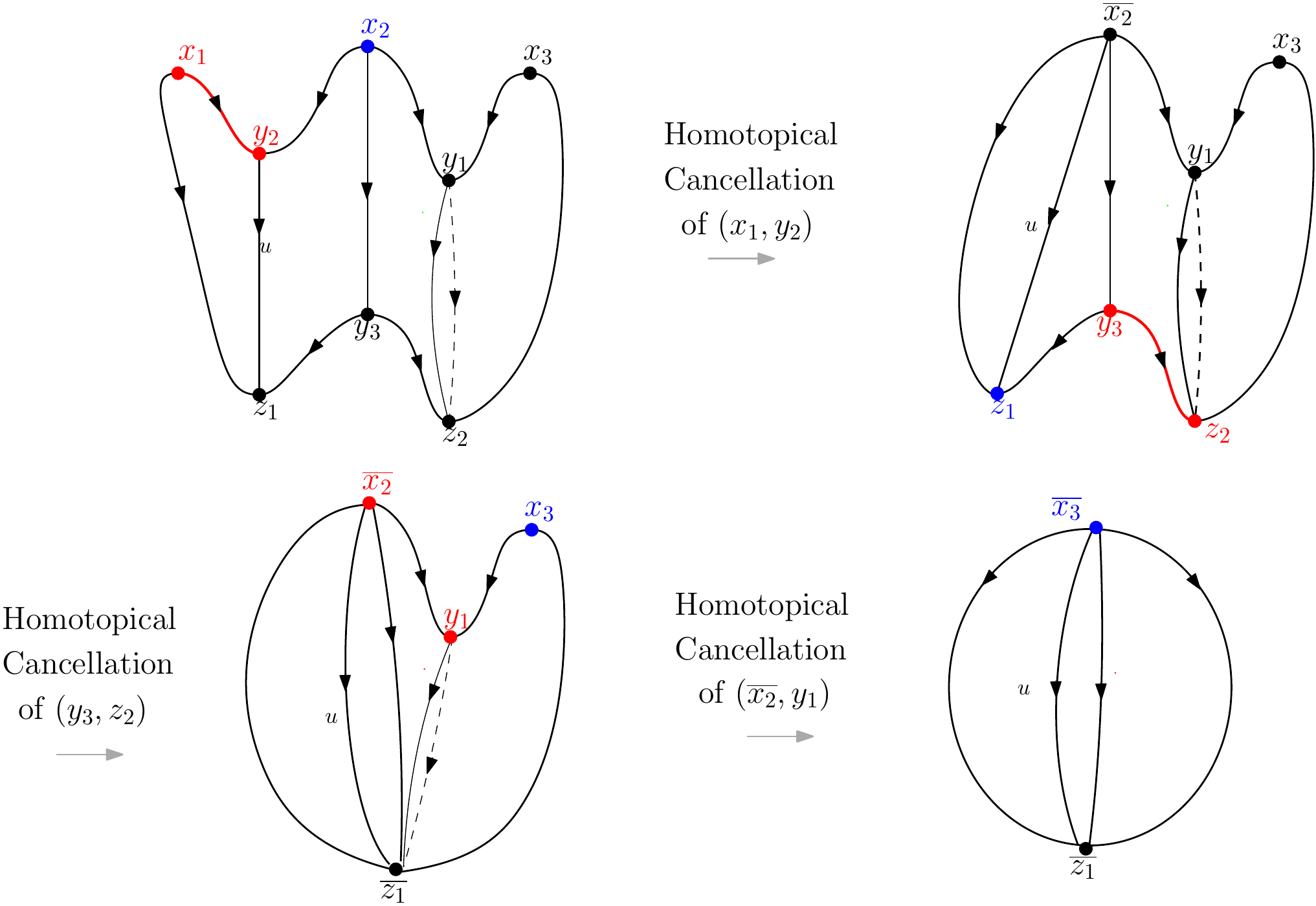}
    \caption{Homotopical cancellation  the pair of singularities $(x_1,y_2)$, $(y_3,z_2)$ and  $(x_2,y_1)$, sucessively.}\label{ex_whitney1-new2N000}
\end{figure}

%
%

\end{example}

\begin{example}
Consider the singular manifold $M\in\mathfrak{M}(\mathcal{GD})$ and the GS-flow $\varphi_{X}$ associated to a vector field   $X\in\mathfrak{X}_{\mathcal{GD}}(M)$ as in  Figure~\ref{fig_caracteristico_d1}.  
The GS-chain complex associated to $(M,X)$ is presented in Example~\ref{ex_d}.
The GS-boundary operator $\Delta^{\mathcal{GS}}_{\ast}$ is  given by the matrix in Figure~\ref{fig_flow_d}.  

Consider a finest  filtration on  $(C_{\ast}^{\mathcal{GD}}(M,X),\Delta^{\mathcal{GD}}_{\ast})$, namely, $
F_0C^{\mathcal{GD}}  = \mathbb{Z}[z_1^e]$, 
$F_1C^{\mathcal{GS}}  \setminus  F_0C^{\mathcal{GD}} = \mathbb{Z}[z_1^i]$, 
$F_2C^{\mathcal{GD}}  \setminus F_1C^{\mathcal{GD}} = \mathbb{Z}[z_2^e]$, $F_3C^{\mathcal{GD}}  \setminus F_2C^{\mathcal{GD}} = \mathbb{Z}[z_2^i]$, 
$F_4C^{\mathcal{GD}}  \setminus  F_3C^{\mathcal{GD}}= \mathbb{Z}[y_1^e]$, 
$F_5C^{\mathcal{GD}}  \setminus F_4C^{\mathcal{GD}}= \mathbb{Z}[y_1^i]$, 
$F_6C^{\mathcal{GD}}  \setminus F_5C^{\mathcal{GD}}= \mathbb{Z}[y_2^e]$,
$F_7C^{\mathcal{GD}} \setminus  F_6C^{\mathcal{GD}}= \mathbb{Z}[y_2^i]$,
$F_8C^{\mathcal{GD}}  \setminus F_7C^{\mathcal{GD}}= \mathbb{Z}[y_3]$,
$F_9C^{\mathcal{GD}}  \setminus F_8C^{\mathcal{GD}}= \mathbb{Z}[x_1]$, 
$F_{10}C^{\mathcal{GD}}   \setminus F_9C^{\mathcal{GD}}= \mathbb{Z}[x_2]$, $F_{11}C^{\mathcal{GD}}   \setminus F_10C^{\mathcal{GD}}= \mathbb{Z}[x_3]$, $F_{12}C^{\mathcal{GD}}   \setminus F_11C^{\mathcal{GD}}= \mathbb{Z}[x_4]$ and  $F_{13}C^{\mathcal{GD}}   \setminus F_12C^{\mathcal{GD}}= \mathbb{Z}[x_5]$.
The  spectral sequence associated to  $(C_{\ast}^{\mathcal{GD}}(M,X),\Delta^{\mathcal{GD}}_{\ast})$ enriched with the filtration $F$ is shown in Figure~\ref{spec}.

\begin{figure}[H]
      \centering
        \includegraphics[width=0.95\textwidth]{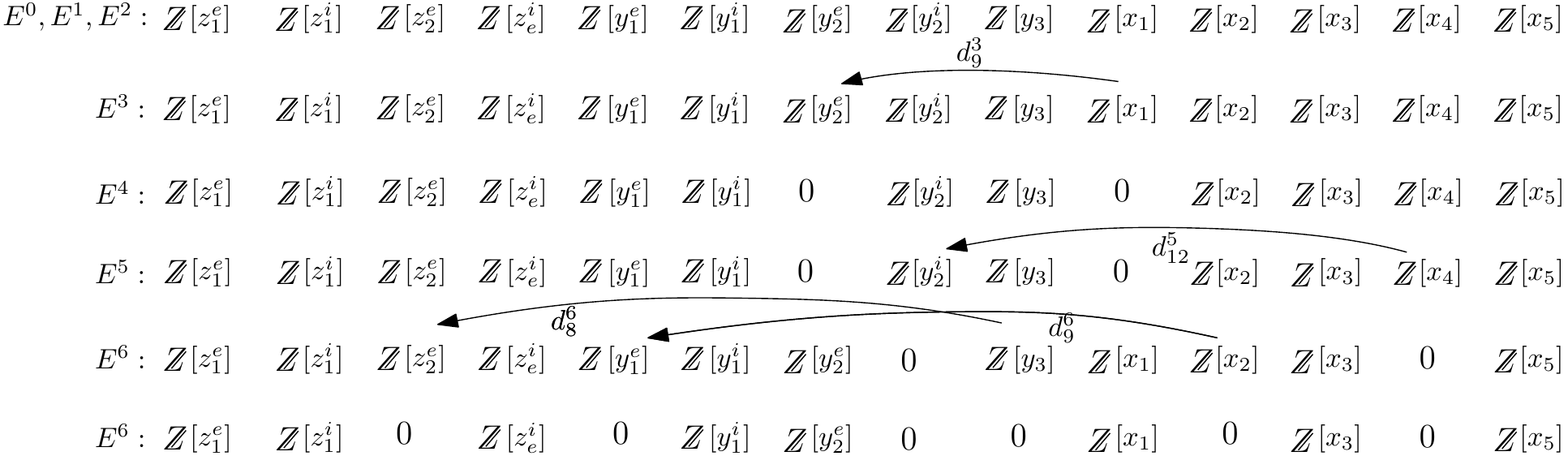}
    \caption{The spectral sequence for $(C_{\ast}^{\mathcal{GD}}(M,X),\Delta^{\mathcal{GD}}_{\ast})$ with filtration $F$.}\label{specD}
\end{figure}

Applying the  SSSA to the GS-boundary differential $\Delta^{\mathcal{GD}}$, one obtains the sequence of matrices  $\Delta^{1},\cdots,\Delta^{8}$ as in  Figures 
~\ref{duplo_matriz3},$\cdots$,~\ref{duplo_matriz8}, respectively, where the singularities are identified by  $h_0^1=z_1^e$, $h_0^2=z_1^i$, $h_0^3=z_2^e$, $h_0^4=z_2^i$, $h_1^5=y_1^e$, $h_1^6=y_1^i$, $h_1^7=y_2^e$, $h_1^8=y_2^i$, $h_1^9=y_3$ and $h_2^{j+9}=x_j$, for $j=1\dots 5$.

%

\begin{figure}[H]
\centering
\subfloat[$\Delta^3$, sweeping 3-rd diagonal.]{
\includegraphics[height=6.7cm]{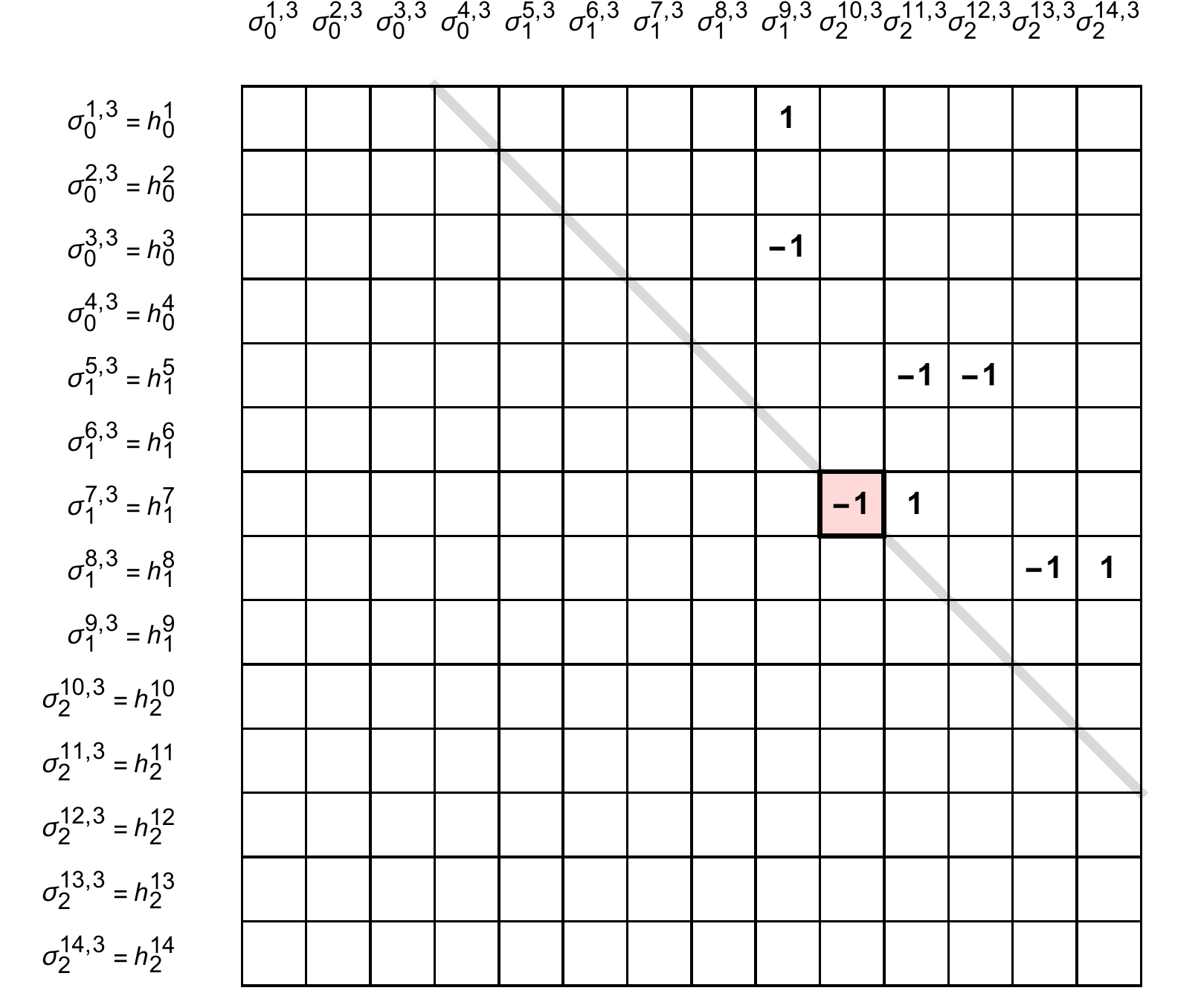}
\label{duplo_matriz3}
}
\quad 
\subfloat[$\Delta^4$, sweeping 4-th diagonal.]{
\includegraphics[height=6.7cm]{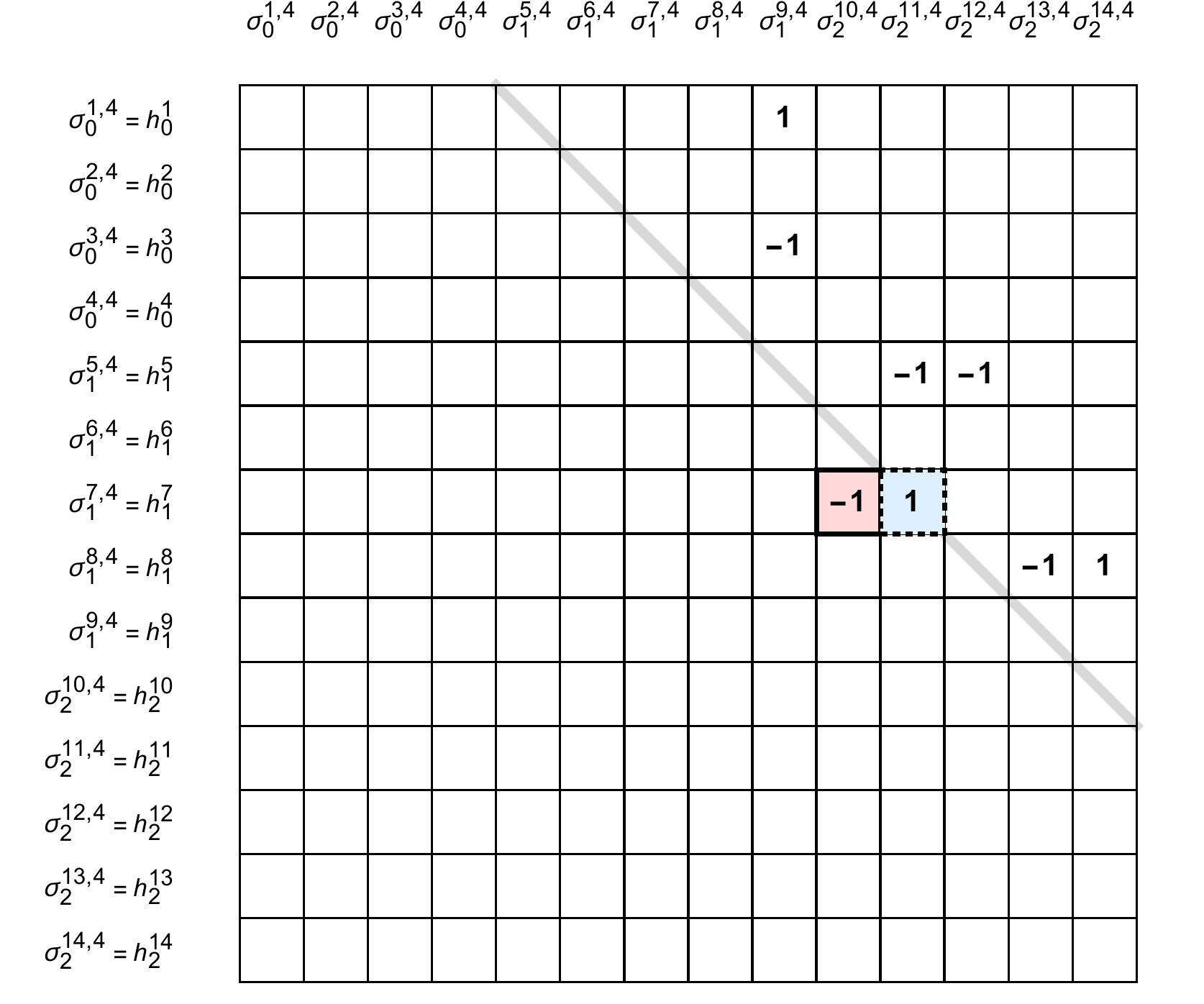}
\label{duplo_matriz4}
}\\
\subfloat[$\Delta^5$, sweeping 5-th diagonal..]{
\includegraphics[height=6.6cm]{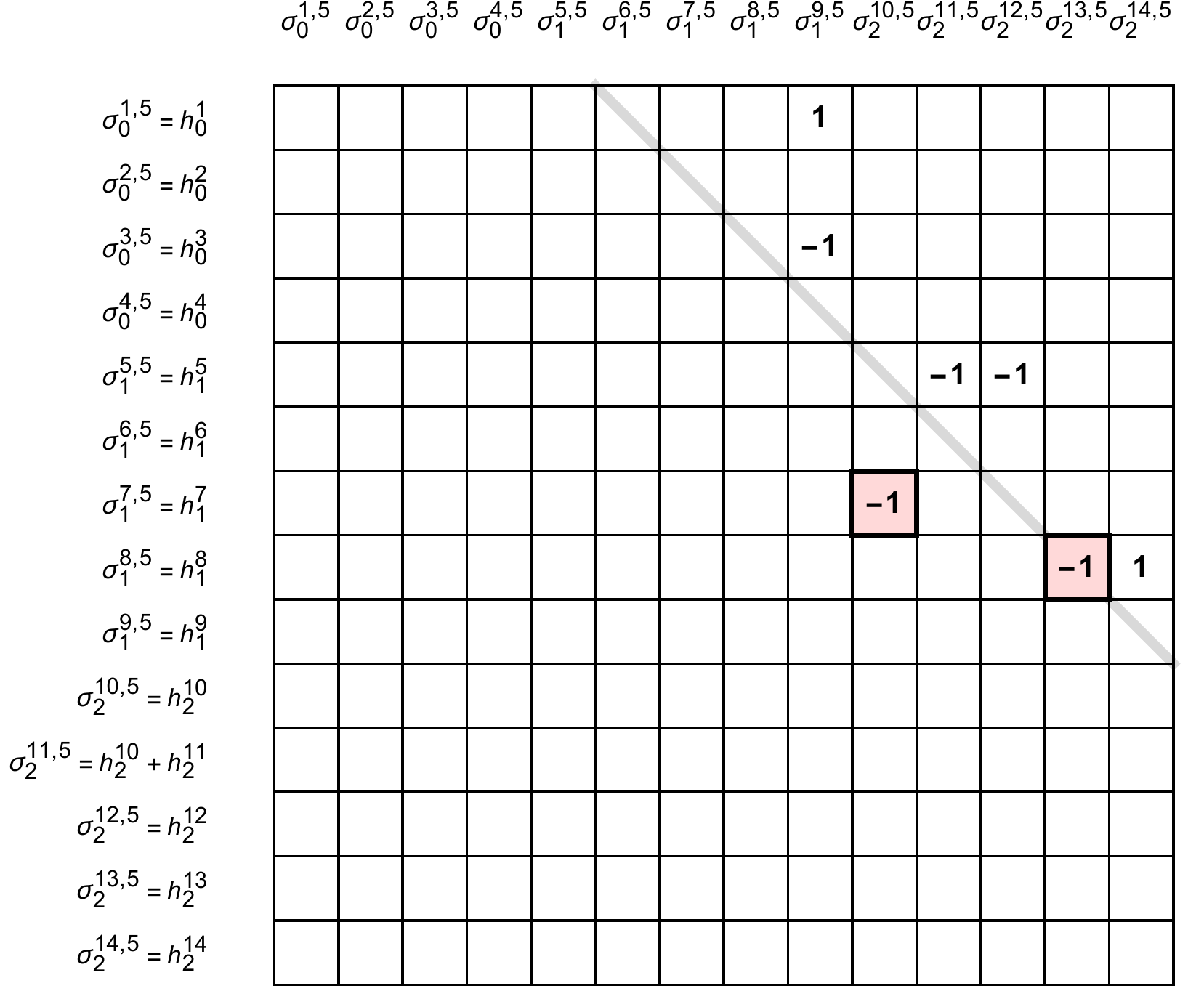}
\label{duplo_matriz5}
}
\quad 
\subfloat[$\Delta^6$, sweeping 6-th diagonal.]{
\includegraphics[height=6.6cm]{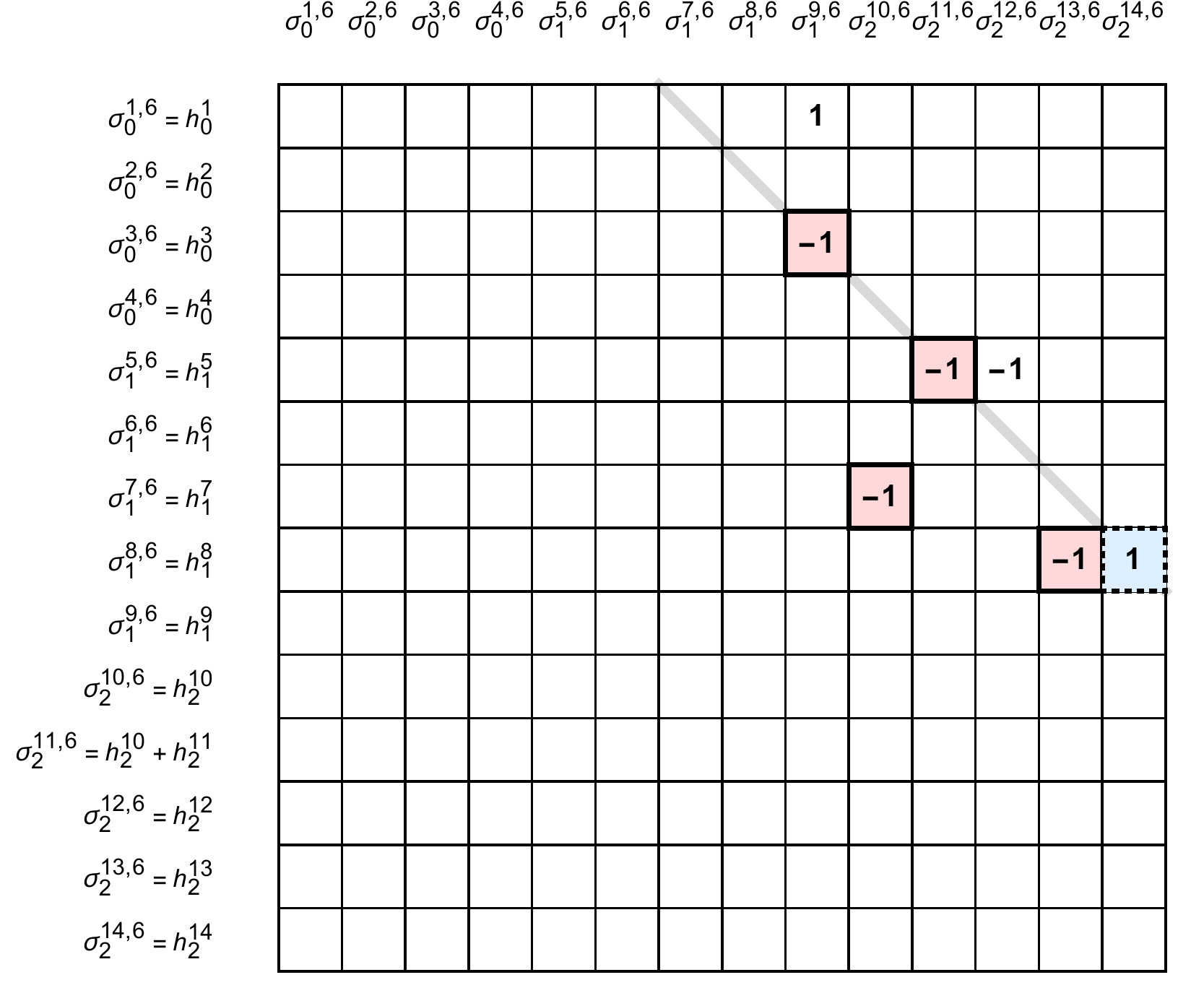}
\label{duplo_matriz6}
}\\
\subfloat[$\Delta^7$, sweeping 7-th diagonal.]{
\includegraphics[height=6.7cm]{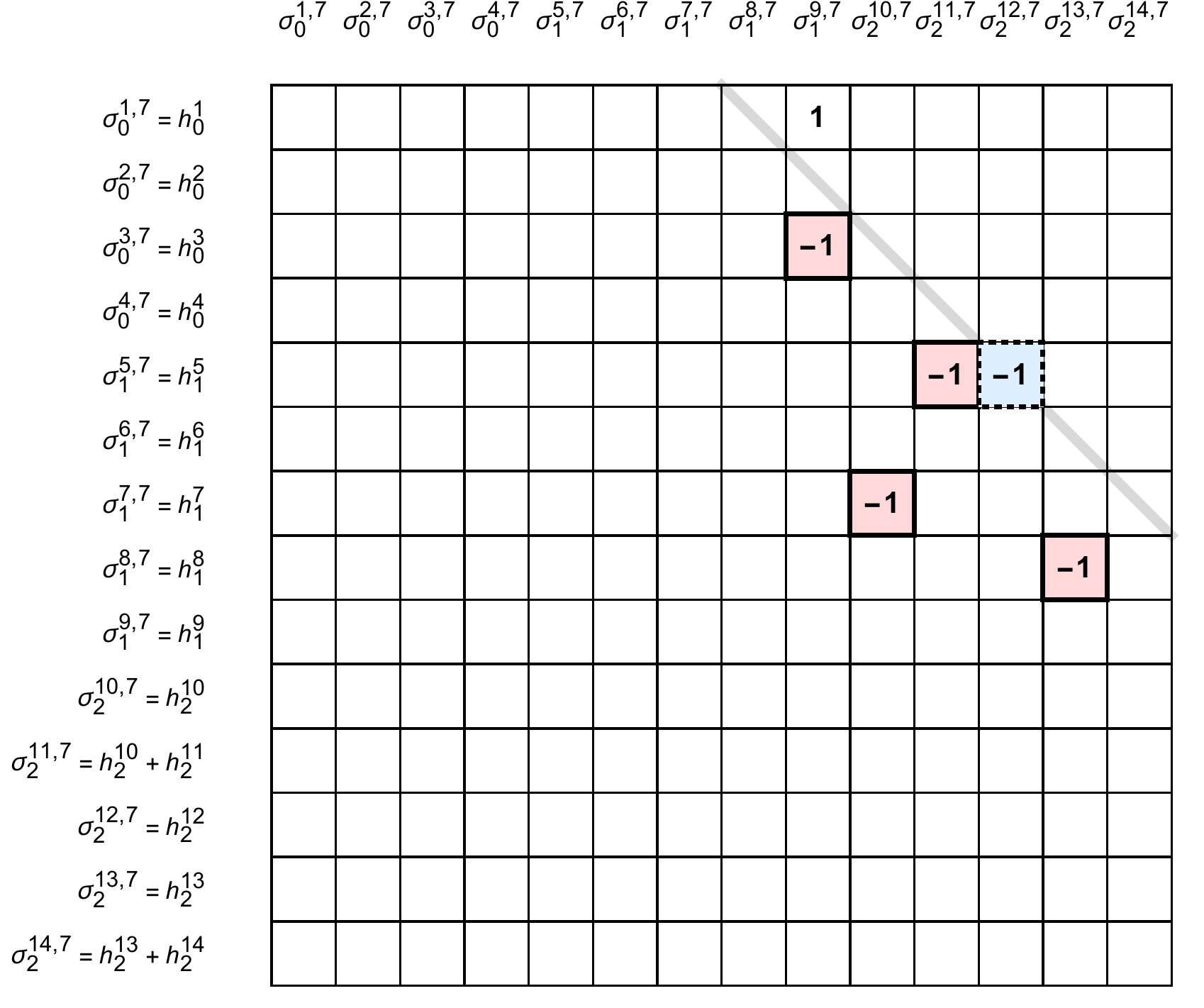}
\label{duplo_matriz7}
}
\quad 
\subfloat[$\Delta^8$, sweeping 8-th diagonal.]{
\includegraphics[height=6.7cm]{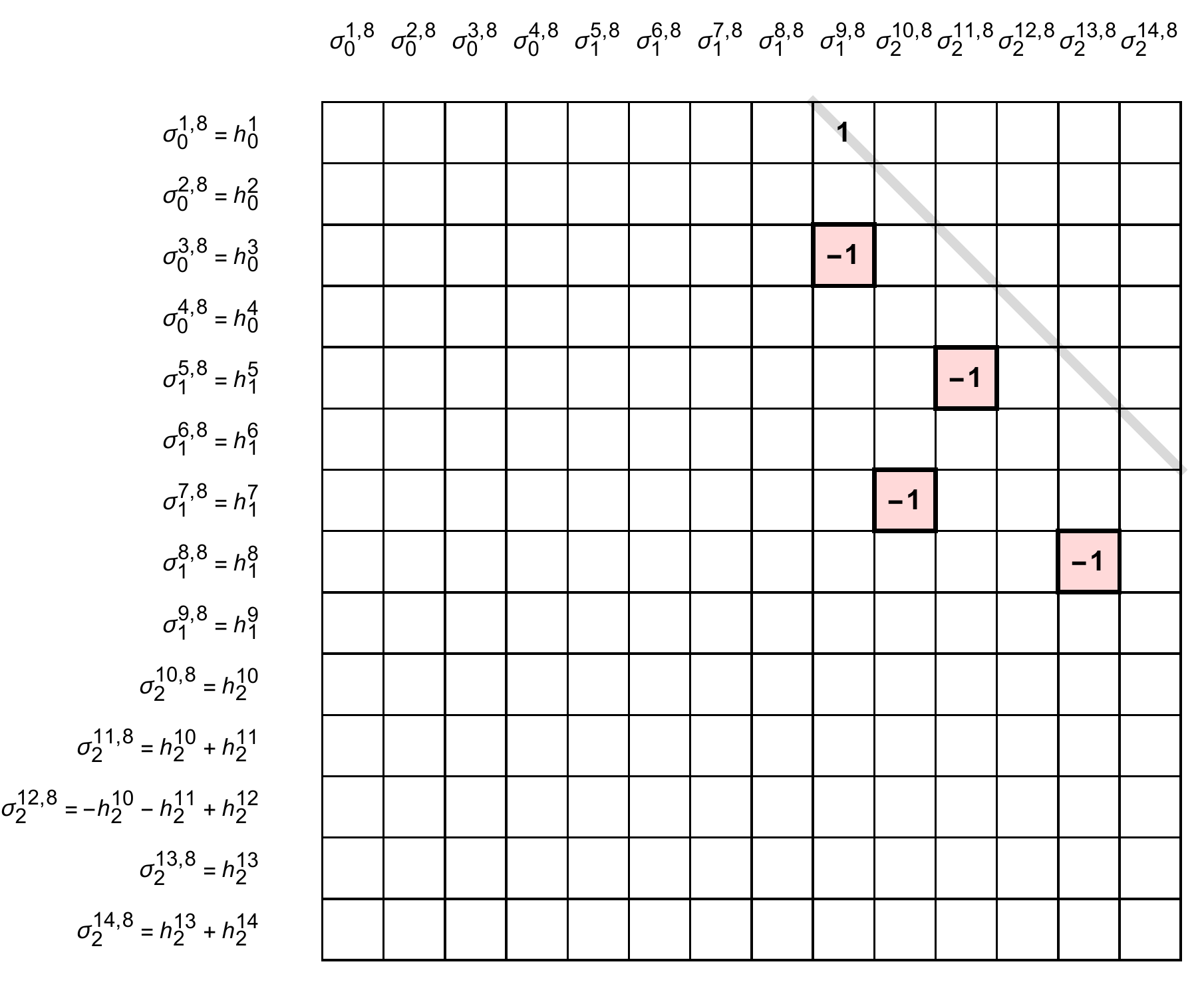}
\label{duplo_matriz8}
}
\caption{Sequence of matrices produced by the SSSA.}
\label{fig01}
\end{figure}

%
%
%

As proven in Theorem~\ref{cancel1}, the primary pivots detect  algebraic cancellations of the modules of the spectral sequence. More specifically, 
\begin{itemize}
\item the primary pivot $\Delta^{3}_{7,10}$ detects the algebraic cancellation of the modules $E^{3}_{9}$ and $E^{3}_{6}$;
\item the primary pivot $\Delta^{5}_{8,13}$ detects the algebraic cancellation of the modules $E^{5}_{12}$ and $E^{5}_{7}$;
\item the primary pivot $\Delta^{6}_{3,9}$ detects the algebraic cancellation of the modules $E^{6}_{8}$ and $E^{6}_{2}$;
\item the primary pivot $\Delta^{6}_{5,11}$ detects the algebraic cancellation of the modules $E^{6}_{10}$ and $E^{6}_{4}$.
\end{itemize}
On the other hand, these algebraic cancellations are associated to dynamical homotopical cancellations, namely:
\begin{itemize}
\item  the algebraic cancellation of  $E^{3}_{9}$ and $E^{3}_{6}$ determines the dynamical cancellation of the singularities $(y_2^e,x_1)$.
\item  the algebraic cancellation of  $E^{5}_{12}$ and $E^{5}_{7}$ determines the dynamical homotopical cancellation of the singularities $(y_2^i,x_4)$.
\item the algebraic cancellation of $E^{6}_{8}$ and $E^{6}_{2}$ determines the dynamical homotopical cancellation of the singularities $(z_2^e,y_3)$.
\item the algebraic cancellation of $E^{6}_{10}$ and $E^{6}_{4}$ determines the dynamical homotopical cancellation of the singularities $(y_1^e,\bar{x}_3)$.
\end{itemize}

Figure \ref{ex_duplo1-newN000}  shows the dynamical cancellation of the pair of  singularities  $(y_2^e,x_1)$, $(y_2^i,x_4)$, $(z_2^e,y_3)$ and $(y_1^e,\bar{x}_3)$, respectively.

\begin{figure}[H]
    \centering
        \includegraphics[width=0.83\textwidth]{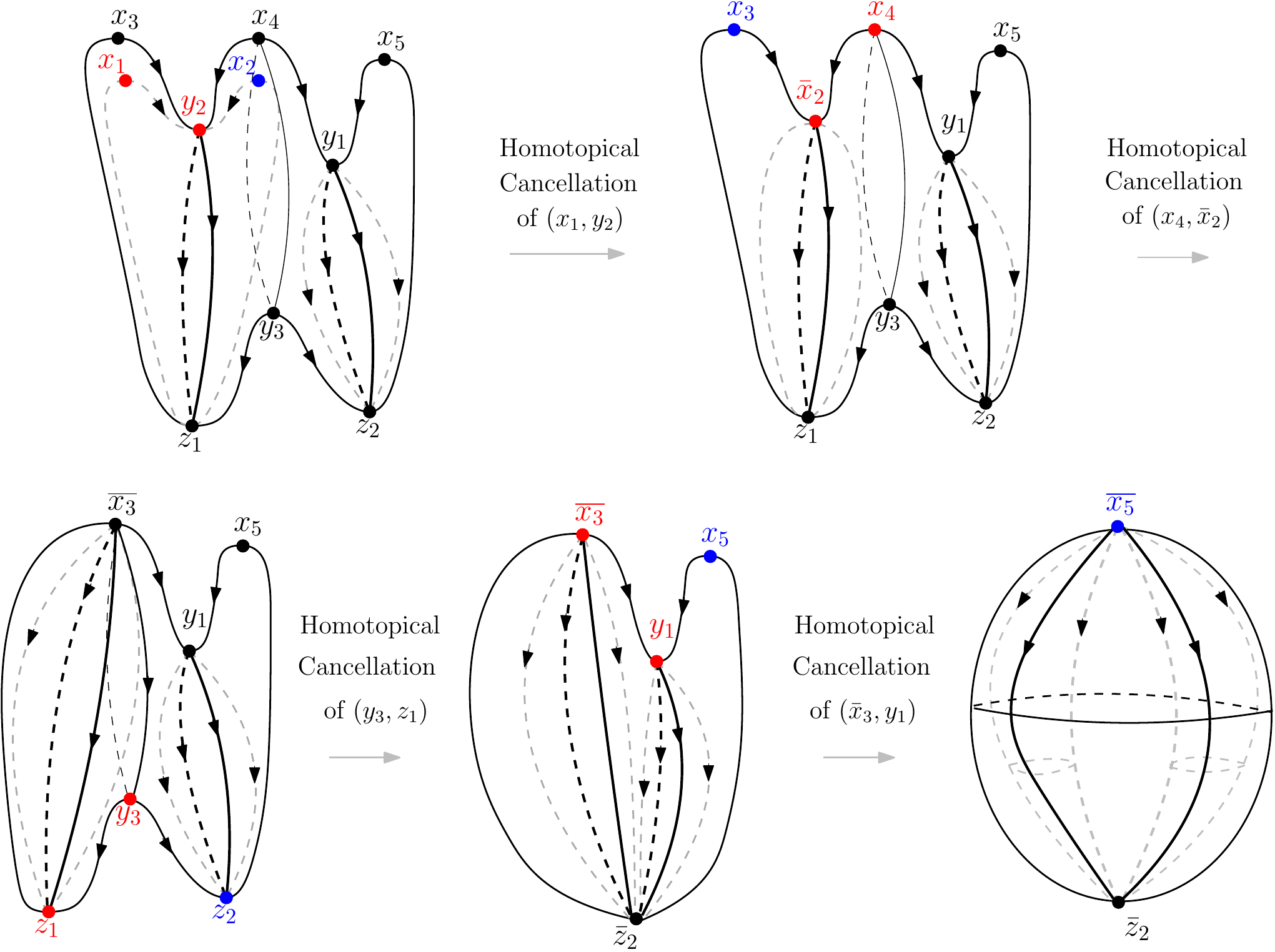}
    \caption{Homotopical cancellation of the pair of generators $(y_2^e,x_1)$, $(y_2^i,x_4)$, $(z_2^e,y_3)$ and $(y_1^e,\bar{x}_3)$, sucessively.}\label{ex_duplo1-newN000}
\end{figure}

%
%
%


\end{example}


\vspace{1cm}

{\sc {\footnotesize
\noindent  D. V. S. Lima - CMCC, Universidade Federal do ABC, Santo André, SP, Brazil.

\noindent e-mail: dahisy.lima@ufabc.edu.br

\vspace{0.2cm}

\noindent  S. A. Raminelli - IMECC, Universidade Estadual de Campinas, Campinas, SP, Brazil. 

\noindent e-mail: stefakemi@outlook.com.br
\vspace{0.2cm}

\noindent  K. A. de Rezende - IMECC, Universidade Estadual de Campinas, Campinas, SP, Brazil.

\noindent e-mail: ketty@ime.unicamp.br

}}

\end{document}